\theoremstyle{plain}
\newtheorem{theorem}{Theorem}[section]
\newtheorem{corollary}[theorem]{Corollary}
\newtheorem{proposition}[theorem]{Proposition}
\newtheorem{lemma}[theorem]{Lemma}
\newtheorem{conjecture}[theorem]{Conjecture}
\theoremstyle{definition}
\newtheorem{definition}[theorem]{Definition}
\newtheorem{example}[theorem]{Example}
\newtheorem{remark}[theorem]{Remark}
\newtheorem{assumption}[theorem]{Assumption}
\newcommand{\dil}{L}
\newcommand{\End}{\operatorname{End}}
\numberwithin{equation}{section}
\numberwithin{figure}{section}
\newcommand{\proj}{\operatorname{proj}}
\newcommand{\inj}{\operatorname{inj}}
\renewcommand{\mod}{\operatorname{mod}}
\newcommand{\add}{\operatorname{add}}
\newcommand{\ind}{\operatorname{ind}}
\newcommand{\dimv}{\underline{\dim}}
\newcommand{\Hom}{\operatorname{Hom}}
\newcommand{\rad}{\operatorname{rad}}
\newcommand{\Spec}{\operatorname{Spec}}
\newcommand{\hF}{\widehat F}
\newcommand{\thick}{\operatorname{thick}}
\newcommand{\coht}{\operatorname{coht}}
\newcommand{\bC}{\mathbb{C}}
\newcommand{\bQ}{\mathbb{Q}}
\newcommand{\bZ}{\mathbb{Z}}
\newcommand{\bd}{\mathbf{d}}
\newcommand{\cC}{\mathcal{C}}
\newcommand{\cD}{\mathcal{D}}
\newcommand{\cI}{\mathcal{I}}
\newcommand{\cM}{\mathcal{M}}
\newcommand{\cU}{\mathcal{U}}
\newcommand{\cZ}{\mathcal{Z}}
\newcommand{\wu}{\widehat v}
\newcommand{\tI}{\widetilde{{I}}}
\newcommand{\uu}{\breve{u}}
\newcommand{\uQ}{\breve{Q}}
\newcommand{\up}{\mathbf{p}}
\newcommand{\Gr}{\operatorname{Gr}}
\newcommand{\spl}{\operatorname{sp}}
\newcommand{\trop}{\operatorname{trop}}
\newcommand{\Ext}{\operatorname{Ext}}
\newcommand{\ext}{\operatorname{ext}}
\newcommand{\soc}{\operatorname{soc}}
\newcommand{\wU}{\widetilde V}
\newcommand{\smid}{\,|\,}
\newcommand{\e}{\mathbf{e}}
\newcommand{\Res}{\operatorname{Res}}
\title{Configuration Spaces of Finite Representation Type Algebras}
\author[N. Arkani-Hamed]{Nima Arkani-Hamed}
\address[Nima Arkani-Hamed]
{}
\email{arkani@ias.edu}
\urladdr{}
\author[H. Frost]{Hadleigh Frost}
\address[Hadleigh Frost]
{}
\email{frost@ias.edu}
\urladdr{}
\author[P.-G. Plamondon]{Pierre-Guy Plamondon}
\address[Pierre-Guy Plamondon]
{}
\email{pierre-guy.plamondon@uvsq.fr}
\urladdr{\url{https://plamondon.pages.math.cnrs.fr/plamondon/}}
\author[G. Salvatori]{Giulio Salvatori}
\address[Giulio Salvatori]
{}
\email{salvatori@ias.edu}
\urladdr{}
\author[H. Thomas]{Hugh Thomas}
\address[Hugh Thomas]
{}
\email{thomas.hugh\_r@uqam.ca}
\urladdr{\url{https://hughrthomas.github.io/}}
\begin{document}

\maketitle

\begin{abstract}
To every finite-dimensional $\mathbb C$-algebra $\Lambda$ of finite representation type we associate an affine variety. These varieties are a large generalization of the varieties defined by ``$u$ variables" satisfying ``$u$-equations'', first introduced in the context of open string theory and moduli space of ordered points on the real projective 
line by Koba and Nielsen, rediscovered by Brown as ``dihedral co-ordinates", and recently generalized to any finite type hereditary algebras. We show that each such variety is irreducible and admits a rational parametrization. The assignment is functorial: algebra quotients correspond to monomial maps among the varieties. The non-negative real part of each variety has boundary strata that are controlled by Jasso reduction. These non-negative parts naturally define a generalization of open string integrals in physics, exhibiting factorization and splitting properties that do not come from a worldsheet picture. We further establish a family of Rogers dilogarithm identities extending results of Chapoton beyond the Dynkin case.
\end{abstract}

\section{Introduction}

To every finite-dimensional algebra over $\mathbb C$, $\Lambda$, of finite representation type, we define and study an affine variety $\widetilde{\mathcal M}_\Lambda$. We show that this assignment of varieties to algebras behaves functorially under algebra quotients. We also study the real, non-negative part of these varieties, $\widetilde{\mathcal M}_\Lambda^{\geq 0}$. The boundaries of $\widetilde{\mathcal M}_\Lambda^{\geq 0}$ decompose into strata that are themselves the non-negative parts of $\widetilde{\mathcal{M}}$-varieties for other algebras, obtained as Jasso reductions of $\Lambda$. The $\widetilde{\mathcal M}$-varieties sit at a fascinating meeting point between algebraic geometry, tropical geometry, and representation theory. They are also a broad generalization of the worldsheet moduli spaces of string theory. In string theory, the boundaries of the moduli spaces have a physics interpretation called factorization. The varieties we study go well beyond the string worldsheet, but continue to exhibit factorization. Though the main focus of this paper is to study the mathematical properties of the $\widetilde{\mathcal M}$-varieties we introduce, we also emphasize here how these varieties define ``string-y amplitudes'' that continue to exhibit the properties of string amplitudes but which do not come from a worldsheet picture. 

This paper is a large generalization of earlier work, by physicists and mathematicians, in the setting of Dynkin type algebras. The ``$u$-equations'' associated to Dynkin type $A$ quivers were first introduced by Koba and Nielsen in 1969 \cite{KN1, KN2, KN3}. These equations define an affine variety that we denote $\widetilde{\mathcal{U}}_{A_n}$. For example, in the case of $A_2$, the~$u$-equations are five equations in five variables, and define a 2-dimensional variety:
\begin{center}
 \begin{tabular}{lclcl}
  $u_{13} + u_{24}u_{25} = 1$ & & $u_{24} + u_{35}u_{13} = 1$ && $u_{35} + u_{14}u_{24} = 1$\\
  $u_{14} + u_{25}u_{35} = 1$ && $u_{25} + u_{13}u_{14} = 1$
 \end{tabular}
\end{center}
Here, the variables can be identified with the chords of a pentagon and the monomials appearing in the equations are determined by how the chords cross. The~$u$-equations led Koba and Nielsen to the first definition of tree-level string amplitudes, which are integrals over the non-negative part, $\mathcal U_{A_n}^{\geq 0}$.
Koba and Nielsen showed that the boundary strata of $\mathcal U_{A_n}^{\geq 0}$ intersect in the same way as the faces of the $n$-dimensional associahedron. Each boundary can be specified by the $u_{ij}$ that are zero on that boundary. The $u$-equations ensure that any two $u$-variables can be zero simultaneously only if their corresponding chords are non-crossing. For example, in the $A_2$ example, when $u_{13}=0$, the first $u$-equation imposes that $u_{24}, u_{25} \neq 0$. Correspondingly, in the pentagon, the chords $24$ and $25$ are the chords that cross $13$. This property captures the important physical property of factorization, and played a key role in the early development of string theory.

Arkani-Hamed, He, and Lam \cite{AHL} showed that similar~$u$-equations can be defined for any Dynkin quiver, $Q$. These define affine varieties that we denote as $\widetilde{\mathcal U}_{\mathbb CQ}$. Motivated by cluster algebras, the same authors also introduced a second system of $\hF$-equations, and showed that they define an $n$-dimensional irreducible affine variety, $\widetilde{\mathcal M}_{\mathbb CQ}$. For example, in the case of $A_2$, the~$\hF$-equations are three equations in the same five variables introduced above:
\begin{center}
 \begin{tabular}{lclcl}
  $ u_{13} + u_{24}u_{25}=1$ && $u_{13}u_{14} + u_{14}u_{24}u_{25} + u_{25}u_{35}=1$  && $u_{14}u_{24} + u_{35}=1$
 \end{tabular}
\end{center}
These equations again define a 2-dimensional variety. In fact, \cite{AHL} proved that these two families of varieties are identical, $\widetilde{\mathcal U}_{\mathbb CQ} =\widetilde{\mathcal M}_{\mathbb CQ}$. 
They found a rational parametrisation of $\widetilde{\mathcal M}_{\mathbb CQ}$. And they showed that the boundary strata of the non-negative part, $\widetilde{\mathcal {M}}_{\mathbb CQ}^{\geq 0}$, intersect in the same way as the face lattice of the generalized associahedron of type $Q$.

In this paper, we define affine varieties $\widetilde{\mathcal U}_\Lambda$ and $\widetilde{\mathcal M}_\Lambda$ for every finite-dimensional algebra, $\Lambda$, of finite representation type. Note that this includes non-hereditary algebras. These varieties agree with those defined by \cite{AHL} when we take $\Lambda$ to be the path algebra $\mathbb CQ$ of a Dynkin quiver $Q$. We show that our assignment of varieties to algebras is functorial under algebra quotients, which correspond to toric blowdowns of the $\widetilde{\mathcal M}_\Lambda$ varieties. Moreover, we show that the divisors $u_X = 0$ of $\widetilde{\mathcal M}_\Lambda$, where one variable is set to zero, correspond to so-called \emph{Jasso reductions} of $\Lambda$. See below (Section \ref{intro:summary}) for an overview of our results. Taken together, our results generalize results of Arkani-Hamed, He, and Lam \cite{AHL}, Chapoton \cite{Chapoton}, and Kus--Reineke \cite{KusReineke} for the Dynkin case, and show how geometric properties of the variety $\widetilde{\mathcal M}_\Lambda$ is controlled by the representation theory of $\Lambda$.

\subsection{Summary of main results.}\label{intro:summary}
Fix a finite representation type $\mathbb C$-algebra, $\Lambda$, with $n$ isomorphism classes of simple modules. We can view such a $\Lambda$ as the path algebra of some quiver with relations, $Q$, with $n$ vertices. We consider the homotopy category $K_\Lambda$ of 2-term complexes of projective modules. We write the objects of $K_\Lambda$ as $X = (X^{-1} \rightarrow X^0)$, for $X^{-1}, X^0$ in $\proj\Lambda$. The indecomposable objects $X^{-1}\rightarrow X^0$ of $K_\Lambda$ are of two forms: minimal projective presentations of the module $H^0(X)$, or $P_i\rightarrow 0$, where $P_i$ is an indecomposable projective module. Both $K_\Lambda$ and the module category $\mod\Lambda$ play a key role in our definitions. On the one hand, the indecomposable objects of $K_\Lambda$,~$X \in \ind K_\Lambda$ index the set of~$u$-variables,~$u_X$. On the other hand, we define the \emph{compatibility degree}, $c(X,Y)$, of $X,Y \in \ind K_\Lambda$ in terms of the dimensions of $\Hom$ groups in $\mod \Lambda$ (Lemma \ref{lem:cXY}):
\[
c(X,Y) = \hom_\Lambda (H^0(X),\,H^0(\tau Y)) + \hom_\Lambda(H^0(Y),\, H^0(\tau X)).
\]
Here, $\tau$ is the \emph{Auslander-Reiten translation} on $K_\Lambda$. Given this, the~$u$-equations are
\begin{equation}\label{intro:newu}
u_X + \prod_{Y \in \ind K_\Lambda} u_Y^{c(X,Y)} = 1,
\end{equation}
for each $X \in \ind K_\Lambda$. This system of equations defines an affine variety $\widetilde{\mathcal U}_\Lambda$. We can also define a polynomial in the same variables, $\hF_M \in \mathbb{Z}[u_X\,|\, X \in \ind K_\Lambda]$, for each indecomposable module $M \in \mod\Lambda$ (Definition \ref{def:Fhat}). Then the~$\hF$-equations are,
\[
\hF_M = 1,
\]
for each indecomposable $M \in \mod\Lambda$. These define an affine variety $\widetilde{\mathcal M}_\Lambda$ (Definition \ref{def:varieties}). 

The varieties $\widetilde{\mathcal M}_\Lambda$ are the focus of our paper. These varieties are a natural generalization of the affine varieties introduced by \cite{AHL} for the Dynkin cases. However, it is interesting to comment on how the two families of affine varieties, $\widetilde{\mathcal{M}}_\Lambda$ and $\widetilde{\mathcal{U}}_\Lambda$, are related. We prove (Corollary \ref{coro:F-tilde-imply-u}) that $\widetilde{\mathcal{M}}_\Lambda \subseteq \widetilde{\mathcal{U}}_\Lambda$. Moreover, we prove (Proposition \ref{prop:u-equations-imply-F-hat-sometimes}) that $\widetilde{\mathcal{M}}_\Lambda = \widetilde{\mathcal{U}}_\Lambda$ in the case that the \emph{Auslander-Reiten quiver} of $K_\Lambda$ has no oriented cycles. However, we have also verified in examples (Example \ref{ex:U=M}), that this equality holds more generally, leading us to
\begin{conjecture}\label{U=F}
For any finite representation type algebra $\Lambda$, $\widetilde{\mathcal{U}}_\Lambda = \widetilde{\mathcal M}_\Lambda$.
\end{conjecture}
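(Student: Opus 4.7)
The plan is to establish the reverse inclusion $\widetilde{\mathcal{U}}_\Lambda \subseteq \widetilde{\mathcal{M}}_\Lambda$, since $\widetilde{\mathcal{M}}_\Lambda \subseteq \widetilde{\mathcal{U}}_\Lambda$ is already provided by Corollary \ref{coro:F-tilde-imply-u}. I would attack this by a double induction: on the number of simple modules $n$, and within a fixed $n$, on the "depth" of oriented cycles in the AR quiver of $K_\Lambda$. The base of the outer induction is trivial, and the acyclic case of the AR quiver is handled by Proposition \ref{prop:u-equations-imply-F-hat-sometimes}, which gives the full statement whenever no oriented cycles occur.

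First I would show that both varieties have the same boundary behaviour. For each $X \in \ind K_\Lambda$, the divisor $\{u_X = 0\} \cap \widetilde{\mathcal{M}}_\Lambda$ is identified, via Jasso reduction, with the $\widetilde{\mathcal{M}}$-variety of an algebra $\Lambda'$ having strictly fewer simples. The same identification should hold on the $\widetilde{\mathcal{U}}$-side, because setting $u_X = 0$ forces a specific pattern of non-vanishing among the $u_Y$ through the $u$-equation indexed by $X$, and the surviving equations match the $u$-equations of $\Lambda'$ once the monomials are pruned using $c(X,Y)$. By the outer induction, equality of varieties holds on each divisor $\{u_X = 0\}$, hence the two varieties agree on the closed locus $\bigcup_X \{u_X = 0\}$.

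Next I would handle the open locus $\mathcal{O} = \{u_X \neq 0 \text{ for all } X\}$. Here $\widetilde{\mathcal{M}}_\Lambda \cap \mathcal{O}$ is irreducible of dimension $n$ and admits a rational parametrization (as asserted in the abstract and developed in the body of the paper). It suffices to show that every irreducible component of $\widetilde{\mathcal{U}}_\Lambda \cap \mathcal{O}$ has dimension at most $n$, since then the inclusion of two equidimensional irreducible pieces (one into the other) must be an equality. A clean way to obtain the dimension bound is to exhibit, among the $u$-equations restricted to $\mathcal{O}$, a subsystem of $|\ind K_\Lambda| - n$ equations whose differentials are linearly independent at every point of $\widetilde{\mathcal{M}}_\Lambda \cap \mathcal{O}$; this Jacobian computation can be carried out in terms of the compatibility matrix $c(X,Y)$ and should reduce to a statement about the Euler form and $g$-vectors of $K_\Lambda$.

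The main obstacle is precisely the case excluded by Proposition \ref{prop:u-equations-imply-F-hat-sometimes}, namely when the AR quiver of $K_\Lambda$ contains oriented cycles. In that situation no single $u$-equation can be solved for a variable in a way that then propagates cleanly through a monomial substitution, so the $\hat F_M = 1$ relations are not derivable one at a time. My attempted workaround would exploit the functoriality under algebra quotients: choose a presentation $\Lambda = \bar\Lambda / I$ where $\bar\Lambda$ is hereditary (or at least has acyclic AR quiver), use the known equality $\widetilde{\mathcal{U}}_{\bar\Lambda} = \widetilde{\mathcal{M}}_{\bar\Lambda}$, and pull back the $\hat F$-equations along the toric blowdown attached to the quotient. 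The delicate point is that the quotient map goes the "wrong way" for a naive pullback of functions, so one would have to show instead that $\widetilde{\mathcal{U}}_\Lambda$ is the closure of an appropriate toric image of $\widetilde{\mathcal{M}}_{\bar\Lambda}$; verifying this closure statement, and ruling out spurious components arising from the collapse of oriented AR cycles, is where I expect the real difficulty to lie.
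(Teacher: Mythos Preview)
The statement you are attempting to prove is labelled a \emph{Conjecture} in the paper, and the paper does not contain a proof of it. What the paper establishes is the inclusion $\widetilde{\mathcal M}_\Lambda \subseteq \widetilde{\mathcal U}_\Lambda$ (Corollary~\ref{coro:F-tilde-imply-u}), the equality under the additional hypothesis that the Auslander--Reiten quiver of $K_\Lambda$ has no oriented cycles (Proposition~\ref{prop:u-equations-imply-F-hat-sometimes}), and computer verification in a handful of cyclic examples (Example~\ref{ex:U=M}). So you are not reconstructing a proof; you are proposing a strategy for an open problem.

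Parts of your outline are sound and even match results in the paper: your boundary reduction is exactly Theorem~\ref{Jasso-u}, which shows that for rigid $X$ the divisor $u_X=0$ in $\widetilde{\mathcal U}_\Lambda$ is the $\widetilde{\mathcal U}$-variety of the Jasso reduction, so the induction on $n$ does propagate equality along those divisors. But there are two genuine gaps in the remaining steps.

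First, on the open torus $\mathcal O$, your argument does not exclude extra irreducible components of $\widetilde{\mathcal U}_\Lambda$. Bounding the dimension of every component by $n$ and knowing that $\widetilde{\mathcal M}_\Lambda\cap\mathcal O$ is irreducible of dimension $n$ only shows that $\widetilde{\mathcal M}_\Lambda$ coincides with the component containing it; nothing prevents $\widetilde{\mathcal U}_\Lambda$ from having further $n$-dimensional components disjoint from $\widetilde{\mathcal M}_\Lambda$. Your proposed Jacobian check is computed \emph{at points of} $\widetilde{\mathcal M}_\Lambda\cap\mathcal O$, so it gives no information away from that locus. To close this you would need a global argument that $\widetilde{\mathcal U}_\Lambda$ is irreducible, and that is precisely the unknown.

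Second, the ``hereditary cover'' workaround fails at the outset: if the quiver of $\Lambda$ has a loop or an oriented cycle (as in $\mathbb C[x]/(x^d)$ or the example of Section~\ref{A2-loop}), its path algebra is infinite-dimensional, so there is no finite-dimensional hereditary $\bar\Lambda$, let alone one of finite representation type, surjecting onto $\Lambda$. Weakening to ``$\bar\Lambda$ with acyclic AR quiver of $K_{\bar\Lambda}$'' does not help, since there is no general mechanism producing such a cover either. Even when a cover exists, the functoriality of Theorem~\ref{quotient} is proved only for the $\widetilde{\mathcal M}$-varieties, and the direction of the map (a surjection $\widetilde{\mathcal M}_{\bar\Lambda}\twoheadrightarrow\widetilde{\mathcal M}_\Lambda$) does not let you transport a containment $\widetilde{\mathcal U}_{\bar\Lambda}\subseteq\widetilde{\mathcal M}_{\bar\Lambda}$ downstairs. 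Your closing paragraph correctly flags this as ``the real difficulty''; in fact it is the whole difficulty, and the conjecture remains open.
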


\medskip

We establish the following properties of the $\widetilde{\mathcal M}_\Lambda$ varieties.

\begin{enumerate}
\setcounter{enumi}{0}

\item  $\widetilde{\mathcal M}_\Lambda$ is an irreducible variety of dimension $n$, and there is an open embedding of $\widetilde{\mathcal M}_\Lambda$ into a toric variety $X_\Lambda$ (Theorem \ref{thm:Mtildetoric}).
\end{enumerate}
\noindent
The fan of this toric variety is the~$g$-vector fan of $\Lambda$ (Section \ref{sec:gfan}). The rays of the~$g$-vector fan correspond to indecomposable objects of $K_\Lambda$ which are \emph{rigid} (i.e. satisfy $\Hom(X,\Sigma X)=0$). These rigid modules correspond to interesting subvarieties of $\widetilde{\mathcal M}_\Lambda$:

\begin{enumerate}
\setcounter{enumi}{1}
\item For any rigid indecomposable $X \in \ind K_\Lambda$, the divisor $u_X = 0$ in $\widetilde{\mathcal M}_\Lambda$ is isomorphic to $\widetilde{\mathcal M}_B$, for some algebra $B$ which is called the \emph{Jasso reduction} of $\Lambda$ at $H^0(X)$ (Theorem \ref{Jasso-thm}). A similar statement (Theorem \ref{Jasso-u}) holds for the $\widetilde{\mathcal U}_\Lambda$ varieties.
\end{enumerate}
\noindent
For every $X \in \ind K_\Lambda$, we define (Section \ref{sec:geoM}) a rational function $v_X$ of $n$ variables $y_1,\ldots,y_n$. Then

\begin{enumerate}
\setcounter{enumi}{2}
\item The functions $v_X\in\mathbb C(y_1,\dots,y_n)$ provide a rational parametrization of $\widetilde{\mathcal M}_\Lambda$, i.e. the coordinate ring of $\widetilde{\mathcal M}_\Lambda$ is isomorphic to the subring of $\mathbb C(y_1,\dots,y_n)$ generated by the $v_X$ (Theorem \ref{thm:RViso}). In other words, setting $u_X = v_X(y)$ solves the $\hF_M = 1$ equations and all solutions are obtained in this way (Corollary \ref{vs-solve}).
\end{enumerate}
\noindent
It is natural to consider the real, non-negative points of these varieties, $\widetilde{\mathcal M}_\Lambda^{\geq 0}$. This is closely related to the toric variety $X_\Lambda$ and corresponding~$g$-vector fan.

\begin{enumerate}
\setcounter{enumi}{3}
\item Restricting the variables $y_i$ to be real, non-negative, $y_i\geq 0$, the rational parametrization above restricts to the non-negative part of the variety, $\widetilde{\mathcal M}_\Lambda^{\geq 0}$ (Theorem \ref{thm:agree}). Equivalently, the non-negative part of $\widetilde{\mathcal M}_\Lambda$ agrees with the non-negative part of the toric variety $X_\Lambda$.

\item The boundary strata of $\widetilde{\mathcal M}_\Lambda^{\geq 0}$ are in bijection with the cones of the $g$-vector fan of $\Lambda$ (Proposition \ref{prop:strat}). This follows from the previous point, when combined with the properties of the moment map.
\end{enumerate}
\noindent
The five properties above generalize properties established in \cite{AHL} for the Dynkin case to all finite representation type algebras. Our more general setting allows us to further establish that our assignment of varieties to algebras is functorial:

\begin{enumerate}
\setcounter{enumi}{5}
\item If $A$ is a quotient of $\Lambda$, then there is a surjective map from $\widetilde {\mathcal M}_\Lambda$ to $\widetilde {\mathcal M}_A$ (Theorem \ref{quotient}). As a homomorphism of coordinate rings, this map is a monomial map. This map can also be viewed as the blowdown map, $X_\Lambda \rightarrow X_A$, of the associated toric varieties (Theorem \ref{thm:blowdown}).
\end{enumerate}

\medskip
We also establish the following property of the \emph{Rogers dilogarithm},
\[
L(x) =  -\int_0^x\frac{\log(1-t)}{t}dt + \frac{1}{2} \log(x) \log(1-x),
\]
which generalizes the dilogarithm identities proved by Chapoton \cite{Chapoton} in the Dynkin setting.

\begin{enumerate}
\setcounter{enumi}{6}
\item The sum of the Rogers dilogarithms $L(1-v_X)$ over all $X \in \ind K_\Lambda$ is a constant,~i.e., independent of $y_1,\dots,y_n$ (Theorem \ref{dil-ident}).  
\end{enumerate}

\medskip

Finally, the rational functions $v_X$, which play a key role in establishing the above properties, also have a further interest in their own right. For $X$ a rigid indecomposable in $K_\Lambda$, the \emph{tropicalization} of $v_X$, $\trop v_X$, is a piecewise linear function on $\mathbb{R}^n$. We show:

\begin{enumerate}
\setcounter{enumi}{7}
\item Evaluated on some vector $g\in\mathbb Z^n$, $\trop v_X(g)$ is the multiplicity of $X$ as a summand of a generic 2-term projective complex of weight $g$ (Theorem \ref{trop-v-is-mult}).
\end{enumerate}

\noindent
We prove this for arbitrary finite-dimensional algebras; the assumption that $\Lambda$ is of finite representation type is not used here. This theorem generalizes the recent result of Kus--Reineke for Dynkin quivers \cite{KusReineke}.

\subsection{Generalized factorization and stringy amplitudes}
The type $A$ $u$-equations lead directly to a formula for the tree-level Veneziano amplitudes for open strings, which are meromorphic functions whose residues exhibit the key property of factorization. Other physical properties of these functions, like the spectrum of their poles and their behaviour in special limiting directions, also follow from the geometry of the $\widetilde{\mathcal U}$-varieties. The $\widetilde{\mathcal M}$-varieties that we study also define functions that resemble these open string amplitudes. These ``string-y amplitudes'' are meromorphic functions that continue to exhibit many of the properties that we associate to the string amplitudes, but are a nontrivial generalization of string amplitudes, in the sense that they do not arise from a worldsheet picture. It is not the purpose of this paper to explore the physical applications of these functions. But since they are a main part of our motivation for this work, we give a brief summary here of how these stringy amplitudes are defined and their interest from a physical point of view.

The type $A$ $u$-equations, and their positive parameterization, gives a new way to understand the color-ordered tree-level open string integrals and their properties \cite{AHL,arkani2023,2025cuts} . For variables $u_{ij}$ labelled by the chords $(i,j)$ of the $n$-gon the equations are
\begin{equation}\label{intro:u}
1 = u_{ij} + \prod_{\substack{(k,\ell)\ \text{crosses}\ (i,j)}} u_{k\ell},
\end{equation}
which defines a $n-3$ dimensional variety $\mathcal{U}_{A_n}$. These equations can be viewed as cross-ratio identities. Let $(z_1,\ldots,z_n)$ be some $n$ points on the boundary of a disk with fixed cyclic order. We can take the disk to be the upper half plane, so that the $z_i$ are real. Their cross-ratios satisfy
\[
(ij|kl) + (jk|li) = 1,\qquad (ij|kl) = \frac{(z_i - z_j)(z_k - z_l)}{(z_i - z_k)(z_j - z_l)}.
\]
If we identify the $u$-variables with (positive) cross-ratios,
\[
u_{ij} = (i, j+1 \ | \ i + 1, j),
\]
then the cross-ratio identity becomes \eqref{intro:u}. These cross-ratios are $PSL_2\mathbb{R}$ invariant. So we can identify the non-negative part $\mathcal{U}_{A_n}^{\geq 0}$ with the moduli space of cyclically ordered points up to $PSL_2\mathbb{R}$: i.e. a component $\mathcal{M}_{0,n}^+(\mathbb{R})$ of the real part of the moduli space $\mathcal{M}_{0,n}$. The Veneziano open string amplitude is an integral over $\mathcal{M}_{0,n}^+(\mathbb{R})$, conventionally written as
\begin{equation}
\label{intro:KN}
\mathcal{A}_{n} =
\int
\frac{\prod_{i=1}^{n} dz_i}{PSL(2,\mathbb R)}
\;
\prod_{1\le i<j\le n}
|z_i - z_j|^{\alpha' s_{ij}}
\;\times\; \frac{1}{(z_1 - z_2) (z_2 - z_3) \cdots (z_n - z_1)},
\end{equation}
where the integral is over points $z_i$ in a fixed cyclic ordering $z_1<z_2<\cdots < z_n$. The amplitude is a function of the Mandelstam variables $s_{ij}$ for every pair of points. The integrand is written in terms of variables that are not $PSL_2\mathbb{R}$ invariant. So it is convenient to instead use the positive parameterization of $\mathcal{U}_{A_n}$. The integral then takes the form
\[
\mathcal{A}_n = 
\int_0^\infty \frac{dy_1 \cdots dy_{n-3}}{y_1 \cdots y_{n-3}} \, \prod_{(i,j)} (u_{ij})^{\alpha' X_{ij}},
\]
now as a function of some new variables $X_{ij}$ for every chord $(i,j)$. Here the $u_{ij}$ are parameterized as rational functions of the $y_i$. The poles of $\mathcal{A}_n$ are the singularities arising near where one $u_{i,j} = 0$. Thanks to the properties of the positive parameterization, the residues of the poles of $\mathcal{A}_n$ are themselves integrals of the same type, and manifest the factorization of the amplitude.

For example, consider $\mathcal{A}_5$, corresponding to the path algebra of $A_2$. There are five $u$-variables, $u_{ij}$, and we can take, for example, the following positive parameterization:
\[
u_{25} = \frac{1+y_2}{1+y_2+y_1y_2},~~ u_{35} = \frac{1}{1+y_2},~~u_{13} = \frac{y_1}{1+y_1},~~u_{14} = \frac{y_2(1+y_1)}{1+y_2+y_1y_2},~~u_{24} = \frac{1+y_2+y_1y_2}{(1+y_2)(1+y_1)}
\]
The poles in $X_{13}$ arise from near $y_1 = 0$, where $u_{13} \approx 0$. Near $y_1 = 0$, we have
\[
u_{13} = y_1 + O(y_1^2),~~ u_{14} = \frac{y_2}{1+y_2} + O(y_1), ~~ u_{35} = \frac{1}{1+y_2},~~ u_{24} = 1 + O(y_1),~~ u_{25} = 1 + O(y_1).
\]
The integral is
\[
\mathcal{A}_5 = \int \frac{dy_2}{y_2} \int \frac{dy_1}{y_1} \left(y_1 + O(y_1^2)\right)^{\alpha' X_{13}} \times (u_{14})^{\alpha' X_{14}}(u_{24})^{\alpha' X_{24}}(u_{25})^{\alpha' X_{25}}(u_{35})^{\alpha' X_{35}}.
\]
Taylor expanding the integrand in $y_1$, we can pull out the leading term, $y_1^{\alpha' X_{13} - 1}$, which gives a pole at $X_{13} = 0$:
\[
\mathcal{A}_5 \sim \frac{1}{\alpha' X_{13}} \int \frac{dy_2}{y_2} \left( \frac{y_2}{1+y_2} \right)^{\alpha' X_{14}} \left( \frac{1}{1+y_2} \right)^{\alpha' X_{35}}.
\]
The residue of the pole is an integral corresponding to the $A_1$ path algebra, or $n=4$ amplitude, involving the two chords, $(1,4)$ and $(3,5)$, that do not cross $(1,3)$. This is an example of \emph{factorization}, and corresponds to the boundary of the moduli space where the disk with $5$ marked points factorizes into a $3$-point disk times a $4$-point disk. Note that $\mathcal{A}_5$ also has poles at $X_{13} + N = 0$ (for positive integers $N$), and these are obtained from the non-leading terms in the Taylor expansion of the integrand in $y_1$.

We also have the monomial maps among $u$-variables that imply the behaviour of $\mathcal{A}_n$ in certain limits. For example, note that the $A_2$ $u$-variables above satisfy
\[
\frac{1}{1+y_1} = u_{24} u_{25},\qquad \frac{y_2}{1+y_2} = u_{14} u_{24}.
\]
So consider $\mathcal{A}_5$ in the limit that
\[
X_{24} \rightarrow X_{14} + X_{25}.
\]
In this limit,
\[
\mathcal{A}_5 \rightarrow \int \frac{dy_1}{y_1} \left( \frac{1}{1+y_1} \right)^{X_{25}} \left( \frac{y_1}{1+y_1} \right)^{X_{13}} \times  \int \frac{dy_2}{y_2} \left( \frac{1}{1+y_2} \right)^{X_{35}} \left( \frac{y_2}{1+y_2} \right)^{X_{14}},
\]
i.e. $\mathcal{A}_5$ becomes a product of two $n=4$ amplitudes in this limit. This is called \emph{splitting}.

The results of this paper suggest a large generalization of the string integrals associated to type $A$. For any finite representation type algebra $\Lambda$, we can define stringy integrals
\begin{equation}
\mathcal{A}_\Lambda = \int \frac{dy_1 \cdots dy_{n}}{y_1 \cdots y_{n}} \, \prod_{Y \in \ind K_\Lambda} (u_{Y})^{\alpha' X_Y},
\end{equation}
where the $u_Y$ satisfy the $u$-equations, \eqref{intro:newu}, and are given as positive rational functions of the positive $y_i$ variables. $\mathcal{A}_\Lambda$ can be viewed as an integral over the non-negative part of the variety $\widetilde{\mathcal{U}}_{\Lambda}^{\geq 0}$. Each $\mathcal{A}_\Lambda$ meromorphic functions of the $X_Y$ variables, and our results imply that they continue to exhibit the properties of the standard tree-level amplitude above. In particular, our results imply that the residue of $\mathcal{A}_\Lambda$ at some poles $X_Y = 0$ is
\[
\Res_{X_Y = 0} \mathcal{A}_\Lambda = \mathcal{A}_{\mathcal{B}}
\]
where $\mathcal{B}$ is the \emph{Jasso reduction} of $\Lambda$ at $Y$. Jasso reduction is then a notion of \emph{generalized factorization} of these stringy integrals that has nothing to do with a worldsheet picture. Moreover, we find monomial maps $\mathcal{M}_\Lambda \rightarrow \mathcal{M}_A$ for any quotient algebra $A$ of $\Lambda$. This implies that, taking certain limits of the variables $X_Y$, the stringy integrals behaves as
\[
\mathcal{A}_\Lambda \rightarrow \mathcal{A}_A.
\]
In other words, the stringy integrals have ``generalized splits''. In particular, we can take the quotient by the ideal generated by \emph{all} edges of the quiver, so that $A = A_1 \times A_1 \times \cdots \times A_1$, for $n$ copies of $A_1$. Then there is a limit such that $\mathcal{A}_\Lambda$ becomes a product of $n$ integrals $\mathcal{A}_{A_1}$. 

For example, consider the quiver $Q$ given by
\[
\begin{tikzpicture}[>=stealth]
 \node (1) at (0,0) {$1$};
  \node (2) at (-1.3,0) {$2$};
  \draw[->,thick] (1) -- (2) node[midway,above]{$a$};
  \draw[->,thick]
    (1) .. controls +(0.9,0.6) and +(0.9,-0.6) .. (1)
    node[midway,right] {$x$};
\end{tikzpicture}
\]
We can take the algebra $\Lambda = \mathbb{C} Q / \langle x^2 \rangle$, where $\mathbb{C} Q$ is the path algebra, and we quotient by the relation $x^2 = 0$. This is a finite representation type algebra. The category $K_\Lambda$ has $9$ indecomposable objects, which label $9$ $u$-variables. (See Section \ref{A2-loop}, where this example is worked out in more detail). The stringy integral $\mathcal{A}_\Lambda$ is then a meromorphic function of $9$ variables $X_Y$ (for $Y \in \ind K_\Lambda$), whose residues are also given by stringy integrals. Its poles exhibit the generalized factorization property. For example, take the projective module $P_2$ with support at vertex $2$. Then
\[
\Res_{X_{P_2} = 0} \mathcal{A}_\Lambda = \mathcal{A}_\mathcal{B},
\]
where $\mathcal{B}$ is the Jasso reduction of $\Lambda$ at $P_2$. $\mathcal{B}$ is the path algebra of the quiver
\[
\begin{tikzpicture}[>=stealth]
 \node (v) at (0,0) {$\bullet$};
  \draw[->,thick]
    (v) .. controls +(0.9,0.6) and +(0.9,-0.6) .. (v)
    node[midway,right] {$x$};
\end{tikzpicture}
\]
modulo the relation $x^2 = 0$. $K_\mathcal{B}$ has three indecomposable objects, so that $\mathcal{A}_\mathcal{B}$ is a function of three $X$ variables. It is given explicitly by
\[
\mathcal{A}_\mathcal{B} = \int \frac{dy}{y} \left(\frac{y+1}{y^2+y+1}\right)^{X_1} \left(\frac{y (y+1)}{y^2+y+1}\right)^{X_2} \left(\frac{y^2+y+1}{(y+1)^2}\right)^{X_3}
\]
This integral gives an example of generalized splitting. In the limit $X_3 \rightarrow X_1 + X_2$ we have
\[
\mathcal{A}_\mathcal{B} \rightarrow \int \frac{dy}{y}  \left( \frac{1}{1+y} \right)^{X_{1}} \left( \frac{y}{1+y} \right)^{X_{2}} = \mathcal{A}_{A_1},
\]
which corresponds to the quotient $\mathcal{B} / \langle a \rangle = \mathbb{C} A_1$.

\subsection{Introduction from a cluster algebras point of view}
The results of \cite{AHL} are couched in the language of cluster algebras. We summarize their results here, in order to clarify how our work is related to theirs, and to explain how this paper generalizes their results. A reader for whom cluster algebras are unfamiliar (or unappealing) can safely skip this discussion. We do not otherwise make use of cluster algebras in this paper.

Given a Dynkin type $\Phi$ (not necessarily simply-laced) of rank $n$, with orientation, there is an associated cluster algebra with \emph{universal coefficients}, $\mathcal{A}$. The cluster variables of $\mathcal{A}$, $x_\gamma$, are indexed by the \emph{almost positive roots} $\gamma \in \Phi_{\geq -1}$. Moreover, there is a pairing on the almost positive roots, $(\gamma\parallel\beta)\in\mathbb Z_{\geq 0}$, for $\gamma,\beta\in\Phi_{\geq -1}$ (this is the \emph{Fomin--Zelevinsky compatibility degree}).
and choose an orientation of the edges of the Dynkin diagram. These data determine a skew-symmetrizable $n\times n$ exchange matrix $B$. Further rows can be added to $B$ if desired, obtaining a larger matrix $\widetilde B$ to which we can associate a cluster algebra $\mathcal A(\widetilde B)$. One way to add further rows provides the cluster algebra of type $\Phi$ with so-called \emph{universal coefficients}; we skip the precise details of this here. The cluster variables of $\mathcal A(\widetilde B)$ are indexed by the set of \emph{almost positive roots}, $\Phi_{\geq -1}$. For any two such roots, $\gamma,\beta\in\Phi_{\geq -1}$, there is the \emph{Fomin--Zelevinsky compatibility degree}, denoted $(\gamma\parallel\beta)\in\mathbb Z_{\geq 0}$.

The affine varieties introduced in \cite{AHL} can then be described as follows. Introduce a variable $u_\gamma$ for each $\gamma\in\Phi_{\geq -1}$. Then we can consider the variety cut out by the system of equations,
$$ u_\gamma+ \prod_{\beta\in\Phi_{\geq -1}} u_\beta^{(\beta\parallel\gamma)} = 1,$$
where we have one equation for each $\gamma\in\Phi_{\geq -1}$. In particular, in type $A_n$, the almost positive roots can be identified with the diagonals of an $(n+3)$-gon, and the compatibility degree $(\beta\parallel\gamma)$ is 1 if $\beta$ and $\gamma$ cross in their interiors, and zero otherwise. This recovers the $u$-equations of Koba--Nielsen \cite{KN1, KN2, KN3}.

A second family of affine varieties, also introduced in \cite{AHL}, can be defined using the same data. These are cut out by equations we call the $\hF$-equations. In the Dynkin setting, the $\hF$- and $u$-equations turn out to define the same varieties. However, the equations look very different, and are defined as follows. For each $\beta\in \Phi_{\geq -1}$, let $x_\beta$ be the corresponding variable in the cluster algebra with universal coefficients, where the coefficient variables, also indexed by $\Phi_{\geq -1}$, are denoted $u_\beta$. For a negative simple root, $-\alpha_i$, the corresponding $x_{-\alpha_i}$ is an initial variable.
The polynomial $\hF_\beta$ is then given by
$$\hF_\beta= \left. x_\beta\, \right|_{x_{-\alpha_i} = 1}.$$
That is to say, $\hF_\beta$ is obtained from $x_\beta$ by setting all initial cluster variables equal to 1, leaving a polynomial in the coefficient variables $\{u_\gamma\}_{\gamma\in\Phi_{\geq -1}}$. (The procedure to produce $\hF_\beta$ is analogous to the procedure for defining the $F$-polynomials given in \cite{CA4}; there, however, one starts with the cluster algebra with principal coefficients.) 
The system of $\hF$-equations are simply
\[
\hF_\beta=1,\]
for each $\beta\in\Phi_{>0}$.

The positive real points of these varieties have interesting boundary stratifications. In fact, the divisors where a single variable vanishes, $u_\beta = 0$, are themselves affine varieties of the same kind.

Moreover, \cite{AHL} show in the Dynkin setting how to give a rational parametrization of these varieties. 
Suppose that $x_\gamma$ and $x_\delta$ are two cluster variables related by mutation. Then we have an ``exchange relation'', $x_\gamma x_\delta = M_1 + M_2$, where $M_1$ and $M_2$ are two monomials in the remaining variables in the cluster and also in the coefficient variables.\footnote{The ratio $M_2/M_1$ can be identified as an $\mathcal X$-cluster variable, in the sense of Fock and Goncharov.} To each $x_\gamma$ (with $\gamma\in\Phi_{\geq 0}$), \cite{AHL} define a specific cluster containing $x_\gamma$. Mutating $x_\gamma$ in this cluster gives some exchange relation, $x_\gamma x_\delta = M_1 + M_2$, and \cite{AHL} show that setting
\[
u_\gamma=\frac{M_1}{x_\gamma x_\delta}=\frac{1}{1+M_2/M_1}
\]
solves the $u$-equations (equivalently, the $\hF$-equations). In other words, these rational functions of the cluster variables parametrize the varieties $\widetilde{\mathcal M}_{\mathbb C Q}$.

The present paper greatly generalizes these results, as summarized above in Section \ref{intro:summary}. We define $u$-equations and $\hF$ equations for any algebra of finite representation type, $\Lambda$. Note that most such algebras do not correspond to a cluster algebra. However, there are suitable generalizations of all the key ingredients in the cluster algebra definitions. In place of cluster compatibility degree, we introduce a compatibility degree expressed using the dimensions of the $\Hom$ groups in $\mod \Lambda$. In place of cluster variables, we use the $F$-polynomials associated to $\Lambda$, as introduced by Derksen--Weyman--Zelevinsky. Finally, we define a suitable generalization of ``$F$-polynomials with universal coefficients'' to our setting, which we continue to denote as $\hF$.

\subsection{Motivation from number theory}
In number theory,~$u$-equations have also played a prominent role. Brown showed that~$u$-equations define affine charts of~$\overline{\mathcal{M}}_{0,n}$ and used these to prove that all periods of~$\mathcal{M}_{0,n}$ can be expressed as multiple zeta values (MZVs) \cite{Brown09}. A key feature of this proof is that~$u$-equations behave well under the natural forgetful morphisms~$\mathcal{M}_{0,n+1} \to \mathcal{M}_{0,n}$. The resulting iterated integrals bear a striking resemblance to string amplitude integrals and provide a geometric framework for understanding the appearance of MZVs in string theory amplitudes \cite{brown2021, broedel2014}. In addition, Brown's work suggests that the geometry of~$u$-equation varieties may be useful for studying relations among MZVs—such as Zagier’s height-two identity \cite{zagier2012}.

There is also a fascinating connection to special functions. The Rogers dilogarithm $L(x)$ satisfies an identity that may be viewed as a~$u$-equation for the $A_2$ root system \cite{kirillov1995}. This identity appears in special cases of Nahm's conjecture  about the modular properties of hypergeometric $q$-series. Generalisations of Rogers dilogarithm identities have been established for algebras of Dynkin type in \cite{Chapoton}. In this paper, we use our approach to extend these results. We present a natural family of Rogers dilogarithm identities, one for every finite representation type algebra (Section \ref{sec:dilog}).

\section{Algebra background}
Let~$\Lambda$ be an associative~$\mathbb C$-algebra, finite-dimensional over~$\mathbb C$.  Let~$e_1, \ldots, e_n$ be a complete set of pairwise orthogonal primitive idempotents of~$\Lambda$.  Then, as a right~$\Lambda$-module,~$\Lambda$ is a direct sum of the indecomposable projective modules:~$P_1 = e_1\Lambda, \ldots, P_n = e_n\Lambda$.  We assume that~$\Lambda$ is \emph{basic}; i.e. the~$P_i$ are pairwise non-isomorphic. We write~$\mod\Lambda$ for the abelian category of finite-dimensional right~$\Lambda$-modules, and~$\proj\Lambda$ and~$\inj\Lambda$ for the full additive subcategories of projective and injective modules, respectively. We write $S_i=P_i/\rad P_i$ for the \emph{simple} right $\Lambda$ modules. The indecomposable injective modules, $I_i$, are the injective envelopes of $S_i$. In particular, note that the socle of $I_i$ is $\soc I_i = S_i$.

We write $\hom_\Lambda(~,~)$ for the dimension of $\Hom_\Lambda(~,~)$ over $\mathbb C$, and similarly $\hom_{\mathcal C}(~,~)$ for the dimension of $\Hom_{\mathcal C}(~,~)$ in a category $\mathcal C$. Similarly, we write $\ext_\Lambda^1(~,~)$ for the dimension of $\Ext_\Lambda^1(~,~)$.
We write $DV$ for the $\mathbb C$-linear dual vector
space of a $\mathbb C$-vector space $V$.

\subsection{Almost split morphisms}
We briefly recall some key elements of Auslander--Reiten theory. The reader interested in further details is invited to consult \cite[Chapter IV]{ASS}.

Consider a short exact sequence,~$0\to L\xrightarrow{f} M\xrightarrow{g} N\to 0$, in $\mod \Lambda$. It is \emph{split} if it is isomorphic to~$0 \to L \to L\oplus N \to N \to 0$. In a split short exact sequence, $f$ is called a \emph{section} and $g$ is called a \emph{retraction}.
Obviously, in a split exact sequence, any map $h:Z\rightarrow N$ can be lifted to a map $h':Z\rightarrow M$ such that $h=gh'$. Under the same hypothesis, any map $j:L\rightarrow Z$ can be extended to a map $j':M\rightarrow Z$ with $j=j'f$.

An important class of short exact sequences are the \emph{almost split} or \emph{Auslander--Reiten} sequences. They are very close to sharing the above lifting/extension properties. 
For $N$ indecomposable, a map $g:M\rightarrow N$ is \emph{right almost split} if it is not a retraction, and for any indecomposable module $Z$ and non-isomorphism $h:Z\rightarrow N$, there exists a map $h':Z\rightarrow M$ such that $h=gh'$. Similarly, for $L$ indecomposable, a map $f:L\rightarrow M$ is \emph{left almost split} if it is not a section, and for any indecomposable module $Z$ and non-isomorphism $j:Z\rightarrow L$, there exists a map $j':M\rightarrow Z$ with $j=j'f$. 

A short exact sequence $0\to L\xrightarrow{f} M\xrightarrow{g} N\to 0$ is \emph {almost split}  if and only if $N$ is indecomposable and $f$ is left almost split, or equivalently if and only if $L$ is indecomposable and $g$ is right almost split. \cite[Theorem IV.1.13]{ASS}.
Any non-projective indecomposable module~$N$ is the right-most term of an almost split sequence,~$0 \to \tau N \to M  \to N \to 0$, unique up to isomorphism \cite[Theorem IV.3.1]{ASS}. Similarly, any non-injective indecomposable~$L$ is the left-most term of an almost split sequence,~$0 \to L \to M \to \tau^{-1} L \to 0$. This defines the \emph{Auslander-Reiten translation},~$\tau$, of $\mod\Lambda$. 

Note that the almost split sequences provide a supply of left and right almost split maps (a right almost split map to every non-projective indecomposable, and a left almost split map from every non-injective indecomposable). We will also make use of right almost split maps into indecomposable projective modules and left almost split maps out of indecomposable injective modules.
If $P$ is indecomposable projective, the inclusion
$\rad P \rightarrow P$ is right almost split. Moreover, if $I$ is indecomposable injective, then the quotient $I\rightarrow I/\soc I$ is left almost split \cite[Proposition IV.3.5]{ASS}.

The following proposition follows directly from the almost splitness of the maps mentioned above. We use this, in particular, in Section \ref{sec:split}, where we show how these identities define a useful set of dual bases of the split Grothendieck group.

\begin{proposition} \label{prop:ar} Suppose that $0\to \tau N \to M \to N \to 0$ is an almost split sequence, and $X$ is an indecomposable module. Then:
  \begin{enumerate}
  \item $\hom_\Lambda(X,\tau N)-\hom_\Lambda(X,M) + \hom_\Lambda(X,N)=0$ unless $X\simeq N$, in which case it equals $1$.
  \item $- \hom_\Lambda(X,\rad P_i) + \hom_\Lambda(X,P_i) = 0$ unless $X \simeq P_i$, in which case it equals $1$.
    \item $\hom_\Lambda(N,X)-\hom_\Lambda(M,X) + \hom_\Lambda(\tau N,X)=0$ unless $X\simeq \tau N$, in which case it equals $1$.
    \item $\hom_\Lambda(I_i,M) - \hom_\Lambda(I_i/S_i, M) = 0$, unless $M \simeq I_i$, in which case it is $1$.
    \end{enumerate} 
\end{proposition}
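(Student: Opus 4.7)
The plan is to derive each identity as a dimension-of-cokernel computation. In each case, I would apply a $\Hom_\Lambda$ functor to a short exact sequence (or to a relevant monomorphism/epimorphism), read the alternating sum of $\hom$-dimensions off the resulting four-term exact sequence, and identify the cokernel using the defining property of the almost split map involved, together with the fact that the endomorphism ring of an indecomposable $\Lambda$-module is local with residue field $\mathbb C$.

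For item (1), I would apply $\Hom_\Lambda(X,-)$ to the almost split sequence $0 \to \tau N \to M \xrightarrow{g} N \to 0$, obtaining the exact sequence
\[
0 \to \Hom_\Lambda(X,\tau N) \to \Hom_\Lambda(X,M) \xrightarrow{g_\ast} \Hom_\Lambda(X,N) \to C \to 0,
\]
so the alternating sum of $\hom$-dimensions in (1) equals $\dim_{\mathbb C} C$. If $X \not\simeq N$, then every map $X \to N$ is a non-isomorphism between indecomposables and thus lifts across $g$ by right almost splitness, giving $C = 0$. If $X \simeq N$, the right almost split property shows that the image of $g_\ast$ contains every non-isomorphism in $\End_\Lambda(N)$, while $\mathrm{id}_N$ itself cannot lift (otherwise the sequence would split), so the image equals the maximal ideal of $\End_\Lambda(N)$ and $C$ is one-dimensional.

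Items (2)--(4) would follow the same template with different almost split maps. For (2), apply $\Hom_\Lambda(X,-)$ to the inclusion $\rad P_i \hookrightarrow P_i$, which is right almost split and injective, so the pushforward is already injective. For (3), apply $\Hom_\Lambda(-,X)$ to the almost split sequence and invoke left almost splitness of $\tau N \to M$. For (4), apply $\Hom_\Lambda(-,M)$ to the quotient $I_i \twoheadrightarrow I_i/S_i$, which is left almost split and surjective (so the pullback is injective). In each case the same non-isomorphism-versus-identity dichotomy, combined with the local-ring argument, pins the cokernel dimension to $0$ or $1$ depending on whether the free indecomposable is isomorphic to the distinguished one ($N$, $P_i$, $\tau N$, or $I_i$ respectively).

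The main delicate point I anticipate is lining up ``right versus left'' almost splitness with ``covariant versus contravariant $\Hom$'' without a direction error in any of the four items, and handling the degenerate cases where one term of the relevant map vanishes (for instance $\rad P_i = 0$ when $P_i$ is simple projective, or $I_i = S_i$ when the simple module is itself injective). In those boundary situations the identity degenerates correctly because the right/left almost split property is stated to hold unconditionally in Proposition IV.3.5 of ASS, and the local-ring calculation then collapses to a Schur-lemma statement.
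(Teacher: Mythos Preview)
Your proposal is correct and is exactly the standard argument the paper is gesturing at when it says the proposition ``follows directly from the almost splitness of the maps mentioned above.'' The paper gives no further detail, so your expansion via four-term exact sequences plus the local-endomorphism-ring computation is precisely the intended proof.
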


  The \emph{injectively stable morphisms} from $X$ to $Y$ in $\mod \Lambda$,
  denoted $\overline{\Hom}_\Lambda(X,Y)$ are by definition $\Hom_\Lambda(X,Y)$ quotiented by the subspace of those morphisms factoring through an injective $\Lambda$-module. One of the Auslander--Reiten formulas \cite[Theorem IV.2.13]{ASS} says the following:

  \begin{theorem}\label{thm:ar-formula} $\Ext^1_\Lambda(X,Y)\simeq D\overline{\Hom}_\Lambda(Y,\tau X)$. \end{theorem}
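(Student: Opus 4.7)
The plan is to invoke the classical Auslander--Reiten argument using the Nakayama functor $\nu = D\Hom_\Lambda(-,\Lambda)$ and the transpose $\operatorname{Tr}$. The key structural tool is the natural isomorphism
\[
\Hom_\Lambda(Y,\nu P) \simeq D\Hom_\Lambda(P,Y),
\]
valid for every projective $P$ and any $Y$, which follows from $\nu P = D(P^*)$, where $P^* = \Hom_\Lambda(P,\Lambda)$, combined with the tensor-Hom adjunction and the identification $P^* \otimes_\Lambda Y \simeq \Hom_\Lambda(P,Y)$.

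First I fix a minimal projective presentation $P_1 \xrightarrow{d} P_0 \to X \to 0$. Applying $\Hom_\Lambda(-,\Lambda)$ produces $P_0^* \xrightarrow{d^*} P_1^* \to \operatorname{Tr}(X) \to 0$, defining the transpose; then applying $D$ yields the exact sequence
\[
0 \to \tau X \to \nu P_1 \xrightarrow{\nu d} \nu P_0,
\]
where $\tau X := D\operatorname{Tr}(X)$ and, by minimality, $\nu P_1$ is the injective envelope of $\tau X$ (this coincides with $\tau$ as defined via almost split sequences). Applying $\Hom_\Lambda(Y,-)$ to this sequence, invoking the natural isomorphism above, and dualizing one arrow produces the exact sequence
\[
\Hom_\Lambda(P_0,Y) \xrightarrow{d^*} \Hom_\Lambda(P_1,Y) \twoheadrightarrow D\Hom_\Lambda(Y,\tau X) \to 0.
\]
On the other hand, applying $\Hom_\Lambda(-,Y)$ to $0 \to K \to P_0 \to X \to 0$, with $K = \ker(P_0 \to X)$, identifies $\Ext^1_\Lambda(X,Y)$ with $\Hom_\Lambda(K,Y)/\operatorname{im}$, realizing it as a subspace of $\operatorname{coker}(d^*)$ whose quotient is $\Hom_\Lambda(P_1,Y)/\Hom_\Lambda(K,Y)$. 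Dualizing yields a canonical surjection $\Hom_\Lambda(Y,\tau X) \twoheadrightarrow D\Ext^1_\Lambda(X,Y)$.

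The heart of the argument, and the main technical obstacle, is to identify the kernel of this surjection with the subspace $\mathcal{I}(Y,\tau X) \subseteq \Hom_\Lambda(Y,\tau X)$ of morphisms that factor through an injective module. One inclusion is straightforward: if $f\colon Y \to \tau X$ factors as $Y \to I \to \tau X$ with $I$ injective, then pushing out any extension $0 \to Y \to E \to X \to 0$ along $Y \to I$ splits, so $f$ pairs to zero with every class in $\Ext^1_\Lambda(X,Y)$. The reverse inclusion is a diagram chase that exploits crucially that $\nu P_1 = I(\tau X)$: an $f$ lying in the kernel lifts along $\tau X \hookrightarrow \nu P_1$ to a map witnessing $f \in \mathcal{I}(Y,\tau X)$. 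Passing to the quotient $\overline{\Hom}_\Lambda(Y,\tau X) := \Hom_\Lambda(Y,\tau X)/\mathcal{I}(Y,\tau X)$ and dualizing yields the stated isomorphism $\Ext^1_\Lambda(X,Y) \simeq D\overline{\Hom}_\Lambda(Y,\tau X)$.
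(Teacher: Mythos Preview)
The paper does not prove this theorem at all; it is simply quoted from \cite[Theorem IV.2.13]{ASS} as a standard result of Auslander--Reiten theory. So there is no ``paper's proof'' to compare against. Your overall architecture---use the Nakayama functor on a minimal presentation to produce a surjection $\Hom_\Lambda(Y,\tau X)\twoheadrightarrow D\Ext^1_\Lambda(X,Y)$, then identify the kernel with morphisms factoring through injectives---is indeed the classical argument.

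That said, there is a genuine gap in your identification of the kernel. The ``reverse inclusion'' as you state it does not work: composing $f\colon Y\to\tau X$ with the inclusion $\iota\colon\tau X\hookrightarrow\nu P_1$ produces a map $Y\to\nu P_1$, but that is a factorization $\iota f = \iota\circ f$ with the injective on the \emph{right}, whereas membership in $\mathcal I(Y,\tau X)$ demands a factorization $Y\to I\to\tau X$ with the injective in the \emph{middle}. The fact that $\nu P_1$ is the injective envelope of $\tau X$ does not supply a map $\nu P_1\to\tau X$.

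The fix is to extend the presentation one step, to $P_2\to P_1\to P_0\to X\to 0$. Dualizing $\Hom_\Lambda(P_\bullet,Y)$ via $\Hom_\Lambda(Y,\nu P_i)\simeq D\Hom_\Lambda(P_i,Y)$ identifies $D\Ext^1_\Lambda(X,Y)$ with the homology of $\Hom_\Lambda(Y,\nu P_2)\to\Hom_\Lambda(Y,\nu P_1)\to\Hom_\Lambda(Y,\nu P_0)$ at the middle term. The kernel at $\nu P_1$ is exactly $\Hom_\Lambda(Y,\tau X)$, so $f$ lies in the kernel of $\Hom_\Lambda(Y,\tau X)\to D\Ext^1_\Lambda(X,Y)$ iff $\iota f$ lies in the image of $\Hom_\Lambda(Y,\nu P_2)\to\Hom_\Lambda(Y,\nu P_1)$, i.e.\ iff there exists $g\colon Y\to\nu P_2$ with $\iota f=(\nu\partial)g$. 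Because $P_2\to P_1\to P_0$ is a complex, $\nu\partial$ lands in $\tau X$, so in fact $f=(\nu\partial|^{\tau X})\circ g$, which exhibits $f$ as factoring through the injective $\nu P_2$. Thus the relevant injective is $\nu P_2$, not $\nu P_1$. Your argument for the easy inclusion (via naturality in $Y$ and $\Ext^1_\Lambda(X,I)=0$) is fine, if somewhat obliquely phrased.
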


  We will need the following lemma:

  \begin{lemma} \label{lem:exthom} Let $M$ be an indecomposable $\Lambda$-module.
  Then  $\Ext^1_\Lambda(M,S_i) \simeq D\Hom_\Lambda(S_i,\tau M).$ \end{lemma}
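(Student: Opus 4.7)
The plan is to derive the lemma as a corollary of the Auslander--Reiten formula in Theorem \ref{thm:ar-formula}, by showing that in the special case $Y = S_i$ the injectively stable Hom coincides with the usual Hom.

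First I would dispose of the projective case: if $M$ is indecomposable projective then $\tau M = 0$, and $\Ext^1_\Lambda(M, S_i) = 0$, so both sides of the claimed isomorphism vanish. From now on I assume $M$ is a non-projective indecomposable, so that $\tau M$ is a non-injective indecomposable module.

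Next, applying Theorem \ref{thm:ar-formula} with $X = M$ and $Y = S_i$, I get
\[
\Ext^1_\Lambda(M, S_i) \simeq D\overline{\Hom}_\Lambda(S_i, \tau M),
\]
so it suffices to prove that every morphism $f \colon S_i \to \tau M$ that factors through an injective module is zero. Suppose $f = h \circ g$ with $g \colon S_i \to I$ and $h \colon I \to \tau M$ for some injective $I$. Decomposing $I$ into indecomposable injective summands $I = \bigoplus_j I_j^{m_j}$ and using that $\soc I_j = S_j$ forces $S_i$ to embed only in the $I_i$-isotypic part, I reduce (by replacing $I$ by a single indecomposable summand receiving the image of $g$) to the case where $I = I_i$ and $g$ is the socle inclusion $S_i \hookrightarrow I_i$.

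The key step is then to apply the almost-split property of the quotient $q \colon I_i \to I_i/S_i$: since $\tau M$ is indecomposable and non-injective, in particular $\tau M \not\simeq I_i$, the map $h \colon I_i \to \tau M$ is a non-isomorphism into an indecomposable, so by left almost splitness of $q$ there exists $h' \colon I_i/S_i \to \tau M$ with $h = h' \circ q$. Composing with the socle inclusion gives $f = h \circ g = h' \circ q \circ g = 0$, because $q$ annihilates $\soc I_i = S_i$. Hence $\overline{\Hom}_\Lambda(S_i, \tau M) = \Hom_\Lambda(S_i, \tau M)$ and the lemma follows. The only subtle point is the reduction to $I = I_i$, which relies on the fact that $S_i$ only embeds into injective modules whose indecomposable summands include $I_i$; everything else is a direct application of the material recalled just above the lemma.
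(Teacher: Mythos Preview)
Your proof is correct and follows essentially the same approach as the paper: both apply the Auslander--Reiten formula and then show that no nonzero map $S_i\to\tau M$ factors through an injective, reducing to a map $h\colon I_i\to\tau M$ and arguing that $h$ factors through $I_i/S_i$. The only difference is in how that last step is justified: you invoke the left almost split property of $q\colon I_i\to I_i/S_i$, while the paper observes directly that $h$ cannot be injective (else it would split, forcing $\tau M\simeq I_i$), hence factors through a proper quotient of $I_i$, and every proper quotient of $I_i$ is a quotient of $I_i/\soc I_i$.
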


  \begin{proof} By the Auslander--Reiten formula of Theorem \ref{thm:ar-formula},
    $\Ext^1_\Lambda(M,S_i)\simeq D\overline{\Hom}_\Lambda(S_i,\tau M)$. The only indecomposable injective which admits a non-zero morphism from $S_i$ is $I_i$. An injection from $I_i$ to $\tau M$ splits, so in that case $\tau M$, being indecomposable, would have to be isomorphic to $I_i$, which is impossible. Thus, a map from $S_i$ to $\tau M$ would necessarily factor through a proper quotient of $I_i$, and thus a quotient of $I_i/\soc I_i$. The only possible non-zero image of $S_i$ in $I_i$ being $\soc I_i$, no non-zero map from $S_i$ to $\tau M$ factors through $I_i$, and therefore $D\overline{\Hom}_\Lambda(S_i,\tau M)\simeq D\Hom_\Lambda(S_i,\tau M)$.
    \end{proof}

\subsection{The category $K_\Lambda$}\label{2-term}
Let~$K^b := K^b(\proj\Lambda)$ be the homotopy category of bounded complexes of projective modules. Write $\Sigma$ for the shift functor of~$K^b$, that shifts all degrees of an object in~$K^b$ by~$-1$.

\begin{definition}
The category,~$K_\Lambda := K^{[-1,0]}(\proj\Lambda)$, of~\emph{$2$-term complexes of projectives}, is the full subcategory of $K^b$ whose objects are complexes supported only in degrees~$-1$ and~$0$. $K_\Lambda$ is an extension-closed subcategory of~$K^{b}(\proj\Lambda)$, and so is an extriangulated category in the sense of~\cite{NakaokaPalu19}, by a result of~\cite{HerschendLiuNakaoka21}. As an extriangulated category,~$K_\Lambda$ has enough projective objects and enough injective objects.  
\end{definition}

For an object, $X$, in $K_\Lambda$, we write $X = (X^{-1} \rightarrow X^0)$, for projective modules $X^{-1}$, $X^0$ in $\mod \Lambda$. The indecomposable projective objects of $K_\Lambda$ are $P_i = (0 \rightarrow P_i)$. (Note that we write $P_i$ both for the projective indecomposables of $\mod\Lambda$ and of $K_\Lambda$.) The indecomposable injective objects are the shifted projectives, $\Sigma P_i = (P_i \rightarrow 0)$.

\begin{definition}
A \emph{conflation} in $K_\Lambda$ is a distinguished triangle of $K^b(\proj\Lambda)$ whose objects and morphisms are in $K_\Lambda$. Given such a distinguished triangle $X \rightarrow E \rightarrow Y \rightarrow \Sigma X$, we write $X \rightarrowtail E \twoheadrightarrow Y$ for the corresponding conflation. A conflation in $K_\Lambda$ is split (resp. almost split) if it is split (resp. almost split) as a distinguished triangle in $K^b(\proj\Lambda)$.
\end{definition}

We write $\ind K_\Lambda$ for the set of indecomposable objects of $K_\Lambda$. The category~$K_\Lambda$ has \emph{Auslander--Reiten--Serre duality} \cite{IyamaNakaokaPalu}. In particular, for any non-projective indecomposable object~$X \in \ind K_\Lambda$, there is an almost split conflation of the form~$\tau X \rightarrowtail E \twoheadrightarrow X$. For any non-injective indecomposable object~$Y \in \ind K_\Lambda$, there is an almost split conflation~$Y \rightarrowtail F \twoheadrightarrow \tau^{-1} Y$. (Note that we write $\tau$ both for the Auslander-Reiten translation in $\mod\Lambda$ and in $K_\Lambda$.)

\subsection{The $H^0$ functor}
The category $K_\Lambda$ is closely related to $\mod\Lambda$. Indeed, there is a functor
\[
H^0:\, K_\Lambda \rightarrow \mod \Lambda,
\]
which sends $X = (X^{-1} \xrightarrow{f} X^0)$ to its cohomology in degree zero, $H^0(X) = X^0 / {\rm Im}(f)$. In other words, we can identify objects $\mod\Lambda$ with their (minimal) projective presentations, which are objects of $K_\Lambda$. 
However, note that the injectives of $K_\Lambda$, $\Sigma P_i$, are not projective presentations of a non-zero module in $\mod\Lambda$, and $H^0(\Sigma P_i) = 0$. This leads to:

\begin{lemma}
$H^0$ induces an equivalence of~$k$-linear categories~\[K_\Lambda/\langle\Sigma \Lambda\rangle \xrightarrow{\sim} \mod\Lambda.\] Here,~$\langle\Sigma\Lambda\rangle$ is the ideal of all morphisms   that factor through an object of~$\add(\Sigma\Lambda)$.
\end{lemma}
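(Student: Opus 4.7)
The plan is to verify that $H^0$ descends to the quotient $K_\Lambda/\langle\Sigma\Lambda\rangle$ and that the induced functor is essentially surjective and fully faithful. That $H^0$ annihilates $\langle\Sigma\Lambda\rangle$ is immediate: for any projective $P$, the complex $\Sigma P = (P \to 0)$ has $H^0(\Sigma P) = 0$, so any morphism factoring through an object of $\add(\Sigma\Lambda)$ is killed. Essential surjectivity is also immediate, since every $M \in \mod\Lambda$ admits a projective presentation $(P^{-1} \to P^0)$, which is an object of $K_\Lambda$ mapped to $M$ by $H^0$.

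For fullness, given $g\colon H^0(X) \to H^0(Y)$ with $X = (X^{-1} \xrightarrow{f_X} X^0)$ and $Y = (Y^{-1} \xrightarrow{f_Y} Y^0)$, I would construct a chain map $(g^{-1}, g^0)\colon X \to Y$ lifting $g$ by two applications of projective lifting. First, since $X^0$ is projective and $Y^0 \twoheadrightarrow H^0(Y)$ is surjective, lift $g$ along these surjections to some $g^0\colon X^0 \to Y^0$. Then $g^0 f_X$ becomes zero in $H^0(Y)$ and hence lands in $\mathrm{Im}(f_Y)$; projectivity of $X^{-1}$, together with the surjection $Y^{-1} \twoheadrightarrow \mathrm{Im}(f_Y)$, yields $g^{-1}\colon X^{-1} \to Y^{-1}$ with $f_Y g^{-1} = g^0 f_X$. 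The pair $(g^{-1}, g^0)$ is a chain map and $H^0(g^{-1}, g^0) = g$ by construction.

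The main obstacle is faithfulness, which requires careful use of the chain-homotopy equivalence built into $K_\Lambda$. Suppose $\phi\colon X \to Y$ satisfies $H^0(\phi) = 0$, and fix a chain-map representative $(\phi^{-1}, \phi^0)$. Then $\phi^0(X^0) \subseteq \mathrm{Im}(f_Y)$, so projectivity of $X^0$ yields $s\colon X^0 \to Y^{-1}$ with $f_Y s = \phi^0$. Treating $s$ as a null-homotopy, $\phi$ is equal in $K_\Lambda$ to $(\phi^{-1} - s f_X,\, 0)$, which factors as honest chain maps $X \xrightarrow{(\mathrm{id}_{X^{-1}},\, 0)} \Sigma X^{-1} \xrightarrow{(\phi^{-1} - s f_X,\, 0)} Y$. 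Since $\Sigma X^{-1} \in \add(\Sigma\Lambda)$, this exhibits $\phi$ as a member of $\langle\Sigma\Lambda\rangle$, completing the proof.
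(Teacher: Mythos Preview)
Your proof is correct. The paper states this lemma without proof, treating it as a standard fact about the relationship between $K_\Lambda$ and $\mod\Lambda$, so there is no argument in the paper to compare against; your direct verification of essential surjectivity, fullness, and faithfulness via projective lifting is exactly the expected elementary argument.

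One small remark on the faithfulness step, just to confirm the bookkeeping: when you factor $(\phi^{-1}-sf_X,\,0)$ through $\Sigma X^{-1}=(X^{-1}\to 0)$, the second arrow $(\phi^{-1}-sf_X,\,0)\colon \Sigma X^{-1}\to Y$ is indeed a chain map because $f_Y(\phi^{-1}-sf_X)=\phi^0 f_X-\phi^0 f_X=0$, using that $(\phi^{-1},\phi^0)$ is a chain map and $f_Y s=\phi^0$. You implicitly used this, and it is the only place where one might worry the factorization fails; it does not.
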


\begin{lemma}\label{lem:split}
A conflation,~$X \rightarrowtail E \twoheadrightarrow Y$, in $K_\Lambda$ is split if and only if the corresponding exact sequence
\[
H^0 X \rightarrow H^0 E \rightarrow H^0Y
\]
is a split short exact sequence in $\mod\Lambda$.
\end{lemma}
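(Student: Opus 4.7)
The forward direction will be immediate, since $H^0$ is additive: a split conflation $X \rightarrowtail X \oplus Y \twoheadrightarrow Y$ is sent to the split short exact sequence $H^0X \rightarrowtail H^0X \oplus H^0Y \twoheadrightarrow H^0Y$ in $\mod\Lambda$.

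For the converse, my plan is to produce a section of the deflation $p\colon E\to Y$ in $K_\Lambda$, which forces the conflation to split. First, choose a section $s_0\colon H^0Y \to H^0E$ of $H^0(p)$. Using the equivalence $K_\Lambda/\langle\Sigma\Lambda\rangle \xrightarrow{\sim} \mod\Lambda$ of the preceding lemma, lift $s_0$ to some $\tilde s\colon Y\to E$ in $K_\Lambda$ with $H^0(\tilde s)=s_0$. Then $H^0(p\tilde s - \mathrm{id}_Y)=0$, so $p\tilde s - \mathrm{id}_Y$ factors as $\beta\alpha$ for some $\alpha\colon Y\to A$ and $\beta\colon A\to Y$ with $A\in\add(\Sigma\Lambda)$. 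If $\beta$ admits a lift $\tilde\beta\colon A\to E$ with $p\tilde\beta=\beta$, then $\tilde s':=\tilde s - \tilde\beta\alpha$ satisfies $p\tilde s'=\mathrm{id}_Y$ and witnesses the splitting.

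The main obstacle will be proving that $\beta$ lifts, equivalently that the obstruction $\delta\circ\beta \in \Hom_{K^b}(A,\Sigma X)$ vanishes, where $\delta\colon Y\to\Sigma X$ is the connecting map of the triangle. Reducing to the case $A=\Sigma P$ with $P$ an indecomposable projective, projectivity of $P$ gives the identifications $\Hom_{K^b}(\Sigma P,Y)\cong \Hom_\Lambda(P,H^{-1}Y)$ and $\Hom_{K^b}(\Sigma P,\Sigma X)\cong \Hom_\Lambda(P,H^0X)$. A direct chain-level computation then identifies the map $\delta_{*}$ between these groups with postcomposition by the boundary map $\partial\colon H^{-1}Y\to H^0X$ of the cohomology long exact sequence of the triangle $X\to E\to Y\to \Sigma X$. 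The hypothesis that $H^0X\to H^0E\to H^0Y$ is short exact forces $H^0X\to H^0E$ to be injective, so $\partial=0$ and hence $\delta\beta=0$. I expect the final identification of $\delta_{*}$ with $\partial$ to be the most delicate step, but it should follow cleanly from naturality of the cohomology long exact sequence.
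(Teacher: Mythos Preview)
The paper states this lemma without proof, so there is no argument to compare against. Your approach is correct and complete.

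A few remarks on the details. The forward direction is indeed immediate from additivity of $H^0$. For the converse, your lifting-and-correcting strategy is the natural one, and each step goes through as you describe. The identification of $\delta_*\colon \Hom_{K^b}(\Sigma P,Y)\to \Hom_{K^b}(\Sigma P,\Sigma X)$ with $\partial_*\colon \Hom_\Lambda(P,H^{-1}Y)\to\Hom_\Lambda(P,H^0X)$ is a routine consequence of the fact that, for $P$ projective, $\Hom_{K^b}(P,Z[i])\cong\Hom_\Lambda(P,H^iZ)$ naturally in $Z$, so applying $\Hom_{K^b}(P,-)$ to the triangle yields precisely $\Hom_\Lambda(P,-)$ applied to the cohomology long exact sequence. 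This step is not delicate; it is functoriality. Your only genuine input from the hypothesis is the injectivity of $H^0X\to H^0E$, which kills $\partial$, and hence the obstruction $\delta\beta$.

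One could also argue slightly more abstractly: the conflation corresponds to a class in $\Hom_{K^b}(Y,\Sigma X)$, and the equivalence $K_\Lambda/\langle\Sigma\Lambda\rangle\simeq\mod\Lambda$ together with the vanishing of $\partial$ shows this class is detected by the extension class of the short exact sequence in $\mod\Lambda$. But your hands-on version is just as clean and arguably more transparent.
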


Moreover, $H^0$ relates the Auslander--Reiten translations of $K_\Lambda$ and $\mod \Lambda$.

\begin{lemma}\label{lem:almost-split}
For a non-projective indecomposable, $X \in \ind K_\Lambda$, consider its Auslander-Reiten conflation in $K_\Lambda$, $\tau X \rightarrowtail E \twoheadrightarrow X$. Then
\begin{enumerate}
\item If $X$ is non-injective, then this conflation is sent by $H^0$ to the Auslander-Reiten sequence in $\mod\Lambda$,
\[
0\rightarrow H^0(\tau X) \rightarrow H^0E \rightarrow H^0X \rightarrow 0.
\]
In particular, $H^0(\tau X)  = \tau H^0 X$.
\item If $X$ is injective in $K_\Lambda$, i.e. $X = \Sigma P_i$, then the above conflation is sent by $H^0$ to the exact sequence
\[
I_i \rightarrow I_i / S_i \rightarrow 0,
\]
where $I_i$ is the injective module in $\mod\Lambda$. In particular, $H^0(\tau \Sigma P_i) = I_i$.
\end{enumerate}
\end{lemma}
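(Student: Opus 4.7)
The plan is to handle each part by constructing the AR conflation explicitly and then verifying its properties via the equivalence $H^0: K_\Lambda/\langle\Sigma\Lambda\rangle \xrightarrow{\sim} \mod\Lambda$.

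For part (2), with $X = \Sigma P_i$, I would lift the short exact sequence $0 \to S_i \to I_i \to I_i/S_i \to 0$ in $\mod\Lambda$ (coming from $\soc I_i = S_i$) to a conflation in $K_\Lambda$ using the horseshoe construction, starting from minimal projective presentations $\widetilde{I_i}$ and $\widetilde{I_i/S_i}$. The minimal projective presentation of $S_i$ is $(\rad P_i \hookrightarrow P_i)$, which fits naturally into a conflation ending at $\Sigma P_i$ in $K_\Lambda$ via the identity on $P_i$ in degree $-1$. Splicing these gives a conflation $\widetilde{I_i} \rightarrowtail \widetilde{I_i/S_i} \twoheadrightarrow \Sigma P_i$ in $K_\Lambda$. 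To confirm this is the AR conflation, I would check that the left map is left almost split in $K_\Lambda$, which reduces via the equivalence above to the left almost split property of $I_i \to I_i/S_i$ in $\mod\Lambda$ recalled in Proposition~\ref{prop:ar}(4). Applying $H^0$ then recovers the claimed sequence $I_i \to I_i/S_i \to 0$, and uniqueness of the AR conflation yields $\tau\Sigma P_i = \widetilde{I_i}$, so $H^0(\tau\Sigma P_i) = I_i$.

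For part (1), where $X$ is non-projective and non-injective: here $M := H^0(X)$ is a non-projective indecomposable $\Lambda$-module, and admits an AR sequence $0 \to \tau M \to E_M \to M \to 0$ in $\mod\Lambda$. Applying the horseshoe lemma to the minimal projective presentations of $\tau M$ and of $M$ (the latter being $X$) produces a conflation $\widetilde{\tau M} \rightarrowtail E' \twoheadrightarrow X$ in $K_\Lambda$ whose image under $H^0$ is exactly this AR sequence. To identify this with the AR conflation in $K_\Lambda$, I would verify that $E' \twoheadrightarrow X$ is right almost split. For a non-isomorphism $Z \to X$ from an indecomposable $Z \in \ind K_\Lambda$, if $Z$ is a minimal projective presentation of an indecomposable module $N$, the induced map $N \to M$ in $\mod\Lambda$ is a non-isomorphism (otherwise an inverse would lift to an isomorphism $Z \simeq X$), so factors through $E_M$ by the right almost split property of the AR sequence, and this factorization lifts to $K_\Lambda$ via the equivalence $K_\Lambda/\langle\Sigma\Lambda\rangle \simeq \mod\Lambda$. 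Uniqueness of the AR conflation then yields $\tau X = \widetilde{\tau M}$ and hence $H^0(\tau X) = \tau H^0(X)$.

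The main obstacle I anticipate is the remaining subcase $Z = \Sigma P_j$ in part (1): such maps have $H^0(Z) = 0$ and are invisible to the equivalence with $\mod\Lambda$. I plan to handle this by a direct analysis, since $\Hom_{K_\Lambda}(\Sigma P_j, X)$ identifies with $\Hom_\Lambda(P_j, \ker d_X)$ modulo homotopies from $\Hom_\Lambda(P_j, X^0)$ precomposed with $d_X$. Using the projectivity of $P_j$ together with the explicit shape of the horseshoe lift $E'$, one constructs the required factorization at the level of 2-term complexes; this verification is the technical heart of the proof and ensures that no $\Sigma P_j$ summands need to be added to $E'$ in order to realize the AR conflation.
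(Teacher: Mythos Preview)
The paper does not give a proof of this lemma; it is stated as a standard fact, implicitly relying on the well-known description of Auslander--Reiten triangles in $K^b(\proj\Lambda)$ via the Nakayama functor (Happel): $\tau = \nu[-1]$, so $\tau(\Sigma P_i)=\nu P_i$ has $H^0$ equal to $I_i$, and for $X$ a minimal presentation of a non-projective module $M$ one gets $\tau X$ as a presentation of $\tau M$.

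Your approach of building a candidate conflation and verifying the almost split property is sound in outline, but two points need tightening. First, the minimal projective presentation of $S_i$ is $(Q_i\to P_i)$ with $Q_i$ the projective cover of $\rad P_i$, not $(\rad P_i\hookrightarrow P_i)$; this is a minor slip that does not affect the strategy. Second, and more substantively, the equivalence $K_\Lambda/\langle\Sigma\Lambda\rangle\simeq\mod\Lambda$ only determines morphisms modulo those factoring through $\add(\Sigma\Lambda)$. So when you lift the factorization $N\to E_M\to M$ back to $K_\Lambda$, the resulting composite $Z\to E'\to X$ agrees with your original $Z\to X$ only up to a map factoring through some $\Sigma P_j$. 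This means the $\Sigma P_j$ analysis you flag at the end is not just a leftover subcase: it is already needed to finish the argument for module presentations $Z$. Your instinct that this is ``the technical heart'' is correct; once you show every map $\Sigma P_j\to X$ factors through $E'\to X$, both issues are resolved simultaneously. A cleaner alternative is to bypass this by using $\tau=\nu[-1]$ directly, then checking that the AR triangle in $K^b(\proj\Lambda)$ has all terms in $K_\Lambda$.
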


The following form of Auslander-Reiten duality relates morphisms in the module category and the homotopy category.

\begin{lemma}\label{lemm::hom}
For ~$X,Y \in \ind K_\Lambda$,
\[
  \Hom_{K^b}(X, \Sigma Y) \cong  D\Hom_{\Lambda}(H^0Y, H^0 (\tau X)),
\]
In particular,
\[
 \hom_{K^b}(X, \Sigma Y)  = \hom_\Lambda(H^0Y, \, H^0 (\tau X) ).
\]

\end{lemma}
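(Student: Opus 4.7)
The strategy has two main moves: first, apply Auslander--Reiten--Serre duality inside the extriangulated category $K_\Lambda$ to flip the $\Sigma$ across the $\Hom$ while introducing $\tau X$; second, push everything down to $\mod\Lambda$ using the equivalence $K_\Lambda/\langle\Sigma\Lambda\rangle\xrightarrow{\sim}\mod\Lambda$ induced by $H^0$. The miracle that makes the right-hand side a genuine $\Hom_\Lambda$ (not a stable version) is that the injective objects in $K_\Lambda$ are exactly $\add(\Sigma\Lambda)$, so the ``stable'' quotient demanded by AR--Serre duality coincides with the quotient that $H^0$ identifies with $\mod\Lambda$.

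For the first step, recall that $K_\Lambda$ is a Hom-finite Krull--Schmidt extriangulated category with enough projectives and enough injectives, and its $\mathbb{E}$-bifunctor is $\mathbb{E}(X,Y)=\Hom_{K^b}(X,\Sigma Y)$. The Auslander--Reiten--Serre duality that was quoted at the start of \cref{2-term} gives a functorial isomorphism
\[
\Hom_{K^b}(X,\Sigma Y)\;\cong\; D\,\overline{\Hom}_{K_\Lambda}(Y,\tau X),
\]
where $\overline{\Hom}_{K_\Lambda}$ denotes $\Hom_{K_\Lambda}$ modulo the ideal of morphisms factoring through an injective object of $K_\Lambda$. Since the indecomposable injectives of $K_\Lambda$ are the $\Sigma P_i$, this ideal is exactly $\langle\Sigma\Lambda\rangle$.

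For the second step, the equivalence $H^0\colon K_\Lambda/\langle\Sigma\Lambda\rangle\xrightarrow{\sim}\mod\Lambda$ recalled just above provides a natural isomorphism
\[
\overline{\Hom}_{K_\Lambda}(Y,\tau X)\;\cong\;\Hom_\Lambda\!\bigl(H^0Y,\;H^0(\tau X)\bigr).
\]
Composing the two displayed isomorphisms yields the claimed identification $\Hom_{K^b}(X,\Sigma Y)\cong D\Hom_\Lambda(H^0Y,H^0(\tau X))$, and passing to dimensions and using $\dim DV=\dim V$ gives the ``in particular'' assertion.

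A minor clean-up point is the degenerate case $X=P_i$, where $\tau X$ is not defined in the usual AR sense: in that case $X$ is projective in $K_\Lambda$, so $\Hom_{K^b}(X,\Sigma Y)=0$, and $H^0(\tau X)$ should be read as $0$ as well, so both sides vanish. The main thing to get right is the bookkeeping of which ``stable'' Hom appears in the extriangulated AR--Serre formula and checking that it matches $\langle\Sigma\Lambda\rangle$; once that alignment is in place, the proof is essentially a two-line composition of known equivalences.
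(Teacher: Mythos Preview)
Your proof is correct and takes a genuinely different route from the paper's. The paper splits into cases on $X$: for $X\neq\Sigma P_i$ it invokes Serre duality and the Nakayama functor in the derived category (deferring the actual argument to \cite[Lemma~2.6]{Plamondon13}), and for $X=\Sigma P_i$ it computes both sides directly as $\Hom_\Lambda(P_i,H^0Y)\cong D\Hom_\Lambda(H^0Y,I_i)$. You instead stay inside $K_\Lambda$ and apply the extriangulated Auslander--Reiten--Serre duality of \cite{IyamaNakaokaPalu} once, obtaining $\Hom_{K^b}(X,\Sigma Y)\cong D\overline{\Hom}_{K_\Lambda}(Y,\tau X)$, and then observe that the injectively stable $\Hom$ in $K_\Lambda$ is precisely $\Hom$ in $K_\Lambda/\langle\Sigma\Lambda\rangle$, which $H^0$ identifies with $\Hom_\Lambda$. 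This handles all non-projective $X$ uniformly, including the injective case $X=\Sigma P_i$ that the paper treats separately; only the trivial projective case $X=P_i$ needs a word (both sides vanish). Your approach is more conceptual and, as you note, makes transparent why no stable quotient survives on the module side; the paper's approach is more hands-on and avoids pinning down the precise form of AR--Serre duality in the extriangulated setting.
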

\begin{proof}
For~$X \neq \Sigma P_i$, we have $H^0(\tau X) = \tau H^0 X$ (Lemma \ref{lem:almost-split}) and the result can be proved using Serre duality and the Nakayama functor~$\nu$; a proof can be found in \cite[Lemma 2.6]{Plamondon13}.  For~$X = \Sigma P_i$, we have $H^0(\tau X) = I_i$. Indeed, in this case we have $\Hom_{K^b} (X,\Sigma Y) \cong \Hom_\Lambda(P_i, H^0 Y)$. And $\Hom_\Lambda(P_i,H^0Y) \simeq D \Hom_\Lambda(H^0Y,I_i)$.
\end{proof}

\subsection{The $g$-vector fan} \label{sec:gfan} 
A general reference for the material in this subsection is \cite{AIR}.
For each object $X$ in $K_\Lambda$, we define a $g$-vector, $g(X)$, in $\mathbb Z^n$. If $X = (X^{-1} \rightarrow X^0)$, then the entry $g(X)_i$ of $g(X)$ is the multiplicity of $P_i$ in $X^0$ minus the multiplicity of $P_i$ in $X^{-1}$.

An object $X$ in $K_\Lambda$ is called \emph{rigid} if $\Hom_{K^b(\proj \Lambda)}(X,\Sigma X)=0$. 
If $X$ is rigid, then $P_i$ cannot appear in both $X^{-1}$ and $X^0$ \cite[Proposition 2.5]{AIR}. So, in particular, if $X$ is rigid and non-zero then $g(X) \neq 0$. There is at most one rigid object with any given $g$-vector \cite[Theorem 5.5]{AIR}. We say that two rigid objects $X,Y$ in $K_\Lambda$ are \emph{compatible} if $\Hom_{K^b}(X,\Sigma Y)=0$ and $\Hom_{K^b}(Y,\Sigma X)=0$.

The $g$-vector fan in $\mathbb{Z}^n$ has rays generated by the $g$-vectors of indecomposable rigid objects. The cones of the $g$-vector fan are generated by the $g$-vectors of sets of pairwise compatible indecomposable rigid objects. As the name suggests, the $g$-vector fan is indeed a fan \cite[Corollary 6.7]{DIJ}. In other words, two cones in the fan intersect either in another cone of the fan. Moreover, if $\Lambda$ has only finitely many indecomposable rigid objects, then the $g$-vector fan covers $\mathbb R^n$ \cite[Proposition 4.8]{Asai}. 

\subsection{Dimension vectors}
For each object $M$ in $\mod\Lambda$, define the \emph{dimension vector}, $d(M) \in \mathbb{Z}^n$, with $d(M)_i = \dim (M e_i)$. For $X \in K_\Lambda$, we define its dimension vector as the dimension vector of $H^0(X)$, and write $d(X) = d(H^0(X))$. Write $\langle-,-\rangle$ for the standard inner product on $\mathbb{Z}^n$. Then, for $X \in K_\Lambda$ and $M\in \mod \Lambda$, we have the pairing
\[
\langle g(X), \, d(M) \rangle = \sum_{i=1}^n g(X)_i \, d(M)_i.
\]
If $X = (X^{-1}\rightarrow X^0)$, then 
\begin{equation} \label{eq:gdpairing}
\langle g(X), \, d(M) \rangle = \hom_{\Lambda}(X^0, M) - \hom_{\Lambda}(X^{-1}, M).
\end{equation}
The following is a variation on \cite[Theorem~1.4a]{AR} (See also \cite[Proposition 2.4(a)]{AIR}.) We include a short proof for the convenience of the reader.

\begin{lemma}\label{lem:gd}
For an indecomposable $X \in \ind K_\Lambda$, and $M \in \mod \Lambda$,
\[
\langle g(X), d(M) \rangle   =  \hom_{\Lambda}(H^0X,M)  -  \hom_\Lambda(M,\, H^0 (\tau X)).
\] 
In the special case that $X = \Sigma P_i$, this identity becomes
\[
\langle g(\Sigma P_i), d(M) \rangle   =  -  \hom_\Lambda(M,\, I_i),
\] 
where $I_i$ is the injective module in $\mod \Lambda$.
\end{lemma}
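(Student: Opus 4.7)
The plan is to start from equation~\eqref{eq:gdpairing}, which expresses $\langle g(X), d(M)\rangle$ as $\hom_\Lambda(X^0, M) - \hom_\Lambda(X^{-1}, M)$, and then to split into two cases according to whether $X = \Sigma P_i$.

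For the special case $X = \Sigma P_i$, direct computation gives $g(\Sigma P_i) = -e_i$, so $\langle g(\Sigma P_i), d(M)\rangle = -d(M)_i = -\hom_\Lambda(P_i, M)$. The Nakayama adjunction then rewrites this as $-\hom_\Lambda(M, I_i)$, giving the second identity in the lemma. This also agrees with the general identity, since $H^0(\Sigma P_i) = 0$ and, by Lemma~\ref{lem:almost-split}(2), $H^0(\tau\Sigma P_i) = I_i$.

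The main case is $X$ not of the form $\Sigma P_i$, so that $X = (X^{-1}\xrightarrow{f} X^0)$ is the minimal projective presentation of $H := H^0(X)$, giving an exact sequence $X^{-1}\xrightarrow{f} X^0\to H\to 0$ in $\mod\Lambda$. I will apply $\Hom_\Lambda(-,\Lambda)$ to produce the four-term exact sequence of left $\Lambda$-modules
\[
0 \to \Hom_\Lambda(H, \Lambda) \to (X^0)^* \xrightarrow{f^*} (X^{-1})^* \to \operatorname{Tr} H \to 0,
\]
where $(-)^* = \Hom_\Lambda(-, \Lambda)$ and $\operatorname{Tr}H := \mathrm{coker}(f^*)$ is the transpose of $H$. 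Tensoring on the left with $M$ over $\Lambda$, and combining the canonical isomorphism $M\otimes_\Lambda (X^i)^* \cong \Hom_\Lambda(X^i, M)$ for projective $X^i$ with left exactness of $\Hom_\Lambda(-,M)$ applied to the original presentation, assembles the exact sequence
\[
0 \to \Hom_\Lambda(H, M) \to \Hom_\Lambda(X^0, M) \to \Hom_\Lambda(X^{-1}, M) \to M\otimes_\Lambda \operatorname{Tr} H \to 0.
\]
Taking alternating dimensions gives $\hom_\Lambda(X^0, M) - \hom_\Lambda(X^{-1}, M) = \hom_\Lambda(H, M) - \dim_\mathbb{C}(M\otimes_\Lambda \operatorname{Tr} H)$. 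Since $\tau H = D\operatorname{Tr}H$ by the defining identity of the AR translation, the tensor-hom adjunction identifies $\hom_\Lambda(M, \tau H) = \dim_\mathbb{C}(M\otimes_\Lambda \operatorname{Tr} H)$, and Lemma~\ref{lem:almost-split}(1) yields $H^0(\tau X) = \tau H$ in this case, completing the argument.

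The main obstacle is the left-versus-right module bookkeeping that enters through the transpose and Nakayama-style adjunctions, together with verifying exactness of the tensored four-term sequence at both endpoints. The underlying homological content is essentially the classical Auslander--Reiten identity cited in the statement as~\cite[Theorem 1.4a]{AR}.
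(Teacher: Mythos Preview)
Your proof is correct and follows essentially the same route as the paper's: both split off the case $X=\Sigma P_i$ by direct computation with the Nakayama adjunction, and for non-injective $X$ both extract the identity from the same four-term exact sequence relating $\Hom(H^0X,M)$, $\Hom(X^0,M)$, $\Hom(X^{-1},M)$, and a term computing $\hom(M,\tau H^0X)$. The only cosmetic difference is that the paper builds this sequence via the Nakayama functor $\nu$ and the isomorphism $\Hom(M,\nu P)\cong D\Hom(P,M)$, whereas you build its $\mathbb{C}$-dual via the transpose and $M\otimes_\Lambda P^*\cong\Hom(P,M)$; since $\tau=D\operatorname{Tr}$ and $\nu=D(-)^*$, these are the same construction.
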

\begin{proof}
If $X = \Sigma P_i$, $H^0X = 0$ and $H^0(\tau X) = I_i$. But $\hom_\Lambda( P_i, M) = \hom_\Lambda (M, I_i)$. So the statement follows directly from \eqref{eq:gdpairing}, after using that $\hom_\Lambda(P_i,M) = \hom_\Lambda(M,I_i)$.

Now take $X$ non-injective. We have $X^{-1} \xrightarrow{f} X^0 \xrightarrow{q} H^0 X \rightarrow 0$. Dually, there is a short exact sequence $0 \xrightarrow{\iota} \tau H^0 X \rightarrow \nu X^{-1} \xrightarrow{\nu f} \nu X^0$, where $\nu$ denotes the Nakayama functor. Then, applying $\Hom_\Lambda(M,-)$ and $\Hom_\Lambda(-,M)$, respectively, we get the following commutative diagram
    $$
    \begin{tikzcd}[column sep = 1em]
        0 \arrow[r] & \Hom(M, \tau H^0X) \arrow[r,"\iota_*"] & \Hom(M, \nu X^{-1}) \arrow[r,"(\nu f)_*"] \arrow[d,leftrightarrow,"\simeq"] & \Hom(M,\nu X^0 \arrow[d,leftrightarrow,"\simeq"])\\
        && D\Hom(X^{-1},M) \arrow[r,"Df^*"] & D\Hom(X^0,M) \arrow[r,"Dq^*"] & D\Hom(H^0X,M) \arrow[r] & 0,
    \end{tikzcd}
    $$
   which defines a four-term exact sequence. Exactness of this implies
    $$\hom_\Lambda (M,\tau H^0X) - \hom_\Lambda(X^{-1},M) + \hom_\Lambda(X^0,M) - \hom_\Lambda (H^0X,M) = 0.$$
    Then \eqref{eq:gdpairing} gives the result.
 \end{proof}

\subsection{$F$-polynomials}
Given a vector~$\bd\in \bZ_{\geq 0}$, let~$\Gr_{\bd}(M)$ be the \emph{submodule Grassmannian} of~$M$. As a set, the elements of $\Gr_{\bd}(M)$ are in bijection with submodules of~$M$ of dimension~$\bd$. It can be viewed as a Zariski-closed subset of the product~$\prod_{i=1}^n \Gr_{d_i}(Me_i)$ of Grassmannians \cite{CalderoChapoton06}, and in this setting it is a projective variety. 

\begin{definition}\label{def:F}
Let~$M$ be a module in~$\mod\Lambda$.  Its \emph{$F$-polynomial} is
 \[
  F_M = \sum_{\bd\in \bZ_{\geq 0}} \chi\left( \Gr_{\bd}(M) \right) y^{\bd} \in \bZ[y_1, \ldots, y_n],
 \]
 where~$\chi$ is the Euler characteristic of topological spaces and~$y^{\bd} = \prod_{i=1}^n y_i^{d_i}$. Moreover, for an object~$X$ of~$K_\Lambda$, we define its $F$-polynomial as
 \[
 F_X := F_{H^0X}.
 \]

\end{definition}

\subsection{$\hF$-polynomials}

We now define a second set of polynomials, $\hF_M$, in the ring~$\bZ[u_X \, | \, X \in \ind K_\Lambda]$. For this, it is important to assume the following

\begin{assumption}[Finiteness assumption]\label{finiteness-assumption}
 The set~$\ind K_\Lambda$ of isomorphism classes of indecomposable objects of~$K_\Lambda$ is finite. Equivalently,~$\Lambda$ is of finite representation type.
\end{assumption}

\noindent
We make this assumption for the rest of the paper. 

\begin{definition}\label{def:Fhat}
Given a module $M$ in $\mod\Lambda$, its \emph{$\hF$-polynomial} is
$$
\hF_M= \sum_{\bd \in \bZ_{\geq 0}} \chi(\Gr_\bd(M)) \prod_{V\in\ind K_\Lambda} u_V^{\hom_\Lambda(H^0V,M)-\langle g(V),\bd\rangle}.
$$
And, for $X\in K_\Lambda$, we define
\[
\hF_X:=\hF_{H^0X}.
\]
\end{definition}

\medskip

The $\hF$-polynomials are closely related to the $F$-polynomials by the following map from the $y$-variables to the $u$-variables.

\begin{definition} \label{psidef} Define a homomorphism
$\Psi:\mathbb C[y_1,\dots,y_n]\rightarrow \mathbb C[u_V^{\pm 1}\,|\, V\in \ind K_\Lambda]$ that maps
\[
\Psi:\, y_i \longmapsto \prod_{V\in \ind K_\Lambda} u_V^{-g(V)_i}.
\]
\end{definition}

\noindent
In particular, $\Psi$ maps the monomial $y^{\bd}$ to $\prod_V u_V^{-\langle g(V), \, \bd\rangle}$. So

\begin{lemma}\label{reform} $\hF_M$ is given by
  $$\hF_M= \left(\prod_{V\in\ind K_\Lambda} u_V^{\hom_\Lambda(H^0V,\,M)}\right) \, \Psi (F_M).$$
  \end{lemma}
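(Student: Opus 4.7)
The plan is to verify the identity by a direct unwinding of the definitions; there is no real obstacle here, only bookkeeping of the exponents.

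First I would apply $\Psi$ to the expression $F_M = \sum_{\bd} \chi(\Gr_\bd(M)) y^\bd$ from Definition \ref{def:F}. Since $\Psi$ is a ring homomorphism and $\chi(\Gr_\bd(M)) \in \mathbb Z$ is a scalar, this gives
\[
\Psi(F_M) = \sum_{\bd \in \bZ_{\geq 0}} \chi(\Gr_\bd(M))\, \Psi(y^\bd).
\]
Next I would evaluate $\Psi(y^\bd)$. Writing $y^\bd = \prod_i y_i^{d_i}$, the definition of $\Psi$ (Definition \ref{psidef}) gives
\[
\Psi(y^\bd) = \prod_{i=1}^n \Psi(y_i)^{d_i} = \prod_{i=1}^n \prod_{V \in \ind K_\Lambda} u_V^{-g(V)_i d_i} = \prod_{V \in \ind K_\Lambda} u_V^{-\langle g(V), \bd\rangle},
\]
where in the last equality I swap the two products and recognize $\sum_i g(V)_i d_i = \langle g(V), \bd\rangle$.

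Finally I would multiply by the prefactor and collect exponents of each $u_V$:
\[
\left(\prod_{V} u_V^{\hom_\Lambda(H^0V, M)}\right) \Psi(F_M) = \sum_{\bd} \chi(\Gr_\bd(M)) \prod_{V} u_V^{\hom_\Lambda(H^0V, M) - \langle g(V), \bd\rangle},
\]
which is exactly $\hF_M$ by Definition \ref{def:Fhat}. The only point worth flagging is that the prefactor $\prod_V u_V^{\hom_\Lambda(H^0V,M)}$ is needed precisely to absorb the fact that $\Psi$ produces only negative exponents in $u_V$, so that $\hF_M$ lies in the polynomial ring $\bZ[u_V \mid V \in \ind K_\Lambda]$ rather than the Laurent polynomial ring appearing in the codomain of $\Psi$; this positivity of the shifted exponents is guaranteed by the finiteness assumption and the geometry of the $g$-vector fan, but is not needed for the bare identity being claimed here.
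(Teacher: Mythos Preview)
Your proof is correct and follows exactly the paper's approach: the paper simply observes that $\Psi(y^{\bd})=\prod_V u_V^{-\langle g(V),\bd\rangle}$ and states the lemma as an immediate consequence, which is precisely what you have unpacked. One small inaccuracy in your closing remark: $\Psi$ does not produce only negative exponents (the sign of $-\langle g(V),\bd\rangle$ can go either way), and the positivity of the shifted exponents is established in the paper by a direct submodule argument (Lemma~\ref{lem:Ftilde}) rather than via the $g$-vector fan---but as you correctly note, this is irrelevant to the identity itself.
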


\medskip
We verify that

\begin{lemma}\label{lem:Ftilde}
 For any~$M \in \mod\Lambda$,~$\hF_M$ is indeed a polynomial. That is, $\hF_M \in \bZ[u_V \, | \, V\in \ind K_\Lambda]$.
\end{lemma}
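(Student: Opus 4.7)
The plan is to show that every exponent appearing in the defining sum is non-negative whenever the corresponding coefficient $\chi(\Gr_\bd(M))$ is non-zero. Fix a dimension vector $\bd\in\bZ_{\geq 0}^n$ with $\chi(\Gr_\bd(M))\neq 0$. Then in particular $\Gr_\bd(M)$ is non-empty, so we may choose a submodule $N\subseteq M$ with $d(N)=\bd$. The goal is to verify that for every $V\in\ind K_\Lambda$,
\[
\hom_\Lambda(H^0V,M)-\langle g(V),\bd\rangle \geq 0.
\]

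The key step is to apply Lemma \ref{lem:gd} to the module $N$, which gives
\[
\langle g(V),\bd\rangle = \langle g(V),d(N)\rangle = \hom_\Lambda(H^0V,N)-\hom_\Lambda(N,H^0(\tau V)).
\]
(In the edge case $V=\Sigma P_i$ where $H^0V=0$, this reads $\langle g(V),\bd\rangle=-\hom_\Lambda(N,I_i)$, consistent with Lemma \ref{lem:gd} since $\hom_\Lambda(H^0V,N)=0$ and $H^0(\tau V)=I_i$.) Substituting into the expression to be bounded,
\[
\hom_\Lambda(H^0V,M)-\langle g(V),\bd\rangle = \bigl(\hom_\Lambda(H^0V,M)-\hom_\Lambda(H^0V,N)\bigr) + \hom_\Lambda(N,H^0(\tau V)).
\]

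The right-hand side is a sum of two non-negative integers. Indeed, the second summand is visibly non-negative, and for the first we apply the left-exact functor $\Hom_\Lambda(H^0V,-)$ to the inclusion $0\to N\to M$ to get an injection $\Hom_\Lambda(H^0V,N)\hookrightarrow \Hom_\Lambda(H^0V,M)$, which forces $\hom_\Lambda(H^0V,M)-\hom_\Lambda(H^0V,N)\geq 0$. This yields the desired inequality and hence shows that each monomial appearing in $\hF_M$ lies in $\bZ[u_V\,|\,V\in\ind K_\Lambda]$.

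There is no substantive obstacle here once Lemma \ref{lem:gd} is in hand; the only mild subtlety is to keep track of the $V=\Sigma P_i$ case, where $H^0V=0$ forces one to check that Lemma \ref{lem:gd} still gives the correct formula, and to confirm the finiteness assumption ensures the sum over $\ind K_\Lambda$ is itself finite (so the product defining each monomial makes sense as an element of the polynomial ring).
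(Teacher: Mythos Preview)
Your proof is correct and follows essentially the same strategy as the paper's: pick a submodule $N$ of $M$ with dimension vector $\bd$, and bound $\langle g(V),\bd\rangle$ above by $\hom_\Lambda(H^0V,N)\leq \hom_\Lambda(H^0V,M)$. The only difference is cosmetic: the paper obtains the bound $\langle g(V),\bd\rangle\leq \hom_\Lambda(H^0V,N)$ directly from the elementary formula \eqref{eq:gdpairing} and the exact sequence obtained by applying $\Hom_\Lambda(-,N)$ to $V^{-1}\to V^0\to H^0V\to 0$, whereas you invoke the more refined Lemma~\ref{lem:gd} (which packages the same exact-sequence computation and identifies the ``slack'' as $\hom_\Lambda(N,H^0(\tau V))$).
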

\begin{proof}
Take any~$\bd$ with~$\chi(\Gr_{\bd}(M))$ non-zero. Take some submodule of~$M$,~$L$, of dimension~$d(L) = \bd$. Then we claim $\hom_{\Lambda}(H^0V, M) - \langle g(V) , \bd \rangle$ is non-negative for all $V\in\ind K_\Lambda$. Let~$V = (V^{-1}\to V^0)$.  Then we get an exact sequence
\[
V^{-1}\to V^0\to H^0V \to 0
\]
in $\mod\Lambda$. Moreover, applying the functor~$\Hom_{\Lambda}(-,L)$, gives the exact sequence of vector spaces,
 \[
  0 \to \Hom_{\Lambda}(H^0V, L) \to \Hom_{\Lambda}(V^0, L) \to \Hom_{\Lambda}(V^{-1}, L).
 \]
But exactness implies that
 \[
  \hom_{\Lambda}(H^0V, L) - \hom_{\Lambda}(V^0, L) + \hom_{\Lambda}(V^{-1}, L) \geq 0.
 \]
Since~$L$ is a submodule of~$M$,~$\hom_{\Lambda}(H^0V, M) \geq \hom_{\Lambda}(H^0V, L)$. And so, recalling also \eqref{eq:gdpairing},  $\hom_{\Lambda}(H^0V, M) - \langle g(V) , \bd \rangle \geq 0$.
\end{proof}

\subsection{Jasso Reduction}\label{Jasso-intro}
In this subsection, we give a quick introduction to Jasso reduction. See \cite{Jasso15} for further details. 

Let $M$ be a rigid indecomposable $\Lambda$-module. We define $^\perp \tau M$ to be the subcategory of $\mod \Lambda$ consisting of modules $Y$ with $\Hom(Y,\tau M)=0$. An object $Z$ in $^\perp \tau M$ is called Ext-projective if $\Ext^1(Z,Y)=0$ for any $Y\in {}^\perp \tau M$.  
Define $X$ to be the direct sum of the indecomposable Ext-projective modules in $^\perp \tau M$. This the called the Bongartz completion of $M$. It contains $M$ as a direct summand.

Define $B$ to be $\End(X)/\langle e_M \rangle$, where $e_M$ is the idempotent corresponding to the summand $M$. The algebra $B$ is called the Jasso reduction of $\Lambda$ at $M$. 

Let $\mathcal C= M^\perp \cap {}^\perp \tau M$, i.e., the modules admitting no morphisms from $M$ and no morphisms to $\tau M$. Then $\Hom(X,-)$ gives an exact equivalence  from $\mathcal C$ to $B$-mod \cite[Theorem 1.4]{Jasso15}.

Starting from $M$, we can define two torsion classes. One, which we denote $t$, consists of all quotients of sums of copies of $M$. The other is $^\perp \tau M$, which we will denote $T$. Observe that $t\subseteq T$. For $N$ a $\Lambda$-module, we write $t(N), T(N)$ for the maximal subobject of $N$ contained in $t,T$ respectively.
Define $J(N)=T(N)/t(N)$. Observe that $J(N)$ automatically belongs to $\mathcal C$, and if $N$ belongs to $\mathcal C$, then $J(N)\simeq N$. 

For $N\in\mod\Lambda$, write $\Gamma(N)$ for $\Hom(X,T(N)/t(N))\in\mod B$. 

There are also categories of projective presentations corresponding to $\mod \Lambda$, $\mathcal C$, and $\mod B$. These are, respectively,
$K^b(\proj \Lambda)$, $K^b(\proj \Lambda)/\thick(M)$, and $K^b(\proj B)$. There is a Verdier quotient from $K^b(\proj \Lambda)$ to $K^b(\proj \Lambda)/\thick(M)$, and there is a natural map from $K^b(\proj \Lambda)/\thick(M)$ to $K^b(\proj B)$. 
On the level of Grothendieck groups, the first map is the quotient by $[M]$ and the second map is an isomorphism. Their composition gives a map from $K_\Lambda$ to $K_B$. 

The above map from $K_\Lambda$ to $K_B$ restricts to a bijection which we denote $s$ from the objects of $\ind K_\Lambda$ which are compatible with $M$, other than $M$ itself, to  the objects of $\ind K_B$. The map $s$ can be viewed as a bijection from the rays of the $g$-vector fan of $\Lambda$ which are compatible with $M$ other than that corresponding to $M$, to the rays of the $g$-vector fan of $B$. This bijection induces an identification of the link of the ray generated by $M$ in the $g$-vector fan of $\Lambda$ with the $g$-vector fan of $B$. This identification is also induced by the quotient map from $K_0(\proj \Lambda)$ to
$K_0(\proj \Lambda)/[M] \simeq K_0(\proj B)$.

\begin{lemma}\label{square} For $Z\in \ind K_\Lambda$, we have $H^0(s(Z))=\Gamma(H^0(Z))$.
\end{lemma}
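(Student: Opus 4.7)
The plan is to trace the functor $K_\Lambda\to K^b(\proj\Lambda)/\thick(M)\to K_B$ defining $s$ through $H^0$, and compare the result with the Jasso functor $N\mapsto J(N)\mapsto\Hom_\Lambda(X,J(N))=\Gamma(N)$. Three steps are involved.

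First, I would show that $H^0 Z$ already lies in $T={}^\perp\tau M$. Indeed, the compatibility of $Z$ with $M$ gives $\Hom_{K^b}(M,\Sigma Z)=0$; by Lemma~\ref{lemm::hom} this equals $D\Hom_\Lambda(H^0 Z,H^0(\tau M))$, and by Lemma~\ref{lem:almost-split}(1) we have $H^0(\tau M)=\tau M$ (since $M$, viewed in $K_\Lambda$ via its minimal projective presentation, is not an injective of $K_\Lambda$). Thus $\Hom_\Lambda(H^0 Z,\tau M)=0$, so $T(H^0 Z)=H^0 Z$ and $J(H^0 Z)=H^0 Z/t(H^0 Z)$.

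Second, I would verify that the Verdier quotient by $\thick(M)$ effects, at the level of $H^0$, the passage from $H^0 Z$ to $J(H^0 Z)$. Since any module in $t$ is an iterated extension of quotients of sums of $M$, its projective resolution lies in $\thick(M)$, and so does its 2-term projective presentation. Applying this to $t(H^0 Z)$ and using the short exact sequence $0\to t(H^0 Z)\to H^0 Z\to J(H^0 Z)\to 0$, the image of $Z$ in $K^b(\proj\Lambda)/\thick(M)$ is isomorphic to a 2-term complex presenting $J(H^0 Z)$.

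Third, the natural map to $K^b(\proj B)$ should be identified with the equivalence $\Hom_\Lambda(X,-)\colon\mathcal C\to\mod B$ of \cite[Theorem~1.4]{Jasso15}. Since all indecomposable projectives of $\Lambda$ lie in $\add X$ (the Bongartz completion is Ext-projective in $T\supseteq\proj\Lambda$), a projective presentation of $J(H^0 Z)\in\mathcal C$ has terms in $\add X$, and applying $\Hom_\Lambda(X,-)$ yields a projective presentation of $\Hom_\Lambda(X,J(H^0 Z))=\Gamma(H^0 Z)$ in $\mod B$. This 2-term $B$-projective complex represents $s(Z)$, and its $H^0$ is $\Gamma(H^0 Z)$, as desired.

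The main obstacle is the second step: carefully justifying that the Verdier quotient by $\thick(M)$ kills exactly the $t$-part of cohomology, together with the identification in the third step of the natural map $K^b(\proj\Lambda)/\thick(M)\to K^b(\proj B)$ with $\Hom_\Lambda(X,-)$. Both require a careful interplay between the triangulated structure and the underlying abelian categories, leveraging the rigidity of $M$ (so that its projective presentation is 2-term and $\thick(M)$ is generated by it), the fact that $t$ is generated as a torsion class by $M$, and Jasso's equivalence to bridge $\mathcal C$ and $\mod B$.
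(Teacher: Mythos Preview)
Your approach is quite different from the paper's, and it has a concrete error. The paper does not trace through the Verdier quotient at all; it simply invokes \cite[Proposition~4.12]{Jasso15}, which already gives a commutative square of bijections between silting objects in $K_\Lambda$, $K_B$ and support $\tau$-tilting modules in $\mod\Lambda$, $\mod B$. Restricting that square to indecomposable summands yields the commutativity $H^0\circ s=\Gamma\circ H^0$ immediately. So the paper's proof is essentially a one-line citation.

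Your Step~3 contains a genuine mistake: it is \emph{not} true that all indecomposable projective $\Lambda$-modules lie in $\add X$. For $P_i$ to be in ${}^\perp\tau M$ you would need $\Hom_\Lambda(P_i,\tau M)=(\tau M)e_i=0$, which fails whenever $\tau M$ has support at vertex $i$; in particular $\Lambda\in{}^\perp\tau M$ only when $M$ is projective. What is true is that modules in $\mathcal C\subseteq T=\operatorname{Fac}(X)$ admit $\add X$-presentations (not $\proj\Lambda$-presentations with terms in $\add X$), and applying $\Hom_\Lambda(X,-)$ to those does yield projective $B$-presentations. But establishing that this construction agrees with the map $K^b(\proj\Lambda)/\thick(M)\to K^b(\proj B)$ coming from silting reduction is precisely the content of Jasso's theorem and the Iyama--Yang machinery; you would be re-deriving it rather than using it.

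Step~2 is also not justified: for $N\in t=\operatorname{Fac}(M)$ with $0\to K\to M^k\to N\to 0$, you get $P_N$ as a cone involving $P_K$, but you have no control over whether $P_K\in\thick(M)$. The relationship between $\thick(M)$ and the torsion pair $(t,t^\perp)$ is subtler than your sketch allows. You correctly flag Steps~2 and~3 as the obstacles, and indeed they are: the clean way around them is exactly to quote Jasso, as the paper does.
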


\begin{proof} \cite[Proposition 4.12]{Jasso15} gives a commutative square of bijections among silting objects in $K_\Lambda$ and $K_B$ and support $\tau$-tilting modules in $\mod \Lambda$ and $\mod B$. It follows that there is a similar commutative square:

  $$\begin{tikzpicture} \node (a) at (0,0) {$\{Z\in\ind K_\Lambda\setminus \{M\}\mid \Hom(Z\oplus M,\Sigma Z \oplus \Sigma M)=0\}$};
    \node (b) at (8,0) {$\{N\in \ind \mod \Lambda\mid \Hom(N,\tau N)=0\}$};
    \node (c) at (0,-2) {$\{W\in \ind \mathcal C\mid \Hom(W,\Sigma W)=0\}$};
    \node (d) at (8,-2) {$\{L\in \ind \mod B, \Hom(B,\tau B)=0\}$};
      \draw [->] (a)-- node[above] {$H^0$} (b); \draw [->] (a)--node[left]{$s$}(c); \draw [->](b)--node [right] {$\Gamma$} (d);
      \draw [->] (c)-- node[above] {$H^0$}(d);\end{tikzpicture}$$

  The desired result follows from the commutativity.
\end{proof}  

\begin{lemma} \label{wanted} For $N\in\mathcal C$, and $Z\in \ind K_\Lambda$, we have that
  $\hom(H^0Z,N)=\hom(H^0s(Z),\Gamma(N))$. \end{lemma}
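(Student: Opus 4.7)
The plan is to combine Jasso's equivalence $\Hom_\Lambda(X,-)\colon \mathcal C\to \mod B$ with a short reduction of $H^0 Z$ to an object of $\mathcal C$. Writing $W := H^0 Z$, the key observation I will need is that the hypothesis on $Z$ (namely that $s(Z)$ is defined, i.e.\ $Z$ is compatible with $M$, or $Z = M$) forces $W\in {}^\perp\tau M$. In the compatible case this comes from $\Hom_{K^b}(M,\Sigma Z)=0$ together with Lemma \ref{lemm::hom}, which translates the vanishing into $\Hom_\Lambda(W,\tau M)=0$, using that $M$ is not of the form $\Sigma P_i$ (so $H^0\tau M = \tau M$). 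The case $Z=M$ is trivial: both sides vanish, the left from $N\in M^\perp$ and the right because $s$ sends $M$ to zero in $K_B$, so $H^0 s(M)=0$ by Lemma \ref{square}.

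Once $W \in {}^\perp\tau M$ is established we have $T(W)=W$, and hence $J(W) = W/t(W)$. I would then apply $\Hom_\Lambda(-,N)$ to the short exact sequence $0\to t(W)\to W\to J(W)\to 0$. Since $t(W)$ is, by construction, a quotient of a sum of copies of $M$, and $N\in M^\perp$, this gives $\Hom_\Lambda(t(W),N)=0$, whence the natural map $\Hom_\Lambda(J(W),N) \to \Hom_\Lambda(W,N)$ is an isomorphism.

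Finally, both $J(W)$ and $N$ lie in $\mathcal C$ (the former automatically, the latter by hypothesis), so Jasso's equivalence identifies $\Hom_\Lambda(J(W),N)$ with $\Hom_B\bigl(\Hom_\Lambda(X, J(W)),\,\Hom_\Lambda(X, N)\bigr) = \Hom_B(\Gamma(W),\Gamma(N))$. Lemma \ref{square} gives $\Gamma(W) = H^0 s(Z)$, and taking $\mathbb C$-dimensions produces the claimed identity.

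The main obstacle, and the conceptual heart of the argument, is the first step: recognising that the compatibility of $Z$ with $M$ translates, via the Auslander--Reiten-type duality of Lemma \ref{lemm::hom}, into exactly the torsion-theoretic condition $W \in {}^\perp\tau M$ needed to force $T(W)=W$ and collapse $J(W)$ to the simpler quotient $W/t(W)$. Without this simplification one would have to analyse the interaction of $\Hom_\Lambda(-,N)$ with a two-step filtration involving both $t$ and $T$, and the necessary cancellations would be considerably less transparent.
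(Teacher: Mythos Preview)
Your argument is correct and follows the same overall strategy as the paper: invoke Lemma~\ref{square} to rewrite $H^0s(Z)$ as $\Gamma(H^0Z)$, then appeal to Jasso's equivalence $\mathcal C\simeq\mod B$. The paper's proof simply asserts that $H^0Z\in\mathcal C$ and applies the equivalence directly; you are more careful, observing that compatibility of $Z$ with $M$ only yields $H^0Z\in{}^\perp\tau M$, and then using the short exact sequence $0\to t(H^0Z)\to H^0Z\to J(H^0Z)\to 0$ together with $N\in M^\perp$ to reduce to $J(H^0Z)\in\mathcal C$. Your caution is warranted: for example, with $\Lambda=\mathbb C A_3$ (linear orientation $1\to2\to3$) and $M=P_2$, one has $\tau M=0$ so $\mathcal C=P_2^\perp$, and $Z=P_1$ is compatible with $M$ (for degree reasons $\Hom_{K^b}(P_i,\Sigma P_j)=0$ always) yet $H^0Z=P_1\notin P_2^\perp$.
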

\begin{proof} By Lemma \ref{square}, $H^0s(Z)=\Gamma(H^0(Z))$. So
  $\hom(H^0s(Z),\Gamma(N))=\hom(\Gamma(H^0(Z)),\Gamma(N))$. Since $H^0Z$ and $N$ are in $\mathcal C$, the desired result follows from the fact that $\Gamma$ is an exact functor from $\mathcal C$ to $\mod B$. \end{proof}

Note that Jasso reduction, as defined in \cite{Jasso15}, starts from $M$ a rigid indecomposable $\Lambda$-module. However, it is also possible to think of starting with $M$ an indecomposable rigid object in $K_\Lambda$. If $M=P_i\rightarrow 0$, then the Bongartz completion is $M\oplus (\Lambda /\langle e_i\rangle)$. Define the torsion class $t(M)=0$ and $T(M)$ consists of the modules with no support at vertex $i$.

\section{Varieties defined by~$\hF$-equations and $u$-equations}

\subsection{The varieties~$\cM_{\Lambda}$ and~$\widetilde{\cM}_{\Lambda}$}
The equations
\begin{equation}\label{eq:hFeqns}
\hF_X = 1,
\end{equation}
for all $X \in \ind K_\Lambda$, define the affine varieties that are the main subject of this paper.

\begin{definition}\label{def:varieties}
In the ring $\mathbb C[u^{\pm 1}_V\,|\, V\in \ind K_\Lambda]$, consider the ideal $I_\Lambda = \langle \hF_W - 1 \, | \, W\in \ind K_\Lambda \rangle$. Similarly, write $\tI_\Lambda$ for the ideal $\tI_\Lambda = \langle \hF_W - 1 \, | \, W\in \ind K_\Lambda \rangle$ in the polynomial ring $\mathbb C[u_V\smid V\in \ind K_\Lambda]$, generated by the same polynomials. This defines two affine varieties:
 \begin{center}
 \begin{tabular}{lll}
   $R_\Lambda := \bC[u^{\pm 1}_X \smid X\in \ind K_\Lambda]\big/ I_\Lambda$  & \qquad\qquad & $\widetilde{R}_\Lambda := \bC[u_X \smid X\in \ind K_\Lambda]\big/ \tI_\Lambda$ \\
  $\cM_\Lambda:= \Spec(R_\Lambda)$ & \qquad\qquad & $\widetilde{\cM}_\Lambda :=\Spec(\widetilde{R}_\Lambda).$
   \end{tabular}
 \end{center}
\end{definition}

\medskip

Thus $\cM_\Lambda$ is the localisation of $\widetilde{\cM}_\Lambda$ along the divisors $u_V = 0$. In other words, $\cM_\Lambda$ is obtained by removing the points where some $u_V = 0$. We emphasize that, in the case that $\Lambda$ is the path algebra of the Dynkin quiver $A_{n-3}$, $\widetilde{\cM}_\Lambda$ is \emph{not} the Deligne-Mumford compactification of the moduli space of points on $\mathbb{P}^1$, $\overline{{\cM}}_{0,n}$. The latter is a projective variety, whereas $\widetilde{M}_\Lambda$ is an affine open in $\overline{{\cM}}_{0,n}$. To avoid confusion, we therefore use a tilde, and not a bar, to denote these affine varieties. Note that these affine varieties are studied by Brown in \cite{Brown09}.


\subsection{The~$u$-equations and configuration spaces}
The varieties in Definition~\ref{def:varieties} generalize the cluster configuration spaces of finite type of~\cite{AHL}, along with other varieties previously considered in the physics and mathematics literature \cite{KN1,KN2,KN3,Brown09}.  We now show how the varieties defined by the equations \eqref{eq:hFeqns} are related to another set of equations called the~$u$-equations.

\begin{definition}\label{def:u-equations}
 Under the finiteness assumption~\ref{finiteness-assumption}, the \emph{$u$-equation} of an object~$X\in \ind K_\Lambda$ is the following equation in the polynomial ring~$\bZ[u_V \smid V \in \ind K_\Lambda]$:
 \[
  u_X + \prod_{Y\in \ind K_\Lambda} u_Y^{c(X,Y)} = 1,
 \]
where the \emph{compatibility degree} of $X, Y \in \ind K_\Lambda$ is
\[
c(X,Y) = \hom_{K_{\Lambda}}(X,\Sigma Y) + \hom_{K_{\Lambda}}(Y,\Sigma X).
\]
The $u$-equations define an affine variety
\[
\widetilde{\cU}_\Lambda = \Spec(\widetilde{S}_\Lambda),
\]
where~$\widetilde{S}_\Lambda := \bC[u_X \smid X\in \ind K_\Lambda]/\langle \textrm{$u$-equations} \rangle$.
\end{definition}

\begin{definition}\label{def:pospart}
The configuration space, ${\widetilde{\cU}}_\Lambda^{\geq 0}$, is the semialgebraic set of real, non-negative points of $\widetilde{\cU}_\Lambda$.
\end{definition}

Finally, the exponents $c(X,Y)$ appearing in the $u$-equations can be written in terms of the module category $\mod\Lambda$ as follows.

\begin{lemma}\label{lem:cXY}
For  $X,Y \in \ind K_\Lambda$, the compatibility degree $c(X,Y)$ is also given by
\[
	c(X,Y) = \hom_\Lambda(H^0 X, H^0(\tau Y)) + \hom_\Lambda(H^0Y,\, H^0 (\tau X)),
\]
or, equivalently, by
\[
   c(X,Y) = -\langle g(X), d(H^0Y) \rangle + \hom_{\Lambda}(H^0X,H^0Y) + \hom_{\Lambda}(H^0X,H^0(\tau Y)).
\]
\end{lemma}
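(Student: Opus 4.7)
The proof is essentially a direct unpacking of two earlier lemmas, with no real obstacle. Here is how I would organize it.

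First, I would establish the first equality by applying Lemma \ref{lemm::hom} termwise. That lemma says $\hom_{K^b}(X,\Sigma Y) = \hom_\Lambda(H^0 Y, H^0(\tau X))$. Since $K_\Lambda$ is a full subcategory of $K^b(\proj\Lambda)$, the $\Hom$ groups computed in the two categories agree, so
\[
c(X,Y) = \hom_{K_\Lambda}(X,\Sigma Y) + \hom_{K_\Lambda}(Y,\Sigma X) = \hom_\Lambda(H^0Y, H^0(\tau X)) + \hom_\Lambda(H^0X, H^0(\tau Y)),
\]
which is the first asserted identity (after swapping the order of the two summands).

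For the second equality, I would invoke Lemma \ref{lem:gd} with $M=H^0Y$, which gives
\[
\langle g(X), d(H^0 Y)\rangle = \hom_\Lambda(H^0X, H^0Y) - \hom_\Lambda(H^0Y, H^0(\tau X)).
\]
Solving for $\hom_\Lambda(H^0Y, H^0(\tau X))$ and substituting into the first form of $c(X,Y)$ yields
\[
c(X,Y) = -\langle g(X), d(H^0Y)\rangle + \hom_\Lambda(H^0X, H^0Y) + \hom_\Lambda(H^0X, H^0(\tau Y)),
\]
which is the second asserted identity.

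The only mild subtlety is that Lemma \ref{lem:gd} is stated for $X$ indecomposable (with a separate case when $X = \Sigma P_i$), but the hypothesis on $X$ in the statement we are proving is exactly this, so no extra work is needed. There is no real obstacle — the lemma is a routine consequence of Auslander--Reiten--Serre duality in $K_\Lambda$ (packaged in Lemma \ref{lemm::hom}) together with the $g$-vector/dimension-vector pairing formula (Lemma \ref{lem:gd}).
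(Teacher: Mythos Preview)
Your proposal is correct and follows essentially the same approach as the paper: apply Lemma~\ref{lemm::hom} to each summand of the definition of $c(X,Y)$ to get the first formula, then substitute using Lemma~\ref{lem:gd} with $M=H^0Y$ to obtain the second. The paper's proof is simply a terser version of what you wrote.
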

\begin{proof}
Recall from Lemma \ref{lemm::hom} that $\hom_{K_\Lambda}(Y,\Sigma X) = \hom_\Lambda(H^0 X, H^0 (\tau Y))$. The second formula follows from Lemma \ref{lem:gd}.
\end{proof}

\subsection{Examples}
The literature includes several families of examples of~$u$-equations, that we summarize below. Note that Definition \ref{def:u-equations}, above, generalizes these examples to the case of any finite representation type algebra. For new examples of~$u$-equations, see Section~\ref{section-examples}.

\subsubsection{Dynkin types}
If the algebra~$\Lambda$ is the path algebra~$\bC Q$ of a quiver~$Q$ of Dynkin type~$A, D$ or~$E$, then the~$u$-equations defining~$\widetilde{\cU}_\Lambda$ are the same as the ones defining the variety~$\widetilde{\cM}$ of Arkani-Hamed, He and Lam~\cite{AHL}.  This is a consequence of the additive categorification of cluster algebras, which was worked out for Dynkin quivers in~\cite{CalderoChapoton06}. In particular, for type~$A_{n-3}$,~$\widetilde{\cU}_\Lambda$ is the affine variety~$\widetilde{\cM}_{0,n}$ of Brown~\cite{Brown09}. For example, for~$A_2$,~$\ind K_\Lambda$ has five modules: the projectives~$P_1, P_2$, the shifted projectives~$\Sigma P_1, \Sigma P_2$, and the simple module~$S_2$. Then the~$u$-equations for this algebra are
\begin{center}
 \begin{tabular}{lclcl}
  $u_{P_1} + u_{S_2}u_{\Sigma P_1} = 1$ & & $u_{P_2} + u_{\Sigma P_1}u_{\Sigma P_2}=1$ && $u_{S_2} + u_{P_1}u_{\Sigma P_2} = 1$\\
  $u_{\Sigma P_1} + u_{P_1}u_{P_2} = 1$ && $u_{\Sigma P_2} + u_{P_2}u_{S_2} = 1$
 \end{tabular}
\end{center}
See Section~\ref{ex:A2} for more details. We also work out type~$A_3$ in detail in Section~\ref{ex:A3}.

\subsubsection{The Grassmannian}\label{grassmannian} The CEGM~$u$-equations are defined in \cite{early2022}. As will be discussed in more detail elsewhere \cite{EPT}, they are the~$u$-equations for the path algebra,~$\Lambda_{k,n}$, of the quiver
\begin{center}
\begin{tikzpicture}[
    >=stealth,
    plain/.style={inner sep=0pt, draw=none},
    ->, shorten >=1pt
]
\matrix (m) [
  matrix of nodes,
  nodes={inner sep=2pt},
  column sep=0.8cm,
  row sep=0.8cm,
  column 4/.style={nodes={plain}},
  row 3/.style={nodes={plain}}
] {
  $\bullet$ & $\bullet$ & $\bullet$ & $\cdots$ & $\bullet$ \\
  $\bullet$ & $\bullet$ & $\bullet$ & $\cdots$ & $\bullet$ \\
  $\vdots$  & $\vdots$  & $\vdots$  & $\ddots$ & $\vdots$  \\
  $\bullet$ & $\bullet$ & $\bullet$ & $\cdots$ & $\bullet$ \\
};
\foreach \i in {1,2,4}{
  \draw (m-\i-1) -- (m-\i-2);
  \draw (m-\i-2) -- (m-\i-3);
  \draw (m-\i-3) -- (m-\i-4);
  \draw (m-\i-4) -- (m-\i-5);
}
\foreach \j in {1,2,3,5}{
  \draw (m-1-\j) -- (m-2-\j);
  \draw (m-2-\j) -- (m-3-\j);
  \draw (m-3-\j) -- (m-4-\j);
}
\node[plain, above=8pt] at ($(m-1-1)!0.5!(m-1-5)$) {$n-k-1$ columns};
\node[plain, left=12pt, rotate=90] at ($(m-1-1)!0.4!(m-3-1)$) {$k-1$ rows};
\end{tikzpicture}
\end{center}
with the relations that the composition of any right-arrow with any down-arrow vanishes. For example, for the case of $k=3$, $n=6$, the indecomposable modules can be labelled by non cyclically-consecutive 3-tuples of $[6]$, and the~$u$-equations are:
\begin{align*}
u_{124}+u_{135} u_{136} u_{235} u_{236} &= 1 
&\qquad u_{125}+u_{136} u_{146} u_{236} u_{246} u_{346} &= 1 \\[6pt]
u_{134} + u_{235} u_{236} u_{245} u_{246} u_{256} &= 1
&\qquad u_{135}+ u_{124} u_{146} u_{236} u_{245} u_{256} u_{346} u_{246}^2 &= 1 \\[6pt]
u_{136} + u_{124} u_{125} u_{245} u_{246} u_{256} &= 1
&\qquad u_{145} +u_{246} u_{256} u_{346} u_{356} &= 1 \\[6pt]
u_{146} +u_{125} u_{135} u_{235} u_{256} u_{356} &= 1
&\qquad u_{235} +u_{124} u_{134} u_{146} u_{246} u_{346} &= 1 \\[6pt]
u_{236} +u_{124} u_{125} u_{134} u_{135} &= 1
&\qquad u_{245} +u_{134} u_{135} u_{136} u_{346} u_{356} &= 1 \\[6pt]
u_{246}+ u_{125} u_{134} u_{136} u_{145} u_{235} u_{356} u_{135}^2 &= 1
&\qquad u_{256}+ u_{134} u_{135} u_{136} u_{145} u_{146} &= 1 \\[6pt]
u_{346}+ u_{125} u_{135} u_{145} u_{235} u_{245} &= 1
&\qquad u_{356}+ u_{145} u_{146} u_{245} u_{246} &= 1
\end{align*}

\medskip

\subsection{The varieties $\cM$ and $\cU$}
The main result of this section is that the~$\hF$-equations \eqref{eq:hFeqns} imply the~$u$-equations. In other words, $\widetilde {\cU}_\Lambda \supseteq \widetilde {\cM}_\Lambda$. The main tool in order to prove this is a kind of ``exchange relation'' for the $\hF$-polynomials.

\begin{proposition}\label{prop:F-tilde-imply-u}
 Let~$X\in \ind K_\Lambda$.
 \begin{enumerate}
  \item If~$X$ is neither projective nor injective, with almost split conflation~~$\tau X \rightarrowtail E_X \twoheadrightarrow X$, then~$\hF_X \hF_{\tau X} = u_X \hF_{E_X} + \prod_{W \in \ind K_\Lambda} u_W^{c(W,X)}$.
  \item If~$X = P_i$, then~$\hF_X = u_X \hF_{\rad P_i} + \prod_{W \in \ind K_\Lambda} u_W^{c(W,X)}$.
  \item If $X = \Sigma P_i$, then~$\hF_{\tau X} = u_X \hF_{I_i/S_i} + \prod_{W \in \ind K_\Lambda} u_W^{c(W,X)}$.
\end{enumerate}
\end{proposition}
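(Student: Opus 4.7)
The plan is to reduce each identity to an exchange relation for ordinary $F$-polynomials via Lemma \ref{reform}, then prove that $F$-polynomial identity geometrically using the almost split structure.

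For part (1), set $\mu_M := \prod_V u_V^{\hom_\Lambda(H^0V, M)}$, so that $\hF_M = \mu_M\cdot\Psi(F_M)$. Applying $H^0$ to the almost split conflation $\tau X \rightarrowtail E_X \twoheadrightarrow X$ yields, by Lemma \ref{lem:almost-split}(1), the Auslander--Reiten sequence $0 \to H^0\tau X \to H^0 E_X \to H^0 X \to 0$ in $\mod\Lambda$. Proposition \ref{prop:ar}(1), tested against $H^0V$ and exponentiated into $u_V$, gives the prefactor identity
\[
\mu_X\cdot\mu_{\tau X} \; = \; u_X\cdot\mu_{E_X}.
\]
Here we use that $H^0X$ is a non-zero indecomposable module (as $X$ is not of the form $\Sigma P_j$), so that $H^0V\cong H^0X$ iff $V\cong X$ in $\ind K_\Lambda$. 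Dividing the statement of (1) by $u_X\mu_{E_X}$ and applying Lemmas \ref{lem:cXY} and \ref{lem:gd} (the latter with $M=H^0X$) identifies the exponent of each $u_W$ in the residual monomial as $-\langle g(W),d(X)\rangle$. The problem thus reduces to the $F$-polynomial identity
\[
F_X\cdot F_{\tau X} \; = \; F_{E_X} \; + \; y^{d(X)}
\]
in $\bZ[y_1,\dots,y_n]$.

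This $F$-polynomial identity is the main step, asserting coefficient-wise that
\[
\sum_{\bd+\bd'=\be}\chi(\Gr_\bd(H^0X))\,\chi(\Gr_{\bd'}(H^0\tau X)) \; = \; \chi(\Gr_\be(H^0E_X)) + \delta_{\be,\,d(X)}
\]
for each $\be\in\bZ_{\geq 0}^n$. I would prove it by studying the natural constructible map
\[
\phi:\Gr_\be(H^0E_X)\longrightarrow \bigsqcup_{\bd+\bd'=\be}\Gr_\bd(H^0X)\times\Gr_{\bd'}(H^0\tau X),\qquad N\longmapsto \bigl(N/(N\cap H^0\tau X),\;N\cap H^0\tau X\bigr).
\]
The fibre of $\phi$ over $(K,L)$ consists of submodules $N\subseteq H^0E_X$ fitting into a commutative diagram of short exact sequences with $0\to L\to N\to K\to 0$ on top and the ambient AR sequence on the bottom; this is an affine subspace of $\Ext^1(K,L)$ recording compatibility with the extension class $[H^0E_X]\in\Ext^1(H^0X,H^0\tau X)$. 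The \emph{almost split} hypothesis---equivalently, Auslander--Reiten duality forcing $\Ext^1(H^0X,H^0\tau X)$ to be one-dimensional---means that the fibres have Euler characteristic $1$ except over the single distinguished stratum $(K,L)=(H^0X,0)$, where the fibre is empty because a section $H^0X\to H^0E_X$ would split the AR sequence. This fibre analysis, well-known in the cluster-category literature (Caldero--Keller, Hubery, Palu), accounts for the correction $+\delta_{\be,d(X)}$ and is the main obstacle of the proof.

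For parts (2) and (3), the AR conflation is replaced by the other almost-split data of Proposition \ref{prop:ar}. In (2), the right-almost-split inclusion $\rad P_i\hookrightarrow P_i$ together with Proposition \ref{prop:ar}(2) gives $\mu_{P_i}=u_{P_i}\cdot\mu_{\rad P_i}$, and the $F$-polynomial identity $F_{P_i}=F_{\rad P_i}+y^{d(P_i)}$ follows immediately from the locality of $P_i$: any proper submodule of $P_i$ lies in $\rad P_i$, while $P_i$ itself contributes the single extra monomial $y^{d(P_i)}$. In (3), Lemma \ref{lem:almost-split}(2) identifies $H^0\tau(\Sigma P_i)=I_i$ and $H^0E_{\Sigma P_i}=I_i/S_i$; the left-almost-split quotient $I_i\twoheadrightarrow I_i/S_i$ from Proposition \ref{prop:ar}(4), combined with Lemma \ref{lem:exthom} (to convert $\ext^1(H^0V,S_i)$ into $\hom(S_i,H^0\tau V)$), handles the prefactor matching. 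The relevant $F$-polynomial identity becomes $F_{I_i}=1+y^{e_i}\cdot F_{I_i/S_i}$, which holds because $S_i$ is the essential socle of $I_i$, so that quotienting by $S_i$ puts the non-zero submodules of $I_i$ in bijection with the submodules of $I_i/S_i$.
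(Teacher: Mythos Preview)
Your overall architecture matches the paper's proof exactly: use Lemma~\ref{reform} to reduce each identity to an $F$-polynomial exchange relation, then match the monomial prefactors using Proposition~\ref{prop:ar} and Lemma~\ref{lem:cXY}. The paper does not attempt to prove the $F$-polynomial identity $F_X F_{\tau X}=F_{E_X}+y^{d(X)}$ from scratch; it records this as Lemma~\ref{theo::fpolynomials}(2) and cites \cite{DominguezGeiss14}. Your decision to sketch the Grassmannian-fibre argument is the only substantive difference.

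Two points to flag. First, the parenthetical justification in your sketch---``Auslander--Reiten duality forcing $\Ext^1(H^0X,H^0\tau X)$ to be one-dimensional''---is not correct in general: AR duality gives $\Ext^1(M,\tau M)\cong D\overline{\End}(M)$, and for a non-hereditary finite type algebra $\overline{\End}(M)$ can have dimension greater than one. The fibre analysis in the cited sources (Caldero--Keller, Palu, and specifically \cite{DominguezGeiss14} for this generality) does not proceed by asserting $\dim\Ext^1=1$; it instead stratifies the product Grassmannian and shows that the relevant constructible maps are affine-fibrations over each stratum. Your conclusion is correct, but the stated reason is not, and since you defer to the literature anyway you may as well cite it as the paper does.

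Second, in part~(3) your appeal to Proposition~\ref{prop:ar}(4) is misplaced: that item computes $\hom(I_i,-)-\hom(I_i/S_i,-)$, whereas the prefactor matching requires the opposite variance, namely $\hom(H^0W,I_i)-\hom(H^0W,I_i/S_i)$. The paper obtains this by applying $\Hom(H^0W,-)$ to $0\to S_i\to I_i\to I_i/S_i\to 0$, converting the resulting $\ext^1(H^0W,S_i)$ via Lemma~\ref{lem:exthom}, and then invoking Lemma~\ref{lem:gd} to produce the extra $g(W)_i$ that singles out $W=\Sigma P_i$. You have the right ingredients (you mention Lemma~\ref{lem:exthom}), but the long exact sequence, not Proposition~\ref{prop:ar}(4), is what drives the computation.
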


From Proposition \ref{prop:F-tilde-imply-u}, it follows immediately that

\begin{corollary}\label{coro:F-tilde-imply-u}
 If~$\hF_X = 1$ for all~$X\in \ind K_\Lambda$, then~$u_X + \prod_{W\in \ind K_\Lambda} u_W^{c(W,X)} = 1$ for all~$X\in \ind K_\Lambda$.
\end{corollary}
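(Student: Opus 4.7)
The plan is to deduce the corollary directly from the three cases of Proposition \ref{prop:F-tilde-imply-u} by a uniform substitution argument, once we know that $\hF$ is multiplicative on direct sums. Fix $X \in \ind K_\Lambda$; the three cases (neither projective nor injective; $X = P_i$; $X = \Sigma P_i$) exhaust $\ind K_\Lambda$, and each identity has the schematic form
\[
\hF_{A} \cdot \hF_{B} \;=\; u_X \cdot \hF_{C} \;+\; \prod_{W \in \ind K_\Lambda} u_W^{c(W,X)},
\]
where $A, B, C$ are either $0$, an indecomposable of $K_\Lambda$, or a module of the form $E_X$, $\rad P_i$, $I_i/S_i$ (these last three being, in general, \emph{decomposable}). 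So the whole task is to show that, under the hypothesis, every such $\hF$-factor on the left- and right-hand sides equals $1$.

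First I would establish multiplicativity: $\hF_{M \oplus N} = \hF_M \cdot \hF_N$ for any $M, N \in \mod\Lambda$. Using Lemma \ref{reform}, this reduces to two well-known facts: the exponent prefactor $\prod_V u_V^{\hom_\Lambda(H^0V,\,-)}$ is additive in the argument, and the classical $F$-polynomial is multiplicative on direct sums, $F_{M \oplus N} = F_M F_N$, since the submodule Grassmannian stratifies over $\bigsqcup_{\bd_1 + \bd_2 = \bd} \Gr_{\bd_1}(M) \times \Gr_{\bd_2}(N)$ with Euler characteristics behaving multiplicatively. Applying $\Psi$ preserves this product.

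Next, by Krull--Schmidt every finite-dimensional $\Lambda$-module decomposes as a direct sum of indecomposable modules, each of which is of the form $H^0 V$ for some $V \in \ind K_\Lambda$ (namely, take $V$ to be the minimal projective presentation). Since the hypothesis $\hF_V = 1$ for $V \in \ind K_\Lambda$ is by definition $\hF_{H^0 V} = 1$, and since $\hF_0 = 1$ trivially (handling the case $H^0(\Sigma P_i) = 0$), multiplicativity upgrades this to $\hF_M = 1$ for \emph{every} module $M \in \mod\Lambda$. In particular $\hF_{E_X}, \hF_{\rad P_i}, \hF_{I_i/S_i}, \hF_{\tau X} = 1$.

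Finally, I would substitute these values into each of the three identities of Proposition \ref{prop:F-tilde-imply-u}. In each case the left-hand side collapses to $1$ and the first term on the right collapses to $u_X$, yielding exactly
\[
u_X + \prod_{W \in \ind K_\Lambda} u_W^{c(W,X)} = 1.
\]
There is no genuine obstacle: given Proposition \ref{prop:F-tilde-imply-u}, the only ingredient not already in the text is the multiplicativity of $\hF$, and the slightly delicate part of that is invoking the multiplicativity of the classical $F$-polynomial on direct sums (via Euler characteristics of submodule Grassmannians).
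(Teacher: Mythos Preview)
Your argument is correct and is exactly what the paper has in mind when it says the corollary follows immediately from Proposition~\ref{prop:F-tilde-imply-u}; you have simply spelled out the implicit step that $\hF_M = 1$ for every (possibly decomposable) module $M$. Note that the multiplicativity of the classical $F$-polynomial on direct sums, which you flag as the delicate point, is already recorded in the paper as Lemma~\ref{theo::fpolynomials}(1), so combined with Lemma~\ref{reform} the multiplicativity of $\hF$ is immediate and you need not re-derive it from the Grassmannian stratification.
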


\medskip

To prove Proposition~\ref{prop:F-tilde-imply-u}, we need some relations between different~$F$-polynomials.

\begin{lemma}\label{lem:projective}
For an indecomposable projective $P_i \in \mod\Lambda$,~$F_{P_i} = F_{\rad P_i} + y^{d(P_i)}$. For an indecomposable injective $I_i \in \mod\Lambda$,~$F_{I_i} = y_i F_{I_i / S_i} + 1$.
\end{lemma}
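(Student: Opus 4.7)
The plan is to prove both identities by a direct analysis of the submodule lattices of $P_i$ and $I_i$, using that $P_i$ is local (unique maximal submodule $\rad P_i$, since $P_i/\rad P_i = S_i$ is simple) and that $I_i$ has simple socle $S_i$.

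For the projective case, I would argue that every proper submodule $L \subset P_i$ satisfies $L \subseteq \rad P_i$, because $\rad P_i$ is the unique maximal submodule. Hence for every $\bd \neq d(P_i)$, the set of submodules of $P_i$ of dimension vector $\bd$ coincides with the set of submodules of $\rad P_i$ of dimension vector $\bd$. Since the submodule Grassmannians are defined as subvarieties of $\prod_i \Gr_{d_i}(-\, e_i)$ cut out by the module condition, and the two modules have the same submodules in these dimensions, we get equalities of schemes $\Gr_{\bd}(P_i) = \Gr_{\bd}(\rad P_i)$. The only extra submodule is $P_i$ itself, in dimension vector $d(P_i)$, and $\Gr_{d(P_i)}(P_i)$ is a reduced point, while $\Gr_{d(P_i)}(\rad P_i) = \emptyset$. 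Summing over $\bd$ and using $\chi(\Gr_{d(P_i)}(\rad P_i)) = 0$ gives $F_{P_i} = F_{\rad P_i} + y^{d(P_i)}$.

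For the injective case, the dual observation is that every nonzero submodule $L \subseteq I_i$ must contain $S_i$: indeed, $L$ has a simple submodule, and the only simple submodule of $I_i$ is $\soc I_i = S_i$. Thus the quotient map $\pi : I_i \twoheadrightarrow I_i/S_i$ induces a bijection $L \mapsto L/S_i$ between nonzero submodules of $I_i$ and submodules of $I_i/S_i$, with $d(L) = d(L/S_i) + e_i$. Taking preimages under $\pi$ gives the inverse. This bijection extends to an isomorphism of varieties $\Gr_{\bd}(I_i) \xrightarrow{\sim} \Gr_{\bd - e_i}(I_i/S_i)$ whenever $\bd_i \geq 1$ (both sides being empty otherwise for nonzero $\bd$). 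Taking Euler characteristics, and separating out the contribution of the zero submodule (which yields the constant term $1$), one obtains
\[
F_{I_i} = 1 + \sum_{\bd' \geq 0} \chi(\Gr_{\bd'}(I_i/S_i))\, y^{\bd' + e_i} = 1 + y_i F_{I_i/S_i}.
\]

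The content here is purely combinatorial: no hard step is hidden, since both identities reduce to the bijections above together with the compatibility of Grassmannian structures under the inclusion $\rad P_i \hookrightarrow P_i$ and the projection $I_i \twoheadrightarrow I_i/S_i$. The only mild subtlety to check is that these are isomorphisms of varieties and not merely bijections of points, but this follows directly from the fact that the submodule Grassmannians embed compatibly in the ambient products of ordinary Grassmannians associated with the inclusion and projection respectively.
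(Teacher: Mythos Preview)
Your proof is correct and follows exactly the same approach as the paper: proper submodules of $P_i$ lie in $\rad P_i$, and nonzero submodules of $I_i$ contain $\soc I_i = S_i$. The paper's proof is a terse two-sentence version of what you wrote; your added remarks about the variety structure are fine but not strictly needed, since the identifications of submodule Grassmannians are literal equalities (in the projective case) or obvious isomorphisms (in the injective case).
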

\begin{proof}
If a submodule of $P$ is not $P$ itself, then it is a submodule of the radical, $\rad P$. $I_i$ contains the zero submodule, and all other submodules contain $\soc I_i=S_i$. 
\end{proof}

\begin{lemma}\label{theo::fpolynomials}  For~$V \rightarrowtail X \twoheadrightarrow W$ a conflation in~$K_\Lambda$:
 \begin{enumerate}
  \item  if the conflation is split, then~$F_VF_W = F_X$,
  \item if the conflation is almost split, and $W$ is not injective, then~$F_VF_W = F_X + y^{d(W)}$.
 \end{enumerate}
\end{lemma}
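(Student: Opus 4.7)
For part (1), Lemma \ref{lem:split} implies $H^0 X \cong H^0 V \oplus H^0 W$ in $\mod\Lambda$, so it suffices to establish the multiplicativity $F_{M \oplus M'} = F_M F_{M'}$. The plan is to use the $\mathbb{C}^*$-action on $M \oplus M'$ scaling the first summand: the fixed locus of the induced action on $\Gr_\bd(M \oplus M')$ is $\bigsqcup_{\bd_1 + \bd_2 = \bd} \Gr_{\bd_1}(M) \times \Gr_{\bd_2}(M')$, and Bialynicki--Birula equates the Euler characteristic of the total space with that of the fixed locus.

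For part (2), the hypotheses (almost split conflation in $K_\Lambda$, $W$ non-injective in $K_\Lambda$) together with Lemma \ref{lem:almost-split}(1) produce an almost split sequence $0 \to L \to E \to N \to 0$ in $\mod\Lambda$, with $L = \tau H^0 W$, $E = H^0 X$, $N = H^0 W$, and $L, N$ indecomposable. The plan is to compute $\chi(\Gr_\gamma(E))$ by stratification. For each $L' \in \Gr_\gamma(E)$, attach the pair $(A, B) = (L' \cap L,\, \pi(L')) \in \Gr_\bd(L) \times \Gr_{\bd'}(N)$ with $\bd + \bd' = \gamma$, where $\pi: E \to N$ is the projection. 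For a fixed target pair, the fiber is the set of splittings of the sequence $0 \to L/A \to (E/A)|_B \to B \to 0$ obtained from the AR sequence by pulling back along $i: B \hookrightarrow N$ and pushing forward along $q: L \twoheadrightarrow L/A$. When non-empty, this set is a torsor over $\Hom_\Lambda(B, L/A)$, hence an affine space of Euler characteristic $1$.

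The crux is to show the fiber is empty precisely when $(A, B) = (0, N)$, which reduces to deciding when the obstruction class $i^* q_*[E] \in \Ext^1_\Lambda(B, L/A)$ is non-zero. I would use both almost-splitness properties of the AR sequence. If $B \subsetneq N$, the inclusion $i$ is not a split epi, so by right almost splitness of $E \to N$ it factors through $E$, giving $i^*[E] = 0$ and hence $i^* q_*[E] = 0$. If $B = N$ but $A \neq 0$, the quotient $q$ is not a split mono (else $L$ would decompose, contradicting its indecomposability), so by left almost splitness of $L \to E$, $q$ factors through $E$ and $q_*[E] = 0$. Only the single pair $(0, N)$ preserves the non-zero class $[E]$. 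The resulting count, $\chi(\Gr_\gamma(E)) = \sum_{\bd + \bd' = \gamma} \chi(\Gr_\bd(L))\,\chi(\Gr_{\bd'}(N))$ with a correction of $-1$ precisely when $\gamma = d(N)$, weighted by $y^\gamma$ and summed, yields $F_E = F_L F_N - y^{d(N)}$. The main obstacle is the simultaneous use of both almost-splitness properties, together with the indecomposability of $L$ and $N$, to isolate the single defect.
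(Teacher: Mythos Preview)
Your proposal is correct. Both parts reduce correctly to $\mod\Lambda$ via Lemmas~\ref{lem:split} and~\ref{lem:almost-split}, and your arguments there are sound: the $\mathbb{C}^*$-fixed-point count in (1) is the standard proof of multiplicativity, and in (2) your identification of the fibre over $(A,B)$ with splittings of $q_*i^*[E]$, together with the use of right and left almost splitness to force $q_*i^*[E]=0$ unless $(A,B)=(0,N)$, is exactly right.

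The paper, however, takes a different and much shorter route: it simply invokes the module-category identities from the literature (\cite[Prop.~2.13]{Plamondon18} or \cite{CalderoChapoton06} for the split case, and the main result of \cite{DominguezGeiss14} for the almost split case) and then appeals to Lemmas~\ref{lem:split} and~\ref{lem:almost-split} to transport them to $K_\Lambda$. Your argument for (2) is, in effect, a reconstruction of the Dom\'inguez--Geiss proof. What this buys you is self-containment; what the paper's approach buys is brevity and a clean attribution. One technical point you leave implicit is that the map $L'\mapsto (L'\cap L,\pi(L'))$ yields a constructible stratification over which the affine-space fibres (of varying dimension) still allow Euler-characteristic additivity to go through; this is routine over $\mathbb C$ and is handled carefully in \cite{DominguezGeiss14}, but would need a sentence if you were writing this out in full.
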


\begin{proof} Recall that for $X \in K_\Lambda$, $F_X := F_{H^0X}$. The identities then follow from the corresponding identities in $\mod\Lambda$, together with Lemma \ref{lem:almost-split}, which relates short exact sequences in $\mod\Lambda$ and conflations in $K_\Lambda$. Indeed, (1) holds for split short exact sequences in $\mod\Lambda$ by \cite[Prop. 2.13]{Plamondon18} (see also \cite{CalderoChapoton06, DominguezGeiss14}). And,  by Lemma \ref{lem:split}, $H^0$ sends a split conflation to a split short exact sequence in $\mod\Lambda$. Whereas (2) holds for an almost split short exact sequence in $\mod\Lambda$, which is the main result of \cite{DominguezGeiss14}. And, since $W$ is non-injective, $H^0$ sends an almost split conflation to an almost split short exact sequence (Lemma \ref{lem:almost-split}).
\end{proof}

  \smallskip

\begin{proof}[Proof of Proposition~\ref{prop:F-tilde-imply-u}]  (1) Take~$X\in \ind K_\Lambda$ neither projective nor injective.  By Lemma~\ref{theo::fpolynomials}(2), we have~$F_XF_{\tau X} = F_{E_X} + y^{d(X)}$. We can turn this into a relation among $\hF$-polynomials. Using the homomorphism $\Psi$ and Lemma \ref{reform},
 \begin{multline}
  \hF_X\hF_{\tau X} = \Psi (F_{E_X})\prod_{W\in \ind K_\Lambda} u_W^{\hom_{\Lambda}(H^0W,H^0X) + \hom_{\Lambda}(H^0W,\tau H^0X)} \notag \\  +  \prod_{W\in \ind K_\Lambda} u_W^{-\langle g(W), d(X) \rangle + \hom_{\Lambda}(H^0W,H^0X) + \hom_{\Lambda}(H^0W,\tau H^0X) } \notag
  \end{multline}
  The first term on the right hand side is equal to~$u_X\hF_{E_X}$, by Proposition \ref{prop:ar}, which implies that
  \[
 \hom_{\Lambda}(H^0W,H^0X) + \hom_{\Lambda}(H^0W,\tau H^0X)
  = \begin{cases}
                                  \displaystyle    \hom_{\Lambda}(H^0W, H^0E_X)     & \textrm{if $X \neq W$} \\
                            \displaystyle         \hom_{\Lambda}(H^0W, H^0E_X) + 1 & \textrm{if $X = W$.}
                                       \end{cases}
  \]
And the second term is equal to~$\prod_{W \in \ind K_\Lambda} u_W^{c(W,X)}$, by Lemma \ref{lem:cXY}.

(2) Take $X=P_i$. By Lemma \ref{lem:projective}, $F_X = F_{\rad H^0X} + y^{d(X)}$ and, using $\Psi$ and Lemma \ref{reform}, 
\[
\hF_X =    \Psi (F_{\rad H^0X})\prod_{W\in \ind K_\Lambda} u_W^{\hom_{\Lambda}(H^0W, H^0X)} + \prod_{W\in \ind K_\Lambda} u_W^{-\langle g(W), d(X) \rangle +  \hom_{\Lambda}(H^0W, H^0X)}.
\]
The first term is equal to~$u_X\hF_{\rad P_i}$, by Proposition \ref{prop:ar}. The second term is equal to~$\prod_{W \in \ind K_\Lambda} u_W^{c(W,X)}$, by Lemma \ref{lem:cXY}, noting that $\tau H^0X = 0$.

(3) Take $X = \Sigma P_i$ injective. By Lemma \ref{lem:projective}, $F_{\tau X} = y_i F_E + 1$, where $H^0(\tau X) = I_i$ and $H^0E = I_i/S_i$. Using $\Psi$ and Lemma \ref{reform}, this becomes
\[
\hF_{\tau X} = \Psi(F_E) \prod_{W \in \ind K_\Lambda} u_W^{\hom_\Lambda(H^0W, H^0(\tau X)) - g(W)_i} + \prod_{W\in \ind K_\Lambda} u_W^{\hom_\Lambda(H^0W, I_i)}.
\]
The second term is equal to $\prod_{W\in \ind K_\Lambda} u_W^{c(W,X)}$, by Lemma \ref{lem:cXY}, noting that $H^0X = 0$. The first term is equal to $u_X \hF_E$, because
  \begin{equation}\label{eq:injectivewrongway}
 \hom_\Lambda(H^0W,H^0(\tau X)) - g(W)_i = \begin{cases}
                                  \displaystyle     \hom_{\Lambda}(H^0W, H^0E)     & \textrm{if $X \neq W$} \\
                            \displaystyle            \hom_{\Lambda}(H^0W, H^0E) + 1 & \textrm{if $X = W$.}
                                       \end{cases}
  \end{equation}
This follows from the short exact sequence $0 \rightarrow S_i \rightarrow I_i \rightarrow I_i/S_i \rightarrow 0$, which gives
\[
0 \rightarrow \Hom_\Lambda(H^0W, S_i) \rightarrow \Hom_\Lambda(H^0W, I_i) \rightarrow \Hom_\Lambda(H^0W, I_i/S_i) \rightarrow \Ext^1(H^0W,S_i) \rightarrow 0.
\]
But, by Lemma \ref{lem:exthom}, $\mathrm{ext}^1(M,S_i) = {\hom}(S_i,\tau M)$, so that
\[
\hom_\Lambda(H^0W,I_i/S_i) - \hom_\Lambda(H^0W,I_i) = \hom_\Lambda(H^0W,S_i) - \hom_\Lambda(S_i,\tau H^0W),
\]
which implies \eqref{eq:injectivewrongway}, by Lemma \ref{lem:gd}) and observing that $H^0(\Sigma P_i) = 0$, but $g(\Sigma P_i)_i = -1$.
\end{proof}

We can prove a converse to Corollary~\ref{coro:F-tilde-imply-u} in some special cases.

\begin{proposition}\label{prop:u-equations-imply-F-hat-sometimes}
 Assume that the Auslander--Reiten quiver of~$K_\Lambda$ has no oriented cycle.
 Then the~$u$-equations imply that~$\widehat{F}_X = 1$ for all~$X\in\ind K_\Lambda$.  In particular,~$\widetilde{\cM}_\Lambda = \widetilde{\cU}_\Lambda$.
\end{proposition}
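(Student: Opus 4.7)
The approach is to use the exchange relations of Proposition \ref{prop:F-tilde-imply-u} together with strong induction along a linear extension of the Auslander--Reiten quiver of $K_\Lambda$. Acyclicity of the AR quiver, combined with the finiteness of $\ind K_\Lambda$ (Assumption \ref{finiteness-assumption}), ensures such a total order $\prec$ exists on $\ind K_\Lambda$, with $Y \prec X$ whenever there is an arrow $Y \to X$. The goal is to show $\hF_X = 1$ in the quotient ring $\widetilde{S}_\Lambda = \bC[u_V \mid V \in \ind K_\Lambda]/\langle u\text{-equations}\rangle$ for every $X \in \ind K_\Lambda$; together with Corollary \ref{coro:F-tilde-imply-u}, this yields $\widetilde{\cM}_\Lambda = \widetilde{\cU}_\Lambda$.

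The induction splits into the three cases of Proposition \ref{prop:F-tilde-imply-u}. If $X = \Sigma P_i$ is injective in $K_\Lambda$, then $H^0 X = 0$, so $\hF_X = 1$ trivially. If $X = P_i$ is projective in $K_\Lambda$, the $u$-equation rewrites $\prod_W u_W^{c(W,X)}$ as $1-u_X$, turning Proposition \ref{prop:F-tilde-imply-u}(2) into
\[
\hF_{P_i} = u_{P_i}\hF_{\rad P_i} + (1-u_{P_i}),
\]
so it suffices to show $\hF_{\rad P_i} = 1$. The indecomposable summands of $\rad P_i$, viewed in $K_\Lambda$ via their minimal projective presentations, are exactly the direct predecessors of $P_i$ in the AR quiver, and the multiplicativity $\hF_{M\oplus N} = \hF_M \hF_N$---immediate from Lemma \ref{reform}, Lemma \ref{theo::fpolynomials}(1), and additivity of $\Hom$---lets the inductive hypothesis finish the job. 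Finally, if $X$ is neither projective nor injective, with AR conflation $\tau X \rightarrowtail E_X \twoheadrightarrow X$, the same rewriting applied to Proposition \ref{prop:F-tilde-imply-u}(1) gives
\[
\hF_X \hF_{\tau X} = u_X \hF_{E_X} + (1-u_X).
\]
The path $\tau X \to (\text{summands of }E_X) \to X$ in the AR quiver places both $\tau X$ and every indecomposable summand of $E_X$ strictly below $X$ in $\prec$, so the inductive hypothesis (together with Case 1 for any summand that happens to be an injective $\Sigma P_j$) forces $\hF_{\tau X} = \hF_{E_X} = 1$ and hence $\hF_X = 1$.

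The base case is absorbed into the three cases. The sources of the AR quiver are either $\Sigma P_i$'s (trivial by Case 1) or simple projectives $P_i = S_i$ with $\rad P_i = 0$, in which case Case 2 collapses to $\hF_{P_i} = u_{P_i} + (1-u_{P_i}) = 1$. A non-projective non-injective $X \in K_\Lambda$ cannot be a source, since $E_X = 0$ would force a distinguished triangle $\tau X \to 0 \to X \to \Sigma \tau X$, giving $X \cong \Sigma \tau X$ in $K^b(\proj \Lambda)$, which is incompatible with the degree constraints defining $K_\Lambda$ since $\Sigma$ moves the support window $\{-1,0\}$ to $\{-2,-1\}$.

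The main obstacle I expect is simply the bookkeeping to verify that the induction is well-founded, namely that every object appearing on the right-hand side of each exchange relation for $X$ lies strictly below $X$ in $\prec$. This is precisely where the acyclicity hypothesis is indispensable: without it, $\tau X$ (or a summand of $E_X$) could be forced to sit after $X$ in any attempted ordering, and the recursion would loop. Once the ordering is settled, each case reduces to an elementary substitution of $(1-u_X)$ for $\prod_W u_W^{c(W,X)}$ followed by an invocation of the inductive hypothesis and multiplicativity of $\hF$.
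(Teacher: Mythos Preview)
Your proof is correct and follows essentially the same strategy as the paper: induction along the Auslander--Reiten quiver of $K_\Lambda$, using the exchange relations of Proposition~\ref{prop:F-tilde-imply-u} together with the $u$-equations. The only difference is the direction of the induction. You work forward from sources, using the AR conflation $\tau X \rightarrowtail E_X \twoheadrightarrow X$ ending at $X$, which forces you to treat the projective case $X = P_i$ separately via Proposition~\ref{prop:F-tilde-imply-u}(2) and $\rad P_i$. The paper instead works backward from the shifted projectives (where $\hF = 1$ is trivial), using the mesh $X \to E_i \to \tau^{-1} X$ starting at $X$; since every non-injective $X$ admits such a mesh, this handles projectives and non-projectives uniformly and never invokes case~(2). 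Both arguments rely on the acyclicity hypothesis in exactly the same way, so the difference is organizational rather than substantive.
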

\begin{proof}
 We can assume that the algebra~$\Lambda$ is connected.  Unless~$\Lambda = \bC$, in which case the statement is trivially true, the Auslander--Reiten quiver~$\Gamma$ of~$K_\Lambda$ is connected.  Let~$\Gamma^+$ be the quiver obtained by adding arrows~$\tau X \to X$ for each non-projective~$X$.  Then~$\Gamma^+$ does not have oriented cycles.

We prove the result by an inductive procedure as follows: suppose that we have a non-empty subset~$\cI$ of~$\ind K_\Lambda$ consisting of objects~$Y$ for which~$\widehat{F}_Y = 1$ under the~$u$-equations. We start the procedure by letting~$\cI$ be the set of shifted projectives.  If~$\cI \neq \ind K_\Lambda$, then we can find an indecomposable object~$X$ not in~$\cI$ such that~$\widehat F_X = 1$ as follows.  Let~$\bar\Gamma^+$ be the quiver obtained by removing all vertices of~$\Gamma^+$ corresponding to objects in~$\cI$ and all arrows connected to them.  Since this quiver has no oriented cycle, it has a sink~$X$.  If~$X$ is a shifted projective, then it is already in $\cI$, contrary to our hypothesis.
  Otherwise, it sits in a mesh
 {\small
 \[\begin{tikzcd}
	& {E_1} \\
	{X} & {E_2} & \tau^{-1} X \\
	& {E_r}
	\arrow[from=1-2, to=2-3]
	\arrow[from=2-1, to=1-2]
	\arrow[from=2-1, to=2-2]
	\arrow[from=2-1, to=3-2]
	\arrow[from=2-2, to=2-3]
	\arrow["\vdots"{description}, draw=none, from=2-2, to=3-2]
	\arrow[from=3-2, to=2-3]
\end{tikzcd}\]
}
and by Proposition~\ref{prop:F-tilde-imply-u}, we have that
\[
 \widehat F_X \widehat F_{\tau^{-1} X} = u_{\tau^{-1} X} \prod_{i=1}^r \widehat F_{E_i} + \prod_{W\in \ind K_\Lambda} u_W^{c(W, \tau^{-1}X)}.
\]
By assumption on~$X$, the objects~$E_i$ and~$\tau^{-1}X$ are all in~$\cI$.  Therefore, we get that
\(
 \widehat F_X = u_{\tau^{-1} X}  + \prod_{W\in \ind K_\Lambda} u_W^{c(W, \tau^{-1}X)},
\)
which is equal to~$1$ under the~$u$-equations.  Thus we can add~$X$ to~$\cI$ and apply induction.  The result is proved.
\end{proof}

\begin{example}
Here are some families of examples for which Proposition~\ref{prop:u-equations-imply-F-hat-sometimes} applies:
 \begin{enumerate}
  \item if~$Q$ is any orientation of a Dynkin quiver of type~$A$, $D$ or~$E$ and~$\Lambda = \bC Q$ is its path algebra;

  \item if~$\Lambda$ is the path algebra of~$1\to 2\to \cdots \to n$ modulo any ideal generated by paths of length at least~$2$;

  \item if~$\Lambda$ is a tilted algebra of Dynkin type~$A$,$D$ or~$E$.  Indeed, in this case,~$\mod\Lambda$ is equivalent to the heart of some~$t$-structure in~$\cD^b(\mod \bC Q)$, with~$Q$ a quiver of the corresponding Dynkin type, and thus cannot have oriented cycles by Happel's theorem on the structure of~$\cD^b(\mod \bC Q)$ \cite{Happel}.
 \end{enumerate}

\end{example}

\begin{example}\label{ex:U=M}
Using a computer, we verified the equality~$\widetilde \cM_\Lambda = \widetilde \cU_\Lambda$ for the following algebras: $\bC[x]/(x^d)$ for~$d=1,2,\ldots, 9$; a $2$-cycle~$1\xrightarrow{a}2\xrightarrow{b}1$ with relation~$ab = 0$; a~$4$-cycle where all paths of length two vanish; and the quiver
\[\begin{tikzcd}
	1 & 2
	\arrow["a", shift left, from=1-1, to=1-2]
	\arrow["b", shift left, from=1-2, to=1-1]
	\arrow["x"', from=1-2, to=1-2, loop, in=30, out=330, distance=5mm]
\end{tikzcd}\]
where all path of length two vanish.
\end{example}

\section{Split Grothendieck groups}\label{sec:split}

In this section, we give bases for the split Grothendieck groups of $\mod \Lambda$ and also study the split Grothendieck group of $K_\Lambda$.

\begin{definition}
 For any extriangulated category~$\cC$, its \emph{split Grothendieck group} is 

 \[
  K_0^{\spl}(\cC) := \bigoplus_{A \in \ind\cC} \bZ\cdot[A] \big/ \langle [X]-[E]+[Y] \ | \ X \rightarrowtail E \twoheadrightarrow Y \textrm{ split conflation} \rangle. 
 \]
In the case of the module category, $\cC = \mod \Lambda$, split conflation means split short exact sequence.
\end{definition}

Consider first the split Grothendieck group of $\mod \Lambda$.   
The set of classes $[M]$ for all indecomposable modules is a basis of $K_0^{\spl}(\mod\Lambda)$, which we call the standard basis. We will give two other bases of $K_0^{\spl}(\mod \Lambda)$, each of which is dual in a suitable sense to the standard basis.

It is natural to consider classes associated to almost split short exact sequences. For any non-projective indecomposable object~$M$ of~$\mod\Lambda$, we write
\[
r_M = [\tau M]-[E]+[ M] 
\]
for the class associated to its Auslander-Reiten sequence, $0 \rightarrow \tau M \rightarrow E \rightarrow M \rightarrow 0$. In the case that $M = P_i$, we write
\[
r_{P_i} = -[\rad P_i] + [P_i],
\]
for the class associated to the right almost split inclusion. Similarly, for any non-injective indecomposable~$M$ of~$\mod\Lambda$, write
\[
\ell_M = [M] - [E] + [\tau^{-1} M],
\]
for the class associated to $0 \rightarrow M \rightarrow E \rightarrow \tau^{-1}M \rightarrow 0$. In the case $M = I_i$, write
\[
\ell_{I_i} = [I_i] - [I_i/S_i],
\]
for the class associated to the left almost split quotient. Note that  for non-projective $M$, $\ell_{\tau M} = r_M$.

There is a natural pairing on $K_0^{\spl}(\mod \Lambda)$ given by
\[
([M],[N]) = \hom_\Lambda(M,N).
\]
With respect to this pairing we have

\begin{lemma}\label{dual-bases}
The set of classes $r_M$ are a basis for $K_0^{\spl}(\mod\Lambda)$, and this is right-dual to the standard basis, in the sense that 
\[
([M],r_N) = \begin{cases} 0 & ~ M\neq N, \\ 1 & ~ M = N. \end{cases}
\]
Similarly, the set of classes $\ell_M$ are also a basis, and this basis is left-dual to the standard basis:
\[
(\ell_M, [N]) = \begin{cases} 0 & ~ M\neq N, \\ 1 & ~ M = N. \end{cases}
\]
In particular, the pairing $(~,~)$ is non-degenerate.
\end{lemma}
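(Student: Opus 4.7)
The plan is to extract the duality statements directly from Proposition~\ref{prop:ar} and then derive the basis and non-degeneracy claims as formal consequences.

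First I would compute $([M], r_N) = \hom_\Lambda(M, N')$ where $N'$ is the formal $\bZ$-linear combination defining $r_N$. For $N$ non-projective, $r_N = [\tau N] - [E] + [N]$ coming from the almost split sequence $0 \to \tau N \to E \to N \to 0$, so
\[
([M], r_N) = \hom_\Lambda(M,\tau N) - \hom_\Lambda(M,E) + \hom_\Lambda(M,N),
\]
which Proposition~\ref{prop:ar}(1) evaluates to $\delta_{M,N}$. For $N = P_i$, $r_{P_i} = -[\rad P_i] + [P_i]$ comes from the right almost split inclusion $\rad P_i \hookrightarrow P_i$, and Proposition~\ref{prop:ar}(2) gives $([M], r_{P_i}) = \delta_{M,P_i}$. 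This establishes the right-duality in both cases.

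Next, exactly the same argument, with left almost split maps in place of right almost split ones, establishes the left-duality. For $M$ non-injective, one pairs the almost split sequence $0 \to M \to E \to \tau^{-1}M \to 0$ defining $\ell_M$ with an arbitrary indecomposable $[N]$ on the right and applies Proposition~\ref{prop:ar}(3); for $M = I_i$, one uses $\ell_{I_i} = [I_i] - [I_i/S_i]$ coming from the left almost split quotient $I_i \twoheadrightarrow I_i/S_i$ and applies Proposition~\ref{prop:ar}(4).

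Finally I would deduce the basis and non-degeneracy statements. If $\sum_N c_N r_N = 0$, pairing on the left with $[M]$ and using the right-duality yields $c_M = 0$ for every indecomposable $M$, so the family $\{r_N\}$ is linearly independent. Under the finite representation type assumption~\ref{finiteness-assumption}, the set $\{r_N\}$ has the same (finite) cardinality as the standard basis $\{[M]\}$, hence is itself a $\bZ$-basis of $K_0^{\spl}(\mod\Lambda)$; the argument for $\{\ell_M\}$ is the mirror. Non-degeneracy of the pairing then follows formally: any element of the left radical pairs to zero with the basis $\{r_N\}$, and the values of these pairings are precisely its coordinates in the standard basis; symmetrically for the right radical using $\{\ell_M\}$.

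There is no real obstacle here: the content of the lemma is essentially a reformulation of Proposition~\ref{prop:ar}, and the only point requiring a word of justification beyond that is passing from linear independence to spanning, which is handled by the cardinality argument under the finiteness assumption.
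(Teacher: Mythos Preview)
Your approach mirrors the paper's: extract the duality relations directly from Proposition~\ref{prop:ar}, then deduce the basis and non-degeneracy claims. The paper's proof is essentially a two-line version of what you wrote.

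There is one small gap. You conclude that $\{r_N\}$ is a $\bZ$-basis from linear independence together with having the same finite cardinality as the standard basis. That inference is valid over a field but not over $\bZ$ (for instance $\{2\}\subset\bZ$ is linearly independent of the right cardinality without spanning). The duality relation is, however, strong enough to close this gap: write $r_N=\sum_L A_{NL}[L]$ and $G_{ML}=\hom_\Lambda(M,L)$. Then $([M],r_N)=\delta_{MN}$ reads $GA^T=I$ as a matrix identity over $\bZ$, so $\det A=\pm 1$ and $A$ is invertible over $\bZ$; hence $\{r_N\}$ is a genuine $\bZ$-basis. The same computation gives $\det G=\pm 1$, which yields the non-degeneracy of the pairing directly, and the argument for $\{\ell_M\}$ is symmetric.
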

\begin{proof}
As already mentioned, the set of classes $[M]$ for all indecomposables $M \in \mod\Lambda$ are a basis of $K_0^{\spl}(\mod \Lambda)$. On the other hand, $([M],r_N)$ and $(\ell_M,[N])$ satisfy the stated properties by Proposition \ref{prop:ar}. But this shows that the $r_M$ (resp. $\ell_M$) also define bases of $K_0^{\spl}(\mod \Lambda)$.
\end{proof}

The following lemma is immediate (see for instance~\cite{Auslander}):
\begin{lemma}\label{lem:basisexp}
Any class $x \in K_0^{\spl}$ admits basis expansions
\[
x = \sum_{M \in \ind\Lambda} (M,x) r_M,\qquad x = \sum_{M \in \ind\Lambda} (x,M) \ell_M
\]
into the basis of $r_M$ and the basis of $\ell_M$ classes, respectively.
\end{lemma}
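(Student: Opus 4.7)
The plan is to derive the two expansions directly from the duality relations already established in Lemma \ref{dual-bases}, using bilinearity of the Hom pairing. First I would observe that the pairing $([M],[N]) = \hom_\Lambda(M,N)$ on indecomposables extends to a well-defined bilinear pairing on the whole of $K_0^{\spl}(\mod\Lambda)$: the relation $[X\oplus Y] = [X]+[Y]$ which defines $K_0^{\spl}$ is precisely the additivity property $\hom_\Lambda(-,X\oplus Y) = \hom_\Lambda(-,X)+\hom_\Lambda(-,Y)$ (and similarly in the first argument), so the pairing descends.

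Next, since $\{r_M\}_{M \in \ind\Lambda}$ is a basis of $K_0^{\spl}(\mod\Lambda)$ by Lemma \ref{dual-bases}, any class $x$ can be written uniquely as $x = \sum_{M} a_M r_M$ for some integers $a_M$, with only finitely many nonzero. Pairing on the left with $[N]$ and using bilinearity,
\[
(N,x) = \sum_{M} a_M ([N], r_M) = a_N,
\]
where the last equality is the right-duality $([N],r_M) = \delta_{N,M}$ from Lemma \ref{dual-bases}. This proves the first expansion.

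The second expansion follows by the symmetric argument: write $x = \sum_M b_M \ell_M$ using the basis $\{\ell_M\}$, pair with $[N]$ on the right, and apply the left-duality $(\ell_M,[N]) = \delta_{M,N}$ to extract $b_N = (x,N)$. There is no real obstacle here; the lemma is a formal consequence of the two dual-basis statements of Lemma \ref{dual-bases} together with bilinearity, so the proof amounts to a one-line calculation after the setup is in place.
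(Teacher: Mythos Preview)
Your proposal is correct and is exactly the standard dual-basis argument that the paper has in mind: the paper does not actually give a proof, simply stating that the lemma ``is immediate'' (with a reference to Auslander), and your write-up is precisely the one-line computation that makes this immediacy explicit.
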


We now consider the split Grothendieck group $K_0^{\spl}(K_\Lambda)$. Again, the
standard basis for $K_0^{\spl}$ consists of the isomorphism classes of indecomposable objects, i.e., $\ind K_\Lambda$.

It will be useful to extend $H^0$ to a map from $K_0^{\spl}(K_\Lambda)$ to
$K_0^{\spl}(\mod \Lambda)$, sending $[X]$ to $[H^0(X)]$.

For non-projective $X \in \ind K_\Lambda$, let $\tau X \rightarrowtail E \twoheadrightarrow X$ be its almost split conflation, and write
\[
b_X = [X]-[E]+[\tau X] \in K_0^{\spl}(K_\Lambda).
\]
For $X$ indecomposable projective, define $b_X = [X]-[\rad X]$ (where we use the slight abuse of notation that~$\rad X$ denotes the minimal projective resolution of the radical of the projective module~$X$).

The assignment of a~$g$-vector~$g(X)\in \bZ^n$ to each $X \in \ind K_\Lambda$ (see Section \ref{sec:gfan}) can be extended to a linear map~$g:K_0^{\spl}\big(K_\Lambda\big) \to \bZ^n$.  The surjection $g$ can be regarded as the natural surjection of $K^{\spl}_0(K_\Lambda)$ onto the Grothendieck group $K_0(K_\Lambda) \simeq \bZ^n$, which has a basis given by the classes $[P_i]$ of the projectives. The following characterises the kernel of $g$.

\begin{theorem}[Theorem~4.13 of~\cite{PPPP}]\label{theo::groth}
The map~$g:K_0^{\spl}\big(K_\Lambda\big) \to \bZ^n$ is a split epimorphism.  The set
\[
\{ b_X \smid X \in \ind K_\Lambda \textrm{ and } X\neq P_i \}
\]
is a basis of $\ker g$ as a free abelian group.
\end{theorem}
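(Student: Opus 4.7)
The plan is to verify the theorem in three steps, using throughout that $g$ factors through the usual Grothendieck group $K_0(K_\Lambda)\cong \mathbb{Z}^n$, in which distinguished triangles impose relations. First, I would check that $g$ is split surjective: since $g([P_i])=e_i$, the assignment $e_i \mapsto [P_i]$ defines a $\mathbb{Z}$-linear section $\sigma:\mathbb{Z}^n \to K_0^{\spl}(K_\Lambda)$ of $g$. Second, I would check $b_X \in \ker g$ for every $X\not\simeq P_i$: such an $X$ is non-projective in $K_\Lambda$ and admits an almost split conflation $\tau X \rightarrowtail E \twoheadrightarrow X$, coming from a distinguished triangle in $K^b(\proj\Lambda)$. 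Hence $g(b_X)=g([X])-g([E])+g([\tau X])=0$.

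Third and most substantively, I would show that the $b_X$'s generate $\ker g$ by passing to the quotient $A := K_0^{\spl}(K_\Lambda)/\langle b_X : X\not\simeq P_i\rangle$ and arguing that $A\cong \mathbb{Z}^n$ via the induced map $\bar g: A \to \mathbb{Z}^n$. Under Assumption~\ref{finiteness-assumption}, the AR quiver of $K_\Lambda$ is finite, so by a standard AR argument there is a total order $\prec$ on $\ind K_\Lambda$ such that, for every non-projective $X$, both $\tau X$ and every indecomposable summand $E_j$ of the middle term of the AR conflation at $X$ are $\prec X$; this uses the mesh structure, which places each $E_j$ at an intermediate AR-distance between $\tau X$ and $X$. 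The relation $[X]=[E]-[\tau X]$ in $A$ then lets us rewrite any $[X]$ inductively in terms of $[Y]$ with $Y\prec X$, and ultimately in terms of the $[P_i]$. Hence $A$ is generated by the images of the $[P_i]$, and the composition $\mathbb{Z}^n \xrightarrow{\sigma} K_0^{\spl}(K_\Lambda) \twoheadrightarrow A$ is a surjection whose composition with $\bar g$ is the identity. Therefore $\bar g$ is an isomorphism, so $\ker g$ equals $\langle b_X : X\not\simeq P_i\rangle$.

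Finally, the split short exact sequence $0 \to \langle b_X\rangle \to K_0^{\spl}(K_\Lambda) \to A\to 0$ with $A\cong \mathbb{Z}^n$ and $K_0^{\spl}(K_\Lambda)$ free of rank $|\ind K_\Lambda|$ forces $\langle b_X\rangle$ to be free of rank $|\ind K_\Lambda|-n$, matching the number of generators $b_X$. Any generating set of a free abelian group with cardinality equal to the rank is a basis, completing the proof. The main obstacle is constructing the well-founded order $\prec$ in the extriangulated setting of $K_\Lambda$: one must verify, from the mesh structure of its AR quiver (including conflations terminating at the injective object $\Sigma P_i$, whose $\tau$-translate yields $I_i$ in $\mod\Lambda$ by Lemma~\ref{lem:almost-split}), that the middle-term summands can consistently be placed below $X$. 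Finite representation type ensures this order exists and the induction terminates.
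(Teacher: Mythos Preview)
The paper does not give its own proof of this statement; it is quoted as Theorem~4.13 of~\cite{PPPP}. So there is nothing to compare against directly, but your argument has a genuine gap.

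Your inductive step rests on the existence of a total order $\prec$ on $\ind K_\Lambda$ with $\tau X \prec X$ for every non-projective $X$. Such an order need not exist: the Auslander--Reiten translation on $K_\Lambda$ can have cycles even when $\Lambda$ is of finite representation type. The preprojective algebra $\Pi_{A_2}$ worked out in Section~\ref{ex:PiA2} is already a counterexample. There the simples $S_1, S_2$ (viewed in $K_\Lambda$ via their projective presentations) satisfy $\tau S_1 = S_2$ and $\tau S_2 = S_1$, so any order with $\tau X \prec X$ would force $S_1 \prec S_2 \prec S_1$. Your claim that ``finite representation type ensures this order exists and the induction terminates'' is therefore false, and the reduction of each $[X]$ to the $[P_i]$'s cannot be carried out one mesh at a time.

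A clean way to repair the argument is to replace the induction by a global change-of-basis, using the dual bases already established in Lemma~\ref{dual-bases}. Observe that $H^0(b_X)=r_{H^0X}=\ell_{\tau H^0X}$ for $X$ neither projective nor injective, while $H^0(b_{\Sigma P_i})=\ell_{I_i}$ by Lemma~\ref{lem:almost-split}(2). Since $\tau$ is a bijection from non-projective to non-injective indecomposables in $\mod\Lambda$, the set $\{H^0(b_X)\mid X\in\ind K_\Lambda,\ X\neq P_i\}$ is exactly $\{\ell_N\mid N\in\ind(\mod\Lambda)\}$, which is a basis of $K_0^{\spl}(\mod\Lambda)$. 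Combining this with the obvious splitting $K_0^{\spl}(K_\Lambda)\cong K_0^{\spl}(\mod\Lambda)\oplus\bigoplus_i\mathbb Z[\Sigma P_i]$ and a short block-triangular determinant computation shows that $\{[P_i]\}\cup\{b_X\mid X\neq P_i\}$ is a $\mathbb Z$-basis of $K_0^{\spl}(K_\Lambda)$, from which the theorem follows.
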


\section{Geometry of $\mathcal M_\Lambda$}\label{sec:geoM}

In this section, we show that $\mathcal M_\Lambda$ is irreducible by realizing it as a torus quotient of an open set in affine space, and we give several different descriptions of its coordinate ring.

Define a grading on $\mathbb C[u_M^\pm\mid M\in \ind K_\Lambda]$ with grading group
$K_0^{\spl}(\Lambda)$ by 
$d(u_M)=H^0(b_M)$.

\begin{lemma} $d(\Psi(y_i))=0 \in K_0^{\spl}(\mod\Lambda)$. \end{lemma}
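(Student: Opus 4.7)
The plan is to compute $d(\Psi(y_i))$ explicitly from the grading and then recognise the result as the zero class in $K_0^{\spl}(\mod \Lambda)$ by comparing two expansions of the class $[S_i]$. By Definition~\ref{psidef} together with the grading $d(u_V) = H^0(b_V)$, we have
\[
d(\Psi(y_i)) = -\sum_{V \in \ind K_\Lambda} g(V)_i \cdot H^0(b_V),
\]
so the first step is to identify $H^0(b_V)$ in each of the three types of indecomposable $V$. Using Lemma~\ref{lem:almost-split}: if $V = P_j$ then $H^0(b_V) = [P_j] - [\rad P_j] = r_{P_j}$; if $V = \Sigma P_j$ then $H^0$ sends the AR conflation to $I_j \to I_j/S_j \to 0$, so $H^0(b_V) = [I_j] - [I_j/S_j] = \ell_{I_j}$; and if $V$ is the minimal $2$-term projective presentation of a non-projective indecomposable module $N$, then $H^0$ sends the AR conflation to the AR sequence of $N$, giving $H^0(b_V) = r_N$.

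Since $g(P_j)_i = \delta_{ij}$ and $g(\Sigma P_j)_i = -\delta_{ij}$, and since reading off the minimal projective presentation of $N$ gives $g(V)_i = \hom_\Lambda(N, S_i) - \ext^1_\Lambda(N, S_i)$ in the third case (the multiplicities of $P_i$ in $P^0$ and $P^{-1}$ being $\hom(N,S_i)$ and $\ext^1(N,S_i)$ by standard minimality arguments), the whole identity reduces to
\[
-r_{P_i} + \ell_{I_i} - \sum_{N} \bigl(\hom_\Lambda(N,S_i) - \ext^1_\Lambda(N,S_i)\bigr)\, r_N \;=\; 0
\]
in $K_0^{\spl}(\mod \Lambda)$, where $N$ ranges over non-projective indecomposable $\Lambda$-modules.

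To close this out, I would use Lemma~\ref{lem:basisexp} to expand $[S_i]$ in both dual bases. Using $\hom_\Lambda(P_j, S_i) = \delta_{ij}$ and $\hom_\Lambda(S_i, I_j) = \delta_{ij}$ (the latter because $\soc I_j = S_j$), one obtains
\[
[S_i] = r_{P_i} + \sum_{N} \hom_\Lambda(N,S_i)\, r_N = \ell_{I_i} + \sum_{M} \hom_\Lambda(S_i,M)\, \ell_M,
\]
the first sum over non-projective indecomposable $N$ and the second over non-injective indecomposable $M$. Lemma~\ref{lem:exthom} then gives $\ext^1_\Lambda(N, S_i) = \hom_\Lambda(S_i, \tau N)$, and since $\tau$ is a bijection between non-projective and non-injective indecomposables with $r_N = \ell_{\tau N}$, the $\ext^1$-sum rewrites as $\sum_{M}\hom_\Lambda(S_i, M)\,\ell_M = [S_i] - \ell_{I_i}$. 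Substituting both expansions, the displayed identity collapses to $-[S_i] + [S_i] = 0$.

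The main obstacle is the asymmetry introduced by the two exceptional types of indecomposable in $K_\Lambda$, namely $P_j$ and $\Sigma P_j$: they contribute via different dual bases ($r$-classes on the projective side, $\ell$-classes on the injective side), so combining them requires both the Auslander--Reiten formula in the form of Lemma~\ref{lem:exthom} and the identification $r_N = \ell_{\tau N}$. Once these bridges are drawn, the cancellation is immediate.
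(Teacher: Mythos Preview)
Your proof is correct, but it takes a genuinely different route from the paper's. The paper argues as follows: writing $d(\Psi(y_i)) = -\sum_{X}g(X)_i\,H^0(b_X)$, it computes the coefficient of $[M]$ for each $M\in\ind\Lambda$ by regrouping the double sum, and observes that this coefficient is precisely $-g(b_{\tau_{K_\Lambda}^{-1}(M)})_i$ (using that the AR mesh in $K_\Lambda$ starting at the presentation of $M$ records exactly which $b_X$ contain $[\hat M]$, and with what sign). Then Theorem~\ref{theo::groth}, which says $b_X\in\ker g$ for non-projective $X$, kills every coefficient at once.

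Your approach instead works entirely in $K_0^{\spl}(\mod\Lambda)$: you split the sum by the three types of $V$, land on an explicit combination of $r$- and $\ell$-classes, and then cancel by expanding $[S_i]$ in both dual bases via Lemma~\ref{lem:basisexp}, bridging the two with Lemma~\ref{lem:exthom} and the identification $r_N=\ell_{\tau N}$. This avoids Theorem~\ref{theo::groth} entirely, at the cost of the auxiliary computation of $g(V)_i=\hom(N,S_i)-\ext^1(N,S_i)$ from minimality of the presentation. The paper's argument is shorter and more structural (it really is ``AR conflations have $g$-vector zero''); yours is more self-contained and makes the role of the dual bases explicit.
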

\begin{proof}  The degree $d(\Psi(y_i))$ is, by definition,  $-\sum_{X\in\ind K_\Lambda} g(X)_i H^0(b_X)$. For $M\in\ind \Lambda$, the multiplicity of $[M]$ in $d(\Psi(y_i))$ is the $i$-th component of $-g(b_{\tau_{K_\Lambda}^{-1}(M)})$. Theorem \ref{theo::groth} says that $b_{\tau_{K_\Lambda}^{-1}(M)}$ is in the kernel of $g$, and the result follows.  
\end{proof}

Now we can establish the following lemma:

\begin{lemma} \label{deg-F} For $M\in \mod \Lambda$, we have that $\hF_M$ is homogeneous with respect to $d$, and its degree is $[M]$. \end{lemma}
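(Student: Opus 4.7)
The plan is to use Lemma \ref{reform} to split $\hF_M$ into a ``degree-zero'' piece and a single monomial whose degree I can compute directly, and then use the basis expansion of Lemma \ref{lem:basisexp} to identify this degree with $[M]$.

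First I would observe that by Lemma \ref{reform},
\[
\hF_M = \left(\prod_{V\in\ind K_\Lambda} u_V^{\hom_\Lambda(H^0V,M)}\right)\,\Psi(F_M).
\]
The polynomial $F_M$ is a $\bZ$-linear combination of monomials $y^{\bd}$, and by the preceding lemma each $\Psi(y_i)$ has degree zero in $K_0^{\spl}(\mod\Lambda)$; hence $\Psi(F_M)$ is a sum of monomials each of degree zero. Therefore every monomial of $\hF_M$ has the same degree, namely
\[
d(\hF_M) \;=\; \sum_{V\in\ind K_\Lambda} \hom_\Lambda(H^0V,M)\, H^0(b_V),
\]
which already establishes homogeneity.

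Next I would compute this degree. The indecomposables of $K_\Lambda$ are either the shifted projectives $\Sigma P_i$ (which have $H^0(\Sigma P_i)=0$, so they contribute nothing to the sum) or they are minimal projective presentations of indecomposable $\Lambda$-modules; and the latter are in bijection with $\ind(\mod\Lambda)$ via $V\mapsto H^0V$. For $V$ in this second class, I claim $H^0(b_V)=r_{H^0V}$. Indeed, if $V\in\ind K_\Lambda$ is non-projective (equivalently, $H^0V$ is non-projective in $\mod\Lambda$), then Lemma \ref{lem:almost-split}(1) sends the almost split conflation $\tau V\rightarrowtail E\twoheadrightarrow V$ to the Auslander--Reiten sequence $0\to \tau H^0V\to H^0E\to H^0V\to 0$, and applying $H^0$ termwise gives exactly $r_{H^0V}$. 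And if $V=P_i$ is projective in $K_\Lambda$, then by definition $b_V=[P_i]-[\rad P_i]$, so $H^0(b_V)=[P_i]-[\rad P_i]=r_{P_i}$.

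Putting these together,
\[
d(\hF_M) \;=\; \sum_{N\in\ind(\mod\Lambda)} \hom_\Lambda(N,M)\,r_N \;=\; \sum_{N\in\ind(\mod\Lambda)} ([N],[M])\,r_N \;=\; [M],
\]
where the last equality is the basis expansion from Lemma \ref{lem:basisexp}. This completes the proof.

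The only step that requires any care is identifying $H^0(b_V)$ with $r_{H^0V}$ uniformly across the (non-injective) indecomposables of $K_\Lambda$; everything else is a direct combination of Lemma \ref{reform}, the preceding degree-zero lemma, and the duality of bases for $K_0^{\spl}(\mod\Lambda)$.
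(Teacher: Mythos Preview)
Your proof is correct and follows essentially the same approach as the paper: both use Lemma \ref{reform} and the preceding degree-zero lemma to reduce to computing the degree of the monomial prefactor, and then invoke the dual-basis machinery of Section \ref{sec:split}. The only difference is that you identify each $H^0(b_V)$ with $r_{H^0V}$ and apply the $r$-basis expansion of Lemma \ref{lem:basisexp}, whereas the paper instead computes the coefficient of each $[N]$ in the sum and identifies it as $(\ell_N,M)$; these are dual versions of the same argument.
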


\begin{proof} First, by the previous lemma, $d(\Psi(F_M))=0$. It follows that
  $\hF_M$ is homogeneous with degree
  $\sum_{L\in \ind \Lambda}\hom(L,M)H^0(b_L)$. To determine the coefficient of $[N]$ in this sum, let $N \rightarrowtail E \twoheadrightarrow \tau^{-1} N$ be the almost split conflation in $K_\Lambda$ beginning at $N$. Then the coefficient of $[N]$ in the degree of $\hF_M$ is $(\ell_N,M)$. This is 1 if $N=M$ and zero otherwise, which proves the lemma.
  \end{proof}

This grading on $\mathbb C[u_M^\pm \mid M\in \ind K_\Lambda]$ defines an action of the torus $T=\Spec \mathbb C[K_0^{\spl}(\mod\Lambda)]$ on $\Spec \mathbb C[u_M^\pm\mid M\in \ind K_\Lambda]$. Because the $\hF_M$ are homogeneous, the $T$-action restricts to an action on $X^\circ=\Spec\mathbb C[u_M^\pm\mid M\in \ind K_\Lambda][\hF_M^{-1}\mid M\in \ind \Lambda]$, which is the locus in $\Spec \mathbb C[u_M^\pm\mid M\in \ind K_\Lambda]$ where the $\hF_M$ are non-zero.

We now have the following theorem:
\begin{theorem}\label{product} There is an isomorphism $\mathbb C[u_M^\pm,\hF_M^{-1}] \cong \mathbb C[\hF_M^\pm\mid M\in \ind \Lambda] \otimes_{\mathbb C} \mathbb C[u_M^\pm,\hF_M^{-1}]_0$ which induces an isomorphism~$R_\Lambda \cong \mathbb C[u_M^\pm,\hF_M^{-1}]_0$.\end{theorem}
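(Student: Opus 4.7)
The plan is to exploit the $K_0^{\spl}(\mod\Lambda)$-grading in a structural way. By Lemma~\ref{deg-F}, each $\hF_M$ for $M\in\ind\Lambda$ is homogeneous of degree $[M]$, and since $\{[M]\mid M\in\ind\Lambda\}$ is a $\mathbb Z$-basis of $K_0^{\spl}(\mod\Lambda)$, distinct monomials $\prod_M \hF_M^{n_M}$ lie in distinct graded pieces. In particular, they are $\mathbb C$-linearly independent, so the $\hF_M$ are algebraically independent and the natural map $\mathbb C[\hF_M^\pm\mid M\in\ind\Lambda]\hookrightarrow\mathbb C[u_M^\pm,\hF_M^{-1}]$ identifies its source with a Laurent polynomial subring consisting of homogeneous units, one in each graded degree.

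The first step is to produce the multiplication map
\[
\mu\colon\mathbb C[\hF_M^\pm\mid M\in\ind\Lambda]\otimes_{\mathbb C}\mathbb C[u_M^\pm,\hF_M^{-1}]_0\longrightarrow \mathbb C[u_M^\pm,\hF_M^{-1}].
\]
Grade the left-hand side by $\deg\bigl(\prod\hF_M^{n_M}\otimes f\bigr)=\sum n_M[M]$; then $\mu$ is a graded homomorphism. To show it is an isomorphism I would argue piece by piece: for $g=\sum n_M[M]\in K_0^{\spl}(\mod\Lambda)$, multiplication by the unit $\prod\hF_M^{n_M}\in\mathbb C[u_M^\pm,\hF_M^{-1}]_g$ is an isomorphism from $\mathbb C[u_M^\pm,\hF_M^{-1}]_0$ onto $\mathbb C[u_M^\pm,\hF_M^{-1}]_g$, with inverse $h\mapsto h\cdot\prod\hF_M^{-n_M}$. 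Writing $h$ in its image form recovers a unique tensor $\prod\hF_M^{n_M}\otimes(h\prod\hF_M^{-n_M})$ on the left, which gives both injectivity and surjectivity of $\mu$ on each graded component.

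For the second isomorphism, first reduce the ideal $I_\Lambda=\langle\hF_W-1\mid W\in\ind K_\Lambda\rangle$: for $W=\Sigma P_i$ one has $H^0W=0$, so $\hF_W=1$ and this relation is trivial; hence $I_\Lambda$ is already generated by $\{\hF_M-1\mid M\in\ind\Lambda\}$. Since in the quotient $R_\Lambda$ each $\hF_M$ becomes $1$, hence a unit, the localization map $R_\Lambda\to\mathbb C[u_M^\pm,\hF_M^{-1}]/\langle\hF_M-1\rangle$ is an isomorphism. Now transport the ideal $\langle\hF_M-1\rangle$ through $\mu^{-1}$: it corresponds to $\mathfrak m\otimes\mathbb C[u_M^\pm,\hF_M^{-1}]_0$, where $\mathfrak m=\langle\hF_M-1\rangle$ is the augmentation (maximal) ideal of $\mathbb C[\hF_M^\pm]$. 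The quotient is therefore $(\mathbb C[\hF_M^\pm]/\mathfrak m)\otimes_{\mathbb C}\mathbb C[u_M^\pm,\hF_M^{-1}]_0\cong \mathbb C[u_M^\pm,\hF_M^{-1}]_0$, which gives the desired identification $R_\Lambda\cong\mathbb C[u_M^\pm,\hF_M^{-1}]_0$.

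The only real subtlety is the grading bookkeeping in step one — verifying that $\mu$ is well-defined as a graded map and that the obvious inverse is well-defined (i.e.\ lands in the degree-zero piece). This is routine once one has noted that the $[M]$ form a basis of the grading group, so the exponents $n_M$ in $g=\sum n_M[M]$ are unique. No deeper input is needed beyond Lemma~\ref{deg-F}; in particular one does not need to expand $\hF_M$ explicitly or use the $u$-equations themselves.
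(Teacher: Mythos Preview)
Your proof is correct and follows essentially the same approach as the paper: both arguments use that, by Lemma~\ref{deg-F}, the $\hF_M$ for $M\in\ind\Lambda$ are homogeneous units whose degrees $[M]$ form a basis of the grading group $K_0^{\spl}(\mod\Lambda)$, so every graded piece is a free module of rank one over the degree-zero part, giving the tensor decomposition; then the ideal $I_\Lambda$ corresponds to $\langle\hF_M-1\rangle\otimes 1$ and the quotient picks out the degree-zero factor. You have simply made explicit a few details the paper leaves implicit (algebraic independence of the $\hF_M$, triviality of the relations for $W=\Sigma P_i$, and the localization step).
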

\begin{proof} $\mathbb C[\hF_M^\pm]$ is a subring of $\mathbb C[u_M^\pm,\hF_M^{-1}]$,
  and contains exactly one Laurent monomial (in the $\hF_M$'s) in each degree in $K_0^{\spl}(\mod\Lambda)$. Thus, any homogeneous element of $\mathbb C[u_M^\pm,\hF_M^{-1}]$ can be uniquely written as a Laurent monomial in $\mathbb C[\hF_M^\pm]$ times an element of $\mathbb C[u_M^\pm,\hF_M^{-1}]_0$, where the subscript zero indicates the subring of degree 0.  We therefore have an isomorphism
  $$\mathbb C[u_M^\pm,\hF_M^{-1}] \xrightarrow{\sim} \mathbb C[\hF_M^\pm] \otimes_{\mathbb C} \mathbb C[u_M^\pm,\hF_M^{-1}]_0$$
  which sends~$\widehat{F}_M^{\pm 1}$ to~$\widehat{F}_M^{\pm 1}\otimes 1$.  The ideal~$I_\Lambda$ is sent on the right-hand side to the ideal generated by the~$(\widehat{F}_M -1) \otimes 1$.  Taking the quotient, we get that~$R_\Lambda \cong \mathbb C[u_M^\pm,\hF_M^{-1}]_0$, proving the theorem.
\end{proof}

\begin{corollary}
  \label{torus-quotient}
  $\mathcal M_\Lambda$ is isomorphic to $X^\circ/T$, and is thus an irreducible variety. \end{corollary}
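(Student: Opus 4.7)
The plan is to identify $\mathcal M_\Lambda$ as the GIT quotient of $X^\circ$ by the torus $T$ by looking at invariant rings, and then deduce irreducibility from the fact that $X^\circ$ is irreducible.

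First I would recall that, since $T = \Spec \mathbb C[K_0^{\spl}(\mod\Lambda)]$ acts on $X^\circ$ via the $K_0^{\spl}(\mod\Lambda)$-grading on the coordinate ring $\mathbb C[u_M^\pm, \hF_M^{-1}]$ introduced just before Theorem \ref{product}, the ring of $T$-invariants on $X^\circ$ is, by the standard dictionary between gradings and torus actions, precisely the degree-zero part
\[
\mathcal O(X^\circ)^T = \mathbb C[u_M^\pm, \hF_M^{-1}]_0.
\]
By Theorem \ref{product}, this ring is canonically isomorphic to $R_\Lambda$. Since $T$ is a reductive (in fact diagonalisable) group acting on an affine variety, the categorical quotient $X^\circ/T$ exists and is given by $\Spec$ of the invariant ring. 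Hence
\[
X^\circ/T \;=\; \Spec \mathcal O(X^\circ)^T \;=\; \Spec R_\Lambda \;=\; \mathcal M_\Lambda,
\]
which is the first claim of the corollary.

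For irreducibility, I would observe that $X^\circ$ is an open subset of the algebraic torus $\Spec \mathbb C[u_M^\pm \mid M \in \ind K_\Lambda]$ (it is the complement of the union of the hypersurfaces $\hF_M = 0$). This torus is irreducible, and a non-empty open subset of an irreducible variety is irreducible, so $X^\circ$ is irreducible. Equivalently, $\mathcal O(X^\circ)$ is a localisation of a Laurent polynomial ring and is therefore an integral domain. The subring $\mathcal O(X^\circ)^T = R_\Lambda$ of an integral domain is again an integral domain, so $\mathcal M_\Lambda = \Spec R_\Lambda$ is irreducible.

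The argument is essentially a repackaging of Theorem \ref{product}; there is no serious obstacle. The only thing one might want to double-check is the identification of $T$-invariants with the degree-zero piece, which is routine for diagonalisable group actions induced by a grading but worth stating explicitly to justify the use of the categorical quotient.
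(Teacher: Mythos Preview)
Your proof is correct and follows essentially the same approach as the paper: both identify $R_\Lambda$ with the degree-zero (i.e., $T$-invariant) part of $\mathcal O(X^\circ)$ via Theorem \ref{product}, and conclude irreducibility from the fact that $\mathcal O(X^\circ)$ is a domain. Your version is simply more explicit about the grading/torus-action dictionary and the reason $X^\circ$ is irreducible, which the paper leaves implicit.
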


\begin{proof} $\mathcal M_\Lambda=\Spec(R_\Lambda)$. We have showed that
  $R_\Lambda= \mathbb C[u_M^\pm,\hF_M^{-1}]_0$. The coordinate ring of
  $X^\circ$ is $\mathbb C[u_M^\pm,\hF_M^{-1}]$, so $ \mathbb C[u_M^\pm,\hF_M^{-1}]_0$ is the coordinate ring of
  $X^\circ/T$. \end{proof}

For $M\in\ind K_\Lambda$, let $$\wu_M=\frac{u_M \hF_E}{\hF_{\tau M} \hF_M},$$
where $\tau M \rightarrow E \rightarrow M$ is the Auslander-Reiten conflation ending at $M$, unless $M$ is projective, in which case set $E=\rad M$ and
$\tau M=0$. 

\begin{lemma} $\wu_M$ is homogeneous of degree zero (and thus $T$-invariant). \end{lemma}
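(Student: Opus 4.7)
The proof is a direct computation using the definitions and Lemma \ref{deg-F}. The plan is to compute $d(\wu_M)$ by applying the grading to each factor of $\wu_M = u_M \hF_E \hF_{\tau M}^{-1} \hF_M^{-1}$ and check that everything cancels.

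First, I would recall that by the definition of the grading, $d(u_M) = H^0(b_M)$, and by Lemma \ref{deg-F}, $d(\hF_N) = [H^0 N]$ for $N \in \ind K_\Lambda$ (using that $\hF_N := \hF_{H^0 N}$). Next, I would expand:
\[
d(\wu_M) = d(u_M) + d(\hF_E) - d(\hF_{\tau M}) - d(\hF_M) = H^0(b_M) + [H^0 E] - [H^0(\tau M)] - [H^0 M].
\]
In the non-projective case, $b_M = [M] - [E] + [\tau M]$, so applying the additive map $H^0$ gives
\[
H^0(b_M) = [H^0 M] - [H^0 E] + [H^0(\tau M)],
\]
and substituting above yields $d(\wu_M) = 0$.

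The projective case is analogous: here $\tau M = 0$ and $b_M = [M] - [\rad M]$, where $\rad M$ denotes the minimal projective presentation of $\rad H^0(M)$. Then $\hF_{\tau M} = \hF_0 = 1$ (since the only submodule Grassmannian contributing to $\hF_0$ is the point $\Gr_0(0)$), hence $d(\hF_{\tau M}) = 0$, and the same telescoping argument gives $d(\wu_M) = 0$.

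There is no real obstacle: the result reduces to the compatibility between the definition of $b_M$ in $K_0^{\spl}(K_\Lambda)$ and the degrees $[H^0 N]$ of the $\hF_N$ computed in Lemma \ref{deg-F}, together with additivity of $H^0$ on split Grothendieck groups. The only subtlety worth flagging is the conventional interpretation of $\hF_0$ and of $\rad M$ (as an object of $K_\Lambda$, i.e. the minimal projective presentation of $\rad H^0(M)$), which ensures the formulas make sense in both the projective and non-projective cases.
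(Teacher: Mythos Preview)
Your proof is correct and follows exactly the same approach as the paper's: the paper's proof simply states that the result is immediate from $d(u_M)=H^0(b_M)$ together with Lemma \ref{deg-F}, and you have spelled out that immediate computation in detail.
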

\begin{proof}
  This is immediate from the fact that $d(u_M)=H^0(b_M)$, together with
  Lemma \ref{deg-F}.\end{proof}

\begin{theorem}
  \label{wu-theorem}
  The ring~$\mathbb C[u_M^\pm,\hF_M^{-1}]_0$ is generated by the~$\widehat{v}_M^\pm$.  
  \end{theorem}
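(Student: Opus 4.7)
The plan is to prove the two inclusions separately. The inclusion $\mathbb C[\wu_M^{\pm} : M\in \ind K_\Lambda]\subseteq \mathbb C[u_M^\pm,\hF_M^{-1}]_0$ is immediate from the preceding lemma, which states each $\wu_M$ is homogeneous of degree $0$. For the reverse inclusion, I would use the fact that the $K_0^{\spl}(\mod\Lambda)$-grading is multiplicative, so the degree-$0$ subring is the $\mathbb C$-linear span of degree-$0$ Laurent monomials in the $u_M$'s and $\hF_N$'s. Hence it suffices to rewrite every such monomial as a Laurent monomial in the $\wu_M$'s.

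The key manoeuvre is to solve the defining equality for $\wu_M$ to get a uniform inversion formula
\[
u_M = \wu_M\,\hF_{\tau M}\,\hF_M\,\hF_E^{-1},
\]
valid in all three cases of the definition of $\wu_M$ (non-projective non-injective $M$; projective $M$ with the conventions $\tau M=0$, $\hF_0=1$, and $E=\rad M$; and injective $M=\Sigma P_i$ with $\hF_M=1$, $\hF_{\tau M}=\hF_{I_i}$, and $\hF_E=\hF_{I_i/S_i}$). Substituting this formula into an arbitrary Laurent monomial $\prod_M u_M^{a_M}\prod_N \hF_N^{b_N}$ expresses it as
\[
\Bigl(\prod_M \wu_M^{a_M}\Bigr)\cdot Q,
\]
where $Q$ is a Laurent monomial purely in the $\hF_L$'s (indexed over $L\in \ind\mod\Lambda$, with $\hF_0=1$ absorbed).

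Finally I would argue $Q=1$ when the original monomial is of degree $0$. Since each $\wu_M$ is of degree $0$, the factor $Q$ is also homogeneous of degree $0$. By Lemma~\ref{deg-F}, the degree of $\hF_L$ is $[L]\in K_0^{\spl}(\mod\Lambda)$, and the classes $\{[L]\}_{L\in\ind \mod\Lambda}$ form a $\mathbb Z$-basis of $K_0^{\spl}(\mod\Lambda)$. So the only Laurent monomial in the $\hF_L$'s of degree $0$ is the trivial one, whence $Q=1$ and the original monomial equals $\prod_M \wu_M^{a_M}$, as desired. The only subtlety to keep straight is the bookkeeping across the three cases in the definition of $\wu_M$; once the convention $\hF_0=1$ is adopted and the inversion formula above is written uniformly, the proof reduces to the basic linear-algebra observation that the $[L]$'s are free generators of $K_0^{\spl}(\mod\Lambda)$.
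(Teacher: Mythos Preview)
Your proof is correct and takes essentially the same approach as the paper: both rely on the factorization $u_M=\wu_M\cdot(\hF_{\tau M}\hF_M/\hF_E)$ together with the fact that the classes $[L]$ for $L\in\ind\mod\Lambda$ form a basis of $K_0^{\spl}(\mod\Lambda)$, so that the only degree-zero Laurent monomial in the $\hF_L$'s is $1$. The paper packages this via the tensor decomposition of Theorem~\ref{product} (and then quotients by $I_\Lambda$, so that the $\hF$-factor becomes $1$ and $u_M\mapsto\wu_M$), while you unwind the same computation monomial by monomial; the arguments are equivalent.
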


\begin{proof}
  The isomorphism~$\mathbb C[u_M^\pm,\hF_M^{-1}] \xrightarrow{\sim} \mathbb C[\hF_M^\pm] \otimes_{\mathbb C} \mathbb C[u_M^\pm,\hF_M^{-1}]_0$ of Theorem~\ref{product} sends each~$u_M$ to~$ \frac{\widehat{F}_{\tau M}\widehat{F}_M}{\widehat{F}_E}\otimes \widehat{v}_M$.  Taking the quotient by~$I_\Lambda$ yields the result.
  \end{proof}

We will now obtain another expression for $R_\Lambda$ as a subring of the rational function field in $n$ variables, $\mathbb C(y_1,\dots,y_n)$.

\begin{proposition} 
  \label{u-proposition}
    The map $\psi$ from $\mathbb C[u_M^\pm,\hF_M^{-1}]$ to
  $\mathbb C[y_1^\pm,\dots,y_n^\pm, F_M^{-1}]$, sending:
  \begin{align*} &u_M \to 1 \textrm{ for $M\in \ind\Lambda$}\\
    &u_{\Sigma P_i} \to y_i \end{align*}
  sends $\hF_M$ to $F_M$, and is an isomorphism
  from $\mathbb C[u_M^\pm,\hF_M^{-1}]_0$ to $\mathbb C[y_1^\pm,\dots,y_n^\pm, F_M^{-1}]$.\end{proposition}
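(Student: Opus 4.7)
The plan is to verify the identity $\psi(\hF_M) = F_M$, and then exhibit an inverse to $\psi|_{R_\Lambda}$ by localizing the map $\Psi$ of Definition~\ref{psidef}.

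First, I would establish $\psi(\hF_M) = F_M$. Starting from $\hF_M = \prod_{V\in\ind K_\Lambda} u_V^{\hom_\Lambda(H^0V, M)}\,\Psi(F_M)$ in Lemma~\ref{reform}, observe that $H^0(\Sigma P_i) = 0$, so only non-shifted $V$'s contribute nontrivially to the prefactor, and all these are sent to $1$ by $\psi$. Hence the prefactor collapses to $1$. Combined with the direct calculation $\psi(\Psi(y_i)) = \prod_j y_j^{-g(\Sigma P_j)_i} = y_i$ (using $g(\Sigma P_j)_i = -\delta_{ij}$), this shows that $\psi\circ\Psi$ is the identity on $\mathbb C[y_1,\dots,y_n]$, so $\psi(\hF_M) = \psi(\Psi(F_M)) = F_M$.

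Next I would extend $\Psi$ to a ring homomorphism $\Psi: \mathbb C[y^\pm, F_M^{-1}] \to \mathbb C[u^\pm, \hF_M^{-1}]$. This extension is legitimate because $\Psi(y_i)$ is already a Laurent monomial, and $\Psi(F_M) = \hF_M\cdot \prod_V u_V^{-\hom_\Lambda(H^0V, M)}$ is invertible in the localized target. Moreover, the image of $\Psi$ lies in the degree-zero subring $R_\Lambda$: the lemma preceding Theorem~\ref{product} shows $d(\Psi(y_i)) = 0$, so everything in the image of $\Psi$ has degree zero. This gives a ring map $\Psi: \mathbb C[y^\pm, F_M^{-1}] \to R_\Lambda$, and the first step shows $\psi\circ\Psi = \mathrm{id}$ on the generators $y_i^{\pm 1}$ and $F_M^{\pm 1}$, hence everywhere.

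It remains to verify $\Psi\circ\psi = \mathrm{id}$ on $R_\Lambda$. By Theorem~\ref{wu-theorem}, $R_\Lambda$ is generated by the $\wu_M^{\pm}$ for $M\in \ind K_\Lambda$, so it suffices to check on these. For a non-projective $M\in\ind\Lambda$ with AR conflation $\tau M\rightarrowtail E\twoheadrightarrow M$, applying $\psi$ to $\wu_M = u_M\hF_E/(\hF_{\tau M}\hF_M)$ yields $F_E/(F_{\tau M}F_M)$; applying $\Psi$ and invoking Lemma~\ref{reform} produces a ratio of $\hF$'s multiplied by $\prod_V u_V^{\hom_\Lambda(H^0V,\tau M) - \hom_\Lambda(H^0V,E) + \hom_\Lambda(H^0V,M)}$, and the alternating-sum identity Proposition~\ref{prop:ar}(1) collapses the last product to $u_M$, recovering $\wu_M$. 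The case $M = P_i$ projective is entirely analogous via Proposition~\ref{prop:ar}(2).

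The main obstacle is the shifted case $M = \Sigma P_i$. Here $\psi(\wu_{\Sigma P_i}) = y_i F_{I_i/S_i}/F_{I_i}$, and applying $\Psi$ produces $(\hF_{I_i/S_i}/\hF_{I_i})\cdot\prod_V u_V^{-g(V)_i - \hom_\Lambda(H^0V, I_i/S_i) + \hom_\Lambda(H^0V, I_i)}$, which must equal $\wu_{\Sigma P_i} = u_{\Sigma P_i}\hF_{I_i/S_i}/\hF_{I_i}$. The identity to establish is therefore that the exponent of $u_V$ equals $1$ if $V = \Sigma P_i$ and $0$ otherwise. The case $V = \Sigma P_j$ is direct from $g(\Sigma P_j)_i = -\delta_{ij}$ and $H^0(\Sigma P_j) = 0$. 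For a non-shifted $V$ with $H^0V = N$, I would apply $\Hom_\Lambda(N, -)$ to the short exact sequence $0 \to S_i \to I_i \to I_i/S_i \to 0$; since $I_i$ is injective, this yields $\hom_\Lambda(N, I_i) - \hom_\Lambda(N, I_i/S_i) = \hom_\Lambda(N, S_i) - \ext^1_\Lambda(N, S_i)$, which by Lemma~\ref{lem:exthom} equals $\hom_\Lambda(N, S_i) - \hom_\Lambda(S_i, \tau N) = \langle g(V), d(S_i)\rangle = g(V)_i$ (using Lemma~\ref{lem:gd} and $d(S_i) = e_i$). Thus the exponent is $-g(V)_i + g(V)_i = 0$, completing the verification.
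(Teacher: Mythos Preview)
Your argument is correct, and in particular the shifted-projective case is handled cleanly via the long exact sequence and Lemma~\ref{lem:gd} (this is essentially the same identity \eqref{eq:injectivewrongway} that appears in the proof of Proposition~\ref{prop:F-tilde-imply-u}(3)).

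However, your route differs from the paper's. You explicitly construct an inverse $\Psi$ and verify both composites on generators, invoking Theorem~\ref{wu-theorem} to reduce $\Psi\circ\psi=\mathrm{id}$ to a check on the $\wu_M^{\pm}$, which you then carry out case-by-case using the Auslander--Reiten identities of Proposition~\ref{prop:ar}. The paper instead reruns the tensor-decomposition trick of Theorem~\ref{product}: since the degrees $d(u_M)=H^0(b_M)=r_{H^0M}$ for $M\in\ind\Lambda$ form a basis of $K_0^{\spl}(\mod\Lambda)$ (Lemma~\ref{dual-bases}), one has $\mathbb C[u_M^\pm,\hF_M^{-1}]\cong\mathbb C[u_M^\pm\mid M\in\ind\Lambda]\otimes\mathbb C[u_M^\pm,\hF_M^{-1}]_0$, and quotienting by $\langle u_M-1\mid M\in\ind\Lambda\rangle$ immediately identifies the degree-zero piece with the target. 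The paper's argument is shorter and avoids the case analysis, relying only on the grading structure; your argument is more explicit and, as a byproduct, computes $\psi(\wu_M)$ directly (which is exactly what the paper does next to define the $v_M$), so your approach anticipates the subsequent development more transparently.
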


\begin{proof}
It is easy to check from the definition of $\hF_M$ that $\psi$ does indeed send $\hF_M$ to $F_M$, and thus is a well-defined map of rings from $\mathbb C[u_M^\pm,\hF_M^{-1}]$ to $\mathbb C[y_1^\pm,\dots,y_n^\pm, F_M^{-1}]$.

  The argument now is similar to the argument for Theorem \ref{product}.
  This time, we use the fact that the degrees of $u_M$ for $M\in \ind\Lambda$ form a basis for $K^{\spl}_0(\mod \Lambda)$ to write
  $$\mathbb C[u_M^\pm, \hF_M^{-1}] \xrightarrow{\sim} \mathbb C[u_M^\pm\mid M\in\ind\Lambda] \otimes_{\mathbb C} \mathbb C[u_M^\pm, \hF_M^{-1}]_0.$$
  The ideal on the left-hand side generated by the~$u_M-1$ for~$M\in\ind\Lambda$ is sent to the ideal generated by the~$(u_M-1)\otimes 1$, so taking the quotient gives
  \[
   \mathbb C[u_M^\pm,\hF_M^{-1}]/\langle u_M-1 \mid M\in\ind\Lambda\rangle
  \simeq \mathbb C[u_M^\pm,\hF_M^{-1}]_0.
  \]
  But the left hand-side is isomorphic to $\mathbb C[y_1,\dots,y_n,F_M^{-1}]$ under the map $\psi$, which yields the result.
  \end{proof}

Define $v_M=\psi(\wu_M)$.
  Explicitly, the $v_M$ are given in terms of $F$-polynomials as
\[
v_M = \frac{F_{E}}{F_M\,F_{\tau M}},\qquad v_{P_i} = \frac{F_{\rad\,P_i}}{F_{P_i}},\qquad v_{\Sigma P_i} = y_i\,\frac{F_{I_i/S_i}}{F_{I_i}},
\]
for $M \neq P_i, \Sigma P_i$, and where we write $0\rightarrow \tau M \rightarrow E \rightarrow M \rightarrow 0$ for the AR-sequence ending with $M$.

\begin{remark}\label{rema::uniform-v}
 The~$v_X$ can be described without splitting into three cases by observing that
 \[
  v_X = \frac{F_XF_{\tau X} - y^{\dimv H^0X}}{F_XF_{\tau X}}.
 \]

\end{remark}

We can now prove an analogue of Theorem \ref{wu-theorem}, regarding $\mathbb C[y_1^\pm,\dots,y_n^\pm, F_M^{-1}]$ (which is isomorphic to $R_\Lambda$ by Proposition \ref{u-proposition}). 

\begin{proposition}\label{another} $\mathbb C[y_1^\pm,\dots,y_n^\pm, F_M^{-1}]$ is generated by the $v_M^\pm$, with $M\in\ind K_\Lambda$. \end{proposition}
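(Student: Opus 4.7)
The plan is to deduce Proposition \ref{another} almost immediately from the two results that precede it, namely Theorem \ref{wu-theorem} and Proposition \ref{u-proposition}, by transporting the generating set via the isomorphism $\psi$.

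First, I would recall that by Theorem \ref{wu-theorem}, the degree-zero subring $\mathbb C[u_M^\pm,\hF_M^{-1}]_0$ is generated (as a $\mathbb C$-algebra) by $\{\wu_M^{\pm 1} : M \in \ind K_\Lambda\}$. Next, Proposition \ref{u-proposition} provides a ring isomorphism
\[
\psi_0 \colon \mathbb C[u_M^\pm,\hF_M^{-1}]_0 \xrightarrow{\ \sim\ } \mathbb C[y_1^\pm,\dots,y_n^\pm, F_M^{-1}],
\]
obtained as the restriction of $\psi$ to the degree-zero part. Since $\wu_M$ is homogeneous of degree zero (noted just before Theorem \ref{wu-theorem}), it lies in the domain of $\psi_0$, and by definition $v_M = \psi(\wu_M) = \psi_0(\wu_M)$.

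The proof is then a one-line formality: applying the ring isomorphism $\psi_0$ to the generating set $\{\wu_M^{\pm 1}\}$ of its domain yields a generating set of its codomain, namely $\{v_M^{\pm 1} : M \in \ind K_\Lambda\}$. Therefore $\mathbb C[y_1^\pm,\dots,y_n^\pm, F_M^{-1}]$ is generated by the $v_M^{\pm 1}$, as claimed.

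There is no real obstacle here; all the work has been done in establishing Theorem \ref{wu-theorem} (the $\wu_M$ generate the degree-zero part) and Proposition \ref{u-proposition} (the isomorphism $\psi_0$). The only thing worth double-checking is the compatibility $\psi(\wu_M)=v_M$, which is built into the definition $v_M := \psi(\wu_M)$ and is consistent with the explicit formulas $v_M = F_E/(F_MF_{\tau M})$, $v_{P_i} = F_{\rad P_i}/F_{P_i}$, $v_{\Sigma P_i} = y_i\,F_{I_i/S_i}/F_{I_i}$, obtained by applying the substitution $u_N\mapsto 1$ ($N\in\ind\Lambda$), $u_{\Sigma P_i}\mapsto y_i$ to $\wu_M = u_M\hF_E/(\hF_{\tau M}\hF_M)$ and using $\psi(\hF_N)=F_N$.
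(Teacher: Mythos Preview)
Your proposal is correct and follows essentially the same approach as the paper: invoke Theorem \ref{wu-theorem} to get that the $\wu_M^{\pm 1}$ generate $\mathbb C[u_M^\pm,\hF_M^{-1}]_0$, then transport this generating set through the isomorphism $\psi$ of Proposition \ref{u-proposition} to conclude that the $v_M^{\pm 1}$ generate $\mathbb C[y_1^\pm,\dots,y_n^\pm,F_M^{-1}]$. The extra sanity check you include on $\psi(\wu_M)=v_M$ is fine but not needed, since this is the definition of $v_M$.
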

\begin{proof}
By Theorem \ref{wu-theorem}, $\mathbb C[u_M^\pm,\hF_M^{-1}]_0$ is generated by $\wu_M^{\pm}$. Applying
  the map $\psi$ from Proposition~\ref{u-proposition}, the image under $\psi$ of $\mathbb C[u_M^\pm,\hF_M^{-1}]_0$ is generated by the images under $\psi$ of the $\wu_M^{\pm}$, which are the $v_M^\pm$. \end{proof}

We write $\Phi$ for the map from $\mathbb C[u_M\mid M\in\ind K_\Lambda]$ to~$\mathbb C(y_1,\dots,y_n)$ sending $u_M$ to $v_M$.

\begin{theorem}\label{six-three}
 The map~$\Phi$ induces an isomorphism~$\bar\Phi:R_\Lambda \to \bC[y_1^{\pm 1}, \ldots, y_n^{\pm 1}; F_M^{-1}\mid M\in \ind K_\Lambda]$.
\end{theorem}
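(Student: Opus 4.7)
The plan is to realize $\bar\Phi$ as a composition of two isomorphisms already available. Theorem~\ref{product} provides an isomorphism $\alpha\colon R_\Lambda \xrightarrow{\sim} \mathbb C[u_M^{\pm 1},\hF_M^{-1}]_0$, and Proposition~\ref{u-proposition} shows that the restriction of $\psi$ to the degree-zero subring,
\[
\bar\psi\colon \mathbb C[u_M^{\pm 1},\hF_M^{-1}]_0 \xrightarrow{\ \sim\ } \mathbb C[y_1^{\pm 1},\ldots,y_n^{\pm 1}; F_M^{-1}\mid M\in\ind K_\Lambda],
\]
is an isomorphism. The composition $\bar\psi\circ\alpha$ is therefore an isomorphism, and the task reduces to identifying it with $\bar\Phi$. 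Since $R_\Lambda$ is generated (as a ring) by the classes of the $u_M$, this in turn reduces to the equality $\bar\psi\circ\alpha(u_M)=v_M$ on generators.

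The key step is to compute $\alpha(u_M)$. The definition of $\wu_M$ can be rewritten as
\[
u_M \;=\; \frac{\hF_{\tau M}\hF_M}{\hF_E}\cdot\wu_M,
\]
which presents $u_M$ as a Laurent monomial in the $\hF_X$, homogeneous of degree $d(u_M)=H^0(b_M)$, times the degree-zero element $\wu_M$. This is exactly the factorisation underlying Theorem~\ref{product}: passing to the quotient $R_\Lambda$ kills the ideal $I_\Lambda$ and sets every $\hF_X$ to $1$, so $\alpha$ sends the class of $u_M$ to $\wu_M$. The same argument applies in the two special cases $M=P_i$ and $M=\Sigma P_i$ once the factors $\hF_{\tau M}$ and $\hF_E$ are read off according to the conventions of Section~\ref{sec:geoM} (so that $\wu_{P_i}=u_{P_i}\hF_{\rad P_i}/\hF_{P_i}$ and $\wu_{\Sigma P_i}=u_{\Sigma P_i}\hF_{I_i/S_i}/\hF_{I_i}$, both still of degree zero).

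By construction $v_M=\psi(\wu_M)=\bar\psi(\wu_M)$, and so $\bar\psi\circ\alpha(u_M)=v_M=\bar\Phi(u_M)$. This simultaneously shows that $\bar\Phi$ is well-defined (it coincides with the composition of two well-defined ring homomorphisms, hence $\Phi(u_M)=v_M$ automatically lies in the target ring, is invertible there, and satisfies $\hF_N(v)=1$ for every $N\in\ind K_\Lambda$) and that $\bar\Phi$ is an isomorphism. I do not foresee a genuine obstacle: the theorem is a consolidation of Theorem~\ref{product}, Theorem~\ref{wu-theorem}, and Proposition~\ref{u-proposition}, and the only content requiring real care is the uniform check $\alpha(u_M)=\wu_M$ across the three cases.
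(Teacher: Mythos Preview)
Your proposal is correct and follows the same approach as the paper: the paper's proof simply states that $\bar\Phi$ is the composition of the isomorphism from Theorem~\ref{product} with the map $\psi$ from Proposition~\ref{u-proposition}. You have filled in the verification on generators (namely $\alpha(u_M)=\wu_M$, which is exactly the content of the proof of Theorem~\ref{wu-theorem}), but the underlying argument is identical.
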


\begin{proof}
 The map~$\bar\Phi$ is the composition of the map~$R_\Lambda \to \bC[u_M^{\pm}, \widehat{F}_M^{-1}]_0$ from Theorem~\ref{product} and the map~$\psi$ from Proposition~\ref{u-proposition}, so the result holds.  
\end{proof}

\begin{corollary} \label{vs-solve} The rational functions $v_M$ satisfy the $\hF=1$ equations and the $u$-equations.
\end{corollary}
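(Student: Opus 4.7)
The plan is to read this off directly from Theorem~\ref{six-three} together with Corollary~\ref{coro:F-tilde-imply-u}; no new computation is needed. The key point is that Theorem~\ref{six-three} asserts $\bar\Phi$ is a well-defined map out of $R_\Lambda = \bC[u_M^{\pm}\mid M\in\ind K_\Lambda]/I_\Lambda$ (an isomorphism, in fact), where $I_\Lambda$ is generated by the elements $\hF_W - 1$. Since $\bar\Phi$ is the map induced by $\Phi: u_M \mapsto v_M$, the mere existence of $\bar\Phi$ says that $\Phi(\hF_W - 1) = 0$ in $\bC(y_1,\dots,y_n)$ for every $W \in \ind K_\Lambda$. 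Unpacking, $\Phi(\hF_W)$ is the rational function obtained by substituting $u_M \mapsto v_M$ into the polynomial $\hF_W$, so this is precisely the statement that the $v_M$ solve all the $\hF$-equations.

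Having established that the $v_M$ solve the system $\hF_W = 1$, the second half of the corollary is immediate from Corollary~\ref{coro:F-tilde-imply-u}, which asserts that the $\hF$-equations imply the $u$-equations as polynomial identities in $\bZ[u_V\mid V\in\ind K_\Lambda]$. Applying $\Phi$ to that implication (i.e.\ substituting $u_M = v_M$) yields the $u$-equations satisfied by the $v_M$.

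Since this is essentially a bookkeeping corollary of the two cited results, there is no real obstacle; the only thing to be mildly careful about is that $\Phi$ is a priori only defined on the polynomial ring $\bC[u_M\mid M\in\ind K_\Lambda]$, not on $\bC[u_M^{\pm}]$, but this is exactly what is needed to make sense of ``$v_M$ satisfies $\hF_W = 1$'' and ``$v_M$ satisfies the $u$-equations'' as identities in $\bC(y_1,\dots,y_n)$. So the proof amounts to one or two sentences invoking Theorem~\ref{six-three} for the $\hF$-equations and Corollary~\ref{coro:F-tilde-imply-u} for the $u$-equations.
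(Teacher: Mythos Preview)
Your proposal is correct and follows essentially the same approach as the paper's proof: invoke Theorem~\ref{six-three} to conclude that the $v_M$ satisfy the $\hF=1$ equations (since $\bar\Phi$ is well-defined on $R_\Lambda$), and then apply Corollary~\ref{coro:F-tilde-imply-u} to deduce the $u$-equations. The paper's version is simply a terser statement of exactly this two-step argument.
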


\begin{proof} The isomorphism of Theorem \ref{six-three} from $R_\Lambda=\mathbb C[u_M\mid M\in \ind K_\Lambda]/\langle \hF_M=1\mid M\in \ind\Lambda\rangle$ to $\mathbb C[v_M]$ sending $u_M$ to $v_M$, implies that the $v_M$ satisfy the $\hF=1$ equations. Corollary \ref{coro:F-tilde-imply-u} shows that they therefore also satisfy the $u$-equations. \end{proof}

\begin{corollary}
There is an isomorphism from an open set in $\mathbb C^n$ to  $\mathcal M_\Lambda$, sending $(y_1,\dots,y_n)$ to $(v_M(y_1,\dots,y_n))_{M\in\ind K_\Lambda}$.
\end{corollary}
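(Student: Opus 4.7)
The plan is to obtain the desired isomorphism by simply applying $\Spec$ to the ring isomorphism already established in Theorem~\ref{six-three}. That theorem produces an isomorphism
\[
\bar\Phi: R_\Lambda \xrightarrow{\sim} \bC[y_1^{\pm 1},\ldots,y_n^{\pm 1};\, F_M^{-1}\mid M\in \ind K_\Lambda]
\]
which sends each generator $u_M$ to the rational function $v_M$. Taking $\Spec$ of both sides yields a scheme isomorphism in the opposite direction, which I will identify with the map described in the corollary.

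First I would identify the spectrum of the right-hand ring with an open subset of $\bC^n$. By construction, $\bC[y_1^{\pm 1},\ldots,y_n^{\pm 1};\, F_M^{-1}]$ is the localization of $\bC[y_1,\ldots,y_n]$ at the multiplicative set generated by $y_1,\ldots,y_n$ and all the $F_M$ for $M\in\ind K_\Lambda$ (which is a finite set by Assumption~\ref{finiteness-assumption}). Its spectrum is therefore the open subvariety
\[
U = \{(y_1,\ldots,y_n)\in\bC^n \;\mid\; y_i\neq 0 \text{ for all } i, \text{ and } F_M(y)\neq 0 \text{ for all }M\in \ind K_\Lambda\},
\]
which is clearly an open subset of $\bC^n$.

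Next I would read off the map on points. By functoriality of $\Spec$, the isomorphism $\bar\Phi$ induces a scheme isomorphism $U \xrightarrow{\sim} \mathcal M_\Lambda$ whose effect on $\bC$-points is given by pullback of functions: a point $y=(y_1,\ldots,y_n)\in U$ is sent to the homomorphism $R_\Lambda\to \bC$ defined by $u_M \longmapsto \bar\Phi(u_M)(y) = v_M(y_1,\ldots,y_n)$. Equivalently, as a point of $\mathcal M_\Lambda\subset \Spec\bC[u_M^{\pm}\mid M\in \ind K_\Lambda]$, the image has coordinates $(v_M(y))_{M\in\ind K_\Lambda}$. The fact that this tuple actually lies on $\mathcal M_\Lambda$ (i.e.\ satisfies the $\hF_M=1$ equations) is guaranteed by Corollary~\ref{vs-solve}, so no separate verification is needed.

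There is no real obstacle here: the entire content of the corollary has been packaged in Theorem~\ref{six-three}, and the remaining work is the purely formal step of applying the $\Spec$ functor and recognizing the resulting affine scheme as an open subvariety of $\bC^n$. The only minor thing to notice is that the localization is at finitely many elements, which uses the finite representation type assumption to ensure that $U$ is indeed the complement of a genuine closed subvariety in the torus $(\bC^*)^n$, and hence open in $\bC^n$.
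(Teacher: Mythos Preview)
Your proof is correct and follows exactly the same approach as the paper: the corollary is obtained by applying $\Spec$ to the ring isomorphism of Theorem~\ref{six-three}. The paper's proof is a single sentence to this effect, and your version simply spells out the details of identifying the localized ring's spectrum as an open subset of $\bC^n$ and reading off the map on points.
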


\begin{proof}
This is the isomorphism of varieties corresponding to the isomorphism of rings from Theorem \ref{six-three}. \end{proof}

\section{Identities relating $v_X$ and $F_M$} \label{parametrization-section}
In this section, we prove some equations relating the rational functions $v_X$ and the polynomials $F_M$. These identities will be needed in Section \ref{geometry}. They can further be used to check directly that the $v_X$ satisfy the $\hF_M=1$ equations.

We write $F$ for the homomorphism $F:K_0^{\spl}\big(\mod\Lambda\big) \rightarrow \bQ(y_1, \ldots, y_n)$ that maps classes in the split Grothendieck group to their $F$-polynomials:
\[
F:\,  \sum_i\lambda_i[A_i]  \longmapsto  \prod_i F_{A_i}^{\lambda_i},
\]
for integers $\lambda_i$. By~\cref{theo::fpolynomials}(1), this is indeed well defined. Moreover, note that
\[
F(r_{X}) = v_X^{-1},\qquad y_i F(\ell_{I_i}) = v_{\Sigma P_i}^{-1},
\]
for $X \neq \Sigma P_i$. Then the basis expansions in $K_0^{\spl}$ given by Lemma \ref{lem:basisexp} imply the following identities:

\begin{lemma}\label{lem:Fexpand}
  For any module $M$ in $\mod\Lambda$,
  \begin{align*}
  \frac{1}{F_M}&=\prod_{N\in\ind\Lambda}v_N^{\hom_\Lambda(N,M)}\\
  \frac{y^{d(M)}}{F_M}&=\prod_{X\in\ind K_\Lambda} v_X^{\hom_\Lambda(M,H^0(\tau X))}
  \end{align*}
\end{lemma}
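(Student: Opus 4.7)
The plan is to obtain both identities by expanding $[M] \in K_0^{\spl}(\mod\Lambda)$ in each of the two dual bases from Lemma~\ref{dual-bases} and then applying the homomorphism $F$. This reduces the lemma to bookkeeping of the values $F(r_N)$ and $F(\ell_N)$ that were computed just before the lemma statement.

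For the first identity, by Lemma~\ref{lem:basisexp} we have $[M] = \sum_{N \in \ind\Lambda} ([N],[M])\, r_N = \sum_{N} \hom_\Lambda(N,M)\, r_N$. Applying the ring homomorphism $F$ and using $F(r_N) = v_N^{-1}$, we immediately read off $F_M = \prod_N v_N^{-\hom_\Lambda(N,M)}$, which is the first identity after inversion.

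For the second identity, I would use the other basis expansion, $[M] = \sum_{N \in \ind\Lambda} \hom_\Lambda(M,N)\, \ell_N$, and split the sum according to whether $N$ is injective. For non-injective $N$, $F(\ell_N) = v_{\tau^{-1}_\Lambda N}^{-1}$; for $N = I_i$, $F(\ell_{I_i}) = y_i\, v_{\Sigma P_i}^{-1}$. The bijection $N \mapsto \tau^{-1}_\Lambda N$ identifies non-injective indecomposable modules with non-projective ones, and under this bijection $N = H^0(\tau_{K_\Lambda} X)$ where $X = \tau^{-1}_\Lambda N$ (by Lemma~\ref{lem:almost-split}). In the injective case, $H^0(\tau_{K_\Lambda} \Sigma P_i) = I_i$, so $\hom_\Lambda(M,I_i)$ is exactly $\hom_\Lambda(M, H^0(\tau \Sigma P_i))$.

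Putting this together, the non-injective part reindexes as $\prod_{X} v_X^{-\hom_\Lambda(M,H^0(\tau X))}$ over non-projective modular $X$, while the injective part contributes the corresponding factor over the $\Sigma P_i$ together with $\prod_i y_i^{\hom_\Lambda(M,I_i)}$. Since $\hom_\Lambda(M,I_i) = \dim_{\mathbb C}(Me_i) = d(M)_i$, this last product equals $y^{d(M)}$. For $X = P_i$ projective in $K_\Lambda$ we have $H^0(\tau X) = 0$, so those factors are trivial and can be freely included. Rearranging yields the second identity.

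The only delicate point is the treatment of the injective modules in the $\ell$-basis expansion: one must track the extra factor of $y_i$ coming from $F(\ell_{I_i}) = y_i v_{\Sigma P_i}^{-1}$ and verify that, after reindexing by the AR translation, the totality of these factors assembles into precisely $y^{d(M)}$. Once this bookkeeping is done, the argument is a direct application of the homomorphism $F$ to the basis expansion.
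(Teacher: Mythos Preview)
Your proposal is correct and follows essentially the same route as the paper: expand $[M]$ in the $r_N$ basis for the first identity and in the $\ell_N$ basis for the second, apply the homomorphism $F$, and track the extra $y_i$ factors coming from the injective terms $\ell_{I_i}$ (using $\hom_\Lambda(M,I_i)=d(M)_i$) to assemble the monomial $y^{d(M)}$. Your bookkeeping and reindexing via $X\mapsto H^0(\tau X)$, including the observation that projective $X$ contribute trivially, matches the paper's argument exactly.
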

\begin{proof} 
  The first equation follows by applying Lemma \ref{lem:basisexp} to expand $x=[M]$ in the $r_N$ basis, and then applying $F$ to the resulting identity in $K_0^{\spl}(\mod\Lambda)$.
  Similarly expanding $x=[M]$ in the $\ell_N$ basis and applying $F$, we get
  $$\frac 1{F_M}=\prod_{N\in\ind\Lambda} F(\ell_N)^{-\hom_\Lambda(M,N)}=\prod_{X\in\ind K_\Lambda} F(\ell_{H^0(\tau X)})^{-\hom_\Lambda(M,H^0(\tau X))}$$
    Now, $F(\ell_{H^0(\tau X)})=v_X^{-1}$ unless $X=\Sigma P_i$, in which case
    $y_iF(\ell_{H^0(\tau \Sigma P_i)})=v_{\Sigma P_i}^{-1}$. Thus, to be left with only powers of the $v_X$ on the righthand side, we need a compensating monomial on the lefthand side, which is just $y^{d(M)}$ because $\hom(M,H^0(\tau \Sigma P_i))=d(M)_i$. This proves the second equation. \end{proof}

\begin{lemma}\label{lem:yexpand}
For any module $M$ in $\mod\Lambda$, with dimension vector $d(M)$,
\[
y^{d(M)} = \prod_{Y\in\ind K_\Lambda} v_Y^{-\langle g(Y), d(M) \rangle}.
\]
\end{lemma}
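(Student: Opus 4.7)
The plan is to derive this identity by dividing the two identities already established in \cref{lem:Fexpand} and then applying the $g$-vector/$\dim$ pairing formula from \cref{lem:gd}. There is no real obstacle; the content is essentially bookkeeping, and the main point is to massage the index sets so they match.

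First, I would rewrite the first identity of \cref{lem:Fexpand}, which is indexed over $\ind \Lambda$, as a product over $\ind K_\Lambda$. The correspondence $X \mapsto H^0(X)$ gives a bijection between indecomposables $X \in \ind K_\Lambda$ with $H^0(X) \neq 0$ and $\ind \Lambda$. The remaining indecomposables of $K_\Lambda$ are the shifted projectives $\Sigma P_i$, which satisfy $H^0(\Sigma P_i) = 0$, so $\hom_\Lambda(H^0(\Sigma P_i), M) = 0$ and these factors are trivial. Hence the first identity of \cref{lem:Fexpand} becomes
\[
\frac{1}{F_M} = \prod_{Y \in \ind K_\Lambda} v_Y^{\hom_\Lambda(H^0Y, M)}.
\]

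Next, dividing the second identity of \cref{lem:Fexpand} by this reformulation gives
\[
y^{d(M)} = \prod_{Y \in \ind K_\Lambda} v_Y^{\hom_\Lambda(M,H^0(\tau Y)) - \hom_\Lambda(H^0Y, M)}.
\]
Finally, \cref{lem:gd} says exactly that
\[
\hom_\Lambda(M, H^0(\tau Y)) - \hom_\Lambda(H^0 Y, M) = -\langle g(Y), d(M) \rangle
\]
for every $Y \in \ind K_\Lambda$ (including the $\Sigma P_i$ case, where $H^0Y = 0$ and $H^0(\tau Y) = I_i$). Substituting this into the exponent yields the claimed formula. The argument is short enough that no subtle step is needed; the only thing to watch is the bookkeeping for the $\Sigma P_i$ contributions in both products.
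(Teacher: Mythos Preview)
Your proof is correct and follows essentially the same approach as the paper: divide the two identities of \cref{lem:Fexpand} and apply \cref{lem:gd}. You are simply a bit more explicit about reindexing the first product from $\ind\Lambda$ to $\ind K_\Lambda$, which the paper leaves implicit.
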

\begin{proof}
By the identities for $1/F_{M}$ and $y^{d(M)} / F_{M}$ (Lemma~\ref{lem:Fexpand}), we find
\[
y^{d(M)} = \prod_{Y\in\ind K_\Lambda} v_Y^{\hom_{\Lambda}(M,H^0 \tau Y)-\hom_{\Lambda}(H^0Y,M)}.
\]
Then the result follows from Lemma~\ref{lem:gd}.
\end{proof}

The two lemmas above can be used to give a direct proof of Corollary \ref{vs-solve}.

\section{Tropical $u$-variables}\label{sec:trop}

In this section, we prove some results on the tropicalization of the rational functions $v_M$ for $M\in\ind K_\Lambda$, as defined just before Proposition~\ref{another}. These are of independent interest, generalizing results of \cite{KusReineke} for Dynkin quivers; we will also use them in Section \ref{geometry}. Because of the independent interest of these results, for most of this section we drop the assumption that our finite-dimensional algebra $\Lambda$ is of finite representation type. 

For $g\in \mathbb Z^n$, we can consider two-term complexes of projectives
$P^{-1}\rightarrow P^0$, with $P^{0}=\bigoplus_{g_i>0} P_i^{g_i}$, $P^{-1}=\bigoplus_{g_i<0} P_i^{-g_i}$. We call such complexes \emph{presentations of weight $g$}.
The space of such complexes is just an affine space, so in particular it is irreducible, and we can look for generic behaviours. In particular, we will be interested in the multiplicity of a given $X\in\ind K_\Lambda$ as an indecomposable summand of a generic presentation of weight $g$. Also, for $M$ a $\Lambda$-module,
define $\hom(g,M)$ to be the dimension of $\Hom$ from the cokernel of a generic presentation of weight $g$ to $M$.

Any $g\in\mathbb Z^n$ admits a canonical decomposition, similar to Kac's canonical decomposition for representations \cite[Section 4]{DF}: the canonical decomposition of $g=g_1+\dots+g_s$ if a generic presentation of weight $g$ decomposes as a direct sum of presentations of weights $g_1,\dots,g_s$. 

Tropicalization of a polynomial drops coefficients, replaces product by sum and sum by maximum. The tropicalization of a monomial is therefore a linear function, and the tropicalization of a polynomial is the maximum of the linear functions corresponding to the terms of the polynomial. Generally tropicalization is only considered for polynomials with non-negative coefficients, but it will be convenient for us to neglect that condition. 

\begin{lemma} $\trop F_M(x)= \max_{L\subset M}\dimv L \cdot x,$ where the maximum is taken over subrepresentations of $M$. \end{lemma}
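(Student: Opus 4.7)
The plan is to unwind the definition of $F_M$ and identify both sides as explicit maxima over sets of dimension vectors.

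By \cref{def:F}, we have $F_M = \sum_{\bd\in\bZ_{\geq 0}^n}\chi(\Gr_\bd(M))\,y^\bd$. Under the tropicalization convention set out just above the lemma (drop coefficients, replace products by sums and sums by maxima), each monomial $y^\bd$ tropicalizes to the linear function $\bd\cdot x$, so
\[
\trop F_M(x) \;=\; \max\big\{\bd\cdot x \,:\, \chi(\Gr_\bd(M))\neq 0\big\}.
\]
On the other hand, the $\mathbb{C}$-points of $\Gr_\bd(M)$ are exactly the submodules $L\subset M$ with $\dimv L=\bd$, so the right-hand side of the lemma equals
\[
\max_{L\subset M}\dimv L\cdot x \;=\; \max\big\{\bd\cdot x \,:\, \Gr_\bd(M)\neq\emptyset\big\}.
\]

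One direction is immediate: any $\bd$ with $\chi(\Gr_\bd(M))\neq 0$ has $\Gr_\bd(M)\neq\emptyset$, hence appears in the indexing set on the right, giving $\trop F_M(x)\leq \max_{L\subset M}\dimv L\cdot x$.

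The reverse inequality is the main obstacle, and amounts to the positivity statement that $\chi(\Gr_\bd(M))>0$ whenever $\Gr_\bd(M)$ is non-empty; granted this, the two indexing sets coincide and the two maxima are equal. To establish positivity I would equip $M$ with a generic $\bC^\times$-grading, inducing a $\bC^\times$-action on $\Gr_\bd(M)$ whose fixed locus consists of graded (``monomial'') subrepresentations of dimension vector $\bd$. Since the Euler characteristic of a complex variety with a $\bC^\times$-action equals that of its fixed locus, and genericity of the grading ensures the fixed set is non-empty (and is in fact a finite set of isolated points inside any non-empty $\Gr_\bd(M)$), positivity of $\chi(\Gr_\bd(M))$ follows. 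This completes the proof.
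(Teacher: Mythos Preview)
Your reduction to the inequality $\trop F_M(x)\geq \max_{L\subset M}\dimv L\cdot x$ is correct, and you correctly identify the obstacle: dimension vectors $\bd$ with $\Gr_\bd(M)\neq\emptyset$ but $\chi(\Gr_\bd(M))=0$. However, your proposed resolution has a genuine gap. The positivity statement ``$\chi(\Gr_\bd(M))>0$ whenever $\Gr_\bd(M)\neq\emptyset$'' is \emph{false} for modules over general finite-dimensional algebras: by a theorem of Reineke, every projective variety occurs as a quiver Grassmannian, so one can arrange $\Gr_\bd(M)$ to be an elliptic curve, with $\chi=0$. (Note that in this section the paper explicitly drops the finite representation type hypothesis.)

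Your $\bC^\times$-argument does not repair this. A ``generic $\bC^\times$-grading'' on $M$ is a grading as a vector space, and the resulting $\bC^\times$-action on the ambient product of Grassmannians need not preserve the closed subvariety $\Gr_\bd(M)$ of $\Lambda$-submodules. For the action to restrict, the $\bC^\times$ would have to act by $\Lambda$-module automorphisms of $M$, i.e.\ $M$ would have to be a graded $\Lambda$-module for some grading of $\Lambda$; a generic module has no such structure, and even when one exists the fixed locus need not consist of isolated points.

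The paper avoids positivity entirely by a convexity argument: letting $P$ be the convex hull of $\{\dimv L: L\subset M\}$, the linear functional $\bd\mapsto \bd\cdot x$ attains its maximum over $P\cap\bZ^n$ at a vertex of $P$, and a result of \cite{BKT} gives that for $\bd$ a vertex of $P$ one has $\Gr_\bd(M)=\{\mathrm{pt}\}$, hence $\chi=1$. Thus only the vertices need to contribute to $\trop F_M$, and at those the coefficient is guaranteed nonzero. This is strictly weaker than what you attempt, and is what makes the argument go through in full generality.
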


\begin{proof} This is not immediate from our definition, because of the Euler characteristics in $F_M$. Suppose that $L$ is a submodule of $M$ of dimension $\mathbf e$ but $\chi(\Gr_{\mathbf e}(M))=0$. Then the term
  $\mathbf e \cdot y$ appears in the maximum on the righthand side but not on the lefthand side. However, let $P$ be the convex hull of the dimension vectors of the subrepresentations of $M$. Clearly, the maximum on the righthand side of the equation in the statement in the lemma can be restricted to the vertices of $P$, and \cite{BKT} shows that for $\mathbf e$ a vertex of $P$, we have $\Gr_{\mathbf e}(M)=\{\textrm{pt}\}$, so the Euler characteristic is 1. Thus, the terms appearing on the righthand side but but missing from the lefthand side (if any) correspond to points contained within the convex hull of the vertices of $P$, and these points will never provide the maximum.\end{proof}

Fei shows \cite[Theorem 3.6]{FeiII} the following theorem:

\begin{theorem}\label{fei} 
  For any representation $M$ of $\Lambda$, and any $g\in\mathbb Z^n$, there exists a positive integer $m$ such that
  $\hom(mg,M)=m(\trop F_M)(g)$. Further, if the statement holds for $m$, it also holds for any multiple of $m$.
\end{theorem}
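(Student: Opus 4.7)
The plan is to follow the approach of Fei, which I would organize around a fundamental identity: for any cokernel $N$ of a projective presentation of weight $g$, applying $\Hom_\Lambda(-,M)$ to $P^{-1}\to P^0 \to N \to 0$ and using $\Ext^1_\Lambda(P,M)=0$ for projective $P$ gives
\[
\hom_\Lambda(N,M) - \ext^1_\Lambda(N,M) = \hom_\Lambda(P^0,M) - \hom_\Lambda(P^{-1},M) = \langle g,\dimv M\rangle.
\]
Taking generic values over the affine space $V_g$ of weight-$g$ presentations, and writing $\ext^1(g,M)$ for the generic value of $\ext^1_\Lambda(\coker f, M)$, one obtains $\hom(g,M) = \ext^1(g,M) + \langle g,\dimv M\rangle$. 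Since $\trop F_M(g) = \max_{L\subseteq M}\langle g,\dimv L\rangle$ (shown just before the theorem), the statement is equivalent to
\[
\ext^1(mg,M) \;=\; -m \min_{L\subseteq M}\langle g,\dimv (M/L)\rangle
\]
for some positive $m$.

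First I would establish the unconditional inequality $\hom(g,M) \geq \trop F_M(g)$. Fix a submodule $L\subseteq M$ realising the maximum. For a generic $f\in V_g$ with cokernel $N$, a homomorphism $N\to L$ is the same as a map $P^0\to L$ vanishing on $\im f$. The source has dimension $\langle g^+,\dimv L\rangle$ and the vanishing condition cuts down by at most $\langle g^-,\dimv L\rangle$, producing at least $\langle g,\dimv L\rangle$ dimensions in $\Hom_\Lambda(N,L)$ and hence, via the inclusion $L\hookrightarrow M$, in $\Hom_\Lambda(N,M)$. Making this precise requires an openness argument: the locus of $f$ for which this dimension is achieved is constructible, and one checks it is non-empty, hence dense in an irreducible component of $V_g$.

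For the matching upper bound after passage to a multiple, I would argue that for sufficiently large $m$, the generic $N$ of weight $mg$ is ``resolved enough'' that every element of $\Hom_\Lambda(N,M)$ factors through the optimal submodule $L$ above. This is where tropical geometry and the canonical decomposition of presentations (in the sense of \cite{DF}) enter: the polynomial $F_M$ has finitely many vertices in its Newton polytope, and for $m$ large, the contribution of non-maximising terms to the generic hom is dominated. Combining with the lower bound forces equality.

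Finally, for the stability of the equality under passing from $m$ to any multiple $km$: given $f_1,\dots,f_k \in V_{mg}$ generic, their direct sum $f = f_1\oplus\cdots\oplus f_k$ lies in $V_{kmg}$ and has $\hom_\Lambda(\coker f,M) = k\cdot\hom(mg,M) = km\,\trop F_M(g) = \trop F_M(kmg)$. Upper semi-continuity of $\hom$ on $V_{kmg}$ gives $\hom(kmg,M)\leq k\hom(mg,M) = km\,\trop F_M(g)$, while the unconditional lower bound gives $\hom(kmg,M) \geq \trop F_M(kmg) = km\,\trop F_M(g)$, so equality propagates.

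The main obstacle is the upper bound step: controlling the generic $\ext^1(mg,M)$ and showing that non-optimal submodules contribute no extra homomorphisms generically. This is the technical heart of the argument and is where one genuinely needs Fei's machinery from \cite{FeiII}; the two bracketing arguments (lower bound, multiplicativity) are comparatively soft.
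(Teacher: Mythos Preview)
The paper does not prove this theorem. The sentence immediately preceding it reads ``Fei shows \cite[Theorem 3.6]{FeiII} the following theorem:'', and no proof is supplied. So there is no argument in the paper to compare your proposal against; the result is simply imported from Fei's work and then used as a black box in the proof of Theorem~\ref{trop-v-is-mult}.

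As for your sketch itself: the lower bound $\hom(g,M)\geq \trop F_M(g)$ and the propagation from $m$ to $km$ via upper semicontinuity are both correct. (In fact your lower bound holds for \emph{every} presentation of weight $g$, not just generic ones, so the digression about constructibility and openness is unnecessary.) But, as you yourself concede, the upper bound is the entire content of the theorem, and you do not supply it: the sentence ``showing that non-optimal submodules contribute no extra homomorphisms generically'' is a restatement of the claim, not an argument, and the appeal to ``tropical geometry and the canonical decomposition'' is a gesture rather than a proof. Since the paper also defers to \cite{FeiII} at exactly this point, your treatment matches the paper's in practice: neither provides an independent proof of the hard direction.
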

\noindent
If a vector $g\in\mathbb Z^n$ lies in the support of the $g$-vector fan, then the generic presentation of weight $mg$ is just $m$ copies of the generic presentation of weight $g$, so we can take $m=1$ in Fei's theorem.

\medskip

The following Theorem generalizes a recent result of Kus--Reineke for Dynkin quivers \cite{KusReineke} to arbitrary finite-dimensional algebras.

\begin{theorem}\label{trop-v-is-mult} For $g\in\mathbb Z^n$ and $X\in\ind K_\Lambda$, we have that $-\trop v_X(g)$ equals
  the multiplicity
  of $X$ in a generic presentation of weight $g$. \end{theorem}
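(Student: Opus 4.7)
The strategy is to express $v_X$ as a ratio of polynomials with non-negative coefficients, tropicalize, and then combine Fei's theorem (Theorem~\ref{fei}) with the Auslander--Reiten identities of Proposition~\ref{prop:ar}. By the explicit formulas for $v_X$ preceding Proposition~\ref{another} (and using Lemma~\ref{theo::fpolynomials} together with Remark~\ref{rema::uniform-v}), one may write
\[
 v_X \;=\; \frac{N_X}{F_{H^0 X}\cdot F_{H^0(\tau X)}},
\]
where $N_X = F_{H^0 E_X}$ when $X$ is non-projective and $X\neq \Sigma P_i$ (with $\tau X\rightarrowtail E_X\twoheadrightarrow X$ the AR conflation in $K_\Lambda$), $N_X = F_{\rad P_i}$ when $X = P_i$ (setting $F_{H^0(\tau P_i)} = F_0 = 1$), and $N_X = y_i F_{I_i/S_i}$ when $X = \Sigma P_i$ (with $F_{H^0(\Sigma P_i)} = 1$ and $F_{H^0(\tau \Sigma P_i)} = F_{I_i}$). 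Since all of these polynomials have non-negative coefficients, tropicalization is additive, giving
\[
 -\trop v_X(g) \;=\; \trop F_{H^0 X}(g) + \trop F_{H^0(\tau X)}(g) - \trop N_X(g).
\]
By Theorem~\ref{fei} (using that the bound is closed under multiples), there is a single integer $m\geq 1$ such that $\hom(mg,M) = m\cdot\trop F_M(g)$ holds simultaneously for each of the finitely many modules $M$ appearing above. Writing $V$ for a generic presentation of weight $mg$, with indecomposable decomposition $V = \bigoplus_{Y\in\ind K_\Lambda} Y^{a_Y}$, one has $\hom(mg,M) = \sum_Y a_Y \hom_\Lambda(H^0 Y, M)$, so the previous identity becomes
\[
 -m\,\trop v_X(g) \;=\; \sum_{Y\in\ind K_\Lambda} a_Y\,\varepsilon^X_Y,
\]
where $\varepsilon^X_Y$ is the corresponding alternating sum of $\hom_\Lambda(H^0 Y,-)$'s, augmented in the $X = \Sigma P_i$ case by the extra contribution $-g(Y)_i$ coming from the monomial $y_i$ in $N_{\Sigma P_i}$.

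The technical heart of the proof is to check that $\varepsilon^X_Y$ vanishes unless $Y\simeq X$, in which case it equals $1$. When $X$ is non-projective with $X\neq\Sigma P_i$, Lemma~\ref{lem:almost-split} guarantees that $H^0$ carries the AR conflation of $X$ to the AR sequence $0\to H^0(\tau X)\to H^0 E_X\to H^0 X\to 0$ in $\mod\Lambda$, and Proposition~\ref{prop:ar}(1) directly yields $\varepsilon^X_Y = [H^0 Y\simeq H^0 X]$; this matches $[Y\simeq X]$ because $H^0$ restricts to a bijection between $\ind K_\Lambda\setminus\{\Sigma P_j\}$ and isomorphism classes of non-zero indecomposable modules (and $H^0(\Sigma P_j) = 0 \neq H^0 X$). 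The case $X = P_i$ is immediate from Proposition~\ref{prop:ar}(2). The delicate case is $X = \Sigma P_i$: here $\varepsilon^{\Sigma P_i}_Y = -g(Y)_i + \hom_\Lambda(H^0Y, I_i) - \hom_\Lambda(H^0Y, I_i/S_i)$. For $Y = \Sigma P_j$ the $\hom$-terms vanish and $-g(\Sigma P_j)_i = \delta_{ij}$, giving exactly $[Y\simeq\Sigma P_i]$. For $Y\neq\Sigma P_j$, one applies the $\Ext$ long exact sequence of $0\to S_i\to I_i\to I_i/S_i\to 0$ with $\Hom(H^0 Y,-)$ (using that $\Ext^1(H^0Y,I_i) = 0$ by injectivity) together with Lemma~\ref{lem:exthom} to obtain
\[
 \hom_\Lambda(H^0Y, I_i) - \hom_\Lambda(H^0Y, I_i/S_i) \;=\; \hom_\Lambda(H^0Y, S_i) - \hom_\Lambda(S_i, H^0(\tau Y)),
\]
and Lemma~\ref{lem:gd} specialized to $M = S_i$ identifies the right-hand side with $g(Y)_i$; hence $\varepsilon^{\Sigma P_i}_Y = 0$, as required.

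Assembling the three cases, the sum collapses to $-m\,\trop v_X(g) = a_X$, the multiplicity of $X$ in a generic presentation of weight $mg$. Since $\trop v_X$ is piecewise linear and hence homogeneous of degree one in $g$, and the multiplicities in the canonical decomposition scale compatibly under $g\mapsto mg$, one has $a_X = m\cdot m_X(g)$. Dividing by $m$ gives $-\trop v_X(g) = m_X(g)$, as desired. The main obstacle will be the edge case $X = \Sigma P_i$, where Proposition~\ref{prop:ar} does not apply directly and must be substituted by the $\Ext$-sequence computation invoking Lemma~\ref{lem:exthom} and the specialization of Lemma~\ref{lem:gd} to simple modules.
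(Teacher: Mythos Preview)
Your approach is essentially the paper's: tropicalize the explicit ratio expression for $v_X$, apply Fei's theorem (Theorem~\ref{fei}) to convert each $\trop F_M$ into a generic $\hom$, and then collapse the alternating sum via the Auslander--Reiten identities of Proposition~\ref{prop:ar}, handling $X=\Sigma P_i$ with the same long exact sequence and the $M=S_i$ specialization of Lemma~\ref{lem:gd}. The paper merely organizes the casework differently, first reducing via the canonical decomposition to the case where the generic presentation of weight $g$ is indecomposable, and then splitting on whether that object is $\tau$-rigid, non-$\tau$-rigid, or a shifted projective.

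Two points deserve more care. First, you assert that the $F$-polynomials have non-negative coefficients in order to make tropicalization additive; this is not known in general (see the remark following Lemma~\ref{F-positivity}). You should instead invoke the lemma immediately preceding Theorem~\ref{fei}, which proves $\trop F_M(x)=\max_{L\subset M}\dimv L\cdot x$ regardless of the signs of the Euler characteristics, and this is all you actually use. Second, your final step $a_X=m\cdot m_X(g)$ asserts that the multiplicity of a fixed $X$ in a generic presentation scales linearly under $g\mapsto mg$. This is true but not immediate: it rests on the fact that the canonical decomposition of $mg$ consists of $m$ copies of that of $g$ (so rigid summands appear $m$ times as often, while non-rigid summands still contribute zero generic multiplicity for any fixed $X$). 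You should either justify this from \cite{DF}, or sidestep it as the paper does: when $g$ lies in the support of the $g$-vector fan one may take $m=1$ in Fei's theorem and no scaling is needed, while for $g$ outside the fan one argues separately that both $m_X(g)$ and $\trop v_X(g)$ vanish. Note that the paper's own non-$\tau$-rigid case leans on the same scaling fact, only implicitly (``since $mg$ is not $\tau$-rigid, for a generic choice of $N$, this multiplicity will be zero'').
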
 

\begin{proof}
  By replacing $g$ successively by each of the summands in its canonical decomposition, we can suppose that the generic presentation of weight $g$ is indecomposable. We split into three cases, based on whether a generic presentation of weight $g$ is
  \begin{enumerate}
    \item the minimal presentation of an indecomposable $\tau$-rigid module, \item the minimal presentation of an indecomposable non $\tau$-rigid module, or  \item a complex of the form $P_i\rightarrow 0$ (i.e., a shifted projective). \end{enumerate}

  We further divide into cases based on whether $X$ is
  \begin{enumerate}
  \item [(a)] a minimal presentation of a non-projective module $M$, with Auslander--Reiten sequence $$0\rightarrow \tau M \rightarrow E \rightarrow M\rightarrow 0,$$
  \item [(b)] a complex of the form $0\rightarrow P_j$ (i.e., a minimal presentation of a projective module $P_j$), or
    \item [(c)] a complex of the form $P_j\rightarrow 0$.
  \end{enumerate}

First, consider case 1. Let $N$ be the cokernel of a generic representation of weight $g$, which is well-defined up to isomorphism since we are in case 1.   

In case 1(a),
$$\trop(v_X)(g)= -\trop F_{\tau M}(g) + \trop F_E(g) - \trop F_M(g).$$
Applying Theorem \ref{fei} (and choosing $m=1$, as we may since $g$ is in the support of the $g$-vector fan), we obtain
  \begin{align*} -\trop v_X (g)&= \hom(N,\tau M)-\hom(N,E)+\hom(N,M)\\&=(N,r_M)\end{align*}
  By Lemma \ref{lem:basisexp}, $(N,r_M)$ equals the multiplicity of $M$ as a summand of $N$. (Note that nothing up to Lemma \ref{lem:basisexp} in Section \ref{sec:split} relied on the finite representation type assumption.) This proves the desired statement in this case.

Case 1(b), where $X$ is an indecomposable projective $P_j$, is very similar. The difference is that the Auslander--Reiten sequence ending at $M$ is replaced by $\rad P_j \rightarrow P_j$. However,
  $-\trop v_X (g)=(N,r_{P_j})$, and we draw the same conclusion.

In case 1(c), we have $X=\Sigma P_j$. Then
  \begin{align*} -\trop(v_X)(g) &= \trop F_{I_j}(g) - \trop y_jF_{I_j/S_j}(g)\\
    &= \hom(N,I_j)-\hom(N,I_j/S_j)-g_j\end{align*}
    We have a long exact sequence
    $$0 \rightarrow \Hom(N,S_j) \rightarrow \Hom(N,I_j) \rightarrow \Hom(N,I_j/S_j) \rightarrow \Ext^1(N,S_j) \rightarrow \Ext^1(N,I_j)=0$$
    So $$\hom(N,I_j)-\hom(N,I_j/S_j)=\hom(N,S_j)-\ext^1(N,S_j)$$.
    Now $\hom(N,S_j)$ is just $\max(g_j,0)$, while $\ext(N,S_j)=-\max(-g_j,0)$,
    so $\hom(N,S_j)-\ext^1(N,S_j)=g_j$, so $-\trop(v_X)(g)=0$ in this case, as desired.

Now consider case 2. In this case, the multiplicity of any particular $X$ in a generic presentation of weight $g$ is zero, because, while $X$ might possibly appear as some presentation of weight $g$, it will not be the generic presentation, since $g$ is not $\tau$-rigid. We must therefore show that, in this case,
    $\trop v_X(g)=0$.

 In case 2(a),    
$$\trop(v_X)(g)= -\trop F_{\tau M}(g) + \trop F_E(g) - \trop F_M(g).$$
  Applying Theorem \ref{fei}, there is a positive integer $m$ such that
  $$-m\trop v_X (g)= \hom(mg,\tau M)-\hom(mg,E)+\hom(mg,M).$$
  Let $N$ be the cokernel of some presentation of weight $mg$. Then
  $\hom(N,\tau M)-\hom(N,E)+\hom(N,M)$ is $(N,r_M)$, so by Lemma 
  \ref{lem:basisexp}, it equals the multiplicity of $M$ as a summand of $N$.
  Since $mg$ is not $\tau$-rigid, for a generic choice of $N$, this multiplicity will be zero, so $-m\trop v_X(g)=0$, as desired.

Cases 2(b) and 2(c) are again very similar to case 2(a).

Finally, consider case 3, where $g=-e_i$. In case 3(a),
$$\trop(v_X)(g)=-\trop F_{\tau M}(-e_i) + \trop F_E(-e_i) - \trop F_M(-e_i)$$
The tropical evaluation of any $F$-polynomial at $-e_i$ is zero, so $\trop(v_X)(g)=0$. Case 3(b) is disposed of in the same way.
For case 3(c), 
\[
 -\trop(v_X)(g) = \trop F_{I_j}(-e_i) - \trop y_jF_{I_j/S_j}(-e_i) =\delta_{ij},
 \]
as desired. 
\end{proof}

We now return to studying algebras $\Lambda$ of finite representation type. In this case, generic presentations of modules are completely encoded by the $g$-vector fan. Specifically, for any $g\in\mathbb Z^n$, we express it as a sum of the generators of the mininal cone containing $g$ in the $g$-fan:
$$g= c_1 g_{X_1}+\dots+c_r g_{X_r}.$$
Then the multiplicity of $X_i$ in a generic representation of weight $g$ is $c_i$, and the multiplicity of any other complex from $\ind K_\Lambda$ is zero. This implies the following corollary, which completely characterizes $\trop v_M$.

\begin{corollary} For $\Lambda$ of finite representation type, $\trop v_M$ is linear on each cone of the $g$-vector fan. If $N\in\ind K_\Lambda$ is rigid, then $\trop v_M(g_N)=-\delta_{MN}$, where $\delta_{MN}=1$ is $M$ and $N$ are isomorphic, and zero otherwise. \end{corollary}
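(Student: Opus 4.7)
The plan is to deduce both parts of the corollary directly from Theorem~\ref{trop-v-is-mult}, combined with the paragraph immediately preceding the statement, which explains that in finite representation type the canonical decomposition of any $g \in \mathbb Z^n$ is controlled by the $g$-vector fan. Concretely: if $g$ lies in the minimal cone $\sigma$ containing it, with ray generators $g_{X_1}, \ldots, g_{X_r}$, then $g = c_1 g_{X_1} + \cdots + c_r g_{X_r}$ with $c_i \in \mathbb Z_{\geq 0}$, and the generic presentation of weight $g$ is the direct sum $\bigoplus_i X_i^{c_i}$.

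First I would handle the linearity claim. Fix a cone $\sigma$ of the $g$-vector fan with ray generators $g_{X_1}, \ldots, g_{X_r}$. Applying Theorem~\ref{trop-v-is-mult} to any integer point $g = \sum_i c_i g_{X_i}$ of $\sigma$ yields
\[
 -\trop v_M(g) \;=\; \sum_{i=1}^{r} c_i \, \delta_{M, X_i},
\]
since the multiplicity of $M$ as a summand of $\bigoplus_i X_i^{c_i}$ is $c_i$ if $M \simeq X_i$ and zero otherwise. The right-hand side is a linear function of the $c_i$, which are linear coordinates on $\sigma$. Because $\trop v_M$ is continuous and piecewise linear and tropical evaluation is $\mathbb Q_{\geq 0}$-homogeneous, agreement with a linear function on the integer points of $\sigma$ extends to agreement on all of $\sigma$.

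For the second part, if $N \in \ind K_\Lambda$ is rigid then $g_N$ generates a ray of the $g$-vector fan, and its canonical decomposition is the trivial one $g_N = 1 \cdot g_N$. The generic presentation of weight $g_N$ is then $N$ itself with multiplicity one, so Theorem~\ref{trop-v-is-mult} gives $\trop v_M(g_N) = -\delta_{MN}$ immediately.

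I do not anticipate any substantive obstacle: the corollary is essentially a direct consequence of Theorem~\ref{trop-v-is-mult}. The only point requiring care is the identification of $\bigoplus_i X_i^{c_i}$ with the generic presentation of weight $g$, but this is already the content of the paragraph preceding the corollary and rests on the fact that compatible rigid objects have direct sums that remain rigid, combined with the uniqueness of a rigid object with a given $g$-vector.
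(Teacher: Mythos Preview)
Your proposal is correct and follows exactly the approach the paper intends: the corollary is stated immediately after the paragraph describing the canonical decomposition in finite representation type, with no separate proof given, precisely because it follows from Theorem~\ref{trop-v-is-mult} combined with that paragraph in the way you spell out. Your added remark about extending from integer points to the whole cone via continuity and positive homogeneity of $\trop v_M$ is a reasonable bit of care that the paper leaves implicit.
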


In particular, note that if $M$ is not rigid, then $\trop v_M$ is identically zero.

We now establish a lemma about the $\widehat F_N$ polynomials. 

\begin{lemma}\label{rigid-cant-divide} Let $M\in \ind \Lambda$ be rigid, and $N\in \ind \Lambda$. Then $\widehat F_N$ is not divisible by $u_M$. \end{lemma}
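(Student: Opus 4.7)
The plan is to exhibit, in the expansion
$$\hF_N = \sum_{\bd \in \mathbb Z_{\geq 0}^n} \chi\bigl(\Gr_\bd(N)\bigr) \prod_{V \in \ind K_\Lambda} u_V^{e_V(\bd)}, \qquad e_V(\bd) = \hom_\Lambda(H^0V,N) - \langle g(V),\bd\rangle,$$
a single dimension vector $\bd_0$ with $\chi(\Gr_{\bd_0}(N)) \neq 0$ and $e_M(\bd_0) = 0$. Distinct $\bd$'s yield distinct monomials (they are already distinguished by their exponents on the shifted projectives, since $\langle g(\Sigma P_i),\bd\rangle = -\bd_i$), so such a $\bd_0$ produces a nonzero monomial in $\hF_N$ that is not divisible by $u_M$.

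The natural candidate is the $M$-trace of $N$: let $L := \mathrm{tr}_M(N) = \sum_{\phi \in \Hom_\Lambda(M,N)} \mathrm{Im}(\phi) \subseteq N$ and set $\bd_0 = d(L)$. By construction, every map $M \to N$ factors through $L$, so the restriction $\Hom_\Lambda(M,L) \to \Hom_\Lambda(M,N)$ is an isomorphism. Moreover, picking a basis $\phi_1,\dots,\phi_k$ of $\Hom_\Lambda(M,N)$ yields a surjection $M^k \twoheadrightarrow L$, and applying $\Hom_\Lambda(-,\tau M)$ gives an embedding $\Hom_\Lambda(L,\tau M) \hookrightarrow \Hom_\Lambda(M,\tau M)^k = 0$ by the $\tau$-rigidity of $M$. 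Lemma~\ref{lem:gd} then yields
$$\langle g(M),\bd_0\rangle = \hom_\Lambda(M,L) - \hom_\Lambda(L,\tau M) = \hom_\Lambda(M,N) - 0,$$
so $e_M(\bd_0) = 0$ as required.

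It remains to show $\chi(\Gr_{\bd_0}(N)) \neq 0$, which I will establish by proving that $\Gr_{\bd_0}(N) = \{L\}$. For any submodule $L' \subseteq N$ with $d(L') = \bd_0$, Lemma~\ref{lem:gd} gives $\hom_\Lambda(M,L') - \hom_\Lambda(L',\tau M) = \langle g(M),\bd_0\rangle = \hom_\Lambda(M,N)$. The inclusion $L' \hookrightarrow N$ forces $\hom_\Lambda(M,L') \leq \hom_\Lambda(M,N)$, while $\hom_\Lambda(L',\tau M) \geq 0$; both inequalities must then be equalities, which means every $\phi : M \to N$ factors through $L'$. Hence $L = \mathrm{tr}_M(N) \subseteq L'$, and the equality $\dim L = \dim L'$ forces $L = L'$. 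Therefore $\chi(\Gr_{\bd_0}(N)) = 1$, and $u_M$ does not divide $\hF_N$. The only delicate step is the uniqueness argument in this last paragraph, but it follows mechanically once Lemma~\ref{lem:gd} is combined with the $\tau$-rigidity of $M$; the real conceptual ingredient is the use of $\tau$-rigidity to kill $\hom_\Lambda(L,\tau M)$ via the surjection from a power of $M$.
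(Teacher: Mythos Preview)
Your proof is correct and takes a genuinely different, more elementary route than the paper's. The paper invokes Fei's theorem (Theorem~\ref{fei}) to establish $(\trop F_N)(g_M)=\hom(M,N)$ when $M$ is rigid, and then observes that $-(\trop F_N)(g_M)$ is the minimum $u_M$-degree appearing in $\Psi(F_N)$, so the monomial prefactor of Lemma~\ref{reform} brings that minimum down to zero. Your argument instead exhibits the witnessing term explicitly: the $M$-trace $L=\mathrm{tr}_M(N)$ has $e_M(d(L))=0$ by a direct $\tau$-rigidity calculation via Lemma~\ref{lem:gd}, and the same calculation forces uniqueness of $L$ in its Grassmannian, so $\chi(\Gr_{\bd_0}(N))=1$. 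In effect you are proving by hand the special case of Fei's theorem needed here, together with the fact (related to \cite{BKT}) that the corresponding face of the Newton polytope is a vertex. Your approach is self-contained and avoids the external input of Theorem~\ref{fei}; the paper's approach fits the lemma into the tropical framework used throughout Section~\ref{sec:trop}, and implicitly identifies the full face of the Newton polytope on which the $u_M$-degree is minimized (useful later in Section~\ref{Jasso}), rather than just one vertex of it.
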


Note that this lemma does not always hold without the rigidity hypothesis on $M$. See the example in Section \ref{A2-loop}, where $\hF_{P_1}$ is divisible by $u_{S_1}$. 

\begin{proof}
  By Theorem \ref{fei}, there exists an $m$ such that 
$(\trop F_N)(mg_M)=\hom(mg_M,N)$. Since $M$ is rigid, a generic presentation of $g$-vector $mg_M$ is isomorphic to $m$ copies of $M$, so we conclude that
  $(\trop F_N)(g_M)=\hom(M,N)$.

  Recall the expression given for $\hF_M$ in Lemma \ref{reform} as a monomial prefactor times $\Psi(F_M)$. 
 By definition, $-(\trop F_N)(g_M)$ is the minimum degree of $u_M$ over all terms of $\Psi F_M$, and then since $(\trop F_N)(g_M)=\hom(M,N)$, the monomial prefactor of Lemma \ref{reform} ensures that the minimum degree of $u_M$ over all the terms of $\hF_M$ is zero.  
\end{proof}

\section{Divisors and Jasso Reduction}
\label{Jasso}

In order to understand the locus in $\widetilde{\cM}_\Lambda$ where all the $u_M$  are non-negative, it is important to consider the stratification of $\widetilde{\cM}_\Lambda$ according to which of the $u_M$ are zero. In order to understand the strata, we need to understand what happens when we add $u_M$ into the ideal $\widetilde I_\Lambda$.  It turns out that 
$\mathbb C[u_Z\mid Z\in\ind K_\Lambda]/\big\langle\widetilde I_\Lambda+\langle u_M\rangle\big\rangle$ is isomorphic to the ring defined by the $\widehat F$-equations for an algebra of rank one less, the Jasso reduction of $\Lambda$ at $M$, which we denote by $B$. Recall that we have defined the Jasso reduction, and explained some of its properties, in Section \ref{Jasso-intro}. We use the notation from that section, notably, the torsion classes $t,T$ in $\mod \Lambda$, the map $\Gamma$ from $\mod \Lambda$ to $\mod B$, and the bijection $s$ from objects of $\ind K_\Lambda$ compatible with $M$ other than $M$ itself to objects of $\ind K_B$.

In what follows, we will write~$u_Z$ for the variables defining~$\widetilde{R}_\Lambda$ and~$\uu_Y$ for the ones defining~$\widetilde{R}_B$.
Let $\mu$ be the morphism from  $\bC[u_Z \ | \ Z\in \ind K_\Lambda]$ to $\bC[\uu_Y \ | \ Y \in \ind K_B]$ which sends $u_Z$ to 1 if $Z$ is incompatible with $M$, and sends $u_Z$ to $\uu_{s(Z)}$ if $Z$ is compatible with $M$ and not isomorphic to $M$, and sends $u_M$ to zero. 

Our principal goal in this section is to establish the following theorem:

\begin{theorem}\label{Jasso-thm}
Let $M\in\ind K_\Lambda$ be rigid.  The morphism~$\mu$ defined above induces an isomorphism from
  $\widetilde R_\Lambda / \langle u_M \rangle$ to $\widetilde R_{B}$.
\end{theorem}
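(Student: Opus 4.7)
I would verify that $\mu$ descends to a ring map $\bar\mu: \widetilde R_\Lambda/\langle u_M\rangle \to \widetilde R_B$ and that this $\bar\mu$ is both injective and surjective. Surjectivity is immediate: by definition, $s$ gives a bijection from $\{Z \in \ind K_\Lambda : Z \text{ compatible with } M, Z \neq M\}$ to $\ind K_B$, so each generator $\uu_Y$ of $\widetilde R_B$ equals $\mu(u_{s^{-1}(Y)})$. The remainder of the argument rests on a single key identity, which I plan to prove first.

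The central computation is
\begin{equation*}
\mu(\hF^\Lambda_W) = \hF^B_{s(W)}
\end{equation*}
as polynomials in $\bC[\uu_Y]$, for each $W \in \ind K_\Lambda$ compatible with $M$ and $W \neq M$. Expanding $\hF^\Lambda_W$ as the sum over dimension vectors $\bd$, I would use non-negativity of the $u_M$-exponent (Lemma \ref{lem:Ftilde}) together with Lemma \ref{lem:gd} to show that the $\bd$-th term survives after setting $u_M = 0$ precisely when every submodule $L \subseteq H^0 W$ of dimension $\bd$ satisfies $L \supseteq t(H^0 W)$ and $L \in T = {}^\perp H^0 \tau M$. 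The quotient $L/t(H^0 W)$ then lies in $\mathcal C$, and via the equivalence $\Gamma: \mathcal C \xrightarrow{\sim} \mod B$ (which satisfies $\Gamma(J(H^0 W)) = H^0 s(W)$ by Lemma \ref{square}) corresponds to a submodule of $H^0 s(W)$. I would then check that this correspondence induces an isomorphism of Grassmannians preserving Euler characteristic (using exactness of $\Gamma$), and matches the monomial exponents via Lemma \ref{wanted} (for the $\hom$-data) and the Verdier quotient $K_0(\proj \Lambda)/[M] \simeq K_0(\proj B)$ (for the $g$-vector pairing).

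For $W = M$, a direct calculation using rigidity $\hom_{K_\Lambda}(M,\Sigma M) = 0$ shows the unique surviving term corresponds to $L = H^0 M$ and evaluates to $1$. For $W$ incompatible with $M$, the product $\prod u_V^{c(V,W)}$ in the exchange relation of Proposition \ref{prop:F-tilde-imply-u} vanishes modulo $u_M$ (since $c(M,W) > 0$), giving $\mu(\hF^\Lambda_W)\,\mu(\hF^\Lambda_{\tau W}) = \mu(\hF^\Lambda_{E_W})$ in $\bC[\uu_Y]$; induction over the Auslander--Reiten quiver (with the cases above as base, and using parts (2)--(3) of Proposition \ref{prop:F-tilde-imply-u} for projectives and injectives) then yields $\mu(\hF^\Lambda_W) \equiv 1 \pmod{\widetilde I_B}$, completing well-definedness. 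Injectivity follows because the $u$-equation for $Z$ incompatible with $M$ (implied by the $\hF$-equations via Corollary \ref{coro:F-tilde-imply-u}) forces $u_Z \equiv 1 \pmod{\widetilde I_\Lambda + \langle u_M\rangle}$, so that $\widetilde R_\Lambda/\langle u_M\rangle$ is generated by the $\bar u_Z$ with $Z$ compatible with $M$ and $Z \neq M$, and the key identity lifts each generator of $\widetilde I_B$ back to a relation in $\widetilde R_\Lambda/\langle u_M\rangle$. The main technical obstacle is the Grassmannian matching within the key identity, which requires showing $\chi(\Gr_\bd H^0 W) = \chi(\Gr_{\bd'} H^0 s(W))$ for the appropriately shifted dimension $\bd'$; a secondary subtlety is ensuring the induction for incompatible $W$ terminates, which is delicate when the Auslander--Reiten quiver contains oriented cycles, as can happen for non-hereditary $\Lambda$.
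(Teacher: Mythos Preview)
Your overall architecture---establish the identity $\mu(\hF^\Lambda_W)=\hF^B_{s(W)}$ for compatible $W$, observe that the $u$-equation forces $u_Z\equiv 1$ modulo $\widetilde I_\Lambda+\langle u_M\rangle$ for incompatible $Z$, and build an explicit inverse---matches the paper's proof. The characterization of the terms of $\hF_N$ surviving $u_M\to 0$ as those $\bd$ with $\langle g_M,\bd\rangle$ maximal, and the exponent-matching via Lemma~\ref{wanted} together with the quotient $K_0(\proj\Lambda)/[M]\simeq K_0(\proj B)$, are exactly what the paper does in Lemma~\ref{pi-hat-F}. One remark: the Grassmannian matching you plan to ``check'' is a genuine theorem of Fei \cite[Section~6]{FeiI} (with the polytope identification from \cite{BKT}); the paper simply cites it.

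The substantive gap is the one you flag yourself: the inductive treatment of $\mu(\hF^\Lambda_W)$ for $W$ incompatible with $M$. Your recurrence from Proposition~\ref{prop:F-tilde-imply-u} is correct, but there is no well-founded order on which to induct when the Auslander--Reiten quiver of $K_\Lambda$ has periodic $\tau$-orbits, as already happens for $\Pi_{A_2}$ (Section~\ref{ex:PiA2}) or $\mathbb C[x]/(x^d)$. Well-definedness of $\bar\mu$ genuinely requires $\mu(\hF_N-1)\in\widetilde I_B$ for \emph{every} $N\in\ind\mod\Lambda$, and your argument does not deliver this in general.

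The paper sidesteps the issue entirely: Lemma~\ref{pi-hat-F} proves $\mu(\hF_N)=\hF_{\Gamma(N)}$ for \emph{all} $N\in\ind\mod\Lambda$, compatible or not, in one stroke. The point is that your Harder--Narasimhan/Grassmannian argument for compatible $W$ works verbatim for arbitrary $N$: the face of the Newton polytope of $N$ maximizing $\langle g_M,\cdot\rangle$ is always the Harder--Narasimhan polytope of $J(N)=T(N)/t(N)$, and Fei's isomorphism $\Gr_{\bd}(N)\cong\Gr_{\bd-\dimv\, t(N)}(\Gamma(N))$ holds regardless of whether $N$ lies in $\mathcal C$. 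Once the key identity is stated at this level of generality, the incompatible case needs no separate treatment and the induction disappears. The fix to your proposal is thus not to repair the induction but to drop the compatibility hypothesis in your key identity.
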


\begin{lemma}\label{pi-hat-F} Let $N\in \ind \Lambda$. Then $\mu\widehat F_N=\widehat F_{\Gamma(N)}$.
  \end{lemma}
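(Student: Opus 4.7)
The plan is to expand $\widehat F_N$ from Definition~\ref{def:Fhat} as $\sum_{\mathbf{d}} \chi(\Gr_{\mathbf{d}}(N)) \prod_V u_V^{a_V(\mathbf{d})}$, where $a_V(\mathbf{d}) := \hom_\Lambda(H^0V, N) - \langle g(V), \mathbf{d}\rangle$, apply $\mu$, and match term by term with the analogous expansion of $\widehat F_{\Gamma(N)}$.

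First, I would determine which monomials survive $\mu$: since $\mu(u_M) = 0$, only $\mathbf{d}$ with $a_M(\mathbf{d}) = 0$ contribute. By Lemma~\ref{lem:gd}, for any submodule $L \subset N$ of dimension $\mathbf{d}$, $\langle g(M), \mathbf{d}\rangle = \hom_\Lambda(H^0M, L) - \hom_\Lambda(L, H^0\tau M)$; combined with $\hom_\Lambda(H^0M, L) \leq \hom_\Lambda(H^0M, N)$ and $\hom_\Lambda(L, H^0\tau M) \geq 0$, the condition $a_M(\mathbf{d}) = 0$ forces $\hom_\Lambda(H^0M, L) = \hom_\Lambda(H^0M, N)$ and $\hom_\Lambda(L, H^0\tau M) = 0$ for every such $L$. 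When $M$ is a rigid module, this says $t(N) \subset L \subset T(N)$ with $L \in T$; when $M = \Sigma P_i$, it reduces analogously to $L$ having no support at vertex $i$, consistent with $t(N) = 0$ and $T(N)$ as described at the end of Section~\ref{Jasso-intro}.

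Next, I would use Jasso's equivalence $\Gamma : \mathcal{C} \xrightarrow{\sim} \mod B$ to set up a bijection between these surviving submodules $L$ and $B$-submodules of $\Gamma(N)$, via $L \mapsto \Gamma(L/t(N))$. That $L/t(N)$ lies in $\mathcal{C}$ uses $L \in T$ for the ${}^\perp\tau M$ half and $\Ext^1_\Lambda(M, t(N)) = 0$ for the $M^\perp$ half; the latter follows from the rigidity of $M$ together with $t(N) \in \mathrm{Gen}(M)$, by applying $\Hom_\Lambda(-, \tau M)$ to a surjection $M^k \twoheadrightarrow t(N)$ and invoking the Auslander--Reiten formula (Theorem~\ref{thm:ar-formula}). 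This bijection is compatible with the Grassmannian stratification via the standard identification $\Gr_{\mathbf{d}}(N) \cong \Gr_{\mathbf{d} - d(t(N))}(T(N)/t(N))$ restricted to $\mathcal{C}$-subobjects, and so matches Euler characteristics.

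Finally, I would match the remaining exponents of $\breve u_{s(V)}$ for $V \neq M$ compatible with $M$. Compatibility together with Lemma~\ref{lemm::hom} gives $H^0V \in T$, hence $\Hom_\Lambda(H^0V, N) = \Hom_\Lambda(H^0V, T(N))$; Lemma~\ref{wanted} applied to $T(N)/t(N) \in \mathcal{C}$ then rewrites the Hom-dimension in terms of $\hom_B(H^0s(V), \Gamma(N))$, up to the $t(N)$-contribution. The pairing $\langle g(V), \mathbf{d}\rangle$ descends to $\langle g_B(s(V)), \mathbf{e}\rangle$ through the quotient $K_0(\proj\Lambda) \to K_0(\proj\Lambda)/[M] \cong K_0(\proj B)$, with $\mathbf{e}$ the $B$-dimension vector of $\Gamma(L/t(N))$. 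The main obstacle is the bookkeeping to reconcile the $\Lambda$- and $B$-dimension vectors through the $d(t(N))$-shift, and to verify that the Hom- and Grothendieck-contributions coming from $t(N)$ cancel uniformly so that the surviving exponent on each side matches exactly.
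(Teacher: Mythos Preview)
Your outline follows the same overall strategy as the paper's proof: identify the terms of $\hF_N$ that survive $\mu$, match their coefficients with those of $\hF_{\Gamma(N)}$, and then match the exponents. Your characterization of the surviving dimension vectors via Lemma~\ref{lem:gd} is correct and is equivalent to the paper's description of the face $\uQ$ of the Harder--Narasimhan polytope where $\langle g_M,\cdot\rangle$ is maximized. The final ``bookkeeping'' you flag---showing that $\hom(H^0Z,N)-\langle g_Z,\bd\rangle=\hom(H^0Z,T(N)/t(N))-\langle g_Z,\bd-\up\rangle$---is carried out explicitly in the paper using the short exact sequence $0\to t(N)\to N\to N/t(N)\to 0$, exactly along the lines you suggest.

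The genuine gap is in the coefficient-matching step. You assert that the map $L\mapsto \Gamma(L/t(N))$ induces an identification of quiver Grassmannians, hence of Euler characteristics, ``via the standard identification''. This is not automatic from the exact equivalence $\Gamma:\mathcal C\to\mod B$: the abelian structure on $\mathcal C$ induced by Jasso's equivalence is not in general the one inherited from $\mod\Lambda$, so $B$-submodules of $\Gamma(N)$ need not correspond to $\Lambda$-submodules of $T(N)/t(N)$ that happen to lie in $\mathcal C$, nor is it clear a priori how $B$-dimension vectors relate to $\Lambda$-dimension vectors under $\Gamma$, nor that the set-theoretic bijection is a morphism of varieties. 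The paper does not attempt to prove this directly: it invokes \cite{BKT} to identify the Harder--Narasimhan polytope of $\Gamma(N)$ with the face $\uQ$, and \cite[Section~6]{FeiI} for the variety isomorphism $\Gr_{\bd-\up}(\Gamma(N))\cong\Gr_{\bd}(N)$ and the agreement of coefficients. Your sketch would need to either cite these results or supply an independent argument for them; as written, this is the missing idea.
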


\begin{proof} 
Let $N$ be an indecomposable $\Lambda$-module. 
Consider $\widehat F_N|_{u_M\leftarrow 0}$.
By Lemma \ref{rigid-cant-divide}, the minimum exponent of $u_M$ appearing in $\widehat F_N$ is zero. This means that the lattice points of the Newton polytope of $N$ which maximize $\langle g_M,\cdot \rangle$ correspond to terms in $\widehat F_N$ with degree of $u_M$ of zero, while all the other lattice points have $u_M$ appearing with strictly positive degree.
Let us denote $\uQ$ the face of the Harder--Narasimhan polytope of $N$ on which $\langle g_M,\cdot\rangle$ is maximized.  
Thus,
$\widehat F_N\mid_{u_M\leftarrow 0}$ is the polynomial obtained by summing the terms in $\widehat F_N$ corresponding to lattice points in $\uQ$.

As \cite{BKT} explains, the Harder--Narasimhan polytope of $\Gamma(N)$ can be identified with $\uQ$.
Further, \cite[Section 6]{FeiI} explains that, after that identification, the coefficients of $\widehat F_{N}$ corresponding to lattice points on $\uQ$ agreee with the corresponding coefficients of $\widehat F_{\Gamma(N)}$. Write $\up$ for the bottommost point of $\uQ$, which is $\dimv\, t(N)$. 

Now:
\begin{align*}\hF_{\Gamma(N)}&= \sum_{\bd \in \uQ\cap \mathbb Z^n} \chi(\Gr_{\bd-\up}(\Gamma(N))\prod_{V\in \ind K_B}\uu_V^{\hom(H^0V,\Gamma(N)) - \langle g_V,\bd-\up\rangle} \\
  &= \sum_{\bd \in \uQ\cap \mathbb Z^n} \chi(\Gr_{\bd-\up}(\Gamma(N))\prod_{Z\in \ind K_\Lambda,\, c(Z,M)=0}\uu_{s(Z)}^{\hom(H^0s(Z),\Gamma(N)) - \langle g_{s(Z)},\bd-\up\rangle}  \end{align*}

Now, $g_{s(Z)}$ can be understood as $g_Z$ in the quotient by $g_M$. 
Since $\bd-\up$ is perpendicular to $g_M$, we can replace $g_{s(Z)}$ by $g_Z$. Further, we can replace $\hom(H^0s(Z),\Gamma(N))$ by $\hom(H^0Z,T(N)/t(N))$ by Lemma \ref{wanted}. 

  By \cite[Section 6]{FeiI}, $\Gr_{\bd-\up}(\Gamma(N))$ is isomorphic to $\Gr_{\bd}(N)$. We obtain:
  $$\hF_{\Gamma(N)}=\sum_{\bd \in \uQ\cap \mathbb Z^n} \chi(\Gr_{\bd}(N))\prod_{Z\in \ind K_\Lambda,\, c(Z,M)=0}\uu_{s(Z)}^{\hom(H^0Z,T(N)/t(N)) - \langle g_{Z},\bd-\up\rangle}$$

  On the other hand,
  $$\mu\hF_N= \sum_{\bd \in \uQ \cap \mathbb Z^n}\chi(\Gr_{\bd}(N))\prod_{Z\in \ind K_\Lambda,\, c(Z,M)=0}\uu_{s(Z)}^{\hom(H^0Z,N) - \langle g_{Z},\bd\rangle}$$

  To get these two expressions to agree, it remains to show that
  $$\hom(H^0Z,N)-\langle g_Z,\bd\rangle=\hom(H^0Z,T(N)/t(N))-\langle g_Z,\bd-\up\rangle.$$

  Consider the exact sequence
  $$ 0 \rightarrow t(N) \rightarrow N \rightarrow N/t(N) \rightarrow 0$$

  Apply $\Hom(Z,-)$ to it. Since $t(N)$ is a quotient of $M$, and $Z$ is compatible with $M$, $\Ext^1(Z,t(N))=0$. Thus, we have a short exact sequence:

  $$ 0 \rightarrow \Hom(Z,t(N))\rightarrow \Hom(Z,N) \rightarrow \Hom(Z,N/t(N))\rightarrow 0$$

  Since $Z\in {}^\perp{(\tau M)}=T$, the image of any map from $Z$ to $N$ is contained in $T(N)$, and thus the image of any map from $Z$ to $N/t(N)$ is contained in $T(N)/t(N)$. So we have:

  $$ 0\rightarrow \Hom(Z,t(N)) \rightarrow \Hom(Z,N) \rightarrow \Hom(Z,T(N)/t(N)) \rightarrow 0$$

  So
\begin{align*}
  \hom(H^0(Z),N)-\hom(H^0(Z),T(N)/t(N))-\langle g_Z,\up\rangle&=
\hom(H^0(Z)),t(N))-\langle g_Z,\dimv\, t(N)\rangle\\&=\hom(t(N),\tau H^0(Z))
\end{align*}
and since $\Hom(M,\tau H^0(Z))=0$, this is zero, which is what we needed to show.
  \end{proof}

\begin{proof}[Proof of Theorem \ref{Jasso-thm}]
  By Lemma \ref{pi-hat-F}, for every indecomposable $N\in \mod \Lambda$, $$\mu \hF_N-1=\hF_{J(N)}-1\in \widetilde I_{B}.$$ This shows that $\mu$ does induce a map from
$\widetilde R_\Lambda / \langle u_M \rangle$ to $\widetilde R_{B}$.

To show that the map induced by $\mu$ is an isomorphism, it suffices to show that, for every indecomposable $Y \in \mod B$, there exists a $\widehat Y\in \mod \Lambda$ such that $\Gamma(\widehat Y)=Y$, since this will show that $\widetilde I_B$ is generated by the image under $\mu$ of the generators of $\widetilde I_\Lambda$. The existence of a suitable $\widehat Y$ follows from the fact that $\Gamma$ is an equivalence of additive categories from a subcategory of $\mod \Lambda$ to $\mod B$. 
\end{proof}

The statement corresponding to Theorem \ref{Jasso-thm}
for the variety~$\cU_\Lambda$ (defined by the vanishing of the $u$-equations) also holds.  In the next statement, we denote by~$J_M:K^b(\proj\Lambda)/\thick(M) \to K^b(\proj B)$ the silting reduction functor.

\begin{theorem}\label{Jasso-u} Let $M\in\ind K_\Lambda$ be rigid.
  Then there is an isomorphism from
  $\widetilde{S}_\Lambda / \langle u_M \rangle$ to $\widetilde S_{B}$ which sends $u_N$ to $1$ if $N$ is incompatible with $M$, and sends $u_N$ to $\uu_{J_M(N)}$ if $N$ is compatible with $M$.
\end{theorem}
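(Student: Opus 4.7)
My plan is to proceed in parallel with the proof of \cref{Jasso-thm}, but now working directly with the $u$-equations rather than the $\widehat F$-equations. First I would verify that the prescribed map $\mu$ (defined on the free polynomial algebra $\bC[u_N \smid N \in \ind K_\Lambda]$) sends every generator $u_X + \prod_Y u_Y^{c(X,Y)} - 1$ of $\widetilde I_\Lambda$ into the ideal $\widetilde I_B + \langle u_M\rangle$ of $\bC[\uu_Y \smid Y \in \ind K_B]$, which guarantees that $\mu$ descends to a well-defined ring homomorphism $\bar\mu: \widetilde S_\Lambda/\langle u_M\rangle \to \widetilde S_B$.

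The case analysis is as follows. If $X = M$, then $\mu(u_M) = 0$, and in the exponent of each $u_Y$ appearing non-trivially in $\prod_Y u_Y^{c(M,Y)}$ we have $c(M,Y) > 0$, which forces $Y$ to be incompatible with $M$, so $\mu(u_Y) = 1$; thus the image of the $M$-equation is $0 + 1 - 1 = 0$. If $X$ is incompatible with $M$, then $c(X,M) > 0$, so the product contains the factor $\mu(u_M)^{c(X,M)} = 0$, and $\mu(u_X) = 1$ yields $1 + 0 - 1 = 0$. The decisive case is when $X$ is compatible with $M$ and $X \not\simeq M$: here $\mu(u_X) = \uu_{s(X)}$, the factor at $Y = M$ is $0^{c(X,M)} = 0^0 = 1$ since $c(X,M) = 0$, the factors at $Y$ incompatible with $M$ are $1$, and the remaining factors regroup via the bijection $s$ into
\[
\prod_{\substack{Y \text{ comp.~with } M \\ Y\not\simeq M}} \uu_{s(Y)}^{c_\Lambda(X,Y)} = \prod_{Z \in \ind K_B} \uu_Z^{c_\Lambda(X,\, s^{-1}(Z))}.
\]
So the image of the $X$-equation is the $u$-equation for $s(X)$ in $\widetilde S_B$ \emph{provided} that $c_\Lambda(X, s^{-1}(Z)) = c_B(s(X), Z)$ for all $Z \in \ind K_B$.

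The heart of the argument is therefore the invariance of the compatibility degree under Jasso reduction: for $X, Y \in \ind K_\Lambda$ both compatible with $M$ and neither isomorphic to $M$, one needs
\[
c_\Lambda(X, Y) = c_B(s(X), s(Y)).
\]
This is where the main work lies. I expect to handle it via silting reduction: the composition $K_\Lambda \to K^b(\proj\Lambda)/\thick(M) \xrightarrow{\sim} K^b(\proj B)$ is a triangle functor whose restriction to the full subcategory of objects compatible with $M$ is fully faithful (morphisms $X \to \Sigma Y$ factoring through $\thick(M)$ between such $X, Y$ vanish, as one sees by reducing to $\Hom_{K_\Lambda}(X,\Sigma M) = 0$ and $\Hom_{K_\Lambda}(M,\Sigma Y) = 0$, the defining conditions for compatibility). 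It follows that $\hom_{K_\Lambda}(X,\Sigma Y) = \hom_{K_B}(s(X), \Sigma s(Y))$ and symmetrically, and summing yields the desired identity. Alternatively, one can bypass this and work entirely in the module category using \cref{lem:cXY}, \cref{square} and \cref{wanted}, after checking that $H^0X$ and $H^0(\tau X)$ both lie in $\mathcal C = M^\perp \cap {}^\perp\tau M$ when $X$ is compatible with $M$ and $X\not\simeq M$.

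Finally I would establish that $\bar\mu$ is an isomorphism. Surjectivity is immediate since $s$ is a bijection onto $\ind K_B$, so every variable $\uu_Z = \mu(u_{s^{-1}(Z)})$ lies in the image. For injectivity, the case analysis above shows that the image under $\mu$ of $\widetilde I_\Lambda$ equals the ideal generated by $\{\uu_{s(X)} + \prod_Z \uu_Z^{c_B(s(X),Z)} - 1 : X \text{ comp.~with } M,\ X \not\simeq M\}$, which is exactly $\widetilde I_B$; combined with $\mu(u_M) = 0$, this shows $\ker\bar\mu = 0$. The main obstacle throughout is the compatibility-degree identity of Step~2; the rest is a routine case analysis orchestrated by the bijection $s$.
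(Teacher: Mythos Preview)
Your approach is essentially the same as the paper's: both define the map $\mu$ on generators, perform the same three-case analysis, and reduce the nontrivial case to the invariance of the compatibility degree $c_\Lambda(X,Y)=c_B(s(X),s(Y))$ under silting reduction. The paper cites this invariance directly as \cite[Lemma~3.4]{IyamaYang} (together with \cite[Lemma~3.13]{Garcia} identifying $\cZ_M/[M]$ with $K_B$), whereas you sketch it. One caution: your sketch treats the Verdier quotient as if it were an additive quotient (``morphisms factoring through $\thick(M)$ vanish''), but Verdier localization can also \emph{create} new morphisms via roofs; what makes this harmless here is precisely the Iyama--Yang result, so you should cite it rather than argue informally. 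For the isomorphism itself, the paper constructs an explicit right inverse $\psi_M$ sending $\uu_{J_M(X)}\mapsto u_X$, while you argue directly that $\ker\mu\subseteq \widetilde I_\Lambda+\langle u_M\rangle$; your route works too, since the case $X$ incompatible with $M$ shows $u_X-1\in\widetilde I_\Lambda+\langle u_M\rangle$, which is exactly what is needed to conclude injectivity.
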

\begin{proof}
 Let~$\cZ_M$ be the full subcategory of~$K_\Lambda$ consisting of objects~$N$ which satisfy that~$\Ext^1(N,M) = 0 = \Ext^1(M,N)$.  By \cite[Lemma 3.13]{Garcia}, the additive quotient category~$\cZ_M/[M]$ is equivalent to~$K^{[-1,0]}(\proj B)$ (and the equivalence is induced by the silting reduction functor $J_M$).  Moreover, by~\cite[Lemma 3.4]{IyamaYang}, if~$X,Y\in \cZ_M$, then~$\Ext^1(X,Y) \cong \Ext^1(J_M(X), J_M(Y))$. Thus the morphism of rings
 \[
  \phi_M : \bC[u_X \ | \ X\in \ind K_\Lambda] \longrightarrow \bC[\uu_Y \ | \ Y \in \ind K_B]
 \]
 sending~$u_M$ to~$0$, $u_N$ to~$1$ if~$N$ is incompatible with~$M$ and~$u_N$ to~$u_{J_M(N)}$ if~$N$ is compatible to~$M$ is surjective and sends~$u$-equations to~$u$-equations.  Thus it induces a surjective morphism
 \[
  \phi_M : \widetilde{S}_\Lambda/\langle u_M \rangle \longrightarrow \widetilde{S}_{B}.
 \]
 Moreover, the morphism
 \[
  \psi_M : \bC[\uu_Y \ | \ Y \in \ind K_B] \longrightarrow \bC[u_X \ | \ X\in \ind K_\Lambda]
 \]
 sending~$\uu_{J_M(X)}$ to~$u_X$ sends the~$\uu$-equations to~$u$-equations after sending~$u_M$ to~$0$ and~$u_N$ to~$1$ for all~$N$ incompatible with~$M$.  Thus it induces a surjective morphism
 \[
  \psi_M : \widetilde{S}_{B}  \longrightarrow  \widetilde{S}_\Lambda/\langle u_M \rangle.
 \]
 Since~$\psi_M$ is clearly a right-inverse to~$\phi_M$, it is injective.  Thus it is an isomorphism, and so is~$\phi_M$.
\end{proof}

\section{Geometry of $\widetilde {\cM}_\Lambda$}\label{geometry}

In this section, we put together the results from Section \ref{sec:geoM} on the geometry of $\cM_\Lambda$ together with results from Sections \ref{sec:trop} and 
\ref{Jasso} to provide a description of the geometry of $\widetilde{\cM}_\Lambda$.

Recall that the $v_M$ for $M\in\ind K_\Lambda$ were defined in section \ref{sec:geoM}, and that $R_\Lambda=\mathbb C[u_M^{\pm1}\mid M\in \ind K_\Lambda]/I_\Lambda$, the coordinate ring of $\cM_\Lambda$, was shown in Proposition \ref{another} to be isomorphic to the subring of $\mathbb C(y_1,\dots,y_n)$ generated by $v_M^{\pm 1}$.

We will now prove a similar statement for $\widetilde R_\Lambda=\mathbb C[u_M\mid M\in\ind K_\Lambda]/\widetilde I_\Lambda$, the coordinate ring of $\widetilde {\cM}_\Lambda$.
To pass from the statement for $R_\Lambda$ to the statement for $\widetilde R_\Lambda$, we will need the following lemma.

\begin{lemma}\label{lem:localisation-in-ideal}
 Let~$R$ be a commutative Noetherian ring, and let~$I$ be an ideal of~$R$ contained in a prime ideal~$P$.  Let~$x_1, \ldots, x_n\in R\setminus P$.  Suppose that~$I_{x_1, \ldots, x_n} = P_{x_1, \ldots, x_n}$ and~$I+\langle x_i\rangle = P+\langle x_i \rangle$ for each~$i\in\{1, \ldots, n\}$.  Then~$I=P$.
\end{lemma}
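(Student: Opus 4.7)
The plan is to show $P\subseteq I$ (the inclusion $I\subseteq P$ is given). The first step is to convert the localisation hypothesis into a single monomial witness. Since $R$ is Noetherian, $P$ is finitely generated, say $P=\langle p_1,\dots,p_m\rangle$. The hypothesis $I_{x_1,\ldots,x_n}=P_{x_1,\ldots,x_n}$ means that for each $p_j$ there is some monomial $m_j$ in the $x_i$'s with $m_j p_j\in I$. Taking $x:=\prod_j m_j$, which is still a monomial in the $x_i$'s, gives $xP\subseteq I$.

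The second step is the heart of the argument: I will show that if $x_i\cdot yP\subseteq I$ for some $y\in R$, then already $yP\subseteq I$. Pick any $p\in P$. By the hypothesis $I+\langle x_i\rangle=P+\langle x_i\rangle$, we can write $p=q+rx_i$ with $q\in I$ and $r\in R$. Then $rx_i=p-q\in P$ (since $I\subseteq P$), and primality of $P$ together with $x_i\notin P$ forces $r\in P$. Therefore $yp=yq+y(rx_i)$, where $yq\in I$ because $q\in I$, and $yrx_i\in x_i\, y\,P\subseteq I$ by the standing assumption. Hence $yp\in I$, and since $p\in P$ was arbitrary, $yP\subseteq I$.

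The third and final step is straightforward induction. Write $x=x_1^{a_1}\cdots x_n^{a_n}$. Starting from $xP\subseteq I$ and applying step two once for each factor $x_i$ present in $x$ (removing one $x_i$ at a time), we arrive after $\sum_i a_i$ applications at $P\subseteq I$, completing the proof.

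The main obstacle I expect is the middle step; everything else is bookkeeping. The subtlety is that the hypothesis $I+\langle x_i\rangle=P+\langle x_i\rangle$ only lets one rewrite a single element of $P$ modulo $x_i$ as an element of $I$ modulo $x_i$, so without primality of $P$ one cannot control the cofactor $r$. Primality is exactly what converts the membership $rx_i\in P$ into $r\in P$, and this is what allows the induction on the exponent of $x_i$ to close up.
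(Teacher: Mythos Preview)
Your proof is correct and follows essentially the same approach as the paper. The paper phrases the key step as the equality $P/I = x_i(P/I)$ in $R/I$ (deduced exactly as in your step~two, by writing each generator $y_j = x_i g_{i,j} + h_{i,j}$ and using primality to get $g_{i,j}\in P$) and then concludes $P/I = (x_1\cdots x_n)^r(P/I) = 0$; your explicit inductive stripping of one $x_i$ factor at a time is the same argument unwound.
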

\begin{proof}
To simplify notations, let~$S = \{x_1, \ldots, x_n\}$.
The equality~$I_S = P_S$ implies that, for each~$y\in P$, there is an~$r\geq 1$ such that~$(x_1\cdots x_n)^ry \in I$.  Since~$R$ is Noetherian,~$P$ is finitely generated, so there is a~$r\geq 1$ such that~$(x_1\cdots x_n)^r P \subseteq I$.  In particular,~$P/I$ is annihilated by~$(x_1\cdots x_n)^r$ inside~$R/I$.

Now say that~$P = \langle y_1, \ldots, y_m\rangle$.  For each~$j$, we have that~$y_j\in P+\langle x_1\rangle = I+\langle x_1\rangle$, so we can write~$y_j = x_1g_{1,j} + h_{1,j}$, with~$h_{1,j}\in I$.  But then~$x_1g_{1,j} = y_j - h_{1,j}\in P$, which is prime, so since~$x_1\notin P$, we get that~$g_{1,j}\in P$.  Projecting into~$P/I$, we get that~$P/I$ is generated by the~$x_1g_{1,j}$.  Thus~$P/I = x_1(P/I)$. Repeating by replacing~$x_1$ with any~$x_i$, we get that~$P/I = x_i(P/I)$.  But then~$P/I = (x_1\cdots x_n)^r P/I = 0$.  Thus~$P=I$.
\end{proof}

Let us write $\wU_\Lambda$ for the subring of $\mathbb C(y_1,\dots,y_n)$ generated by the $v_M$.

\begin{theorem}\label{thm:RViso}
  $\widetilde R_\Lambda$ is isomorphic to $\wU_\Lambda$. \end{theorem}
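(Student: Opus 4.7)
The plan is to prove that the surjective ring homomorphism $\bar\Phi:\widetilde R_\Lambda \twoheadrightarrow \wU_\Lambda$, defined by $u_M \mapsto v_M$ and well-defined by Corollary~\ref{vs-solve}, is in fact injective. Setting $R=\bC[u_M \mid M\in \ind K_\Lambda]$, $I=\widetilde I_\Lambda$, and $P=\ker(R\to\wU_\Lambda)$, the goal is the ideal equality $I=P$, which I would establish via Lemma~\ref{lem:localisation-in-ideal}, taking the elements $x_i$ of the lemma to be all the $u_M$ for $M\in\ind K_\Lambda$.

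Three of the four hypotheses of the lemma are easily checked. The ideal $P$ is prime because $\wU_\Lambda\subseteq \bC(y_1,\dots,y_n)$ is a domain; each $u_M$ lies outside $P$ because $v_M$ is a nonzero rational function, as is visible from the explicit formulas in Section~\ref{sec:geoM}. For the localized equality $I_{\{u_M\}}=P_{\{u_M\}}$, I would combine Theorem~\ref{six-three}, which identifies $R_\Lambda$ (the localization of $\widetilde R_\Lambda$ at all $u_M$) with $\bC[y_1^{\pm},\dots,y_n^{\pm},F_M^{-1}]$, and Proposition~\ref{another}, which shows that the localization of $\wU_\Lambda$ at the $v_M$ equals the same ring; hence $\bar\Phi$ becomes an isomorphism after localizing at the $u_M$'s.

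The substantive step is the ideal equality $\widetilde I_\Lambda + \langle u_M \rangle = P + \langle u_M \rangle$ for each $M$, equivalently, the injectivity of the induced surjection $\widetilde R_\Lambda/\langle u_M\rangle \twoheadrightarrow \wU_\Lambda/(v_M)$. I would proceed by induction on the rank $n$ of $\Lambda$. For $M$ rigid, Theorem~\ref{Jasso-thm} identifies the source $\widetilde R_\Lambda/\langle u_M\rangle$ with $\widetilde R_B$, where $B$ is the Jasso reduction of $\Lambda$ at $M$ and has $n-1$ simples; the inductive hypothesis then gives $\widetilde R_B\cong \wU_B$. To match the target, one constructs an analogue of the Jasso map on the rational functions $v$: using the bijection $s$ of Section~\ref{Jasso-intro}, define a ring map $\wU_\Lambda\to\wU_B$ sending $v_Z\mapsto v^B_{s(Z)}$ for $Z$ compatible with $M$ and $Z\neq M$, $v_Z\mapsto 1$ for $Z$ incompatible with $M$, and $v_M\mapsto 0$. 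Well-definedness can be verified in the spirit of Lemma~\ref{pi-hat-F}, and the commutativity of the resulting square, together with the inductive isomorphism $\widetilde R_B\cong\wU_B$, forces the quotient surjection $\widetilde R_\Lambda/\langle u_M\rangle \twoheadrightarrow \wU_\Lambda/(v_M)$ to be an isomorphism.

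The main obstacle will be the non-rigid case, where Theorem~\ref{Jasso-thm} does not directly apply. I expect this to be handled by exploiting the canonical decomposition of the $g$-vector of $M$ into rigid summands (Section~\ref{sec:trop}) together with the observation that $\trop v_M\equiv 0$ for non-rigid $M$, so that the ideal equality at non-rigid $u_M$ can be reduced to the statements already established at rigid indecomposables. It is precisely this delicate reduction at non-rigid objects that justifies routing the proof through the localization lemma rather than attempting a direct computation of $\ker\bar\Phi$.
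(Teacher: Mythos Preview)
Your framework is the paper's: set $R=\bC[u_M]$, $I=\tI_\Lambda$, $P=\ker(R\twoheadrightarrow\wU_\Lambda)$, and apply Lemma~\ref{lem:localisation-in-ideal} with the $x_i$ equal to the $u_M$'s. The checks that $P$ is prime, that $u_M\notin P$, and that the localizations agree are fine and match the paper. The inductive structure is also correct.

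However, both your rigid and non-rigid arguments have problems.

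\textbf{Non-rigid case.} You are making this much harder than it is. If $M$ is not rigid then $c(M,M)>0$, so the $u$-equation $u_M+\prod_Y u_Y^{c(M,Y)}=1$ has left-hand side divisible by $u_M$. Hence $1\in\tI_\Lambda+\langle u_M\rangle$, so this ideal is all of $R$; since $P+\langle u_M\rangle$ contains it, it too is all of $R$, and the required equality is trivial. No tropicalization or canonical decomposition is needed.

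\textbf{Rigid case.} Your proposed construction of a ring map $\wU_\Lambda\to\wU_B$ by $v_Z\mapsto v^B_{s(Z)}$, $v_M\mapsto 0$, etc., is circular. To define a ring map on $\wU_\Lambda$ by prescribing images of the generators $v_Z$, you must verify that every polynomial relation among the $v_Z$ is respected. But the ideal of relations among the $v_Z$ is exactly $P$, and Lemma~\ref{pi-hat-F} only tells you that the relations in $\tI_\Lambda$ are respected. Showing that all of $P$ is respected is equivalent to $P\subseteq \mu^{-1}(P_B)+\langle u_M\rangle$, which is essentially the statement you are trying to prove. There is no independent description of $\wU_\Lambda$ available at this point that would let you build such a map (for instance, at the level of the $y$-variables).

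The paper instead argues by dimension. One first observes that $P$ has coheight $n$ (via the isomorphism $R_\Lambda\cong\bC[y_i^{\pm},F_M^{-1}]$). By induction and Theorem~\ref{Jasso-thm}, $\tI_\Lambda+\langle u_M\rangle$ is prime of coheight $n-1$. Krull's principal ideal theorem then says any minimal prime over $P+\langle u_M\rangle$ has coheight at most $n-1$, provided $P+\langle u_M\rangle\ne R$. The chain $\tI_\Lambda+\langle u_M\rangle\subseteq P+\langle u_M\rangle\subseteq Q$ then forces equality. The residual obligation, that $P+\langle u_M\rangle\ne R$ (equivalently, that $v_M$ is not a unit in $\wU_\Lambda$), is handled by a separate argument: one exhibits a point in the Zariski closure of $\cM_\Lambda$ with $u_M=0$, constructed by taking a path in the ambient affine space approaching the hyperplane $u_M=0$ while avoiding the $\hF$-hypersurfaces (possible by Lemma~\ref{rigid-cant-divide}), and then using the torus action of Section~\ref{sec:geoM} to push the path into $\cM_\Lambda$. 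Your proposal does not address this non-triviality at all.
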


   \begin{proof}

We note that~$(\tI_\Lambda)_{u_X, X\in\ind K_\Lambda} = I_\Lambda$, which is prime by Theorem~\ref{six-three}, since~$I_\Lambda = \ker \Phi$ and the image of~$\Phi$ is an integral domain.  Let~$\iota:\bC[u_X \ | \ X\in \ind K_\Lambda] \to \bC[u^{\pm 1}_X \ | \ X\in \ind K_\Lambda]$ be the localisation map, and let~$P_\Lambda = \iota^{-1}(I_\Lambda)$.  Note that~$P_\Lambda$ is the kernel of the map~$\bC[u_X \ | \ X\in \ind K_\Lambda] \to \wU_\Lambda$ sending each~$u_X$ to~$v_X$, since this map factors as~$\Phi\iota$.  To prove the theorem, it suffices to show that~$P_\Lambda = \tI_\Lambda$.  The proof is by induction on~$n$.

Note that~$P_\Lambda$ is prime, since~$I_\Lambda$ is.  Also,~$P_\Lambda$ contains~$\tI_\Lambda$, but does not contain any of the~$u_X$; otherwise,~$u_X$ would be in~$I_\Lambda$, which is the kernel of~$\Phi$, but~$\Phi(u_X) = v_X \neq 0$. Moreover,~$(P_\Lambda)_{u_X, X\in \ind K_\Lambda} = I_\Lambda$.  Thus the first conditions of Lemma~\ref{lem:localisation-in-ideal} applies to~$\tI_\Lambda \subseteq P_\Lambda$.

To apply Lemma~\ref{lem:localisation-in-ideal}, it remains to be shown that~$P_\Lambda+\langle u_X\rangle = \tI_\Lambda + \langle u_X \rangle$ for each~$X\in \ind K_\Lambda$.    First, if~$X$ is not rigid, then in the~$u$-equation~$u_X + \prod_{W\in \ind K_\Lambda} u_W^{c(W,X)} = 1$, the left hand side is divisible by~$u_X$.  Thus~$1\in \tI_\Lambda + \langle u_X \rangle$, so the ideal is the whole ring.  Since this ideal is contained in~$P_\Lambda + \langle u_X \rangle$, they are both equal to the whole ring, so~$\tI_\Lambda + \langle u_X \rangle = P_\Lambda + \langle u_X \rangle$.

Before we continue, we will need the following fact: the ideal~$P_\Lambda$ has coheight~$n$.  To see this, recall that~$\iota$ induces an isomorphism of posets between prime ideals of~$\bC[u_X \ | \ X\in \ind K_\Lambda]$ not containing monomials in the~$u_X$'s and prime ideals of~$\bC[u_X^{\pm 1} \ | \ X\in \ind K_\Lambda]$.  The inverse of this bijection is given by localizing the~$u_X$'s.   This bijection sends~$P_\Lambda$ to~$I_\Lambda$.  Moreover, since the algebras involved are finitely generated~$\bC$-algebras, the bijection sends maximal ideals to maximal ideals; thus the bijection preserves the height and coheight of ideals.   By Theorem~\ref{six-three}, $I_\Lambda$ has coheight~$n$.  Thus~$\coht P_\Lambda = \coht I_\Lambda = n$.

Assume now that~$X$ is rigid.  If~$n=1$, then no other indecomposable object~$Y$ is such that~$c(X,Y)=0$.  Thus sending~$u_X$ to zero brings~$u_Y$ to 1.  Therefore~$\bC[u_W \ | \ W \in \ind K_\Lambda]/(\tI_\Lambda + \langle u_X \rangle) = \widetilde{R}_\Lambda / \langle u_X \rangle = \bC$, so the coheight of~$\tI_\Lambda + \langle u_X \rangle$ is zero.  As shown above, the coheight of~$P_\Lambda$ is~$n=1$, so by Krull's principal ideal theorem, any minimal prime ideal~$Q$ containing~$P_\Lambda+\langle u_X \rangle$ has coheight at most~$n-1=0$ provided we prove that it is not equal to the whole ring, which we do at the end of this proof for any~$n>1$. In that case, since~$\tI_\Lambda + \langle u_X \rangle \subset P_\Lambda+\langle u_X \rangle \subset Q$, comparing the coheights tells us that all three ideals are equal.

Assume now that~$n>1$.  By induction hypothesis,~$\tI_\Lambda + \langle u_X \rangle$ is prime of coheight~$n-1$.  By Krull's principal ideal theorem, any minimal prime ideal~$Q$ containing~$P_\Lambda+\langle u_X \rangle$ has coheight at most~$n-1$ (again provided we can prove that it is not equal to the whole ring, as proved below), so again the inclusions~$\tI_\Lambda + \langle u_X \rangle \subset P_\Lambda+\langle u_X \rangle \subset Q$ implies equality of all three ideals.

Finally, we prove that~$P_\Lambda+\langle u_X \rangle$ is not equal to the whole ring.  This is equivalent to proving that there is a point in the variety~$V(P_\Lambda)$ whose~$u_X$ coordinate vanishes.  Let~$N$ be the number of objects in~$\ind K_\Lambda$.  Since~$P_\Lambda = \iota^{-1}(I_\Lambda)$ where~$\iota$ is the above localisation map, we can view the variety~$V(P_\Lambda)$ as the Zariski closure of~$V(I_\Lambda)$ inside~$\bC^N$.  We now use the isomorphism of Theorem~\ref{product}
\[
 \mathbb C[u_M^\pm,\hF_M^{-1}] \cong \mathbb C[\hF_M^\pm\mid M\in \ind \Lambda] \otimes_{\mathbb C} \mathbb C[u_M^\pm,\hF_M^{-1}]_0
\]
sending each~$u_M$ to~$\widehat{v}_M$ which allows us to identify~$\cM_\Lambda$ with the subset of~$\bC^N$ where the~$\widehat{F}_M$ are equal to~$1$ and the~$u_M$ don't vanish.  Take a continuous map~$\gamma:[0,1]\to \bC^N$ such that~$\gamma(0)$ has a zero at coordinate~$u_X$ and only at that coordinate, and such that~$\gamma$ avoids the hypersurfaces defined by~$\widehat{F}_M = 0$ and~$u_M = 0$ otherwise; this is possible, since none of the~$\widehat{F}_M$ are divisible by~$u_X$ by Lemma~\ref{rigid-cant-divide}.  Now define~$\bar \gamma:[0,1]\to \bC^N$ by first applying~$\gamma$ to~$t$ and then multiplying coordinate~$u_M$ by~$\frac{\widehat{F}_M(\gamma(t))\widehat{F}_{\tau M}(\gamma(t))}{\widehat{F_{E_M}}(\gamma(t))}$.  This amounts to acting by the element of the torus $T=\Spec \mathbb C[K_0^{\spl}(\mod\Lambda)]$ which rescales each $F_{\widehat M}$ to 1. Then the image of~$\bar \gamma : (0,1]$ is in the subset of~$\bC^N$ where the~$F_{\widehat{M}}=1$ and the~$u_M \neq 0$, and~$\bar \gamma(0)$ is such that~$u_X=0$.  This proves that there is a point in the closure of~$\cM_\Lambda$ whose~$u_X$ coordinate does not vanish.  This finishes the proof.
\end{proof}

We now characterize $\widetilde R_\Lambda$ as a subring of $\mathbb C(y_1,\dots,y_n)$ in terms of tropical properties. 

\begin{proposition} 
  \label{trop-characterization} The subring of $\mathbb C(y_1,\dots,y_n)$ generated by the
  $v_M$ for $M\in\ind K_\Lambda$ (which we denote $\mathbb C[v_M]$) is exactly the subring generated by the rational functions $w$ such that:
  \begin{enumerate}
    \item $w$ is a Laurent monomial in the $y_i$ and $F_M$, and
    \item $\trop(w)$ is non-positive.
\end{enumerate}\end{proposition}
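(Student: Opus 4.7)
The plan is to prove the equality by mutual inclusion between~$\mathbb{C}[v_M]$ and the subring~$\mathcal{W}$ of~$\mathbb{C}(y_1,\dots,y_n)$ generated by the Laurent monomials~$w$ in the~$y_i$ and~$F_M$ satisfying~$\trop w\leq 0$. The inclusion~$\mathbb{C}[v_M]\subseteq\mathcal{W}$ is straightforward: each~$v_M$ is visibly a Laurent monomial in the~$y_i$ and~$F_N$ from its defining formula (for instance,~$v_X=F_{E_X}/(F_X F_{\tau X})$ when~$H^0X$ is non-projective), and its tropicalization is non-positive by Theorem~\ref{trop-v-is-mult}, which identifies~$-\trop v_X(g)$ for~$g\in\mathbb{Z}^n$ with the multiplicity of~$X$ in a generic presentation of weight~$g$---a non-negative integer, which extends to a non-positive piecewise linear function on~$\mathbb{R}^n$.

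For the reverse inclusion, it suffices to check that any generator $w=y^a\prod_M F_M^{b_M}$ with~$\trop w\leq 0$ lies in~$\mathbb{C}[v_M]$. I would first rewrite~$w$ as a single Laurent monomial in the~$v_Y$: Lemma~\ref{lem:Fexpand} gives $F_M=\prod_{N\in\ind\Lambda}v_N^{-\hom_\Lambda(N,M)}$, and applying Lemma~\ref{lem:yexpand} to the simple modules~$S_i$ and extending multiplicatively yields $y^a=\prod_{Y\in\ind K_\Lambda}v_Y^{-\langle g(Y),a\rangle}$ for any~$a\in\mathbb{Z}^n$. Substituting produces an expression $w=\prod_{Y\in\ind K_\Lambda}v_Y^{e_Y}$ with explicit integer exponents~$e_Y$. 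By the corollary following Theorem~\ref{trop-v-is-mult}, for any rigid~$X\in\ind K_\Lambda$ one has~$\trop v_Y(g(X))=-\delta_{YX}$. Evaluating~$\trop w=\sum_Y e_Y\trop v_Y$ on the ray~$g(X)$ therefore gives $\trop w(g(X))=-e_X$, and the hypothesis~$\trop w\leq 0$ forces $e_X\geq 0$ for every rigid~$X$.

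The main obstacle is that the tropical condition places no constraint on the exponents~$e_Y$ when~$Y$ is non-rigid, so these could a priori be negative. The fix is to show that each non-rigid~$v_Y$ is a unit in~$\mathbb{C}[v_M]$. By Corollary~\ref{vs-solve} the rational functions~$v_M$ satisfy the~$u$-equations, and for non-rigid~$Y$ we have $c(Y,Y)=2\hom_{K^b}(Y,\Sigma Y)\geq 2$, so the~$u$-equation for~$Y$ factors as
\[
v_Y\Bigl(1+v_Y^{c(Y,Y)-1}\prod_{Z\neq Y}v_Z^{c(Y,Z)}\Bigr)=1,
\]
exhibiting $v_Y^{-1}=1+v_Y^{c(Y,Y)-1}\prod_{Z\neq Y}v_Z^{c(Y,Z)}\in\mathbb{C}[v_M]$. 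Combined with the non-negativity of~$e_X$ for rigid~$X$, this yields $w=\prod_Y v_Y^{e_Y}\in\mathbb{C}[v_M]$, completing the proof.
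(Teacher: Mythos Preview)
Your proof is correct and follows essentially the same approach as the paper's own proof: both directions use Theorem~\ref{trop-v-is-mult} for the forward inclusion, rewrite $w$ as a Laurent monomial in the $v_Y$ via Lemmas~\ref{lem:Fexpand} and~\ref{lem:yexpand}, read off non-negativity of the rigid exponents by evaluating at $g(X)$, and handle the non-rigid exponents by factoring $v_Y$ out of its own $u$-equation to exhibit $v_Y^{-1}\in\mathbb{C}[v_M]$. Your write-up is simply more explicit than the paper's terse version.
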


\begin{proof} That $v_M$ for $M\in\ind K_\Lambda$ satisfy (1) and (2) is immediate from Theorem \ref{trop-v-is-mult}. By Lemmas \ref{lem:Fexpand} and \ref{lem:yexpand}, we can re-express a Laurent monomial $w$ as in (1) as a Laurent monomial in the $v_M$. By Theorem \ref{trop-v-is-mult}, the power of each $v_M$ must be positive, except perhaps for any $v_M$ with $M$ non-rigid. But observe that if $M$ is non-rigid, then the $u$-equation for $M$ has a factor of $u_M$ on the lefthand side, so dividing both sides by it, we get an expression for $u_M^{-1}$ with both terms having all $u_N$ appearing only with positive powers.
\end{proof}
      
Write $F$ for $\prod_M F_M$. Let $X_\Lambda$ be the toric variety defined by the $g$-vector fan, with $y_i$ the coordinate corresponding to the ray $e_i=g(P_i)$.

A subsemigroup $S$ of $\mathbb Z^n$ is called saturated if for any $v \in
\mathbb Z^n$ such that for some positive integer $k$ we have $kv\in S$, it follows that $v\in S$ \cite[Definition 1.3.4]{CLS}. 
A lattice polytope $P$ in $\mathbb Z^n$ is called very ample if for every vertex $v$ of $P$,
the semigroup generated by $(P\cap \mathbb Z^n)-v$ is saturated \cite[Definition 2.2.17]{CLS}. It is also true that $P$ is very ample if and only if for $k$ sufficiently large, any lattice point in $kP$ can be written as a sum of $k$ lattice points of $P$ \cite[Exercise 2.23]{BrunsG}.

If $P$ is a full-dimensional very ample polytope in $\mathbb Z^n$, we can take $P\cap \mathbb Z^n = \{p_1,\dots,p_s\}$. Then the toric variety associated to the outer normal fan to $P$  can be defined as the Zariski closure of the image of the map sending $T=(\mathbb C^*)^n$ to $\mathbb P^{s-1}$, sending $t$ to $[t^{p_1}:\dots:t^{p_s}]$. (See \cite[Section 2.3]{CLS}.)

\begin{theorem}\label{thm:Mtildetoric}
  The affine scheme $\widetilde \cM_\Lambda$ is isomorphic to the affine open subset of $X_\Lambda$ where $F$ is non-vanishing. The subvariety $\cM_\Lambda$ is the intersection of $F\ne 0$ with the open torus orbit in $X_\Lambda$.
  \end{theorem}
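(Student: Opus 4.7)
By Theorem~\ref{thm:RViso}, $\widetilde R_\Lambda$ is isomorphic to~$\wU_\Lambda := \mathbb C[v_M : M \in \ind K_\Lambda] \subset \mathbb C(y_1, \ldots, y_n)$, so~$\widetilde \cM_\Lambda \cong \Spec \wU_\Lambda$. The plan is to identify this with the coordinate ring of~$U := X_\Lambda \setminus \{F = 0\}$, using Proposition~\ref{trop-characterization} as the bridge: it shows that~$\wU_\Lambda$ is generated by the Laurent monomials~$w_{a,b} := y^a \prod_M F_M^{b_M}$ whose tropicalization is~$\leq 0$ on~$\mathbb R^n$, and by piecewise linearity on the~$g$-vector fan this is equivalent to~$\trop(w_{a,b})(g(X)) \leq 0$ for every rigid indecomposable~$X \in \ind K_\Lambda$.

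Next, I would realise~$X_\Lambda$ projectively using the setup described just before the theorem: choose a full-dimensional very ample lattice polytope~$P$ whose outer normal fan is the~$g$-vector fan (for instance, a sufficiently large dilation of the Newton polytope of~$F$), giving an embedding~$X_\Lambda \hookrightarrow \mathbb P^{|P\cap \mathbb Z^n| - 1}$. In this embedding,~$F$ is identified with a section whose vanishing locus is~$\{F = 0\}$, so~$U$ is the standard affine chart obtained by inverting that section, and its coordinate ring is generated by the ratios~$w_{a,b}/w_{a_0,b_0}$ indexed by lattice points of dilations of~$P$. The key technical step is the toric--tropical dictionary: by the standard description of regular functions on affine toric charts via dual cones, the Laurent monomial~$w_{a,b}$ is regular on~$U$ (after multiplication by a suitable power of~$F$) precisely when~$\trop(w_{a,b})(g(X)) \leq 0$ for every ray generator~$g(X)$ of the~$g$-vector fan. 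Combined with Proposition~\ref{trop-characterization}, this yields~$\mathcal O(U) = \wU_\Lambda$, proving the first statement.

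For the second statement,~$\cM_\Lambda$ is obtained from~$\widetilde{\cM}_\Lambda$ by inverting all the~$u_X$ (Definition~\ref{def:varieties}), so under the iso~$u_X \mapsto v_X$ it corresponds to the locus in~$U$ where every~$v_X$ is non-zero. For rigid~$X$, Theorem~\ref{trop-v-is-mult} and its corollary give~$-\trop v_X(g(Y)) = \delta_{XY}$ for rigid~$Y$, identifying~$\{v_X = 0\} \cap U$ with the toric divisor~$D_X$ of~$X_\Lambda$. For non-rigid~$X$, one has~$c(X,X) \geq 2$, so the~$u$-equation for~$X$ takes the form~$u_X(1 + \cdots) = 1$, making~$v_X$ a unit in~$\mathcal O(U)$ that never vanishes. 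Hence~$\cM_\Lambda$ is the complement in~$U$ of the union of the toric divisors, i.e., the intersection of~$U$ with the open torus orbit of~$X_\Lambda$. The main obstacle is the toric--tropical dictionary of the second paragraph: constructing~$P$, interpreting~$F$ as an appropriate section, and translating regularity on~$U$ into tropical inequalities; once this is in place, everything else follows directly from earlier results of this section and Section~\ref{sec:trop}.
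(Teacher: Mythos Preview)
Your approach is essentially the same as the paper's: both reduce the first statement to showing $\wU_\Lambda = \mathcal O(U)$ via Proposition~\ref{trop-characterization} and a projective embedding of $X_\Lambda$ determined by a very ample polytope whose normal fan is the $g$-vector fan. The paper is more concrete on two points. First, it takes $P$ to be the Newton polytope of $F$ itself, citing \cite{AHI} for the fact that its outer normal fan is the $g$-vector fan, and uses unimodularity of that fan to get very ampleness directly (no dilation needed). Second, for the equality of rings the paper argues each inclusion explicitly: $\mathcal O(U)$ is generated by the pullbacks $y^{p_i}/F$, each of which has non-positive tropicalization (giving $\mathcal O(U)\subseteq \wU_\Lambda$ by Proposition~\ref{trop-characterization}); conversely each $v_M$ is written as $\overline A(y)/F^k$ with the exponents of $\overline A$ lying in $kP$, and very ampleness lets one factor each monomial as a product of $k$ elements $y^{u_j}/F$.

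One caution on your phrasing: the ``standard description of regular functions on affine toric charts via dual cones'' does not apply directly to $U$, because $\{F=0\}$ is not a torus-invariant divisor and $U$ is not an affine toric variety. The dictionary you want goes through the projective embedding instead (as you also indicate), and the paper carries this out concretely rather than invoking a black box. Also, your treatment of the second statement is more elaborate than necessary: it follows immediately from Theorem~\ref{six-three}, since $R_\Lambda\cong\mathbb C[y_i^{\pm 1},F_M^{-1}]$ is exactly the coordinate ring of the open torus localized at $F=\prod_M F_M$.
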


\begin{proof} The outer normal fan of $P$, the Newton polytope of $F$, coincides with the $g$-vector fan of $\Lambda$ by \cite[Corollary 5.25(b)]{AHI}. Since the $g$-vector fan of $\Lambda$ is unimodular, $P$ is automatically very ample \cite[Proposition 2.4.4]{CLS}. (This resolves in the affirmative the first part of \cite[Question 5.4]{AHL}.) 
  
  Let $P\cap \mathbb Z^n=\{p_1,\dots,p_s\}$, and consider the embedding of $X_\Lambda$ into $\mathbb P^{s-1}$ as above.  The polynomial $F$, the exponents of whose monomials are a subset of $P\cap \mathbb Z^n$, defines a hyperplane in
  $\mathbb P^{s-1}$. The complement of this hyperplane is an affine space. The coordinate functions from $\mathbb P^{s-1}$ pull back to $y^{p_i}/F$ on $X_\Lambda$. Since $\trop(y^{p_i}/F)$ is non-positive, by Proposition \ref{trop-characterization}, $\mathbb C[y^{p_i}/F]$ is contained in $\mathbb C[u_M]$. 

  For the opposite inclusion, we want to show that $v_M$ is contained in $\mathbb C[y^{p_i}/F]$ for any $M\in\ind K_\Lambda$. Pick a particular $M$. Its
  denominator divides $F$. Since $P$ is very ample, we can choose $k$ 
  sufficiently large so that any lattice point in $kP$ can be written as a sum of $k$ lattice points of $P$. We can then write
  $v_M=\overline A(y)/F^k(y)$ for some polynomial $\overline A(y)$. Because
  $\trop(v_M)\leq 0$, the exponents of monomials appearing in $A(y)$ are all contained in $kP$. 
  Let $y^u$ be a monomial appearing in $\overline A(y)$. By the assumption on $k$, we can write $u={u_1}+{u_2}+\dots+{u_k}$ with each $u_i\in P$. Then
  $y^u/F^k(y)=(y^{u_1}/F)\dots(y^{u_k}/F)$ belongs to $\mathbb C[y^{p_i}/F]$, and
  summing over all the monomials of $\overline A$ we have the desired result. 
\end{proof}

\section{Non-negative real part of $\widetilde{\mathcal M}_\Lambda$}

In this section, we investigate the geometry of $\widetilde{\mathcal M}_\Lambda^{>0}$ and
$\widetilde{\mathcal M}_\Lambda^{\geq 0}$, the loci inside $\widetilde{\mathcal M}_\Lambda$ where the coordinates $u_M$ are real and positive (respectively, non-negative). We refer to these as the totally positive (respectively, totally non-negative) parts of $\widetilde{\mathcal M}_\Lambda$. 
We will study them by showing that they coincide with the corresponding parts of the toric variety $X_\Lambda$.

Let $\Delta$ be a complete, unimodular fan in $\mathbb Z^n$, and let $X_\Delta$ be the corresponding toric variety. It admits a covering by a finite number of patches isomorphic to
$\mathbb C^n$. The totally non-negative part of $X_\Delta$, which we denote $X_\Delta^{\geq 0}$ is glued together from the copy of $\mathbb R^n_{\geq 0}$ contained in each copy of $\mathbb C^n$. (See \cite[Section 4.2]{Fulton}, \cite[Section 12.2]{CLS} for more details.) The totally positive part is glued together in the same way, from copies of $\mathbb R^n_{>0}$.

\subsection{Agreement of totally positive parts}
We begin by showing the following proposition:

\begin{proposition}\label{prop:coincide} When $\widetilde{\mathcal M}_\Lambda$ is identified as an open subset of $X_\Lambda$, the totally positive parts of the two varieties coincide. \end{proposition}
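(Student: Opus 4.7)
The plan is to realize both totally positive parts as subsets of $X_\Lambda$ and verify that they coincide. By Theorem~\ref{thm:Mtildetoric}, we have the identification $\widetilde{\mathcal M}_\Lambda \cong \{F \neq 0\} \subseteq X_\Lambda$ under which the coordinate function $u_M$ pulls back to the rational function $v_M \in \mathbb C(y_1,\ldots,y_n)$; moreover, since the $g$-vector fan is complete and unimodular, $X_\Lambda$ is smooth and complete, and its totally positive part $X_\Lambda^{>0}$ is $\mathbb R^n_{>0}$ parametrized by $(y_1,\ldots,y_n)$ inside the open torus orbit. The task is then to prove two containments inside $X_\Lambda$: first, that every point with all $y_i > 0$ satisfies $v_M(y) > 0$ for every $M \in \ind K_\Lambda$; second, that any point of $\widetilde{\mathcal M}_\Lambda$ at which all $u_M > 0$ arises from positive real $y$-coordinates.

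For the forward direction, I would begin by establishing that each $F$-polynomial $F_M$ is strictly positive on $\mathbb R^n_{>0}$. The idea is that in the finite representation type setting each Euler characteristic $\chi(\Gr_{\bd}(M))$ is non-negative and $\chi(\Gr_{\mathbf 0}(M)) = 1$, so $F_M$ has non-negative real coefficients with positive constant term and hence takes strictly positive values on $\mathbb R^n_{>0}$. This gives $F = \prod_M F_M > 0$ on $\mathbb R^n_{>0}$, so this region sits inside $\widetilde{\mathcal M}_\Lambda$, and each $v_M$ — being, according to the explicit formulas in Section~\ref{sec:geoM}, a ratio of $F$-polynomials possibly multiplied by some $y_i$ — is then strictly positive there. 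This yields $X_\Lambda^{>0} \subseteq \widetilde{\mathcal M}_\Lambda^{>0}$.

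For the reverse direction, I would take a point of $\widetilde{\mathcal M}_\Lambda$ at which every $u_M > 0$ and argue that it must lie in $X_\Lambda^{>0}$. Since no $u_M$ vanishes, the point sits off every toric divisor of $X_\Lambda$ (whose rays are indexed by rigid indecomposables) and hence inside the open torus orbit, so it corresponds to a real tuple $(y_1,\ldots,y_n) \in (\mathbb R^*)^n$ with $u_M = v_M(y)$. Applying Lemma~\ref{lem:yexpand} to the simple module $S_i$, whose dimension vector is $e_i$, expresses $y_i = \prod_Y v_Y(y)^{-\langle g(Y), e_i \rangle}$ as a Laurent monomial in the strictly positive real numbers $u_Y$; hence $y_i > 0$, placing the point in $X_\Lambda^{>0}$.

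The main obstacle will be the non-negativity of the Euler characteristics $\chi(\Gr_{\bd}(M))$ over a general finite representation type algebra. For Dynkin types this is classical, and for Jacobian algebras of non-degenerate quivers with potential it is known via Derksen–Weyman–Zelevinsky style arguments; beyond these cases the claim is more delicate and may require an argument specific to the finite representation type setting, for instance via torus actions on quiver Grassmannians with isolated fixed points coming from a suitable cell decomposition. Should positivity of the individual $F_M$ prove obstructed, a fallback is to argue positivity of the $v_M$ directly by combining the $u$-equations $v_M + \prod_W v_W^{c(M,W)} = 1$ with a connectedness argument on $\mathbb R^n_{>0}$, so that no $v_M$ can change sign without forcing some $F_M$ to cross zero.
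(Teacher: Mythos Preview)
Your reverse direction is essentially the paper's argument: Lemma~\ref{lem:yexpand} applied to the simple modules writes each $y_i$ as a Laurent monomial in the $v_Y$, so positivity of the $u_M=v_M$ forces positivity of the $y_i$. That part is fine.

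The forward direction, however, has a genuine gap. Your primary approach assumes that $\chi(\Gr_{\bd}(M))\geq 0$ for every $M\in\mod\Lambda$ over an arbitrary finite representation type algebra. This is \emph{not} known in that generality; the paper makes this explicit in the remark immediately following Lemma~\ref{F-positivity}, noting that positivity of the $F$-polynomial coefficients is observed in all examples but is not established as a theorem, and that it would simplify the argument if it were. So you cannot simply invoke non-negativity of the Euler characteristics here.

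The paper's actual argument is close in spirit to your fallback, but the key device is different from the $u$-equations. One uses the uniform description $v_X = 1 - y^{\dimv H^0X}/(F_X F_{\tau X})$ from Remark~\ref{rema::uniform-v}. Near $y=0$ the constant terms dominate, so all $F_M$ are positive; along a path in $\mathbb R^n_{>0}$ to a putative bad point, take the \emph{first} point $p$ where some $F_M$ vanishes. Then $v_M\to -\infty$ as one approaches $p$ along the path, so $v_M$ must have hit zero strictly earlier, forcing some $F_{M'}$ (namely a factor in the numerator $F_E$ of $v_M$) to vanish earlier, contradicting minimality of $p$. Your fallback sketch gestures at a connectedness argument but does not isolate this blow-up mechanism; the $u$-equation $v_M+\prod_W v_W^{c(M,W)}=1$ alone does not obviously yield the needed contradiction, since the product term could a priori stay bounded.
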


Proposition \ref{prop:coincide} follows directly from the next two lemmas.

\begin{lemma} \label{converse-lem} If $x\in \widetilde{\mathcal M}$ is a point where $v_M$ is real and positive for all $M\in\ind K_\Lambda$, the $y_i$ are real and positive. \end{lemma}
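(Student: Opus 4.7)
The plan is to read off each $y_i$ as an explicit Laurent monomial in the $v_Y$'s, using the identities already established in Section \ref{parametrization-section}. Specifically, I will apply Lemma \ref{lem:yexpand} to the simple module $M = S_i$. Since $S_i = P_i/\rad P_i$ satisfies $S_i e_j = \delta_{ij} \mathbb{C}$, its dimension vector $d(S_i)$ is the $i$-th standard basis vector, and so the left-hand side $y^{d(S_i)}$ collapses to $y_i$. Lemma \ref{lem:yexpand} therefore gives the identity
\[
y_i = \prod_{Y \in \ind K_\Lambda} v_Y^{-g(Y)_i}
\]
in $\mathbb{C}(y_1,\dots,y_n)$.

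The first step is to verify that this identity can be evaluated at the point $x$. Because $v_M(x) > 0$ for every $M$, in particular $v_M(x) \neq 0$, so $x$ lies in the open subvariety $\mathcal{M}_\Lambda \subset \widetilde{\mathcal{M}}_\Lambda$ obtained by inverting all the $u_M$'s. On this open set, Theorem \ref{six-three} identifies the coordinate ring with $\mathbb{C}[y_1^{\pm 1}, \dots, y_n^{\pm 1}; F_M^{-1}]$, so each $y_i$ is a well-defined (nonzero) regular function at $x$, and the above identity of rational functions is a valid equality of complex numbers when specialized to $x$.

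The second step is simply to evaluate: the right-hand side at $x$ is a finite product of integer powers of positive real numbers, and hence itself a positive real number, so $y_i(x)$ is too. I do not anticipate any real obstacle: the lemma is essentially the specialization of Lemma \ref{lem:yexpand} to the simple modules $S_i$, combined with the elementary fact that $\mathbb{R}_{>0}$ is closed under products and integer powers.
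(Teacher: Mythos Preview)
Your proposal is correct and takes essentially the same approach as the paper, which simply says the lemma is ``immediate from Lemma~\ref{lem:yexpand}.'' You have spelled out the details (specializing to $M=S_i$ and checking the identity evaluates at $x$) more carefully than the paper does, but the argument is the same.
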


\begin{proof} This is immediate from Lemma \ref{lem:yexpand}. \end{proof}

\begin{lemma}\label{F-positivity} If $x\in\widetilde {\mathcal M}$ is a point where $y_1,\dots,y_n$ are positive, then $v_M$ is real and positive for all $M\in\ind K_\Lambda$. \end{lemma}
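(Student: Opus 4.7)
The plan is to reduce the positivity of $v_M$ to the positivity of the $F$-polynomial values on the positive orthant, namely the assertion that $F_N(y)>0$ for every $N \in \mod\Lambda$ and every $y \in \mathbb R_{>0}^n$. Given this, the explicit formulas from Section \ref{sec:geoM},
\[
v_M = \frac{F_E}{F_M F_{\tau M}},\qquad v_{P_i} = \frac{F_{\rad P_i}}{F_{P_i}},\qquad v_{\Sigma P_i} = y_i\,\frac{F_{I_i/S_i}}{F_{I_i}},
\]
immediately yield $v_M(y)>0$: in each of the three cases $v_M$ is a ratio of strictly positive quantities, with the extra factor $y_i>0$ in the shifted-projective case. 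So the real content is the sub-claim on $F$-polynomials.

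To establish this sub-claim, I would first observe that $F_N(0)=\chi(\Gr_{\mathbf 0}(N))=1>0$; hence it suffices to show that each $F_N \in \mathbb Z[y_1,\ldots,y_n]$ has nonnegative coefficients, i.e.\ that all Grassmannian Euler characteristics $\chi(\Gr_{\mathbf d}(N))$ are non-negative. This non-negativity for modules over a finite representation type algebra is a well-established positivity phenomenon in the cluster-theoretic literature, and can be invoked directly. A self-contained alternative would proceed by induction on the AR-quiver of $\mod\Lambda$: positivity for a projective indecomposable $P_i$ follows from $F_{P_i} = F_{\rad P_i} + y^{d(P_i)}$ combined with a secondary induction on $\dim N$ (since $\dim\rad P_i<\dim P_i$), and then positivity is propagated to all indecomposables along $\tau^{-1}$ via the AR-exchange relation $F_M F_{\tau M} = F_E + y^{d(M)}$ of Lemma \ref{theo::fpolynomials}, together with the multiplicativity $F_{M_1\oplus M_2}=F_{M_1}F_{M_2}$ for direct sums.

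The main obstacle is precisely this positivity of $F$-polynomial coefficients. The AR-quiver induction is delicate because the exchange relation relates $F_M$ to $F_{\tau M}$ and $F_E$, where $\tau M$ can have strictly larger dimension than $M$, so a naive induction on dimension alone will not close. One must therefore order the indecomposables by a more refined complexity measure (for instance, the length of the $\tau$-orbit from a projective module, coupled with a dimension induction at the projective base case), or else cite the categorification-based positivity as a black box. Once positivity of the $F_N$ on $\mathbb R_{>0}^n$ is in hand, the lemma follows by direct inspection of the formulas above.
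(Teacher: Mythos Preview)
Your reduction to the assertion that $F_N(y)>0$ for all $N\in\mod\Lambda$ and all $y\in\mathbb R_{>0}^n$ is correct and is exactly how the paper begins. The gap is in how you propose to prove this assertion.

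You claim that non-negativity of the coefficients $\chi(\Gr_{\mathbf d}(N))$ is ``a well-established positivity phenomenon'' that can be invoked directly. It is not: the paper explicitly remarks, immediately after this lemma, that although coefficient positivity holds in every example known to the authors, they do not know how to prove it for a general finite representation type algebra. So citing it as a black box is not an option. Your fallback AR-quiver induction does not close either. Even granting a base case at the projectives, the exchange relation $F_M F_{\tau M} = F_E + y^{d(M)}$ only tells you that the \emph{product} $F_M F_{\tau M}$ has non-negative coefficients, which does not force the same for the factors; and if you instead try to propagate positivity of function values along $\tau^{-1}$, the induction order breaks down whenever the AR-quiver of $\mod\Lambda$ has oriented cycles (as in the $\Pi_{A_2}$ and loop examples of Section~\ref{section-examples}).

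The paper's proof avoids coefficient positivity altogether by a continuity argument. Near the origin $F_M(y)>0$ since the constant term dominates. If some $F_M$ vanished somewhere in $\mathbb R_{>0}^n$, take a path from near the origin to such a point and let $p$ be the first point where some $F_M$ vanishes. The uniform formula (Remark~\ref{rema::uniform-v})
\[
v_M = \frac{F_M F_{\tau M} - y^{d(H^0M)}}{F_M F_{\tau M}}
\]
shows that as we approach $p$ the numerator tends to a strictly negative number while the denominator tends to $0^+$, so $v_M\to -\infty$. But $v_M$ was positive near the origin, hence vanishes at some earlier point on the path; since $v_M = F_E/(F_M F_{\tau M})$, this forces some $F_{E_i}=0$ strictly before $p$, contradicting minimality. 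This argument uses only the continuity of the $F_M$ and the specific form of the $v_M$, not any positivity of Euler characteristics.
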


\begin{proof}
Clearly, it is sufficient to show that $F_M(y)>0$ for all positive $y$. If all the $y_i$ are close to zero, then this is certainly true, because the terms in $F_M$ are dominated by the constant term.
Now assume there is a path in $\mathbb R^n_{>0}$ from a point close to the origin to some  point in $y$-space where~$v_N$ is non-positive.  Then on that path, some of the~$F_M$ have to be non-positive.  Consider the first point~$p$ on the path where some~$F_M$ vanishes.  Then, using Remark~\ref{rema::uniform-v}, we see that~$v_M$ approaches~$-\infty$ as we approach that point.  But this means that~$v_M$ has to vanish on some earlier point on the path, and so some~$F_{M'}$ vanishes too, contradicting the minimality of~$p$.
\end{proof}

\begin{remark} In all examples of finite representation type algebras that we are aware of, all the coefficients of all their $F$-polynomials are positive. If this were true for all finite representation type algebras, it would simplify the proof of the previous lemma.\end{remark} 

\begin{proof}[Proof of Proposition \ref{prop:coincide}]
  The totally non-negative part of $X_\Lambda$ consists of the points where $y_1,\dots, y_n$ are real and positive. By Lemma \ref{F-positivity}, such points are also points in the totally positive part of $\widetilde{\mathcal M}_\Lambda$. Conversely, by Lemma \ref{converse-lem}, a totally positive point of
  $\widetilde {\mathcal M}_\Lambda$ is also in the totally positive part of $X_\Lambda$. \end{proof}

\subsection{Agreement of totally non-negative parts}

A key step towards showing that the totally non-negative parts agree is the following proposition:

\begin{proposition}\label{prop:remove} $X_\Lambda \setminus \widetilde{\mathcal M}_\Lambda$ does not intersect the closure in the analytic topology of $X_\Lambda^{> 0}$. \end{proposition}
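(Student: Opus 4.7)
My plan is to prove the equivalent statement $X_\Lambda^{\geq 0}=\overline{X_\Lambda^{>0}}\subseteq \widetilde{\mathcal M}_\Lambda$ by induction on the number $n$ of simple $\Lambda$-modules, combining Proposition~\ref{prop:coincide} (which handles the totally positive interior) with Theorem~\ref{Jasso-thm} (which rewrites each divisor $u_X=0$ in $\widetilde{\mathcal M}_\Lambda$ as $\widetilde{\mathcal M}_B$ for a strictly smaller algebra $B$), and with the fact recalled in Section~\ref{Jasso-intro} that the link of the ray $\mathbb R_{\geq 0}\cdot g(X)$ in the $g$-vector fan of $\Lambda$ is identified with the $g$-vector fan of the Jasso reduction $B$ of $\Lambda$ at $X$. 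The base case $n\leq 1$ is immediate: then $X_\Lambda$ is a point or $\mathbb P^1$, and the locus where $F=1+y_1$ vanishes does not meet $X_\Lambda^{\geq 0}$.

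For the inductive step, fix $p\in X_\Lambda^{\geq 0}$. If $p\in X_\Lambda^{>0}$, then $p\in \widetilde{\mathcal M}_\Lambda$ by Proposition~\ref{prop:coincide}. Otherwise $p$ lies on the toric boundary of $X_\Lambda^{\geq 0}$, which, under the moment-map identification $X_\Lambda^{\geq 0}\cong P$ furnished by Theorem~\ref{thm:Mtildetoric}, is a union of facets of $P$. Each facet is $D_\rho^{\geq 0}$ for some ray $\rho$ of the $g$-vector fan, so $p\in D_\rho^{\geq 0}$ with $\rho=\mathbb R_{\geq 0}\cdot g(X)$ for some rigid indecomposable $X\in \ind K_\Lambda$. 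The link identification above yields a toric isomorphism $D_\rho\cong X_B$, which, being a toric morphism, restricts to $D_\rho^{\geq 0}\cong X_B^{\geq 0}$. Since $B$ has $n-1$ simples and is still of finite representation type, the inductive hypothesis gives $X_B^{\geq 0}\subseteq \widetilde{\mathcal M}_B$. Transporting this across the identification $\widetilde{\mathcal M}_B\cong \{u_X=0\}\cap \widetilde{\mathcal M}_\Lambda$ from Theorem~\ref{Jasso-thm} yields $p\in D_\rho^{\geq 0}\subseteq \widetilde{\mathcal M}_\Lambda$.

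The main obstacle is verifying that the two identifications $D_\rho\cong X_B$ and $\{u_X=0\}\cap\widetilde{\mathcal M}_\Lambda \cong \widetilde{\mathcal M}_B$ are compatible with the open immersions $\widetilde{\mathcal M}\hookrightarrow X$. Concretely, the monomial map $\mu$ of Theorem~\ref{Jasso-thm} sends $u_X\mapsto 0$ and $u_Y\mapsto \breve u_{s(Y)}$ for $Y$ compatible with $X$, and one must check this matches the toric closed immersion $X_B\hookrightarrow X_\Lambda$ dictated by the link quotient of $g$-fans. Both are induced by the same quotient $K_0(\proj\Lambda)/[X]\simeq K_0(\proj B)$ of Grothendieck groups—on the toric side via the quotient of fans, and on the $\widetilde{\mathcal M}$-side through the bijection $s$ appearing in the exponents of $\mu$—so the two identifications agree and the induction closes.
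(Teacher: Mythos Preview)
Your approach is genuinely different from the paper's, and the compatibility you yourself flag as ``the main obstacle'' is not established by the argument you give. The paper avoids this issue entirely: it proves a short lemma that each $F_M$ is bounded below by some $\epsilon>0$ on all of $X_\Lambda^{>0}$, via an induction over the submodule lattice of $M$ (as one approaches the coordinate hyperplane $y_i=0$, $F_M$ tends to $F_{M_{\langle i\rangle}}$, where $M_{\langle i\rangle}$ is the largest submodule with no support at $i$, and that is bounded away from $0$ by induction). Since $X_\Lambda\setminus\widetilde{\mathcal M}_\Lambda$ is exactly where some $F_M$ vanishes, the proposition follows at once. No Jasso reduction, no induction on the rank.

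Your induction on $n$ would close only if the square
\[
\begin{tikzcd}
\widetilde{\mathcal M}_B \arrow[r,hook] \arrow[d,"\mu^*"'] & X_B \arrow[d,"\cong"] \\
\widetilde{\mathcal M}_\Lambda \arrow[r,hook] & X_\Lambda
\end{tikzcd}
\]
commutes, with the right vertical arrow the toric closed immersion onto $D_\rho$ and the left vertical arrow the closed immersion of Theorem~\ref{Jasso-thm}. Saying that both sides are ``induced by the same quotient $K_0(\proj\Lambda)/[X]\cong K_0(\proj B)$'' is an assertion, not a proof. The left map is defined by the monomial substitution $\mu$ (sending $u_X\mapsto 0$, $u_Z\mapsto 1$ for $Z$ incompatible with $X$, and $u_Y\mapsto\breve u_{s(Y)}$), while the right map is the toric divisor inclusion determined by the link of $\rho$. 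Commutativity amounts to showing that, under the embedding $u_Y=v_Y$ of Theorem~\ref{thm:Mtildetoric}, the restriction $v_Y|_{D_\rho}$ equals $\breve v_{s(Y)}$ under $D_\rho\cong X_B$ for every compatible $Y$, and that $v_Z|_{D_\rho}\equiv 1$ for every incompatible $Z$; equivalently, that the vanishing locus of $\prod_{M\in\ind\Lambda}F_M$ restricted to $D_\rho$ coincides with that of $\prod_{N\in\ind B}F^B_N$. Neither statement is proved anywhere in the paper prior to this point, and the fact that the bijection $s$ respects $g$-vectors modulo $g(X)$ does not by itself imply them. (Note also that Proposition~\ref{prop:strat}, which does describe the boundary stratification, is deduced \emph{from} Proposition~\ref{prop:remove}, so it cannot be invoked here.) Without this compatibility, you cannot conclude that the point $p\in D_\rho^{\geq 0}$ you start with is the same as the point of $\widetilde{\mathcal M}_\Lambda$ you obtain after transporting through $\widetilde{\mathcal M}_B$.
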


To establish this proposition, we need the following lemma:

\begin{lemma} There is some positive $\epsilon$ such that for all $M\in\ind \mod \Lambda$, we have $F_M(x)>\epsilon$ for all $x\in X_\Lambda^{>0}$. \end{lemma}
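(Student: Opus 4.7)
The plan is to prove the strictly stronger statement that $F_M(x) \geq 1$ for every $M \in \ind \mod \Lambda$ and every $x \in X_\Lambda^{>0}$; the lemma then follows with, say, $\epsilon = 1/2$, uniformly in $M$. The idea is to combine the product expansion of $1/F_M$ in terms of the $v_N$'s with the bound $v_X \leq 1$ that falls out of the $u$-equations.

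First I would invoke Corollary~\ref{vs-solve}, which says that the rational functions $v_X$ satisfy the $u$-equations $v_X + \prod_{Y \in \ind K_\Lambda} v_Y^{c(X,Y)} = 1$. On $X_\Lambda^{>0} = \mathbb{R}_{>0}^n$, Lemma~\ref{F-positivity} guarantees that every $v_Y$ is strictly positive; each compatibility degree $c(X,Y)$ is a non-negative integer, so the product on the left is non-negative, which immediately gives $v_X \leq 1$ on $X_\Lambda^{>0}$ for every $X \in \ind K_\Lambda$. Next I would apply Lemma~\ref{lem:Fexpand}, which gives
\[
\frac{1}{F_M} = \prod_{N \in \ind \mod \Lambda} v_N^{\hom_\Lambda(N, M)},
\]
with non-negative integer exponents. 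Since each $v_N \in (0,1]$ on $X_\Lambda^{>0}$, every factor and hence the whole product lies in $(0,1]$, so $F_M \geq 1$ on $X_\Lambda^{>0}$.

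There is no real obstacle here: the argument is a short chain of consequences of results already established in the paper. The one subtlety worth noting is that the degenerate case where $c(X,Y) = 0$ for all $Y$ would reduce the $u$-equation to $v_X \equiv 0$, but this is excluded on the strictly positive locus by Lemma~\ref{F-positivity}, so the bound $v_X \leq 1$ indeed holds uniformly. An alternative route would try to extend $1/F_M$ continuously to the compact $X_\Lambda^{\geq 0}$ and invoke compactness, but this is circular with Proposition~\ref{prop:remove}, which is precisely what the lemma is meant to feed into — so the direct combinatorial bound via the $u$-equations is the cleanest path.
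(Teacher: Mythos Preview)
Your proof is correct and takes a genuinely different route from the paper's. The paper argues by induction on submodules: by compactness, one only needs to control $F_M$ near each coordinate hyperplane $y_i=0$, and there $F_M$ degenerates to $F_{M_{\langle i\rangle}}$ for the largest submodule $M_{\langle i\rangle}$ with no support at vertex $i$, which is handled by the inductive hypothesis. Your argument instead combines the product formula $1/F_M=\prod_{N}v_N^{\hom(N,M)}$ from Lemma~\ref{lem:Fexpand} with the bound $0<v_N<1$ coming from the $u$-equations and Lemma~\ref{F-positivity}, yielding the sharp explicit bound $F_M\geq 1$ (in fact $F_M>1$, since $\hom(M,M)\geq 1$) uniformly in $M$, with no induction and no compactness. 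Your approach is cleaner, gives more information, and avoids the somewhat delicate boundary analysis; the paper's approach has the minor virtue of not appealing to the $u$-equation machinery from Sections~\ref{sec:geoM}--\ref{parametrization-section}, but since those results are already in place this is no real cost. One small remark: your aside about the degenerate case $c(X,Y)=0$ for all $Y$ is harmless but unnecessary for the bound $v_X\leq 1$ --- even in that case the $u$-equation gives $v_X=0\leq 1$; the contradiction with Lemma~\ref{F-positivity} simply shows that such an $X$ cannot exist.
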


\begin{proof} The proof is by induction. Suppose that we already know the statement for all submodules of $M$; we want to prove it for $M$. 
  We established in the proof of Lemma \ref{F-positivity} that $F_M(x)>0$ for all $x\in X_{\Lambda}^{>0}$. Thus, by compactness, the only necessity is to analyze what happens as $x$ approaches one of the coordinate hyperplanes, say, the where $y_i=0$. If $M$ has no support over the vertex $i$, then $F_M$ does not depend on the value of $y_i$, so there is no problem with approaching this coordinate hyperplane. On the other hand, if $M$ has support over vertex $i$, then let $M_{\langle i\rangle}$ be the largest submodule of $M$ with no support over the vertex $i$. As $y_i\rightarrow 0$, we have that $F_M$ tends towards $F_{M_{\langle i\rangle}}$. We know by induction that $F_{M_{\langle i \rangle}}$ stays bounded away from zero, so the same thing is true for $F_M$. \end{proof}

We can now prove Proposition \ref{prop:remove}.

\begin{proof}[Proof of Proposition \ref{prop:remove}]
  Because the functions $F_M$ are bounded away from zero on $X_\Lambda^{>0}$, they are also strictly positive on $X_\Lambda^{\geq 0}$. The locus $X_\Lambda\setminus \widetilde{\mathcal M}_\Lambda$ is characterized by the vanishing of some $F_M$. Thus, there is no intersection between $X_\Lambda^{\geq 0}$ and $X_\Lambda\setminus\widetilde{\mathcal M}$.  
\end{proof}
  
\begin{theorem}\label{thm:agree} The totally non-negative parts of $X_\Lambda$ and $\widetilde{\mathcal M}$ coincide.
\end{theorem}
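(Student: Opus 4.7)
The plan is to prove the two inclusions $X_\Lambda^{\geq 0} \subseteq \widetilde{\mathcal M}_\Lambda^{\geq 0}$ and $\widetilde{\mathcal M}_\Lambda^{\geq 0} \subseteq X_\Lambda^{\geq 0}$ as subsets of $X_\Lambda$. Both lie in $\widetilde{\mathcal M}_\Lambda$: for $\widetilde{\mathcal M}_\Lambda^{\geq 0}$ this is by definition, and for $X_\Lambda^{\geq 0}$ this is Proposition \ref{prop:remove}. The first inclusion is then a short continuity argument: on the common open torus part $X_\Lambda^{>0} = \widetilde{\mathcal M}_\Lambda^{>0}$ (Proposition \ref{prop:coincide}) all the $u_M$ are strictly positive, and since they are regular functions on $\widetilde{\mathcal M}_\Lambda \supseteq X_\Lambda^{\geq 0}$ they remain non-negative on the analytic closure $X_\Lambda^{\geq 0}$ of $X_\Lambda^{>0}$.

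For the reverse inclusion $\widetilde{\mathcal M}_\Lambda^{\geq 0} \subseteq X_\Lambda^{\geq 0}$ I would argue by induction on $n$, the rank of $\Lambda$. Take $x \in \widetilde{\mathcal M}_\Lambda^{\geq 0}$. If every $u_M(x) > 0$ then $x \in \widetilde{\mathcal M}_\Lambda^{>0} = X_\Lambda^{>0}$ and we are done; otherwise some $u_M(x) = 0$. I would first observe that any such $M$ must be rigid: for $M$ non-rigid one has $c(M,M) \geq 2$, and the $u$-equation for $M$ (which holds on $\widetilde{\mathcal M}_\Lambda$ via Corollary \ref{coro:F-tilde-imply-u}) factors as
\[
u_M\Bigl(1 + u_M^{\,c(M,M)-1}\prod_{W \neq M} u_W^{c(W,M)}\Bigr) = 1,
\]
forcing $u_M(x) \neq 0$, a contradiction. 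With $M$ rigid, Theorem \ref{Jasso-thm} identifies $\{u_M = 0\} \cap \widetilde{\mathcal M}_\Lambda$ with $\widetilde{\mathcal M}_B$, where $B$ is the Jasso reduction of $\Lambda$ at $M$. Since the isomorphism sends each surviving $u_N$ either to $\uu_{s(N)}$ or to $1$, it carries $\widetilde{\mathcal M}_\Lambda^{\geq 0} \cap \{u_M = 0\}$ into $\widetilde{\mathcal M}_B^{\geq 0}$. As $B$ is again of finite representation type and has rank $n-1$, the inductive hypothesis places the image $\bar x$ of $x$ inside $X_B^{\geq 0}$.

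It then remains to identify $X_B^{\geq 0}$ with a closed boundary stratum of $X_\Lambda^{\geq 0}$. By Section \ref{Jasso-intro} the $g$-vector fan of $B$ is the star of the ray $\mathbb R_{\geq 0}\, g(M)$ in the $g$-vector fan of $\Lambda$, so standard toric geometry identifies $X_B$ with the closed toric subvariety of $X_\Lambda$ attached to this ray, and $X_B^{\geq 0}$ with the corresponding boundary stratum of $X_\Lambda^{\geq 0}$; in particular $X_B^{\geq 0} \subseteq X_\Lambda^{\geq 0}$. The step I expect to require the most care is checking that the ring-theoretic isomorphism of Theorem \ref{Jasso-thm} is compatible with these toric embeddings, i.e.\ that $\widetilde{\mathcal M}_B$, viewed via $X_B \hookrightarrow X_\Lambda$, really coincides as a subvariety of $X_\Lambda$ with $\{u_M = 0\} \cap \widetilde{\mathcal M}_\Lambda$. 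This should follow by describing both sides as the locus cut out by $u_M = 0$ and $F \neq 0$ via Theorem \ref{thm:Mtildetoric} and tracking the restriction of the distinguished rational functions through the identification of $g$-vector fans. Once this compatibility is in hand, $\bar x \in X_B^{\geq 0}$ gives $x \in X_\Lambda^{\geq 0}$ and the induction closes.
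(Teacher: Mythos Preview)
Your approach diverges substantially from the paper's, which is much shorter. The paper argues that $X_\Lambda^{\geq 0}$ is the analytic closure of $X_\Lambda^{>0}$ in $X_\Lambda$ (a standard toric fact), and asserts likewise that $\widetilde{\mathcal M}_\Lambda^{\geq 0}$ is the analytic closure of $\widetilde{\mathcal M}_\Lambda^{>0}$ in $\widetilde{\mathcal M}_\Lambda$. Since the two totally positive parts coincide (Proposition~\ref{prop:coincide}) and the closure in $X_\Lambda$ does not escape $\widetilde{\mathcal M}_\Lambda$ (Proposition~\ref{prop:remove}), the two closures agree. No induction and no Jasso reduction are used.

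Your first inclusion $X_\Lambda^{\geq 0}\subseteq \widetilde{\mathcal M}_\Lambda^{\geq 0}$ via continuity of the $u_M$ is fine and in the same spirit. For the reverse inclusion, however, you launch an induction on the rank through Theorem~\ref{Jasso-thm}, and you correctly flag the delicate step: one must verify that the isomorphism $\widetilde R_\Lambda/\langle u_M\rangle \cong \widetilde R_B$ is compatible with the toric identification of $X_B$ with the orbit closure in $X_\Lambda$ attached to the ray $\mathbb R_{\geq 0}\,g(M)$. This is not established anywhere in the paper, and making it precise would require tracking the open embeddings of Theorem~\ref{thm:Mtildetoric} for both $\Lambda$ and $B$ through the quotient of $g$-vector lattices in Section~\ref{Jasso-intro}. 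The paper's closure argument sidesteps all of this.

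One could remark that the paper's assertion ``$\widetilde{\mathcal M}_\Lambda^{\geq 0}$ is the closure of $\widetilde{\mathcal M}_\Lambda^{>0}$'' is itself stated rather briskly; if you wanted to supply a proof of that assertion, your inductive scheme could in principle be adapted to do so, but as written it is neither the paper's route nor complete.
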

  
\begin{proof}
  $X_\Lambda^{\geq 0}$ can be viewed as the closure of $X^{>0}_\Lambda$ in the analytic topology on $X_\Lambda$, while $\widetilde{\mathcal M}^{\geq 0}$ can be viewed as the closure of $\widetilde{\mathcal M}^{> 0}$ in the analytic topology on
  $\widetilde{\mathcal M}_\Lambda$. By Proposition \ref{prop:coincide} the two totally positive parts coincide, and by Proposition \ref{prop:remove}, their closures do too.
\end{proof}

\subsection{Stratification}
For $P$ a lattice polytope having $\Delta$ as its outer normal fan, the  algebraic moment map $\mu_P$ provides a homeomorphism from $X_\Delta^{\geq 0}$ to $P$. This provides a stratification of $X_\Delta^{\geq 0}$ with strata indexed by the cones of $\Delta$ (or the faces of $P$): the stratum associated to the cone $\sigma\in\Delta$ is the inverse image under $\mu_P$ of the relative interior of the face having outer normal fan $\sigma$.

We now show that $\widetilde{\mathcal M}_\Lambda^{\geq 0}$ admits a stratification with the same properties.

\begin{proposition}\label{prop:strat}
$\widetilde{\mathcal M}^{\geq 0}_\Lambda$ admits a stratification whose strata correspond to the cones of the $g$-vector fan of $\Lambda$. The stratum corresponding to a $d$-dimensional cone is isomorphic to $\mathbb R^{n-d}$. \end{proposition}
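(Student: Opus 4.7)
The plan is to deduce this stratification entirely from Theorem \ref{thm:agree} together with the standard toric picture recalled just before the proposition. The hard work has already been done: since $\widetilde{\mathcal M}_\Lambda^{\geq 0}=X_\Lambda^{\geq 0}$ as subsets of $X_\Lambda$, it suffices to stratify the non-negative part of the toric variety.

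First, I would fix the lattice polytope to use. The natural choice is $P$, the Newton polytope of $F=\prod_M F_M$ that appeared in the proof of Theorem \ref{thm:Mtildetoric}; by the reference \cite[Corollary 5.25(b)]{AHI} invoked there, its outer normal fan is exactly the $g$-vector fan of $\Lambda$. This is the polytope that matches the fan $\Delta$ defining $X_\Lambda$.

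Next, I would invoke the algebraic moment map $\mu_P : X_\Lambda^{\geq 0} \to P$, which is a homeomorphism (see \cite[Section 4.2]{Fulton} or \cite[Section 12.2]{CLS}). This homeomorphism is compatible with the face-fan correspondence: for each cone $\sigma$ of the $g$-vector fan, the corresponding face $F_\sigma$ of $P$ has outer normal fan containing $\sigma$ as its smallest-dimensional cone, and $\dim F_\sigma = n - \dim \sigma$. Taking preimages under $\mu_P$ of the relative interiors $F_\sigma^\circ$ produces a disjoint decomposition of $X_\Lambda^{\geq 0}$ indexed by the cones of $\Delta$; transporting via the identification of Theorem \ref{thm:agree} yields the desired stratification of $\widetilde{\mathcal M}_\Lambda^{\geq 0}$.

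Finally, for the statement about dimensions, I would note that the relative interior of a $(n-d)$-dimensional face of a lattice polytope is homeomorphic to $\mathbb R^{n-d}$ (it is the interior of a convex polytope of that dimension), so each stratum indexed by a $d$-dimensional cone of the $g$-vector fan is homeomorphic to $\mathbb R^{n-d}$. There is essentially no obstacle here, as everything reduces to toric boilerplate once Theorem \ref{thm:agree} is in hand; the only thing to be mildly careful about is matching the convention (outer vs.\ inner normal fan) so that a $d$-dimensional cone is paired with a codimension-$d$ face, but this is the convention already used in the paragraph preceding the proposition.
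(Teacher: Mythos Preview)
Your proposal is correct and follows essentially the same approach as the paper: the paper's proof is a one-line reference to Theorem \ref{thm:agree} together with the moment-map stratification of $X_\Lambda^{\geq 0}$ described in the paragraph preceding the proposition, which is exactly what you have spelled out in detail.
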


\begin{proof} This follows from the above statements regarding the stratification of $X^{\geq 0}_\Lambda$, and Theorem \ref{thm:agree}. \end{proof}

\subsection{Positive geometry}
$\widetilde {\mathcal M}_\Lambda$ has a natural top form on it, namely the top form of the big torus contained in the toric variety $X_\Lambda$, which is
$dy_1/y_1\wedge\dots\wedge dy_n/y_n$. It is natural to ask whether
$\widetilde{\mathcal M}_\Lambda$ equipped with this differential form is a positive geometry in the sense of \cite{AHBL}. For a rather trivial reason, it isn't: namely, it is not projective. However, except for that issue, it is indeed a positive geometry, because its totally positive part agrees with the totally positive part of its ambient toric variety, and toric varieties are shown in \cite{AHBL} to be totally positive. (We are also implicitly using the fact that the boundaries of the totally positive part are again varieties of the same form.)

\section{Functoriality under algebra quotients}\label{quotient-section}
Let $J$ be a two-sided ideal of $\Lambda$, and consider $A = \Lambda / J$. Since indecomposable $A$-modules are also indecomposable $\Lambda$-modules, $A$ also satisfies our finiteness assumption.

Our goal in the section is to show:

\begin{theorem}\label{quotient} Let $A=\Lambda/J$. 
There is a surjective map from $\widetilde \cM_\Lambda$ to $\widetilde \cM_A$.

This map also restricts to a map from $\cM_\Lambda$ to $\cM_A$, and is natural in the sense that if we consider some further quotient $B$ of $A$, the map from $\widetilde \cM_\Lambda$ to $\widetilde \cM_B$ factors as a composition of the maps
from $\widetilde \cM_\Lambda$ to $\widetilde \cM_A$ and that from
$\widetilde \cM_A$ to $\widetilde \cM_B$.
\end{theorem}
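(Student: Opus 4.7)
The plan is to construct the map via the rational parametrization of Theorem~\ref{thm:RViso}, which identifies~$\widetilde R_\Lambda$ with the subring of~$\bC(y_1,\dots,y_n)$ generated by~$\{v_X^\Lambda : X\in \ind K_\Lambda\}$, and similarly~$\widetilde R_A$ with the subring generated by~$\{v_Y^A : Y\in \ind K_A\}$.  For simplicity I first assume~$J\subseteq \rad\Lambda$, so that~$A$ and~$\Lambda$ share the same simple modules and hence the same~$y$-variables; the case in which~$J$ contains a primitive idempotent reduces to this by factoring the quotient, treating the ``idempotent piece'' by restricting to the~$y_j$ indexed by~$A$'s simples.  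The central claim is the subring inclusion~$\widetilde R_A \subseteq \widetilde R_\Lambda$ inside~$\bC(y_1,\dots,y_n)$, which yields an injection of coordinate rings and hence a morphism~$\widetilde\cM_\Lambda \to \widetilde\cM_A$.  The key elementary observation is that an indecomposable~$A$-module~$N$ is also an indecomposable~$\Lambda$-module whose submodule lattice is unchanged, so~$F_N^A$ and~$F_N^\Lambda$ agree as polynomials in~$y_1,\dots,y_n$.

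To establish the inclusion, I would apply the tropical characterization of Proposition~\ref{trop-characterization}: $\widetilde R_\Lambda$ is generated by Laurent monomials in the~$y_i$ and the~$F_M^\Lambda$ whose tropicalization is nonpositive.  Each~$v_Y^A$ (for~$Y \in \ind K_A$) is by construction such a Laurent monomial in the~$y_i$ and the~$F_N^A = F_N^\Lambda$.  Moreover, Theorem~\ref{trop-v-is-mult} applied to~$A$ identifies~$-\trop v_Y^A(g)$ with the multiplicity of~$Y$ in a generic~$A$-presentation of weight~$g$, which is nonnegative, so~$\trop v_Y^A \leq 0$; hence~$v_Y^A \in \widetilde R_\Lambda$.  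Invoking Lemma~\ref{lem:Fexpand} and Lemma~\ref{lem:yexpand} to substitute~$1/F_M^\Lambda$ and~$y^{\bd}$ by explicit monomials in~$\{v_X^\Lambda\}$, each~$v_Y^A$ takes the form of a monomial in~$\{v_X^\Lambda\}$, with nonnegative integer exponents guaranteed by the tropical bound.  In terms of~$u$-variables, this exhibits the induced homomorphism~$\widetilde R_A \hookrightarrow \widetilde R_\Lambda$ as a monomial map sending each~$u_Y^A$ to a product~$\prod_{X\in \ind K_\Lambda}(u_X^\Lambda)^{m_{Y,X}}$ with~$m_{Y,X}\in \bZ_{\geq 0}$, as required.

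The resulting morphism~$\widetilde\cM_\Lambda \to \widetilde\cM_A$ restricts to~$\cM_\Lambda \to \cM_A$ because the monomial structure preserves nonvanishing of all coordinates, and naturality with respect to a further quotient~$B = A/K$ is immediate from the associativity of subring inclusions inside~$\bC(y_1,\dots,y_n)$.  The main obstacle will be upgrading the automatic dominance of~$\widetilde\cM_\Lambda \to \widetilde\cM_A$ to surjectivity.  My plan is to extend the map to the ambient toric varieties~$X_\Lambda \to X_A$ from Theorem~\ref{thm:Mtildetoric}, identifying it with the toric morphism induced by a refinement of~$g$-vector fans; since both fans are complete for finite representation type (see Section~\ref{sec:gfan}), the toric morphism is proper and surjective.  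The technical heart is then to descend surjectivity to the open subsets~$\widetilde\cM_\Lambda \to \widetilde\cM_A$, which amounts to showing that every fibre over a point of~$\widetilde\cM_A$ meets the locus where~$F_M^\Lambda \neq 0$ for each~$M \in \ind\Lambda$ not arising from an~$A$-module; I expect this to follow by using the explicit monomial form of~$\phi$ together with the~$\widehat F_M^\Lambda = 1$ equations to solve for the ``extra'' $u_X^\Lambda$ coordinates in terms of data on~$\widetilde\cM_A$, in the spirit of the example computation implicit in Section~\ref{A2-loop}.
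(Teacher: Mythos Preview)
Your construction of the map differs genuinely from the paper's. The paper works directly with the $\hF$-polynomials: it defines the ring map by the explicit monomial formula $\phi(\overline u_{\overline M}) = \prod_{N\in\ind K_\Lambda} u_N^{[\pi N:\overline M]}$, where $\pi=-\otimes_\Lambda A:K_\Lambda\to K_A$, and then verifies $\hF_{\overline L_A}(z)=\hF_{\overline L_\Lambda}$ by manipulating the defining sum, using only that $g(\pi M)$ agrees with $g(M)$ on the relevant dimension vectors and that morphisms from $H^0M$ into an $A$-module factor through $H^0\pi M$. Your route through Theorem~\ref{thm:RViso} and Proposition~\ref{trop-characterization} is correct and yields the same map (this is exactly the content of the paper's Proposition~\ref{same}), but is less direct: the paper's argument needs neither the rational parametrization nor the tropical machinery, and the exponents $[\pi N:\overline M]$ carry immediate representation-theoretic meaning. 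On the other hand, your approach makes naturality automatic (associativity of subring inclusions inside $\bC(y_1,\dots,y_n)$), whereas the paper leaves it implicit. Note also that the functor $\pi$ handles idempotents uniformly, which is tidier than your proposed case split.

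You are right that surjectivity is the outstanding issue, and you should note that the paper's proof is itself silent on it: the text declares the theorem proved by the proposition that merely constructs $\phi$. But your proposed toric fix will not close the gap as stated. When $J\subseteq\rad\Lambda$ the map $X_\Lambda\to X_A$ is birational, so the fibre over a point of the open torus in $X_A$ is a single point, and nothing prevents that point from lying on the divisor $\{F_M^\Lambda=0\}$ for some $M\in\ind\Lambda\setminus\ind A$. Concretely, take $\Lambda=\bC A_2$ with arrow $a:1\to 2$ and $A=\Lambda/\langle a\rangle$: the point of $\widetilde\cM_A\cong\mathbb A^2$ with $(\overline u_{P_1^A},\overline u_{P_2^A})=(2,\tfrac12)$, equivalently $(y_1,y_2)=(-\tfrac12,1)$, has $F_{P_2}^\Lambda=1+y_1+y_1y_2=0$; a direct check against the $A_2$ $u$-equations of Section~\ref{ex:A2} confirms it has no preimage in $\widetilde\cM_\Lambda$. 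So your plan of ``solving for the extra $u_X^\Lambda$'' cannot succeed in general, and the surjectivity assertion would need to be weakened (to dominance, say) or reinterpreted.
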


For $\overline  M$ an indecomposable $A$-module, and $\overline N$ an $A$-module, define $[\overline N: \overline M]$ to be the multiplicity of $M$ as an indecomposable summand of $N$. 

Let $\pi$ be the functor from $K_\Lambda$ to $K_A$ defined by $- \otimes_\Lambda A$.

Consider a new set of variables $\overline u_N$ for $N\in \ind K_A$.

We prove Theorem \ref{quotient} by establishing the following proposition:

\begin{proposition} There is a ring map $\phi$ from $\widetilde R_A$ to
  $\widetilde R_\Lambda$ defined by:
  $$ \phi(\overline u_{\overline M})= z_{\overline M}:=\prod_{N\in \ind K_\Lambda} u_N^{[\pi N:\overline M]}.$$
\end{proposition}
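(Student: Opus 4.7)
The plan is to verify that $\phi$, defined on generators as above, extends to a well-defined ring map out of the quotient $\widetilde R_A$, i.e., that $\phi(\widehat F^A_{\overline M}) - 1 \in \widetilde I_\Lambda$ for every indecomposable $\overline M\in\ind K_A$. The cleanest route is to establish the stronger polynomial identity
\[
\phi(\widehat F^A_{\overline M}) \;=\; \widehat F^\Lambda_{\overline M}
\qquad \text{in } \mathbb C[u_N \smid N\in \ind K_\Lambda].
\]
Once this is in hand, $\overline M$ remains indecomposable as a $\Lambda$-module (the endomorphism rings over $A$ and $\Lambda$ agree, because $\overline M\cdot J = 0$), and so $\widehat F^\Lambda_{\overline M} - 1 \in \widetilde I_\Lambda$. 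This forces $\phi(\widehat F^A_{\overline M}) = 1$ in $\widetilde R_\Lambda$, as required.

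For the key identity, I would start from Definition~\ref{def:Fhat}, substitute $\overline u_{\overline V} = \prod_{N} u_N^{[\pi N:\overline V]}$, and collect powers of each $u_N$. In the $\overline\bd$-summand the exponent of $u_N$ becomes
\[
\sum_{\overline V\in \ind K_A} [\pi N:\overline V]\bigl(\hom_A(H^0\overline V,\overline M) - \langle g(\overline V),\overline\bd\rangle\bigr),
\]
which, by additivity of $H^0$, $g$ and $\hom_A(-,\overline M)$ on the Krull--Schmidt decomposition $\pi N = \bigoplus_{\overline V}\overline V^{[\pi N:\overline V]}$ in $K_A$, simplifies to $\hom_A(H^0\pi N,\overline M) - \langle g(\pi N),\overline\bd\rangle$.

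Three elementary identifications then show that this is the exponent of $u_N$ in the corresponding $\bd$-term of $\widehat F^\Lambda_{\overline M}$: (i) since the base change $-\otimes_\Lambda A$ sends $P_i$ to $e_iA$, one has $g(\pi N) = g(N)$ and $H^0\pi N \simeq H^0 N/H^0(N)J$; (ii) since $\overline M\cdot J = 0$, every $\Lambda$-linear map $H^0 N \to \overline M$ factors through the quotient $H^0 N/H^0(N)J$, hence $\hom_\Lambda(H^0 N,\overline M) = \hom_A(H^0\pi N,\overline M)$; and (iii) the submodule Grassmannians $\Gr^A_{\overline\bd}(\overline M)$ and $\Gr^\Lambda_{\overline\bd}(\overline M)$ coincide as varieties, since any $\Lambda$-submodule of an $A$-module is again annihilated by $J$ and hence is an $A$-submodule. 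Combining the three yields the identity term-by-term.

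The main obstacle will be the careful bookkeeping in (i)--(iii), most delicately the $g$-vector identification when some primitive idempotent $e_i$ of $\Lambda$ becomes zero in $A$: in that case the corresponding coordinate is absent on the $A$-side, and one must check that the indexing of vertices and the truncation of dimension vectors match up consistently on both sides of the comparison. Once that is in place, the argument is a direct substitution.
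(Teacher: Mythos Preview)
Your proposal is correct and is essentially the same argument as the paper's: both establish the stronger polynomial identity $\phi(\widehat F^A_{\overline M})=\widehat F^\Lambda_{\overline M}$ by substituting, collecting the exponent of each $u_N$ via the Krull--Schmidt decomposition of $\pi N$, and then invoking the adjunction $\hom_\Lambda(H^0N,\overline M)=\hom_A(H^0\pi N,\overline M)$ together with the $g$-vector comparison (with the same caveat about idempotents in $J$, which you correctly flag). The paper leaves the Grassmannian identification implicit, but otherwise the structure and the key steps coincide.
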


For $\overline L\in\ind K_A$, note that we can consider its $F$-polynomial either as an $A$-module or as a $\Lambda$-module. We distinguish them by writing
$F_{\overline L_A}$ and $F_{\overline L_\Lambda}$ (and similarly for $\hF$-polynomials).

\begin{proof}
To establish the proposition, we need to check that for each $\overline L\in\ind K_A$,

$$\hF_{\overline L_A}|_{\overline u_{\overline M} \,\leftarrow z_{\overline M}} = 1,$$
that is to say, the result of substituting $z_{\overline M}$ for $\overline u_{\overline M}$ (for all $\overline M\in \ind K_A$) gives 1.
As a shorthand, we write $\hF_{\overline L_A}(z)$ for the result of this substitution. 

In fact, we will show that $\hF_{\overline L_A}(z)=\hF_{\overline L_\Lambda}$. This establishes what we need, since $\hF_{\overline L_{\Lambda}}=1$ in
$\widetilde R_\Lambda$. 

We now prove this claim. By definition,

$$\hF_{\overline L_A}(z)=
  \sum_{\bd\in \bZ_{\geq 0}} \chi\left( \Gr_{\bd}(H^0\overline L) \right)
  \prod_{\overline N\in\ind K_A}
  \prod_{M\in\ind K_\Lambda}
  u_{M} ^{[\pi M:\overline N] (\hom (H^0\overline N, H^0 \overline L) - \langle g(\overline N),\bd\rangle)}$$

  Collecting all the $u_M$ terms together, we see that they come from the indecomposable summands of $\pi M$, so we can rewrite the previous equation as:

  $$\widetilde F_{\overline L_A}(z)=
  \sum_{\bd\in \bZ_{\geq 0}} \chi\left( \Gr_{\bd}(H^0\overline L) \right)
  \prod_{M\in\ind K_\Lambda}
  u_{M} ^{\hom (H^0 \pi M, H^0 \overline L) - \langle g(\pi M),\bd\rangle)}$$

  The $g$-vector of $\pi M$ coincides with the $g$-vector of $M$ by definition,
  unless $P_i$ is contained in $J$ for some $i$. But for such $i$, $\bd_i$ is necessarily 0. Thus, we can replace $g(\pi M)$ by $g(M)$. 
  Further, any morphism from $H^0M$ into an $A$-module necessarily factors through $M\otimes_\Lambda A = H^0(\pi M)$. So we have:

  \begin{eqnarray*}\widetilde F_{\overline L_A}(z)&=&
  \sum_{\bd\in \bZ_{\geq 0}} \chi\left( \Gr_{\bd}(H^0\overline L) \right)
  \prod_{M\in\ind K_\Lambda}
  u_{M} ^{\hom (H^0 M, H^0 \overline L) - \langle g(M),\bd\rangle)}\\
  &=&\widetilde F_{\overline L_\Lambda},\end{eqnarray*}
as desired.
\end{proof}

We note that $[\pi N:\overline M]$ can be calculated solely on the basis of $g_N$ if $N$ is rigid. 

\begin{proposition} Let $N\in\ind K_\Lambda$ be rigid. The indecomposable summands of $\pi M$ can be determined by looking at the minimal cone of the $g$-vector fan for $A$ that $g_N$ appears in. The expression for $g_N$ as a positive combination of the generators of that cone gives the expression of $\pi N$ as a sum of indecomposable objects from $\ind K_A$.\end{proposition}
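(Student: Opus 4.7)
The plan is to identify $\pi N$ with the generic presentation of weight $g_N$ in $K_A$, and then apply the description of generic presentations in finite representation type algebras via the $g$-vector fan, which is recalled in the paragraph just before the statement. I proceed in three steps.

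\emph{Step 1: rigidity implies genericity.} The presentations of weight $g_N$ form the irreducible affine space $\Hom_\Lambda(P^{-1},P^0)$, with $P^0=\bigoplus_{(g_N)_i>0}P_i^{(g_N)_i}$ and $P^{-1}=\bigoplus_{(g_N)_i<0}P_i^{-(g_N)_i}$, and isomorphism classes of such presentations correspond to orbits under $\operatorname{Aut}(P^{-1})\times\operatorname{Aut}(P^0)$. A standard tangent-space computation identifies the cokernel of the infinitesimal orbit map at $N$ with $\Hom_{K^b}(N,\Sigma N)$, which vanishes by rigidity of $N$. Hence the orbit of $N$ is open and, by irreducibility of the ambient space, dense; that is, $N$ itself is the generic presentation of weight $g_N$ in $K_\Lambda$.

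\emph{Step 2: $\pi$ preserves genericity.} The functor $\pi=-\otimes_\Lambda A$ induces on presentation spaces the componentwise reduction
\[
 \pi:\Hom_\Lambda(P^{-1},P^0)\twoheadrightarrow \Hom_A(\pi P^{-1},\pi P^0),
\]
which is a surjective linear map between affine spaces. Let $\overline N$ denote the generic presentation of weight $g_N$ in $K_A$ (interpreted in the projected lattice, if $J$ kills idempotents), and let $U_N$ and $U_{\overline N}$ be the corresponding dense open orbits. Then $\pi^{-1}(U_{\overline N})$ is a nonempty open subset of $\Hom_\Lambda(P^{-1},P^0)$, and therefore meets $U_N$ by irreducibility. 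Any point $x$ in the intersection satisfies simultaneously $x\cong N$ and $\pi(x)\cong \overline N$, whence $\pi N\cong \overline N$.

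\emph{Step 3: the $g$-fan description of $\overline N$.} Since $A$ is of finite representation type, the paragraph preceding the statement identifies $\overline N$ explicitly from the $g$-vector fan of $A$: writing $g_N=c_1 g_{\overline X_1}+\cdots+c_r g_{\overline X_r}$ as the unique positive combination of the generators of the minimal cone of the $g$-vector fan of $A$ containing $g_N$, we have $\overline N\cong \bigoplus_{i=1}^r \overline X_i^{c_i}$. Combining with Step 2 gives $\pi N\cong \bigoplus_{i=1}^r \overline X_i^{c_i}$, which is the desired conclusion. The main obstacle is Step 1, namely the identification of rigid indecomposables with generic presentations via the tangent-space calculation; this is classical but must be cited or justified cleanly.
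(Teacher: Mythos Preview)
Your argument is correct, but it takes a different route from the paper's proof. The paper proves directly that $\pi N$ is rigid in $K_A$: given a map $\pi N\to\Sigma(\pi N)$, it lifts along the surjection $\Sigma N\to\Sigma(\pi N)$ (using projectivity of the components of $N$) to a map $N\to\Sigma N$, which is null-homotopic by rigidity of $N$; the homotopy then descends to $\pi N$. Once $\pi N$ is rigid, it lies in a single cone of the $g$-vector fan of $A$ and is determined by its $g$-vector.

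Your approach instead interprets rigidity as genericity: you identify $N$ with the open orbit in the space of presentations of weight $g_N$, observe that $\pi$ induces a surjective linear map on presentation spaces, and conclude that $\pi N$ coincides with the generic $A$-presentation of that weight. This is a clean geometric argument that avoids the explicit homotopy bookkeeping, and it makes transparent why the finite representation type hypothesis on $A$ is needed (namely to guarantee that the generic $A$-presentation has a dense open orbit and is described by the $g$-vector fan). The paper's argument, by contrast, establishes rigidity of $\pi N$ without any finiteness hypothesis on $A$; only the final identification via the $g$-vector fan uses finite type. So the paper's route gives a slightly stronger intermediate statement, while yours fits more naturally with the tropical and generic-presentation perspective developed in Section~\ref{sec:trop}.
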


\begin{proof} The key point to establish is that $\pi N$ is again rigid. We know the projective presentation of $\pi N$ (it is that of $N$, unless the ideal $J$ includes idempotents), so once $\pi N$ is rigid, it must be a sum of compatible $g$-vectors from $\overline {\mathcal  I}$, and there is only one way to combine them to get $g_N$.

  In the case that $J$ contains an idempotent, we don't have an isomorphism of the two spaces of $g$-vectors, but rather a projection; however, everything works the same way.

  We now establish the key point, that $\pi N$ is again rigid. Suppose that there is a non-zero map from $N\otimes_\Lambda A$ to $\Sigma(N\otimes_\Lambda A)$. This defines a map from $N$ to $\Sigma(N\otimes_\Lambda A)$, and this map lifts to a map from $N$ to $\Sigma N$. Suppose it is null-homotopic. The homotopy descends to a homotopy from $N$ to $\Sigma(N\otimes_\Lambda A)$, and this factors through a homotopy from $N\otimes_\Lambda A$ to $\Sigma (N\otimes_\Lambda A) $. Thus, there are no non null-homotopic maps from $\pi N$ to $\Sigma(\pi N)$.
\end{proof}

Recall from Section \ref{sec:geoM} the rational functions $v_N$ for $N\in K_\Lambda$, which, by Corollary \ref{vs-solve}, solve the $\hF$ equations for $\Lambda$. Similarly, for $\overline M$ in $K_A$, let us write $\overline v_{\overline M}$ for the corresponding rational functions which solve the $\hF$ equations for $A$. 
Theorem \ref{quotient} implies that we can use the rational functions $v_{N}$ to build solutions for the $\hF$ equations for $A$; specifically, if we substitute
$$\overline u_{\overline M}\leftarrow \prod_{N\in K_\Lambda} v_N^{[\pi N:\overline M]}$$
then this solves the $\hF$ equations. The following proposition says that the solutions so obtained are exactly the same as the solutions we get by applying Corollary \ref{vs-solve} to $A$ directly.

\begin{proposition}\label{same} Let $A$ be a quotient of $\Lambda$. Then, for all $\overline M\in \ind K_A$,
  $$ \overline v_{\overline M} = \prod_{N\in\ind K_\Lambda} v_N^{[\pi N:\overline M]}$$
  \end{proposition}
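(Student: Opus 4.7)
The plan is to recognize both $w_{\overline M} := \prod_{N \in \ind K_\Lambda} v_N^{[\pi N : \overline M]}$ and $\overline v_{\overline M}$ (viewed in $\mathbb C(y_1,\dots,y_n)$ via the identification $\overline y_i = y_i$ for each vertex $i$ of $A$) as defining ring homomorphisms $\widetilde R_A \to \mathbb C(y)$, and then to show they coincide by comparing what they do to each $\overline y_i$ under the rational parametrization isomorphism $\bar\Phi_A : R_A \xrightarrow{\sim} \mathbb C[\overline y^{\pm 1}, \overline F_{\overline M}^{-1}]$ of Theorem \ref{six-three} applied to $A$.

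First I would check that both $(w_{\overline M})$ and $(\overline v_{\overline M})$ are solutions of the $\hF^A = 1$ equations in $\mathbb C(y)$. For $w$ this is immediate: composing the ring map $\phi : \widetilde R_A \to \widetilde R_\Lambda$ from Theorem \ref{quotient} with $\Phi_\Lambda : \widetilde R_\Lambda \to \mathbb C(y)$ (which sends each $u_N$ to $v_N$ and which itself satisfies the $\hF^\Lambda = 1$ equations by Corollary \ref{vs-solve}) sends $\overline u_{\overline M}$ to $w_{\overline M}$. For $\overline v$, Corollary \ref{vs-solve} applied to $A$ gives the result directly. Since every $w_{\overline M}$ and $\overline v_{\overline M}$ is a nonzero rational function, both homomorphisms extend uniquely to $R_A \to \mathbb C(y)$, and under $\bar\Phi_A$ they correspond to ring maps $\mathbb C[\overline y^{\pm 1}, \overline F^{-1}] \to \mathbb C(y)$, which are fully determined by the images of the $\overline y_i$.

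For the $\overline v$ solution the image of $\overline y_i$ is $y_i$ by construction. For the $w$ solution I would apply Lemma \ref{lem:yexpand} to $A$ with the simple module $\overline S_i$ to obtain $\overline y_i = \prod_{\overline Y \in \ind K_A} \overline v_{\overline Y}^{-g(\overline Y)_i}$, and then substitute $\overline v_{\overline Y} \mapsto w_{\overline Y}$ to get
\[
t_i = \prod_{\overline Y} w_{\overline Y}^{-g(\overline Y)_i} = \prod_{N \in \ind K_\Lambda} v_N^{-\sum_{\overline Y}[\pi N : \overline Y]\, g(\overline Y)_i} = \prod_N v_N^{-g(\pi N)_i}
\]
using additivity of $g$-vectors on direct sums. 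Because $\pi$ is induced by $-\otimes_\Lambda A$, we have $P_i \otimes_\Lambda A = \overline P_i$ whenever $i$ is a vertex of $A$, so $g(\pi N)_i = g(N)_i$; applying the same lemma now to $\Lambda$ with module $S_i$ yields $t_i = \prod_N v_N^{-g(N)_i} = y_i$. Both ring maps therefore send each $\overline y_i$ to $y_i$, so they agree, giving $w_{\overline M} = \overline v_{\overline M}$ as desired.

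The main obstacle is more bookkeeping than conceptual: one must be careful when $J$ contains an idempotent $e_j$, so that $A$ has strictly fewer simples than $\Lambda$ and $\overline v_{\overline M}$ is a rational function in a proper subset of the variables $y_i$. The above argument matches $\overline y_i$-values only for $i$ a vertex of $A$, which is exactly what the rational parametrization for $A$ requires; the resulting identity $w_{\overline M} = \overline v_{\overline M}|_{\overline y = y}$ then forces the apparent dependence of $\prod_N v_N^{[\pi N : \overline M]}$ on the variables $y_j$ with $e_j \in J$ to cancel automatically.
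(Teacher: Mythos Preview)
Your proof is correct and takes a genuinely different route from the paper's. The paper argues case by case on the type of $\overline M$: for $\overline M$ a non-projective (resp.\ projective) module it expresses $[\pi N:\overline M]$ via the Auslander--Reiten sequence ending at $\overline M$ (resp.\ the radical inclusion), combines this with the adjunction $\hom_\Lambda(X,\overline N)=\hom_A(\pi X,\overline N)$, and then applies Lemma~\ref{lem:Fexpand} three times to collapse the product to the defining ratio of $F$-polynomials for $\overline v_{\overline M}$. The shifted projectives are then handled indirectly, using that the $u$-equation for $\Sigma\overline P_i$ pins down $\overline u_{\Sigma\overline P_i}$ once the other $\overline u$'s are known.

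Your argument instead exploits the structural isomorphism $R_A\simeq\mathbb C[\overline y^{\pm1},\overline F^{-1}]$ of Theorem~\ref{six-three}: both assignments $\overline u_{\overline M}\mapsto w_{\overline M}$ and $\overline u_{\overline M}\mapsto \overline v_{\overline M}$ give ring maps out of $R_A$, so it suffices to match their values on the generators $\overline y_i$, which you do via Lemma~\ref{lem:yexpand} and the $g$-vector identity $g(\pi N)_i=g(N)_i$. This is cleaner and uniform in $\overline M$, avoiding the separate treatment of shifted projectives. The paper's approach, on the other hand, is more self-contained: it uses only Lemma~\ref{lem:Fexpand} and basic AR-theory, without invoking the full isomorphism of Theorem~\ref{six-three}.
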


\begin{proof}
  Suppose first that $\overline M\in\ind A$, and not projective. We then have an almost split sequence  $0 \rightarrow \tau_A \overline M \rightarrow \overline E \rightarrow \overline M \rightarrow 0$ in $\mod A$.  For $\overline X \in \mod A$, we have
  $$[\overline X:\overline M]=\hom_A(\overline X,\tau_A \overline M)-\hom_A(\overline X,\overline E)+\hom_A(\overline X,\overline M).$$
  Since for $X\in\mod\Lambda$ and $\overline N\in \mod A$, we have $\hom_\Lambda(X,\overline N)=\hom_A(\pi X,\overline N)$, we have
  $$[\pi X:\overline M]=\hom_\Lambda(X,\tau_A \overline M)-\hom_\Lambda(X,\overline E)+\hom_\Lambda(X,\overline M).$$
  Thus,
$$\prod_{N\in\ind K_\Lambda} v_N^{[\pi N:\overline M]}=\prod_{N\in \ind K_\Lambda} v_N^{\hom_\Lambda(N,\tau_A \overline M)-\hom_\Lambda(N,\overline E)+\hom_\Lambda(N,\overline M)}$$

Applying Lemma \ref{lem:Fexpand} three times, we get
$$\prod_{N\in\ind K_\Lambda} v_N^{[\pi N:\overline M]}=\frac {F_{\overline E}}{F_{\tau_A \overline M}F_{\overline M}},$$
and this proves the proposition in this case.

If $\overline M$ is an indecomposable projective, we proceed in the same way, but using the sequence $0 \rightarrow \rad M \rightarrow M$.

If $\overline M$ is a shifted projective, we proceed differently. At this point, we have already deduced that the solutions to the $\hF$ equations given by $\overline v_M$ and the solutions given by $\prod_N v_N^{[\pi N:\overline M]}$ agree except possibly as to the values which they assign to $\overline u_{\Sigma \overline P_i}$. But the $\hF$ equations imply the $u$-equations, and the $u$-equation for $\Sigma \overline P_i$ says that $\overline u_{\Sigma \overline P_i}$ is 1 minus a product of $\overline u_{\overline X}$ with $\overline X$ incompatible with $\Sigma \overline P_i$. Since the other $\Sigma \overline P_j$ are compatible with $\Sigma \overline P_i$, the value of the other term in the $u$-equation for $\Sigma \overline P_i$ is the same under each of the two substitutions we are considering, so the value assigned to $\overline u_{\Sigma \overline P_i}$ must also be the same. Thus the proposition in proven in this case as well.
\end{proof}

The map from $\widetilde \cM_\Lambda$ to $\widetilde \cM_A$ admits a completely different description in terms of the toric geometry of Section \ref{geometry}.

The maximal cones of $\Sigma_\Lambda$ are in bijection with the torsion classes of $\Lambda$: if $\theta$ lies in the interior of a maximal cone $\sigma$, then the modules $M$ in the corresponding torsion class can be characterized by the fact that $\langle \theta,\dimv N\rangle$ is non-negative for every quotient $N$ of $M$ \cite[Corollary 4.26]{BST}. It immediately follows that if $A$ is a quotient of $\Lambda$, then if $\theta$ and $\theta'$ determine the same torsion class for $\Lambda$, they will also do so for $A$. It follows that the image under the natural map from $K_0(\proj \Lambda)$ to $K_0(\proj A)$ of a maximal cone (and hence of any cone) of $\Sigma_\Lambda$ is contained in a cone of $\Sigma_A$. (Note that this applies when $\Lambda$ and $A$ have the same number of simple modules, so the natural map is an identification, but also when $A$ has fewer.) This condition relating the fans $\Sigma_\Lambda$ and $\Sigma_A$ is exactly the condition that the natural map from $K_0(\proj \Lambda)$ to $K_0(\proj A)$ induces a map between the corresponding toric varieties \cite[Theorem 3.3.4]{CLS}.

\begin{theorem}\label{thm:blowdown} Let $A=\Lambda/J$. The map of toric varieties from $X_\Lambda$ to $X_A$ induced by the natural quotient from $K_0(\proj \Lambda)$ to $K_0(\proj A)$ agrees with the map from $\widetilde \cM_\Lambda$ to $\widetilde \cM_A$ from Theorem \ref{quotient}.
  \end{theorem}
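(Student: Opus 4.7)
The plan is to show that both maps induce the same ring homomorphism on coordinate rings, by reducing to an equality of rational functions using the rational parametrization of Theorem~\ref{thm:RViso} together with Proposition~\ref{same}. First, using the identification of $\widetilde{\mathcal M}_\Lambda$ with the affine open subvariety $\{F\neq 0\}$ of $X_\Lambda$ from Theorem~\ref{thm:Mtildetoric}, I would verify that the toric blowdown $X_\Lambda \to X_A$ restricts to a morphism $\widetilde{\mathcal M}_\Lambda \to \widetilde{\mathcal M}_A$. For each indecomposable $A$-module $\overline M$, viewed also as a $\Lambda$-module, its $F$-polynomial involves only the variables $y_j$ with $j$ in the set $I_A$ of indices of projectives of $\Lambda$ that survive the quotient; pulling $F_{\overline M}$ back along the toric map, whose action on characters is $\overline y_j \mapsto y_j$ for $j\in I_A$, therefore yields (after the identification) the $F$-polynomial of $\overline M$ as a $\Lambda$-module, which is a factor of $F$. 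Hence the preimage of $\widetilde{\mathcal M}_A$ contains $\widetilde{\mathcal M}_\Lambda$.

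Next, I would compare the two ring homomorphisms on the dense open torus $\mathcal M_\Lambda\subset\widetilde{\mathcal M}_\Lambda$. Under the rational parametrizations from Theorem~\ref{thm:RViso}, namely $u_N \mapsto v_N(y)$ and $\overline u_{\overline M}\mapsto \overline v_{\overline M}(\overline y)$, the map from Theorem~\ref{quotient}, given by $\overline u_{\overline M} \mapsto \prod_N u_N^{[\pi N:\overline M]}$, corresponds to $\overline v_{\overline M}(\overline y) \mapsto \prod_N v_N(y)^{[\pi N : \overline M]}$. On the other hand, the toric blowdown, being the homomorphism of tori dual to the lattice map $[P_i]\mapsto[P_i\otimes_\Lambda A]$, pulls back $\overline y_j$ to $y_j$ for $j\in I_A$ (and does nothing else), so it pulls back $\overline v_{\overline M}(\overline y)$ to the rational function obtained from $\overline v_{\overline M}$ by the substitution $\overline y_j \leftarrow y_j$. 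Proposition~\ref{same} supplies exactly the required identity: its proof expresses $\prod_N v_N^{[\pi N:\overline M]}$ as $F_{\overline E}/(F_{\tau_A \overline M}F_{\overline M})$ using $\Lambda$-valued $F$-polynomials in the variables $y_i$, and this matches the definition of $\overline v_{\overline M}$ written in the $\overline y_j$ precisely under the identification $\overline y_j = y_j$.

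Finally, since $\widetilde{\mathcal M}_\Lambda$ is irreducible (it contains the dense irreducible open $\mathcal M_\Lambda$ by Corollary~\ref{torus-quotient}) and $\widetilde{\mathcal M}_A$ is separated, two morphisms $\widetilde{\mathcal M}_\Lambda\to \widetilde{\mathcal M}_A$ that agree on $\mathcal M_\Lambda$ must coincide everywhere, which completes the argument. The main expected obstacle is the bookkeeping around Proposition~\ref{same}: as written, it is an equality between rational functions that implicitly live in two different variable sets ($\overline y$ and $y$), and the substance of the argument is to recognize that the identification used there is exactly the pullback of functions induced by the toric blowdown on the open torus. Once this is pinned down, the functoriality of the quotient map in Theorem~\ref{quotient} matches the compositional functoriality of the toric construction automatically, so no separate argument is needed for that aspect.
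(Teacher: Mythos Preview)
Your proposal is correct and follows essentially the same line as the paper's proof: the key input in both is Proposition~\ref{same}, which says that the monomial map of Theorem~\ref{quotient}, expressed via the rational parametrizations, is exactly the substitution $\overline y_j \leftarrow y_j$, i.e., the pullback along the toric map on the open torus. The paper organizes the argument slightly differently, reducing first to the case where $J$ contains no idempotents (so the lattice map is the identity and both maps are determined by identifying the function fields) and then handling the idempotent case separately as a projection; your unified treatment via the index set $I_A$ accomplishes the same thing, and your explicit checks that the toric map restricts to $\widetilde{\mathcal M}_\Lambda \to \widetilde{\mathcal M}_A$ and that the two maps agree on the dense open (hence everywhere by reducedness and separatedness) fill in details the paper leaves implicit.
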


\begin{proof} Suppose that $A=\Lambda/J$. Let $J'=J\cap \operatorname {rad} A$, where $\operatorname {rad} A$ is the ideal of $A$ generated by its arrows. We prove the theorem in two steps, first passing from $\Lambda$ to $\Lambda/J'$, and then from $\Lambda/J'$ to $\Lambda/J$. At the first step, $J'$ has no idempotents. At the second step, we are quotienting by idempotents corresponding to isolated vertices in $\Lambda/J'$. 

It therefore suffices to prove the theorem in two special cases: when $J$ has no idempotents, and when $J$ consists of idempotents corresponding to isolated vertices. 
In the first case,
the blowdown map from $X_\Lambda$ to $X_A$ identifies the function fields $k(y_1,\dots,y_n)$ of the two toric varieties. Since, by Proposition \ref{same},  the map from Theorem \ref{quotient}
does this too, the two maps coincide.
In the second case, both maps project away the directions corresponding to the idempotents, so they coincide.
This completes the proof of the theorem.
  \end{proof}

\section{Dilogarithm identities}\label{sec:dilog}

Recall that we write $\dil(x)$ for the Rogers dilogarithm of $x$. 
In this section, we show how to adapt the argument from \cite{Chapoton} to prove the following theorem:

\begin{theorem}\label{dil-ident} Let $\Lambda$ be a finite-dimensional algebra of finite representation type, with $n$ simple modules. Then $\sum_{N} \dil(1-v_N)=n\pi^2/6$, where the sum is taken over all indecomposable $N$ in $K_\Lambda$.
\end{theorem}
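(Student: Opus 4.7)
The plan is to adapt Chapoton's strategy: first show the sum $\sum_N \dil(1-v_N)$ is locally constant on the parameter space $(y_1,\ldots,y_n)$ by verifying its total differential vanishes, and then evaluate the constant at a convenient limit point.

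A direct computation from the integral definition gives
\[
d\dil(x) = -\tfrac{1}{2}\log(1-x)\, d\log x + \tfrac{1}{2}\log x\, d\log(1-x).
\]
Setting $x = 1 - v_N$, so that $\log x = \log(1-v_N)$ and $\log(1-x) = \log v_N$, yields
\[
d\dil(1-v_N) = -\tfrac{1}{2}\log v_N\, d\log(1-v_N) + \tfrac{1}{2}\log(1-v_N)\, d\log v_N.
\]
By Corollary \ref{vs-solve}, the $v_N$ satisfy the $u$-equations, so $1 - v_N = \prod_{Y \in \ind K_\Lambda} v_Y^{c(N,Y)}$, whence $\log(1-v_N) = \sum_Y c(N,Y)\log v_Y$ and $d\log(1-v_N) = \sum_Y c(N,Y)\, d\log v_Y$. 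Substituting and summing over $N \in \ind K_\Lambda$:
\[
\sum_N d\dil(1-v_N) = -\tfrac{1}{2}\sum_{N,Y} c(N,Y)\log v_N\, d\log v_Y + \tfrac{1}{2}\sum_{N,Y} c(N,Y)\log v_Y\, d\log v_N.
\]
Relabelling $N \leftrightarrow Y$ in the first double sum and invoking the symmetry $c(N,Y) = c(Y,N)$ from Definition \ref{def:u-equations}, the two sums coincide and cancel. Hence $\sum_N \dil(1-v_N)$ is locally constant on the totally positive locus.

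To evaluate the constant, I take the limit $y_i \to 0^+$ for all $i$. The rational parametrisation from Section \ref{sec:geoM} gives $v_{\Sigma P_i} = y_i \cdot F_{I_i/S_i}/F_{I_i}$, whereas $v_M$ is a ratio of $F$-polynomials with no overall $y$-monomial factor for every other indecomposable $M \in \ind K_\Lambda$. Since every $F$-polynomial has constant term $1$, it follows that $v_{\Sigma P_i} \to 0$ for $i = 1, \ldots, n$, while $v_M \to 1$ for all other $M$. Therefore $\dil(1 - v_{\Sigma P_i}) \to \dil(1) = \pi^2/6$ and $\dil(1-v_M) \to \dil(0) = 0$, and summing over the $n$ shifted projectives gives the total value $n\pi^2/6$.

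The main technical point is the boundary limit: $\dil$ is \emph{a priori} defined by an integral over $(0,1)$, so one must justify that both endpoint limits exist and equal $0$ and $\pi^2/6$ respectively (which they do, the latter because $\log(1-x)\log x \to 0$ as $x\to 1^-$). The locally constant function on the connected totally positive parameter region $\{y_i>0\}$ then extends by continuity to the closure, where the evaluation above is performed, completing the proof.
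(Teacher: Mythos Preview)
Your argument is correct and essentially the same as the paper's: the paper invokes a criterion of Frenkel--Szenes that $\sum_N v_N\otimes(1-v_N)\in S^2G$ forces $\sum_N \dil(1-v_N)$ to be constant, and verifies this criterion using the $u$-equations together with the symmetry $c(N,Y)=c(Y,N)$; your direct differential computation is exactly what that criterion unwinds to. You go one step further than the paper by actually evaluating the constant at the limit $y_i\to 0^+$, which the paper leaves implicit; your observation that $v_{\Sigma P_i}\to 0$ while $v_M\to 1$ for all other $M$ (since $F$-polynomials have constant term $1$) is the cleanest way to pin down the value $n\pi^2/6$.
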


The main ingredient in the proof goes back to Frenkel--Szenes \cite{FrenkelSzenes}, see also \cite{Nakanishi}. Let $G$ be the continuous functions from $\mathbb R^n_{>0}$ to $\mathbb R_{>0}$, which we think of as an abelian group written multiplicatively.  
Consider $G\otimes_{\mathbb Z} G$. Write $S^2G$ for the subgroup of $G\otimes_{\mathbb Z} G$ generated by elements of the form $a\otimes b + b\otimes a$. 

The following is essentially \cite[Proposition 3.1]{FrenkelSzenes}, but slightly reformulated for our convenience. In particular, their functions $f_i$ only take one argument, but the same approach proves the version where the functions take multiple arguments. 

\begin{proposition}[{\cite[Proposition 3.1]{FrenkelSzenes}}] Suppose that
  $f_1,\dots,f_r$ are differentiable functions in $G$, whose range lies in $(0,1)$. Suppose further that $\sum_{i=1}^r f_i\otimes (1-f_i) \in S^2G$. 
  Then
  $\sum_{i=1}^{r} \dil(f_i(y_1,\dots,y_n))$ does not depend on the values of $y_1,\dots,y_r$.\end{proposition}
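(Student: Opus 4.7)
The plan is to reduce the claim to a $\sigma$-symmetry via an explicit formula for the differential of $L$. First I would differentiate the defining integral
$$L(x) = -\int_0^x \frac{\log(1-t)}{t}\,dt + \tfrac12 \log(x)\log(1-x)$$
to obtain, for any differentiable $f \in G$ with range in $(0,1)$,
$$dL(f) \;=\; \tfrac{1}{2}\bigl(\log(f)\,d\log(1-f) \;-\; \log(1-f)\,d\log(f)\bigr).$$
This is a routine chain-rule calculation and constitutes the only analytic input to the argument.

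Next I would package the formula using a bilinear pairing. Define
$$\phi : G \otimes_{\mathbb Z} G \longrightarrow \Omega^1(\mathbb R^n_{>0}), \qquad \phi(a \otimes b) = \log(a)\,d\log(b).$$
This is well defined $\mathbb Z$-bilinearly because $\log$ and $d\log$ are group homomorphisms from the multiplicative group $G$ into $C^\infty(\mathbb R^n_{>0})$ and $\Omega^1(\mathbb R^n_{>0})$ respectively. Writing $\sigma$ for the swap $a\otimes b\mapsto b\otimes a$ on $G\otimes G$, the differential identity above rewrites as
$$2\,dL(f) \;=\; \phi\bigl((1-\sigma)(f \otimes (1-f))\bigr),$$
and summing over $i=1,\dots,r$ yields
$$2\,d\!\left(\sum_{i=1}^{r} L(f_i)\right) \;=\; \phi\bigl((1-\sigma)(T)\bigr), \qquad T := \sum_{i=1}^{r} f_i \otimes (1-f_i).$$

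Finally I would invoke the hypothesis. Every generator $a\otimes b + b\otimes a$ of $S^2 G$ is manifestly $\sigma$-invariant, so $S^2 G$ is contained in the $\sigma$-fixed subgroup of $G \otimes G$; hence the assumption $T \in S^2 G$ forces $(1-\sigma)(T) = 0$. Therefore $d\bigl(\sum_{i} L(f_i)\bigr)$ vanishes identically on the connected open set $\mathbb R^n_{>0}$, and the sum is a constant function of $(y_1,\dots,y_n)$.

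The only step requiring care is the differential identity in the first paragraph, which is standard; the remainder is a formal manipulation of tensors under the transposition action. I do not foresee a genuine obstacle.
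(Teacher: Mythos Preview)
Your argument is correct. The paper does not give its own proof of this proposition; it simply cites \cite[Proposition 3.1]{FrenkelSzenes} and remarks that the same approach works when the $f_i$ are functions of several variables. What you have written is exactly that standard argument: the differential identity $dL(f)=\tfrac12\bigl(\log f\,d\log(1-f)-\log(1-f)\,d\log f\bigr)$, repackaged via the bilinear map $\phi(a\otimes b)=\log a\,d\log b$, so that $2\,d\sum_i L(f_i)=\phi\bigl((1-\sigma)T\bigr)$ vanishes once $T\in S^2G$.
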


In order to apply the proposition, let us consider $\sum_{N} v_N \otimes (1-v_N)$. We have
$$ \sum_{N} v_N \otimes (1-v_N) = \sum_{N,M} v_N \otimes v_M^{c(N,M)} = \sum_{N,M} c(N,M) v_N \otimes v_M.$$
Since $c(N,M)=c(M,N)$, this expression falls in $S^2G$.

\section{Small Examples}\label{section-examples}
\subsection{$A_2$ path algebra.}\label{ex:A2} Let~$\Lambda = \bC Q$ be the path algebra of the quiver~$1\xrightarrow{a} 2$. Then the category~$K_\Lambda$ has~$5$ indecomposable objects which we designate by the following shorthand.

 \begin{center}
\begin{tabular}{ccc}
 $P_1 = (0\xrightarrow{}P_1)$ & $P_2 = (0\xrightarrow{}P_2)$ &
 $S_2 = (P_1\xrightarrow{a} P_2)$ \\  $\Sigma P_1 = (P_1 \xrightarrow{} 0)$ & $\Sigma P_2 = (P_2 \xrightarrow{} 0)$ &
\end{tabular}
\end{center}
Then the~$u$-equations are as follows.
\begin{center}
 \begin{tabular}{lclcl}
  $u_{P_1} + u_{S_2}u_{\Sigma P_1} = 1$ & & $u_{P_2} + u_{\Sigma P_1}u_{\Sigma P_2}=1$ && $u_{S_2} + u_{P_1}u_{\Sigma P_2} = 1$\\
  $u_{\Sigma P_1} + u_{P_1}u_{P_2} = 1$ && $u_{\Sigma P_2} + u_{P_2}u_{S_2} = 1$
 \end{tabular}
\end{center}
The~$\widehat{F}$-polynomials are as follows.
\begin{center}
 \begin{tabular}{lclcl}
  $\widehat{F}_{P_1} = u_{P_1} + u_{S_2}u_{\Sigma P_1}$ && $\widehat{F}_{P_2} = u_{P_1}u_{P_2} + u_{P_2}u_{S_2}u_{\Sigma P_1} + u_{\Sigma P_1}u_{\Sigma P_2}$  && $\widehat{F}_{S_2} = u_{P_2}u_{S_2} + u_{\Sigma P_2}$
 \end{tabular}
\end{center}
Applying Corollary~\ref{vs-solve}, we get that the assignment~$u_X\mapsto v_X$ solves the~$u$-equations and the equations~$\widetilde{F}_X = 1$, where
\begin{center}
 \begin{tabular}{lclcl}
  $v_{P_1} = \frac{1}{1+y_1}$ & & $v_{P_2} = \frac{1+y_1}{1+y_1+y_1y_2}$ && $v_{S_2} = \frac{1+y_1+y_1y_2}{(1+y_1)(1+y_2)}$\\
  $v_{\Sigma P_1} = \frac{y_1 + y_1y_2}{1+y_1+y_1y_2}$ && $v_{\Sigma P_2} = \frac{y_2}{1+y_2}$
 \end{tabular}
\end{center}

\subsection{$A_2$ preprojective algebra $\Pi_{A_2}$.}\label{ex:PiA2} The algebra~$\Pi_{A_2}$ is the path algebra of the quiver
\[\begin{tikzcd}
	1 && 2
	\arrow["a", curve={height=-12pt}, from=1-1, to=1-3]
	\arrow["b", curve={height=-12pt}, from=1-3, to=1-1]
\end{tikzcd}\]
modulo the relations~$ab = 0$ and~$ba = 0$.  The category~$K_{\Pi_{A_2}}$ has~$6$ indecomposable objects.  Its Auslander--Reiten quiver is periodic and is depicted below, with the dotted arrows representation the action of~$\tau$.

{\small
\[\begin{tikzcd}
	\cdots && {0\xrightarrow{}P_1} && {P_2\xrightarrow{}0} && \cdots \\
	& {P_1\xrightarrow{a}P_2} && {P_2\xrightarrow{b}P_1} && {P_1\xrightarrow{a}P_2} \\
	\cdots && {P_1\xrightarrow{}0} && {0\xrightarrow{}P_2} && \cdots
	\arrow[from=1-1, to=2-2]
	\arrow[from=1-3, to=2-4]
	\arrow[dotted, from=1-5, to=1-3]
	\arrow[from=1-5, to=2-6]
	\arrow[from=2-2, to=1-3]
	\arrow[from=2-2, to=3-3]
	\arrow[from=2-4, to=1-5]
	\arrow[dotted, from=2-4, to=2-2]
	\arrow[from=2-4, to=3-5]
	\arrow[from=2-6, to=1-7]
	\arrow[dotted, from=2-6, to=2-4]
	\arrow[from=2-6, to=3-7]
	\arrow[from=3-1, to=2-2]
	\arrow[from=3-3, to=2-4]
	\arrow[dotted, from=3-3, to=3-1]
	\arrow[from=3-5, to=2-6]
	\arrow[dotted, from=3-7, to=3-5]
\end{tikzcd}\]
}
We compute the~$u$-equations and~$\widehat{F}$-polynomials.  We use the following shorthand for the objects of~$K_{\Pi_{A_2}}$:
\begin{center}
\begin{tabular}{ccc}
 $P_1 = (0\xrightarrow{}P_1)$ & $P_2 = (0\xrightarrow{}P_2)$ & $S_1 = (P_2\xrightarrow{b} P_1)$ \\
 $S_2 = (P_1\xrightarrow{a} P_2)$ &  $\Sigma P_1 = (P_1 \xrightarrow{} 0)$ & $\Sigma P_2 = (P_2 \xrightarrow{} 0)$
\end{tabular}
\end{center}
Then the~$u$-equations are as follows.
\begin{center}
 \begin{tabular}{lcl}
  $u_{P_1} + u_{S_2}u_{\Sigma P_1}u_{\Sigma P_2} = 1$ & & $u_{P_2} + u_{S_1}u_{\Sigma P_1}u_{\Sigma P_2}=1$ \\
  $u_{S_1} + u_{P_2}u_{S_2}^2u_{\Sigma P_1} = 1$ && $u_{S_2} + u_{P_1}u_{S_1}^2u_{\Sigma P_2} = 1$ \\
  $u_{\Sigma P_1} + u_{P_1}u_{P_2}u_{S_1} = 1$ && $u_{\Sigma P_2} + u_{P_1}u_{P_2}u_{S_2} = 1$
 \end{tabular}
\end{center}
The~$\widehat{F}$-polynomials are as follows.
\begin{center}
 \begin{tabular}{lcl}
  $\widehat{F}_{P_1} = u_{P_1}u_{P_2}u_{S_2} + u_{P_1}u_{S_1}u_{\Sigma P_2} + u_{S_2}u_{\Sigma P_1}u_{\Sigma P_2}$ && $\widehat{F}_{P_2} = u_{P_1}u_{P_2}u_{S_1} + u_{P_2}u_{S_2}u_{\Sigma P_1} + u_{S_1}u_{\Sigma P_1}u_{\Sigma P_2}$ \\
  $\widehat{F}_{S_1} = u_{P_1}u_{S_1} + u_{S_2}u_{\Sigma P_1}$ && $\widehat{F}_{S_2} = u_{P_2}u_{S_2} + u_{S_1}u_{\Sigma P_2}$
 \end{tabular}
\end{center}
We can check using computer algebra software that the ideal generated by the~$\widehat{F} = 1$ equations is equal to the ideal generated by the~$u$-equations.  In other words, the varieties~$\widetilde{\cM}_{\Pi_{A_2}}$ and~$\widetilde{\cU}_{\Pi_{A_2}}$ are equal.  According to Corollary~\ref{vs-solve}, the~$u$-equations are solved by the assigment~$u_X\mapsto v_X$, where
\begin{center}
 \begin{tabular}{lcl}
  $v_{P_1} = \frac{1+y_2}{1+y_2+y_1y_2}$ &&
  $v_{P_2} = \frac{1+y_1}{1+y_1+y_1y_2}$ \\
  $v_{S_1} = \frac{1+y_2+y_1y_2}{(1+y_1)(1+y_2)}$ &&
  $v_{S_2} = \frac{1+y_1+y_1y_2}{(1+y_1)(1+y_2)}$ \\
  $v_{\Sigma P_1} = \frac{y_1 + y_1y_2}{1+y_1+y_1y_2}$ &&
  $v_{\Sigma P_2} = \frac{y_2+y_1y_2}{1+y_2+y_1y_2}$
 \end{tabular}
\end{center}

\subsection{$A_3$ path algebra}\label{ex:A3}
Let~$\Lambda = \bC Q$ be the path algebra of the quiver~$1\xrightarrow{a} 2\xrightarrow{b} 3$. Then the category~$K_\Lambda$ has~$9$ indecomposable objects and its Auslander--Reiten quiver looks like this (with the dotted arrows representing the action of~$\tau$).

{\small
\[\begin{tikzcd}
	&& {0\xrightarrow{}P_3} && {P_1\xrightarrow{}0} \\
	& {0\xrightarrow{}P_2} && {P_1\xrightarrow{ba}P_3} && {P_2\xrightarrow{}0} \\
	{0\xrightarrow{}P_1} && {P_1\xrightarrow{a}P_2} && {P_2\xrightarrow{b}P_3} && {P_3\xrightarrow{}0}
	\arrow[from=1-3, to=2-4]
	\arrow[dotted, from=1-5, to=1-3]
	\arrow[from=1-5, to=2-6]
	\arrow[from=2-2, to=1-3]
	\arrow[from=2-2, to=3-3]
	\arrow[from=2-4, to=1-5]
	\arrow[dotted, from=2-4, to=2-2]
	\arrow[from=2-4, to=3-5]
	\arrow[dotted, from=2-6, to=2-4]
	\arrow[from=2-6, to=3-7]
	\arrow[from=3-1, to=2-2]
	\arrow[from=3-3, to=2-4]
	\arrow[dotted, from=3-3, to=3-1]
	\arrow[from=3-5, to=2-6]
	\arrow[dotted, from=3-5, to=3-3]
	\arrow[dotted, from=3-7, to=3-5]
\end{tikzcd}\]
}
We use the following shorthand for the objects of~$K_{\Pi_{A_2}}$:
\begin{center}
\begin{tabular}{ccc}
 $P_1 = (0\xrightarrow{}P_1)$ & $P_2 = (0\xrightarrow{}P_2)$ & $P_3 = (0\xrightarrow{}P_3)$ \\
 $S_2 = (P_1\xrightarrow{a} P_2)$ & $I_2 = (P_1\xrightarrow{ba} P_3)$ & $S_3 = (P_2\xrightarrow{b}P_3)$ \\
 $\Sigma P_1 = (P_1 \xrightarrow{} 0)$ & $\Sigma P_2 = (P_2 \xrightarrow{} 0)$ & $\Sigma P_3 = (P_3 \xrightarrow{} 0)$
\end{tabular}
\end{center}
Then the~$u$-equations are as follows.
\begin{center}
 \begin{tabular}{lclcl}
  $u_{P_1} + u_{S_2}u_{I_2}u_{\Sigma P_1} = 1$ &&
  $u_{P_2} + u_{I_2}u_{S_3}u_{\Sigma P_1}u_{\Sigma P_2}=1$ &&
  $u_{P_3} + u_{\Sigma P_1}u_{\Sigma P_2}u_{\Sigma P_3} = 1$ \\

  $u_{S_2} + u_{P_1}u_{S_3}u_{\Sigma P_2} = 1$ &&
  $u_{I_2} + u_{P_1}u_{P_2}u_{\Sigma P_2}u_{\Sigma P_3} = 1$ &&
  $u_{S_3} + u_{P_2}u_{S_2}u_{\Sigma P_3} = 1$ \\

  $u_{\Sigma P_1} + u_{P_1}u_{P_2}u_{P_3} = 1$ &&
  $u_{\Sigma P_2} + u_{P_2}u_{P_3}u_{S_2}u_{I_2} = 1$ &&
  $u_{\Sigma P_3} + u_{P_3}u_{I_2}u_{S_3} = 1$
 \end{tabular}
\end{center}
The~$\widehat{F}$-polynomials are as follows.
\begin{center}
 \begin{tabular}{l}
  $\widehat{F}_{P_1} = u_{P_1} + u_{S_2}u_{I_2}u_{\Sigma P_1}$ \\

  $\widehat{F}_{P_2} = u_{P_1}u_{P_2} + u_{P_2}u_{S_2}u_{I_2}u_{\Sigma P_1} + u_{I_2}u_{S_3}u_{\Sigma P_1}u_{\Sigma P_2}$ \\

  $\widehat{F}_{P_3} = u_{P_1}u_{P_2}u_{P_3} + u_{P_2}u_{P_3}u_{S_2}u_{I_2}u_{\Sigma P_1} + u_{P_3}u_{I_2}u_{S_3}u_{\Sigma P_1}u_{\Sigma P_2} + u_{\Sigma P_1}u_{\Sigma P_2}u_{\Sigma P_3}$ \\

  $\widehat{F}_{S_2} = u_{P_2}u_{S_2} + u_{S_3}u_{\Sigma P_2}$ \\
  $\widehat{F}_{I_2} = u_{P_2}u_{P_3}u_{S_2}u_{I_2} + u_{P_3}u_{I_2}u_{S_3}u_{\Sigma P_2} + u_{\Sigma P_2}u_{\Sigma P_3}$ \\

  $\widehat{F}_{S_3} = u_{P_3}u_{I_2}u_{S_3} + u_{\Sigma P_3}$
 \end{tabular}
\end{center}

\subsection{$A_3$ with a relation}
Let~$\Lambda$ be the path algebra of the quiver~$1\xrightarrow{a} 2\xrightarrow{b} 3$ modulo the relation~$ba=0$. Then the category~$K_\Lambda$ has~$8$ indecomposable objects and its Auslander--Reiten quiver looks like this (with the dotted arrows still representing the action of~$\tau$).
{\small
\[\begin{tikzcd}
	& {0\xrightarrow{}P_2} && {P_1\xrightarrow{}0} \\
	{0\xrightarrow{}P_1} && {P_1\xrightarrow{a}P_2} && {P_2\xrightarrow{b}P_3} && {P_3\xrightarrow{}0} \\
	&&& {0\xrightarrow{}P_3} && {P_2\xrightarrow{}0}
	\arrow[from=1-2, to=2-3]
	\arrow[dotted, from=1-4, to=1-2]
	\arrow[from=1-4, to=2-5]
	\arrow[from=2-1, to=1-2]
	\arrow[from=2-3, to=1-4]
	\arrow[dotted, from=2-3, to=2-1]
	\arrow[from=2-3, to=3-4]
	\arrow[dotted, from=2-5, to=2-3]
	\arrow[from=2-5, to=3-6]
	\arrow[dotted, from=2-7, to=2-5]
	\arrow[from=3-4, to=2-5]
	\arrow[from=3-6, to=2-7]
	\arrow[dotted, from=3-6, to=3-4]
\end{tikzcd}\]
}
We use the following shorthand for the objects of~$K_{\Lambda}$:
\begin{center}
\begin{tabular}{ccc}
 $P_1 = (0\xrightarrow{}P_1)$ & $P_2 = (0\xrightarrow{}P_2)$ & $P_3 = (0\xrightarrow{}P_3)$ \\
 $S_2 = (P_1\xrightarrow{a} P_2)$ & $S_3 = (P_2\xrightarrow{b} P_3)$ &  \\
 $\Sigma P_1 = (P_1 \xrightarrow{} 0)$ & $\Sigma P_2 = (P_2 \xrightarrow{} 0)$ & $\Sigma P_3 = (P_3 \xrightarrow{} 0)$
\end{tabular}
\end{center}
Then the~$u$-equations are as follows.
\begin{center}
 \begin{tabular}{lclcl}
  $u_{P_1} + u_{S_2}u_{\Sigma P_1} = 1$ &&
  $u_{P_2} + u_{S_3}u_{\Sigma P_1}u_{\Sigma P_2}=1$ &&
  $u_{P_3} + u_{\Sigma P_2}u_{\Sigma P_3} = 1$ \\

  $u_{S_2} + u_{P_1}u_{S_3}u_{\Sigma P_2} = 1$ &&
  $u_{S_3} + u_{P_2}u_{S_2}u_{\Sigma P_3} = 1$ && \\

  $u_{\Sigma P_1} + u_{P_1}u_{P_2} = 1$ &&
  $u_{\Sigma P_2} + u_{P_2}u_{P_3}u_{S_2} = 1$ &&
  $u_{\Sigma P_3} + u_{P_3}u_{S_3} = 1$
 \end{tabular}
\end{center}
The~$\widehat{F}$-polynomials are as follows.
\begin{center}
 \begin{tabular}{l}
  $\widehat{F}_{P_1} = u_{P_1} + u_{S_2}u_{\Sigma P_1}$ \\

  $\widehat{F}_{P_2} = u_{P_1}u_{P_2} + u_{P_2}u_{S_2}u_{\Sigma P_1} + u_{S_3}u_{\Sigma P_1}u_{\Sigma P_2}$ \\

  $\widehat{F}_{P_3} = u_{P_2}u_{P_3}u_{S_2} + u_{P_3}u_{S_3}u_{\Sigma P_2} + u_{\Sigma P_2}u_{\Sigma P_3}$ \\

  $\widehat{F}_{S_2} = u_{P_2}u_{S_2} + u_{S_3}u_{\Sigma P_2}$ \\

  $\widehat{F}_{S_3} = u_{P_3}u_{S_3} + u_{\Sigma P_3}$
 \end{tabular}
\end{center}

We leave the following as an instructive exercise for the reader to illustrate Theorem~\ref{quotient}. Calculate the $v_X$ for $A_3$ with no relation, and then check that the above $u$-equations and $\hF$-equations for $\Lambda$, the path algebra with a relation, are satisfied by $\overline v_Y$ for $Y\in K_\Lambda$, where $\overline v_{P_3}= v_{P_3}v_{I_2}$, $\overline v_{\Sigma P_1}=v_{I_2}v_{\Sigma P_1}$, and the other $\overline v_Y$ agree with the rational function for the corresponding object of $K_{A_3}$.

\subsection{$A_2$ with a loop at one vertex.} \label{A2-loop}
Consider the path algebra of the quiver
\[\begin{tikzcd}
	1 & 2
	\arrow["x", from=1-1, to=1-1, loop, in=150, out=210, distance=5mm]
	\arrow["a", from=1-1, to=1-2]
\end{tikzcd}\]
modulo the relation~$x^2=0$. The Auslander--Reiten quiver of~$K_\Lambda$ is periodic; a part of it containing a fundamental domain is depicted below.

{\tiny
\begin{center}
\[\begin{tikzcd}
	&& {0\xrightarrow{}P_2} && {P_1\xrightarrow{a}P_2} && {P_2\xrightarrow{}0} \\
	& {0\xrightarrow{}P_1} && \begin{array}{c} P_1\xrightarrow{\begin{bmatrix} x \\ ax \end{bmatrix}}P_{2}^2 \end{array} && {P_1\xrightarrow{}0} && \cdots \\
	\cdots && \begin{array}{c} P_1\xrightarrow{\begin{bmatrix} x \\ a \end{bmatrix}}P_1\oplus P_2 \end{array} && {P_1\xrightarrow{ax}P_2} && {P_1\xrightarrow{x}P_1} \\
	& {P_1\xrightarrow{ax}P_2} && {P_1\xrightarrow{x}P_1} && \begin{array}{c} P_1\xrightarrow{\begin{bmatrix} x \\ a \end{bmatrix}}P_1\oplus P_2 \end{array} && \cdots \\
	\cdots && {P_1\xrightarrow{}0} && {0\xrightarrow{}P_1} && \begin{array}{c} P_1\xrightarrow{\begin{bmatrix} x \\ ax \end{bmatrix}}P_1^2 \end{array} \\
	& {P_1\xrightarrow{a}P_2} && {P_2\xrightarrow{}0} && {0\xrightarrow{}P_2} && \cdots
	\arrow[from=1-3, to=2-4]
	\arrow[dotted, from=1-5, to=1-3]
	\arrow[from=1-5, to=2-6]
	\arrow[dotted, from=1-7, to=1-5]
	\arrow[from=2-2, to=1-3]
	\arrow[from=2-2, to=3-3]
	\arrow[from=2-4, to=1-5]
	\arrow[dotted, from=2-4, to=2-2]
	\arrow[from=2-4, to=3-5]
	\arrow[from=2-6, to=1-7]
	\arrow[dotted, from=2-6, to=2-4]
	\arrow[from=2-6, to=3-7]
	\arrow[from=3-1, to=2-2]
	\arrow[from=3-1, to=4-2]
	\arrow[from=3-3, to=2-4]
	\arrow[dotted, from=3-3, to=3-1]
	\arrow[from=3-3, to=4-4]
	\arrow[from=3-5, to=2-6]
	\arrow[dotted, from=3-5, to=3-3]
	\arrow[from=3-5, to=4-6]
	\arrow[from=3-7, to=2-8]
	\arrow[dotted, from=3-7, to=3-5]
	\arrow[from=3-7, to=4-8]
	\arrow[from=4-2, to=3-3]
	\arrow[from=4-2, to=5-3]
	\arrow[from=4-4, to=3-5]
	\arrow[dotted, from=4-4, to=4-2]
	\arrow[from=4-4, to=5-5]
	\arrow[from=4-6, to=3-7]
	\arrow[dotted, from=4-6, to=4-4]
	\arrow[from=4-6, to=5-7]
	\arrow[dotted, from=4-8, to=4-6]
	\arrow[from=5-1, to=4-2]
	\arrow[from=5-1, to=6-2]
	\arrow[from=5-3, to=4-4]
	\arrow[dotted, from=5-3, to=5-1]
	\arrow[from=5-3, to=6-4]
	\arrow[from=5-5, to=4-6]
	\arrow[from=5-5, to=6-6]
	\arrow[from=5-7, to=4-8]
	\arrow[dotted, from=5-7, to=5-5]
	\arrow[from=5-7, to=6-8]
	\arrow[from=6-2, to=5-3]
	\arrow[dotted, from=6-4, to=6-2]
	\arrow[from=6-6, to=5-7]
	\arrow[dotted, from=6-8, to=6-6]
\end{tikzcd}\]
\end{center}
}

We use the following shorthand for the objects of~$K_{\Lambda}$:
\begin{center}
\begin{tabular}{ccc}
 $P_1 = (0\xrightarrow{}P_1)$ &
 $P_2 = (0\xrightarrow{}P_2)$ &
 $S_1 = (P_1\xrightarrow{x}P_1)$ \\
 $S_2 = (P_1\xrightarrow{a} P_2)$ &
 $I_1 = (P_1\xrightarrow{} P_2^2)$ &
 $12 = (P_1\xrightarrow{ax}P_2$ \\
 $112 = (P_1\xrightarrow{}P_1\oplus P_2)$ &
 $\Sigma P_1 = (P_1 \xrightarrow{} 0)$ &
 $\Sigma P_2 = (P_2 \xrightarrow{} 0)$
\end{tabular}
\end{center}
Then the~$u$-equations are as follows.
\begin{center}
 \begin{tabular}{lcl}
  $u_{P_1} + u_{S_1}u_{S_2}^2u_{I_1}^2 u_{12}^2 u_{112}u_{\Sigma P_1}^2 = 1$ &&
  $u_{P_2} + u_{S_1}u_{S_2}u_{12}u_{\Sigma P_1}^2u_{\Sigma P_2}=1$ \\
  $u_{S_1} + u_{P_1}u_{P_2}u_{S_1}^2u_{S_2}u_{I_1}^2u_{12}^2u_{112}^2u_{\Sigma P_1} = 1$ &&

  $u_{S_2} + u_{P_1}^2u_{P_2}u_{S_1}u_{112}u_{\Sigma P_2} = 1$ \\
  $u_{I_1} + u_{P_1}^2 u_{S_1}^2 u_{12} u_{112} u_{\Sigma P_1}^2 u_{\Sigma P_2}^2 = 1$ &&
  $u_{12} + u_{P_1}^2 u_{P_2} u_{S_1}^2 u_{I_1} u_{12}^2 u_{112}^2 u_{\Sigma P_1} u_{\Sigma P_2} = 1$ \\

  $u_{112} + u_{P_1}u_{S_1}^2 u_{S_2} u_{I_1} u_{12}^2 u_{112}^2 u_{\Sigma P_1}^2 u_{\Sigma P_2} = 1$ &&
  $u_{\Sigma P_1} + u_{P_1}^2 u_{P_2}^2 u_{S_1} u_{I_1}^2 u_{12} u_{112}^2 = 1$ \\
  $u_{\Sigma P_2} + u_{P_2} u_{S_2} u_{I_1}^2 u_{12} u_{112} = 1$
 \end{tabular}
\end{center}
The~$\widehat{F}$-polynomials are as follows.
{\footnotesize
\begin{center}
 \begin{tabular}{l}
  $\widehat{F}_{P_1} = u_{P_1}^{2}u_{S_1}u_{112} + u_{P_1}u_{S_1}u_{S_2}u_{I_1}u_{12}u_{112}u_{\Sigma P_1} + u_{S_1}u_{S_2}^{2}u_{I_1}^{2}u_{12}^{2}u_{112}u_{\Sigma P_1}^{2} $ \\

  $\widehat{F}_{P_2} = u_{P_1}^{2}u_{P_2}u_{S_1}u_{112} + u_{S_1}u_{S_2}u_{12}u_{\Sigma P_1}^{2}u_{\Sigma P_2} + u_{P_1}u_{P_2}u_{S_1}u_{S_2}u_{I_1}u_{12}u_{112}u_{\Sigma P_1} + u_{P_2}u_{S_1}u_{S_2}^{2}u_{I_1}^{2}u_{12}^{2}u_{112}u_{\Sigma P_1}^{2}$ \\

  $\widehat{F}_{S_1} = u_{P_1}u_{S_1}u_{112} + u_{S_1}u_{S_2}u_{I_1}u_{12}u_{112}u_{\Sigma P_1}$ \\

  $\widehat{F}_{S_2} = u_{\Sigma P_2} + u_{P_2}u_{S_2}u_{I_1}^{2}u_{12}u_{112}$ \\

  $\widehat{F}_{I_1} = u_{S_1}u_{12}u_{\Sigma P_1}^{2}u_{\Sigma P_2}^{2} + u_{P_1}u_{P_2}u_{S_1}u_{I_1}u_{12}u_{112}u_{\Sigma P_1}u_{\Sigma P_2} + u_{P_1}^{2}u_{P_2}^{2}u_{S_1}u_{I_1}^{2}u_{12}u_{112}^{2} +$ \\

  $\qquad + 2u_{P_2}u_{S_1}u_{S_2}u_{I_1}^{2}u_{12}^{2}u_{112}u_{\Sigma P_1}^{2}u_{\Sigma P_2} + u_{P_1}u_{P_2}^{2}u_{S_1}u_{S_2}u_{I_1}^{3}u_{12}^{2}u_{112}^{2}u_{\Sigma P_1} + u_{P_2}^{2}u_{S_1}u_{S_2}^{2}u_{I_1}^{4}u_{12}^{3}u_{112}^{2}u_{\Sigma P_1}^{2}$ \\

  $\widehat{F}_{12} = u_{S_1}u_{12}u_{\Sigma P_1}u_{\Sigma P_2} + u_{P_1}u_{P_2}u_{S_1}u_{I_1}u_{12}u_{112} + u_{P_2}u_{S_1}u_{S_2}u_{I_1}^{2}u_{12}^{2}u_{112}u_{\Sigma P_1}$ \\

  $\widehat{F}_{112} = u_{P_1}u_{S_1}u_{12}u_{112}u_{\Sigma P_1}u_{\Sigma P_2} + u_{P_1}^{2}u_{P_2}u_{S_1}u_{I_1}u_{12}u_{112}^{2} + u_{S_1}u_{S_2}u_{I_1}u_{12}^{2}u_{112}u_{\Sigma P_1}^{2}u_{\Sigma P_2} +$ \\
  $\qquad + u_{P_1}u_{P_2}u_{S_1}u_{S_2}u_{I_1}^{2}u_{12}^{2}u_{112}^{2}u_{\Sigma P_1} + u_{P_2}u_{S_1}u_{S_2}^{2}u_{I_1}^{3}u_{12}^{3}u_{112}^{2}u_{\Sigma P_1}^{2}$ \\
 \end{tabular}
\end{center}
}
In this example again, a computer check shows that~$\widetilde{\cM}_\Lambda = \widetilde{\cU}_\Lambda$.

\subsection{Pellytopes}

Let $A$ be the path algebra of the quiver $1\leftarrow 2 \leftarrow \dots \leftarrow n$, modulo the ideal generated by the relations that the composition of any two consecutive arrows is zero.

There are $3n-1$ elements of $\ind K_\Lambda$: $P_1,\dots,P_n=S_n; \Sigma P_1, \dots \Sigma P_n; S_1,\dots, S_{n-1}$. It will turn out to be more convenient to view $P_n$ as part of the family of simples, rather than the family of projectives, so we will consistently write $S_n$ instead of $P_n$ (though we will still write $\Sigma P_n$).
The $u$-equations are as follows:
\begin{align*}
  u_{P_i} + u_{S_{i-1}}u_{\Sigma P_i}u_{\Sigma P_{i+1}}&=1\\
  u_{\Sigma P_i} + u_{P_{i-1}}u_{P_i}u_{S_i}&=1\\
  u_{S_i}+u_{P_{i+1}}u_{S_{i+1}}u_{\Sigma P_{i}}u_{S_{i-1}}&=1
\end{align*}
where $i$ runs from 1 to $n-1$ in the first equation, and from 1 to $n$ in the second and third, and we drop the variables $u_{S_0}, u_{S_{n+1}}, u_{P_0}, u_{P_n},u_{P_{n+1}}$ whenever they arise. These are exactly the $u$-equations of \cite{pelly}.

We have numbered the vertices of our quiver so that the $g$-vectors of the objects of $\ind K_\Lambda$ agree with \cite{pelly}. The $g$-vector of $S_i$ is $\e_i-\e_{i+1}$ for $1\leq i\leq n-1$, while the $g$-vector of $S_n=P_n$ is $\e_n$. The $g$-vector of $P_i$ is of course $\e_i$, while the $g$-vector of $\Sigma P_i$ is $-\e_i$.

The $F$-polynomials are given by $F_{P_i}=1 + y_{i+1} + y_{i}y_{i+1}$ for $1\leq i\leq n-1$, and $F_{S_i}=1+y_i$ for $1\leq i \leq n$. 

The $\hF$-polynomials are
\begin{align*} \hF_{P_i}&=u_{P_i}u_{P_{i+1}}u_{S_{i+1}} + u_{\Sigma P_{i+1}}u_{S_i}u_{P_i}+u_{\Sigma P_{i+1}}u_{\Sigma P_i}u_{S_{i-1}}\\
  \hF_{S_i}&=u_{P_i}u_{S_i}+u_{\Sigma P_i}u_{S_{i-1}},\end{align*}
where $i$ runs from 1 to $n-1$ in the first equation and from 1 to $n$ in the second. Again, we drop $u_{S_0}$, $u_{P_n}$.

The solutions $v_X$ are given by:
\begin{center}
 \begin{tabular}{lcl}
  $v_{P_i}=\frac {F_{S_{i+1}}}{F_{P_i}}$ &&
  $v_{\Sigma P_i}=\frac{y_iF_{S_{i-1}}}{F_{P_{i-1}}} \textrm{ for $2\leq i \leq n$}$\\
  $v_{\Sigma P_1}=\frac{y_1}{1+y_1}$ &&
  $v_{S_i}=\frac{F_{P_{i}}}{F_{S_{i+1}}F_{S_i}}$
 \end{tabular}
\end{center}

We drop the terms $F_{P_n},F_{S_{n+1}}$ when they appear. (We could also use the formulas given above for them, and then decree that $y_{n+1}=0$.)

Now consider the $\hat F$-equations (or equivalently $u$-equations) for $\Lambda$, the path algebra of the same $A_n$ quiver, with no relations. We set $u_{ij}$ to correspond to $M_{ij}$, the indecomposable representation supported at vertices $k$ with $i<k<j$. This accounts for all $u_{ij}$ with $0\leq i$, $i\leq j-2$, $j\leq n+1$. 
We let $u_{-1,i}$ correspond to $\Sigma P_i$. The index set $\ind K_\Lambda$ is thereby identified with the diagonals of an $(n+3)$-gon, with vertices numbered $-1$ to $n+1$.

To find solutions to the Pellytope $u$-equations in terms of those from $\Lambda$ (which we denote $v_{ij}$), as in Section \ref{quotient-section}, we observe that for any representation of $A_n$ which is neither simple nor projective, its projective resolution, when tensored by the radical square zero quotient of the path algebra, splits into a direct sum of an indecomposable projective and a shifted indecomposable projective. Concretely, for $i\leq j-2$, we have the presentation
$$ \widehat P_j\rightarrow \widehat P_{i+1} \rightarrow M_{ij} \rightarrow 0.$$ 
(Here we write $\widehat P_i$ for the projective $\Lambda$-module with simple top $S_i$, keeping the notation $P_i$ for the corresponding projective $A$-module.)

If $i=j-2$, when we tensor the presentation of $M_{j-2,j}$ by $A$, it becomes a minimal projective presentation of $S_{j-1}$. 
However, if $i\leq j-3$, the result of tensoring the presentation of $M_{ij}$ by $A$ is a complex of the form
$ P_j\rightarrow P_{i+1}.$
Since $\Hom(P_j,P_{i+1})$ is 0, the map must be zero, so the complex splits as $(0\rightarrow P_{i+1}) \oplus (P_j\rightarrow 0)$. 
The result is that each $u_{i,j}$ with $i\ne -1$, $j\ne n+1$, and $j-i>2$, appears in the product for $P_{i+1}$ and that for $\Sigma P_{j}$.

The solution to the Pellytope $u$-equations in terms of the solutions for $\Lambda$ is therefore given by:

\[
v_{S_i}=v_{i-1,i+1}, \qquad
v_{P_i}=\prod_{j=i+2}^{n+1} v_{i-1,j}, \qquad
v_{\Sigma P_i}=\prod_{\ell=-1}^{i-3} v_{\ell,i}.
\]

\subsection*{Acknowledgements}
The authors would like to express their gratitude to the Mathematisches Forschungsinstitut Oberwolfach and to the Center of Mathematical Sciences and its Applications, where significant progress was made on this project. We thank Vladimir Fock for suggesting the link to Nahm's conjecture, which led to the results in Section \ref{sec:dilog}. We thank Nick Early for discussions on the CEGM~$u$-equations which motivated Section~\ref{grassmannian}. We thank Philippe Petit for comments on an early draft of the paper.

The work of N.A.H. is supported by the DOE
(Grant No.~DE-SC0009988), the Simons Collaboration
on Celestial Holography and the Carl B. Feinberg cross-disciplinary program in innovation at the IAS.
Work of N.A.H, H.F, and G.S. also supported by the European Union (ERC, Universe+, 101118787).
H.F. is supported by the Sivian Fund. 
P.-G.P. was partly supported by the French ANR grant CHARMS (ANR-19-CE40-0017-02) and is supported by the Institut Universitaire de France (IUF).
H.T. is supported by NSERC Discovery Grant RGPIN-2022-03960 and the Canada Research Chairs program, grant number CRC-2021-00120.

\bibliographystyle{plain} 
\bibliography{uplusu}

\end{document}